\newcommand{\papernumber}{II}
\renewcommand{\thesection}{\papernumber.\arabic{section}}
\crefname{theorem}{Theorem}{Theorems}
\crefname{thm}{Theorem}{Theorems}
\crefname{lemma}{Lemma}{Lemmas}
\crefname{lem}{Lemma}{Lemmas}
\crefname{remark}{Remark}{Remarks}
\crefname{prop}{Proposition}{Propositions}
\crefname{defn}{Definition}{Definitions}
\crefname{corollary}{Corollary}{Corollaries}
\crefname{conjecture}{Conjecture}{Conjectures}
\crefname{question}{Question}{Questions}
\crefname{chapter}{Chapter}{Chapters}
\crefname{section}{Section}{Sections}
\crefname{figure}{Figure}{Figures}
\crefname{example}{Example}{Examples}
\theoremstyle{plain}
\newtheorem{thm}{Theorem}[section]
\newtheorem{lemma}[thm]{Lemma}
\newtheorem{theorem}[thm]{Theorem}
\newtheorem{corollary}[thm]{Corollary}
\newtheorem{prop}[thm]{Proposition}
\theoremstyle{definition}
\newtheorem{defn}[thm]{Definition}
\theoremstyle{remark}
\newtheorem{remark}[thm]{Remark}
\numberwithin{equation}{section}
\renewcommand{\P}{\mathbb P}
\newcommand{\E}{\mathbb E}
\newcommand{\R}{\mathbb R}
\newcommand{\Z}{\mathbb Z}
\newcommand{\cE}{\mathcal E}
\newcommand{\cF}{\mathcal F}
\newcommand{\cG}{\mathcal G}
\def\P{\mathbb{P}}
\DeclareMathSymbol{\leqslant}{\mathalpha}{AMSa}{"36} 
\DeclareMathSymbol{\geqslant}{\mathalpha}{AMSa}{"3E} 
\DeclareMathSymbol{\eset}{\mathalpha}{AMSb}{"3F}     
\newcommand{\Mobius}{\mathrm{M\ddot{o}}}
\newcommand{\hathat}[1]{%
\begingroup%
  \let\macc@kerna\z@%
  \let\macc@kernb\z@%
  \let\macc@nucleus\@empty%
  \hat{\mathchoice%
    {\raisebox{.2ex}{\vphantom{\ensuremath{\displaystyle #1}}}}%
    {\raisebox{.2ex}{\vphantom{\ensuremath{\textstyle #1}}}}%
    {\raisebox{.16ex}{\vphantom{\ensuremath{\scriptstyle #1}}}}%
    {\raisebox{.14ex}{\vphantom{\ensuremath{\scriptscriptstyle #1}}}}%
    \smash{\hat{#1}}}%
\endgroup%
}
\renewcommand{\epsilon}{\varepsilon}
\newcommand{\eps}{\varepsilon}
\tikzset{nomorepostaction/.code=\let\tikz@postactions\pgfutil@empty}
\title{{\bf Critical long-range percolation II: Low effective dimension}}
\renewenvironment{abstract}
 {\par\noindent\textbf{\abstractname.}\ \ignorespaces}
 {\par\medskip}
\author{{\bf Tom Hutchcroft}}
\newcommand\mytag[2][]{%
  \def\@currentlabel{#2}%
  (#2)\label{#1} 
}
\begin{document}

\date{\small{\today}}

\maketitle

\begin{abstract}
In long-range Bernoulli bond percolation on the $d$-dimensional lattice $\Z^d$, each pair of points $x$ and $y$ are connected by an edge with probability $1-\exp(-\beta\|x-y\|^{-d-\alpha})$, where $\alpha>0$ is fixed, $\beta \geq 0$ is the parameter that is varied to induce a phase transition, and $\|\cdot\|$ is a norm. As $d$ and $\alpha$ are varied, the model is conjectured to exhibit eight qualitatively different forms of second-order critical behaviour, with a transition between a mean-field regime and a low-dimensional regime satisfying the  hyperscaling relations when $d=\min\{6,3\alpha\}$, a transition between effectively long- and short-range regimes at a crossover value $\alpha=\alpha_c(d)$, and with various logarithmic corrections to these behaviours occurring at the boundaries between these regimes. 

This is the second of a series\footnote{This paper is intended to be readable independently of the rest of the series. An expository account of the series over 12 hours of lectures is available on the YouTube channel of the Fondation Hadamard, see \url{https://www.youtube.com/playlist?list=PLbq-TeAWSXhPQt8MA9_GAdNgtEbvswsfS}.}
 of three papers developing a rigorous and detailed theory of the model's critical behaviour in five of these eight regimes, including all effectively long-range (LR) regimes and all effectively high-dimensional (HD) regimes.
In this paper we focus on the LR-LD regime $d/3<\alpha<\alpha_c(d)$, in which the model is effectively long-range but \emph{below} its upper critical dimension. Since computing $\alpha_c(d)$ for $2<d<6$ appears to be outside the scope of current techniques, we instead give an axiomatic definition of what it means to belong to the effectively long-range regime, which we prove holds when $\alpha <1$. With this definition in hand, 
we prove up-to-constants estimates on the critical and slightly subcritical two-point function throughout the LR regime and up-to-constants estimates on the critical volume tail and $k$-point function throughout the LR-LD regime.
%
We deduce in particular that the critical exponents $\eta$, $\gamma$, $\Delta$, $d_f$, and $\nu$ satisfy the identities
\[
 \eta = 2-\alpha, \qquad \gamma = (2-\eta)\nu, \qquad \text{ and } \qquad \Delta = \nu d_f
\]
throughout the LR regime (assuming that at least one of $\gamma$, $\nu$, or $\Delta$ is well-defined) and that the critical exponents $\delta$ and $d_f$ are given by the hyperscaling identities
\[
   \delta = \frac{d+\alpha}{d-\alpha} \qquad \text{ and } \qquad d_f = \frac{d+\alpha}{2}
\]
throughout the LR-LD regime. 
Our results also give a first glimpse of \emph{conformal invariance} in the LR-LD  regime, yielding in particular that the critical $k$-point function in this regime coincides up-to-constants with an explicit M\"obius-covariant function.
\end{abstract}

\newpage
\setcounter{tocdepth}{2}

\tableofcontents

\setstretch{1.1}

\newpage

\section{Introduction}
\label{sec:introduction}

In \textbf{long-range percolation} on $\Z^d$, 
 we define a random graph with vertex set $\Z^d$ by setting any two distinct vertices $x,y\in \Z^d$ to be connected by an edge $\{x,y\}$ with probability $1-e^{-\beta J(x,y)}$ independently of all other pairs, where $J:\Z^d\times \Z^d \to [0,\infty)$ is a symmetric, translation invariant\footnote{Meaning that $J(x,y)=J(y,x)=J(0,y-x)$ for every distinct $x,y\in \Z^d$.} kernel and $\beta\geq 0$ is a parameter that is varied to induce a phase transition. We will 
 be interested in the case that the kernel $J$ has power-law decay of the form
\begin{equation}
\label{eq:kernel_simple_intro}
  J(x,y) \sim \text{const.}\|x-y\|^{-d-\alpha}
\end{equation}
for some $\alpha>0$ and norm $\|\cdot\|$, and in fact will work under the slightly stronger\footnote{In fact for the effectively low-dimensional results of this paper it suffices that $|J'(r)|\sim C r^{-d-\alpha-1}$, without any assumptions on the rate of convergence. We retain the stronger conditions used in the other papers of the series to allow for a uniform treatment of our results concerning the entire effectively long-range regime, some of which rely on the high and critical-dimensional results proven in the other papers of the series.} assumption that $J(x,y)= \mathbbm{1}(x\neq y) J(\|x-y\|)$ for some decreasing, differentiable function $J(r)$ satisfying
\begin{equation}
  |J'(r)| = C(1+\delta_r) r^{-d-\alpha-1}
\end{equation}
for some constant $C>0$ and measurable error function $\delta_r$ satisfying $\delta_r\to 0$ as $r\to \infty$ and $\int_{r_0}^\infty \frac{|\delta_r|}{r}\dif r<\infty$ for some $r_0<\infty$.
 The \textbf{critical point} $\beta_c=\beta_c(J)$ is defined by
\[
  \beta_c = \inf\{\beta \geq 0: \P_\beta(\text{an infinite cluster exists})>0\},
\]
where we write $\P_\beta$ and $\E_\beta$ for probabilities and expectations taken with respect to the law of the model with parameter $\beta$. 
For kernels of the form \eqref{eq:kernel_simple_intro}, this critical point is non-trivial in the sense that $0<\beta_c<\infty$ if and only if $d\geq 2$ and $\alpha>0$ or $d=1$ and $0<\alpha \leq 1$ \cite{newman1986one,schulman1983long}. 
In this paper we will be interested exclusively in the study of the model \emph{at and near its critical point} $\beta=\beta_c$,
referring the reader to e.g.\ \cite{biskup2021arithmetic,ding2023uniqueness,baumler2023distances,jorritsma2024cluster,berger2024scaling,aoun2021sharp} for results on other aspects of the model.

One interesting feature of this model (and indeed long-range models more broadly) is that varying $\alpha$ is in some ways analogous to varying the dimension $d$. This phenomenon is captured in part\footnote{We caution the reader not to rely too strongly on the effective dimension: It gives the correct prediction that the model is mean-field when $d_\mathrm{eff}>d_c=6$ and satisfies the hyperscaling relations when $d_\mathrm{eff}<d_c=6$, but is not predicted to fully determine the critical behaviour of the model in the effectively low-dimensional regime. In particular, for $d<6$ the crossover from effectively long-range to effectively short-range behaviour is \emph{not} predicted to occur at the point $\alpha=2$ where the equality $d=d_\mathrm{eff}$ begins to hold.} by the \textbf{effective dimension} 
\[d_\mathrm{eff} := \max\left\{d, \frac{2d}{\alpha}\right\},\]
which is the spectral dimension of the random walk with jump kernel proportional to $\|x-y\|^{-d-\alpha}$. 
In particular, the model is predicted to exhibit different critical behaviours according to whether $d_\mathrm{eff}$ is greater than, smaller than, or equal to the \textbf{upper critical dimension} $d_c=6$: When $d_\mathrm{eff}>d_c$ the model is predicted to exhibit relatively simple, \emph{mean-field} critical behaviour, while when
$d_\mathrm{eff}<d_c$ the model is predicted to have more complex critical behaviour characterised by the validity of the \emph{hyperscaling relations} (see \cref{subsec:hyperscaling}). Besides this transition between the effectively low-dimensional and high-dimensional regimes, long-range percolation is also predicted to exhibit a transition between an \emph{effectively long-range} (small $\alpha$) regime and an \emph{effectively short-range} (large $\alpha$) regime, demarcated by a \emph{crossover value} $\alpha_c(d)$ \cite{sak1973recursion,brezin2014crossover,paulos2016conformal}.  One also expects further, subtly different critical behaviours to occur at the boundaries of these regimes, leading to at least eight qualitatively different forms of critical behaviour in total (\cref{fig:cartoon}).

\begin{figure}[t]
\centering
\includegraphics{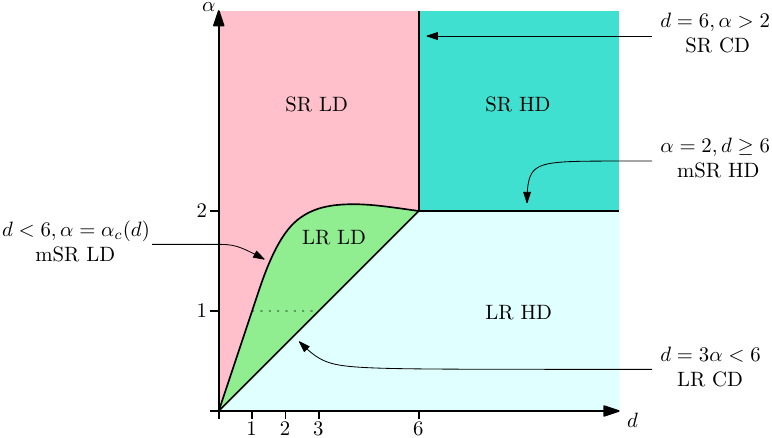}\hspace{1cm}
\caption{Schematic illustration of the different regimes of critical behaviour for long-range percolation. LR, SR, HD, LD, and CD stand for ``Long Range'', ``Short Range'', ``High Dimensional'', ``Low Dimensional'', and ``Critical Dimensional'' respectively, while mSR stands for ``marginally Short Range''. Here we ignore the special behaviours occuring when $d=1$ (where there is either no phase transition when $\alpha>1$ or a discontinuous phase transition when $\alpha=1$ \cite{MR868738,duminil2020long}) for clarity. In this paper we primarily study critical behaviour in the effectively \emph{long-range, low-dimensional} regime LR LD, although we also prove several results applying to the entire effectively long-range regime LR. Our results are proven to apply to the region $d<3\alpha <3$, i.e., the part of the regime LR LD below the dotted line; for other values of $d$ and $\alpha$ we \emph{define} what it means to belong to the regime LR via the condition \eqref{CL} in \cref{def:CL} and prove our results subject to this condition.
 We do not expect the breakdown of our proof method to correspond to any true change in behaviour of the critical model when $\alpha=1$ for $d \geq 2$.}
\label{fig:cartoon}
\end{figure}



This paper is the second in a series of three papers building a detailed and rigorous theory of critical phenomena for long-range percolation in all regimes other than the effectively short-range low-dimensional regime and its boundaries.  This work builds on the earlier works \cite{hutchcroft2020power,hutchcroft2022sharp,hutchcroft2024pointwise,baumler2022isoperimetric,hutchcrofthierarchical} and closely parallels our paper \cite{hutchcroft2022critical} which developed a similar theory for \emph{hierarchical percolation} (which we do not assume familiarity with); working in the Euclidean setting entails both significant additional technical challenges and new phenomena that need to be understood, since hierarchical models are always ``effectively long-range'' and do not exhibit the transition to ``effectively short-range'' behaviour seen in Euclidean models. The focus of this paper is on the effectively long-range, \emph{low-dimensional} regime (where the model is \emph{below} its upper critical dimension) while the first and third papers focus on the effectively high-dimensional and critical-dimensional regimes respectively 
 \cite{LRPpaper1,LRPpaper3}. Despite this focus, several of the results proven in this paper also have important consequences for models of high and critical effective dimension,
  including the pointwise two-point function estimates of \cref{thm:CL_Sak} and the slightly subcritical scaling theory developed in \cref{subsec:subcritical_intro}. Although our papers rely on a form of real-space \emph{renormalization group} (RG) analysis (as explained in detail in \cref{I-subsec:RG}), we stress that all our results  are \emph{non-perturbative} (i.e., do not require any ``small parameter assumptions'') in contrast to both traditional approaches to \emph{high-dimensional} percolation via the lace expansion \cite{MR1043524,MR2430773} and the traditional RG approach to low dimensional critical phenomena in which one expands perturbatively around the upper critical dimension \cite{wilson1972critical,MR3772040,MR3723429,abdesselam2013rigorous,MR709462,bleher2010critical}.
 

\medskip\noindent \textbf{Asymptotic notation.}
We write $\asymp$, $\preceq$, and $\succeq$ for equalities and inequalities that hold to within positive multiplicative constants depending on $d\geq 1$, $\alpha>0$, and the kernel $J$
but not on any other parameters. If implicit constants depend on an additional parameter (such as the index of a moment), this will be indicated using a subscript. Landau's asymptotic notation is used similarly, so that if $f$ is a non-negative function then ``$f(n)=O(n)$ for every $n\geq 1$'' and ``$f(n) \preceq n$ for every $n\geq 1$'' both mean that there exists a positive constant $C$ such that $f(n)\leq C n$ for every $n\geq 1$. We also write $f(n)=o(g(n))$ to mean that $f(n)/g(n)\to 0$ as $n\to\infty$ and write $f(n)\sim g(n)$ to mean that $f(n)/g(n)\to 1$ as $n\to\infty$. The quantities we represent implicitly using big-$O$ and little-$o$ notation are always taken to be non-negative, and we denote quantities of uncertain sign using $\pm O$ or $\pm o$ as appropriate. 





\medskip

\noindent
\textbf{Sak's prediction and its consequences.}
Understanding critical phenomena in intermediate dimensions is a notoriously difficult problem. While critical nearest-neighbour percolation is now well-understood in high dimensions \cite{MR1043524,MR2748397,MR762034,MR1127713,MR1959796,MR3306002,MR4032873,fitzner2015nearest} (see also \cite{MR2430773,MR3306002,MR4032873,liu2025high} for related results for long-range percolation) and is reasonably well-understood in two dimensions\footnote{The main caveat being that results concerning scaling limits and the computation of critical exponents for two-dimensional nearest-neighbour percolation remain limited to  \emph{site percolation on the triangular lattice} and universality remains a major challenge.} \cite{smirnov2001critical2,smirnov2001critical,lawler2002one,MR879034,duminil2020rotational,camia2024conformal}, progress in intermediate dimensions $2<d<6$ and the critical dimension $d=6$ has been extremely limited. For intermediate dimensions the situation is similar throughout mathematical physics more broadly,
 with e.g.\ three-dimensional critical phenomena only\footnote{One possible counterexample to this claim is the work of Bauerschmidt, Lohmann, and Slade \cite{bauerschmidt2020three} regarding \emph{tricritical points} in three dimensions. In contrast to the other models discussed above, these tricritical models continue to behave in a ``marginally trivial'' fashion in three dimensions and can be understood using techniques typically used to analyze models at their upper critical dimension. Another important example is the three-dimensional loop-erased random walk and uniform spanning tree, where we know that scaling limits and critical exponents are well-defined but not how to compute or describe them \cite{shiraishi2018growth,angel2021scaling,kozma2007scaling}.} understood in a handful of ``exactly solvable'' examples such as branched polymers \cite{brydges2003branched,kenyon2009branched} and GFF level-set percolation \cite{drewitz2023critical,werner2021clusters,lupu2016loop}. 

One of several underlying difficulties in the study of intermediate-dimensional critical phenomena for nearest neighbour percolation is that no exact values are conjectured for critical exponents, and indeed there seems to be no reason to believe that closed-form expressions for these exponents should exist. One of the most appealing features of \emph{long-range} percolation (and long-range models more broadly) is that some (but not all) of their critical exponents are predicted to have simple explicit values throughout the effectively long range, low dimensional regime. Indeed, it is a prediction of Sak \cite{sak1973recursion} (who worked in the context of the $O(n)$ model and built on the earlier works \cite{fisher1972critical,suzuki1972wilson}) that the two-point function critical exponent $\eta$, defined (if it exists) via
  $\P_{\beta_c}(x\leftrightarrow y) = \|x-y\|^{-d+2-\eta \pm o(1)}$,
satisfies
\begin{equation}
\label{eq:Sak_Stick}
2-\eta=\alpha\end{equation}
throughout the entire effectively long-range regime. (This prediction is sometimes interpreted as the two-point function ``sticking'' to its mean-field scaling beyond the mean-field regime.) It is also predicted that the exponent $\eta$ coincides with its nearest neighbour-value for $\eta_\mathrm{SR}=\eta_\mathrm{SR}(d)$ throughout the effectively short-range regime, so that
\begin{equation}
\label{eq:Sak}
  2-\eta = \begin{cases} \alpha & \alpha < \alpha_c(d) \qquad \qquad \text{(LR)}
  \\
  2-\eta_{\mathrm{SR}} & \alpha \geq \alpha_c(d) \qquad \qquad \text{(SR and mSR)}
  \end{cases}
\end{equation}
where the crossover value $\alpha_c(d)=2-\eta_\mathrm{SR}$ is the unique value making this function continuous. 
(For $d=1$ there is no crossover to effectively short-range behaviour and the prediction $2-\eta=\alpha$ applies to the entire interval $\alpha\in (0,1)$ in which a continuous phase transition occurs.)
Hiding in the apparent simplicity of this prediction is the complicated value of $\alpha_c(d)=2-\eta_\mathrm{SR}$, which is predicted to be $43/24$ when $d=2$ \cite{smirnov2001critical} and has numerical values\footnote{See \url{https://en.wikipedia.org/wiki/Percolation_critical_exponents} for a summary of numerical and non-rigorous analytic estimates for $\eta_{\mathrm{SR}}(d)$.} slightly above $2$ in dimensions $3$, $4$, and $5$; the rigorous computation of $\alpha_c(d)$ appears to be out of reach of current techniques even for $d=2$, and is morally related to very strong forms of universality for short-range models.
See \cite{brezin2014crossover,luijten1997interaction,gori2017one,behan2017scaling} for further discussion of Sak's prediction in the physics literature (where it has been the subject of some controversy \cite{van1982instability,picco2012critical,brezin2014crossover}) and \cite{MR3772040,MR3723429} for rigorous partial progress on the spin $O(n)$ model.

Recently, significant partial progress has been made on Sak's prediction for long-range percolation \cite{hutchcroft2022sharp,hutchcroft2024pointwise,baumler2022isoperimetric}, with the pointwise estimate 
\begin{equation}
\label{eq:two_point_pointwise}
\P_{\beta_c}(x\leftrightarrow y) \asymp \|x-y\|^{-d+\alpha}
\end{equation}
proven to hold whenever $\alpha<1$ \cite[Theorem 1.4]{hutchcroft2024pointwise}. This completely verifies Sak's prediction for $d=1$ and treats the range of low-effective-dimensional models with $\alpha\in (2/3,1)$  for $d=2$. Moreover, the two-point function estimate of Sak's prediction has been shown in \cite{hutchcroft2022sharp} to always hold as an \emph{upper bound} in the spatially averaged sense
\begin{equation}
\label{eq:Sak_upper_restate}
  \frac{1}{r^d}\sum_{x\in [-r,r]^d}\P_{\beta_c}(0\leftrightarrow x) \preceq r^{-d+\alpha},
\end{equation}
this bound holding vacuously for $\alpha \geq d$. (See \cite{baumler2022isoperimetric} for the current best results regarding unconditional \emph{lower bounds} for general $d$ and $\alpha$.)

  Unfortunately, both the methods and results of the papers \cite{hutchcroft2022sharp,hutchcroft2024pointwise,baumler2022isoperimetric,hutchcroft2020power} (along with Sak's prediction itself) are insensitive to the distinction between low and high effective dimensions and seem unsuitable for the study of other aspects of critical long-range percolation 
   where this distinction is important. Two important examples of exponents that are sensitive to this distinction are the \emph{volume tail exponent} $\delta$ and \emph{fractal dimension exponent} $d_f$ defined (if they exist) by 
   \[
     \P_{\beta_c}(|K|\geq n) = n^{-1/\delta \pm o(1)} \qquad \text{ and } \qquad 
  \frac{\E_{\beta_c}|K\cap B_r|^2}{\E_{\beta_c}|K \cap B_r|} = r^{d_f\pm o(1)}
   \]
as $n,r\to\infty$, where we write $B_r=\{x\in \Z^d: \|x\|\leq r\}$. (This is just one of several possible precise definitions of $d_f$; we will discuss others in \cref{subsec:hyperscaling}.)
Using Sak's prediction \eqref{eq:Sak}, standard heuristic hyperscaling theory (discussed in detail in \cref{subsec:hyperscaling}), and the mean-field values of $\delta$ and $d_f$ discussed in more detail in \cref{I-sec:introduction}, one obtains the prediction that if $1<d<6$ then
\begin{equation}
  (\delta, d_f)  
  \arraycolsep=10pt\def\arraystretch{1.8}
   = \left\{\!\!\!\!\!\!\begin{array}{lll} \Bigl(2,2\alpha\Bigr) & \alpha \leq d/3
   &\text{(LR HD and LR CD)}\\
  \Bigl(\frac{d+\alpha}{d-\alpha},\frac{d+\alpha}{2}\Bigr) & d/3 < \alpha < \alpha_c(d) &\text{(LR LD)} \\
 \Bigl(\delta_\mathrm{SR}, d_{f,\mathrm{SR}}\Bigr) & \alpha \geq \alpha_c(d), &\text{(SR LD and mSR LD)}
  \end{array}\right.
  \label{eq:Sak_delta}
\end{equation}
where $\delta_\mathrm{SR}=\delta_\mathrm{SR}(d)$ and $d_{f,\mathrm{SR}}=d_{f,\mathrm{SR}}(d)$ are the corresponding exponents for the nearest-neighbour model. (See \cref{fig:2d}.) For $d=1$ the prediction is similar except that there is no phase transition for $\alpha >\alpha_c(1)=1$ so that the curve has only two pieces. (The $\alpha\leq d/3$ part of this prediction is verified in Theorems \ref{I-thm:hd_moments_main}, \ref{I-thm:critical_dim_moments_main_slowly_varying}, and \ref{III-thm:critical_dim_moments_main}, including precise logarithmic corrections to scaling when $d=3\alpha<6$.) A further related problem is the computation of the $k$-point function $\tau_{\beta_c}(x_1,\ldots,x_k) := \P_{\beta_c}(x_1,\ldots,x_k$ all connected$)$ which, as we explain in detail in \cref{subsec:glimpse_of_conformal_invariance_,remark:HD_spread}, is expected to have different asymptotic forms above, at, and below the upper critical dimension when $k\geq 3$. 

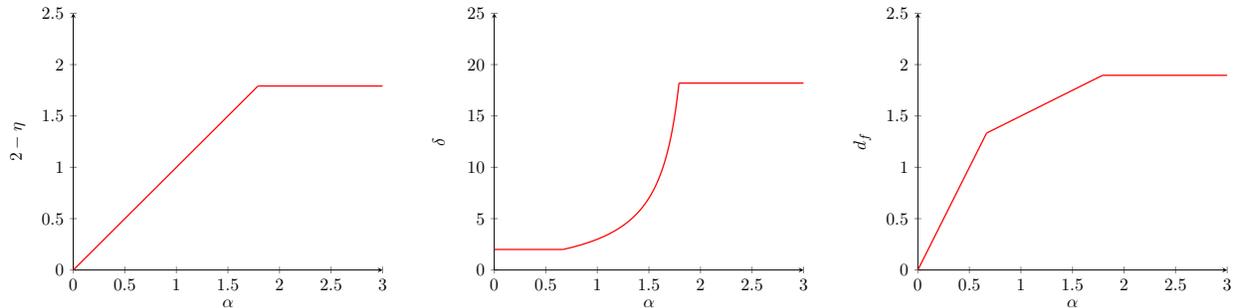
\begin{figure}[t]
\centering
\begin{tikzpicture}[scale=0.6]
\begin{axis}[
    axis lines = left,
    xlabel = $\alpha$,
    ylabel = {$2-\eta$},
    xmin=0,xmax=3,
    ymax=2.5
]
\addplot [
    domain=0:43/24, 
    samples=100, 
    color=red,
    thick
]
{x};
\addplot [
    domain=43/24:3, 
    samples=100, 
    color=red,
    thick
]
{43/24};
\end{axis}

\end{tikzpicture}
    \hspace{0.2cm}
    \begin{tikzpicture}[scale=0.6]
\begin{axis}[
    axis lines = left,
    xlabel = $\alpha$,
    ylabel = {$\delta$},
    xmin=0, xmax=3,
    ymin=0, ymax=25
]
\addplot [
    domain=0:2/3, 
    samples=100, 
    color=red,
    thick
]
{2};
\addplot [
    domain=2/3:43/24, 
    samples=100, 
    color=red,
    thick
]
{(2+x)/(2-x)};
\addplot [
    domain=43/24:3, 
    samples=100, 
    color=red,
    thick
]
{91/5};
\end{axis} 
\end{tikzpicture}
\hspace{0.2cm}
    \begin{tikzpicture}[scale=0.6]
\begin{axis}[
    axis lines = left,
    xlabel = $\alpha$,
    ylabel = {$d_f$},
    xmin=0, xmax=3,
    ymin=0, ymax=2.5
]
\addplot [
    domain=0:2/3, 
    samples=100, 
    color=red,
    thick
]
{2*x};
\addplot [
    domain=2/3:43/24, 
    samples=100, 
    color=red,
    thick
]
{(2+x)/2};
\addplot [
    domain=43/24:3, 
    samples=100, 
    color=red,
    thick
]
{(2+43/24)/2};
\end{axis} 
\end{tikzpicture}
\hspace{0.1cm}
    \caption{Sak's predicted value of $2-\eta$ (left) for $d=2$ and its consequences for $\delta$ (center) and $d_f$ (right) assuming the validity of the hyperscaling relations for $d_\mathrm{eff}<6$ (right). The predicted crossover value $\alpha_c(2)=43/24$ arises as $2-\eta_{\mathrm{SR}}$ with $\eta_{\mathrm{SR}}=5/24$, $\delta_\mathrm{SR}=91/5$, and $d_{f,\mathrm{SR}}=91/48$.}
    \label{fig:2d}
    \vspace{-1em}
\end{figure}


\medskip

\noindent 
\textbf{Results in the case $\alpha<1$.}
Two of the main results of this paper establish up-to-constants estimates on the volume tail $\P_{\beta_c}(|K|\geq n)$ and the $k$-point function $\tau_{\beta_c}(x_1,\ldots,x_k)$ throughout the effectively long-range, low-dimensional regime (LR LD of \cref{fig:cartoon}). In contrast to the two-point function, both quantities exhibit different asymptotic behaviour in this regime than they do at or above the upper critical dimension. This
difference is already apparent in the simplest versions of our results, restricting to the case $\alpha<1$ and to the three-point function rather than general $k$-point functions, which we now state. Further discussion and motivation of these theorems is given throughout the remainder of the introduction.

\begin{thm}
\label{thm:volume_tail_simple}
If $\alpha<1$ and $d<3\alpha$ then 
\[\P_{\beta_c}(|K|\geq n) \asymp n^{-(d-\alpha)/(d+\alpha)} \qquad \text{ and } \qquad \E_{\beta_c}|K\cap B_r|^p \asymp_p r^{\alpha+(p-1)\frac{d+\alpha}{2}}
\] for every $n,r\geq 1$ and positive integer $p$. In particular, the critical exponents $\delta$ and $d_f$ are well-defined and given by $\delta=(d+\alpha)/(d-\alpha)$ and $d_f=(d+\alpha)/2$ respectively.
\end{thm}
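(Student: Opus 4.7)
The plan is to derive both estimates from a single central quantity, the one-arm probability $\P_{\beta_c}(K\cap B_r \neq \emptyset)$, which is predicted to be of order $r^{-(d-\alpha)/2}$ throughout the LR LD regime. Once this is in hand, both results follow from the hyperscaling picture: conditional on $K\cap B_r\neq \emptyset$ the cluster volume should be concentrated at the scale $r^{d_f}=r^{(d+\alpha)/2}$, so that
\[ \P_{\beta_c}(|K|\geq n)\asymp \P_{\beta_c}(K\cap B_r\neq \emptyset) \quad \text{with } r\asymp n^{1/d_f} \]
yields the predicted volume-tail exponent, while the moments factorise as $\E_{\beta_c}|K\cap B_r|^p \asymp r^{(p-1)d_f}\cdot \E_{\beta_c}|K\cap B_r|$. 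The first moment itself is immediate from summing the pointwise Sak estimate $\P_{\beta_c}(0\leftrightarrow x)\asymp \|x\|^{-(d-\alpha)}$ (available here since $\alpha<1$) over $x\in B_r$, giving $\E_{\beta_c}|K\cap B_r|\asymp r^\alpha$.

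For the lower bound on the one-arm probability, I would apply Paley--Zygmund to $X=|K\cap B_r|$, giving
\[ \P_{\beta_c}(K\cap B_r\neq \emptyset)\ge \P_{\beta_c}(X\ge c\E X) \succeq \frac{(\E X)^2}{\E X^2}\asymp \frac{r^{2\alpha}}{r^{(d+3\alpha)/2}}=r^{-(d-\alpha)/2}, \]
contingent on a matching upper bound on the second moment $\E X^2$. The matching upper bound on the one-arm probability itself then follows from the upper bound on $\E|K\cap B_r|^p$ together with the concentration of $|K\cap B_r|$ conditional on non-triviality.

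The principal obstacle, then, is the upper bound $\E_{\beta_c}|K\cap B_r|^p \preceq_p r^{\alpha + (p-1)(d+\alpha)/2}$. Naive tree-graph / BK-type diagrammatic bounds overshoot in the LR LD regime: for $p=2$ they yield $r^{3\alpha}$ rather than the desired $r^{(d+3\alpha)/2}$ whenever $d<3\alpha$, reflecting the genuine smallness of the three-point function below the upper critical dimension. The route I would pursue is a real-space renormalization-group argument, coarse-graining at a sequence of geometric scales and proving a closed recursion whose fixed point selects the hyperscaling exponent $d_f$. The contraction factor at each scale comes from an open-triangle-type diagram: when $\alpha<d/2$ this diagram is summable and behaves like $\|x\|^{-(d-2\alpha)}$, while for $\alpha\geq d/2$ the same estimate holds after a finite-volume cut-off at scale $r$. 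The renormalization apparatus developed in the companion papers is designed precisely to make such recursions closable, and I would adapt it to the present setting.

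Finally, lifting the one-arm probability estimate to the sharp tail estimate for $|K|$ requires the promised concentration of cluster volume at scale $r^{d_f}$ conditional on the cluster reaching $B_r$. I would obtain this by an iterated second-moment / FKG argument on the dyadic scales $r,2r,4r,\ldots$, using the higher-moment bounds above to control the variance contributions from each scale and thereby pass from ``$K$ touches $B_r$'' to ``$|K|\asymp r^{d_f}$'' with uniformly positive conditional probability.
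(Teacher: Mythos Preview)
Your proposal contains the right heuristic picture but misses the two key technical ingredients, and the route you sketch in their place would not work.

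\medskip

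\textbf{The moment upper bound.} You correctly identify that tree-graph bounds give $r^{3\alpha}$ rather than $r^{(d+3\alpha)/2}$ for the second moment, and you propose to repair this with an RG recursion whose ``contraction factor at each scale comes from an open-triangle-type diagram''. But open-triangle diagrams are precisely what diverge when $d<3\alpha$; there is no contracting recursion to be found there. The paper instead uses a completely different tool: Gladkov's three-point inequality $\tau(x,y,z)\le \sqrt{8\tau(x,y)\tau(y,z)\tau(z,x)}$ and a $k$-point generalisation (\cref{thm:higher_Gladkov}). Combined with the pointwise two-point bound (available when $\alpha<1$), this immediately gives $\E_{\beta_c,r}|K|^p \preceq_p r^{\alpha+(p-1)(d+\alpha)/2}$ (\cref{lem:sCL_moments}). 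This is a genuinely new correlation inequality, not an RG argument.

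\medskip

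\textbf{The moment lower bound and volume tail.} Your proposal does not address the matching lower bound on $\E|K\cap B_r|^2$, which is the harder direction. The paper's mechanism is a \emph{proof by contradiction with the high-dimensional analysis}: one shows that if the ``hydrodynamic condition'' $M_r=o(r^{(d+\alpha)/2})$ held, then the mean-field ODE for $\E_{\beta_c,r}|K|^2$ would be valid and would force regular variation of index $3\alpha$, contradicting the upper bound $r^{\alpha+(d+\alpha)/2}$ just established via Gladkov when $d<3\alpha$. A finitary version of this argument (\cref{subsec:low_dim_max_cluster}) gives $M_r\asymp r^{(d+\alpha)/2}$ at every scale, and a further differential-inequality argument (\cref{prop:negligibility_of_mesoscopic}) establishes the ``negligibility of mesoscopic clusters'' needed to convert moment estimates into the volume-tail asymptotics.

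\medskip

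\textbf{The one-arm framing.} As written, $\P_{\beta_c}(K\cap B_r\neq\emptyset)=1$ since $0\in K\cap B_r$. If you intended the usual arm event $\{0\leftrightarrow \partial B_r\}$, the paper explicitly warns (see the remark in \cref{subsec:hyperscaling}) that this approach fails for long-range models: arm events are dominated by ``hairs'', single long edges carrying no volume, so the arm probability does not encode $d_f$. The paper works instead with the size-biased cluster volume distribution under the cut-off measure $\P_{\beta_c,r}$, avoiding arm events entirely.
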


\begin{thm}
\label{thm:k_point_simple}
If $\alpha<1$ and $d<3\alpha$ then 
\[
  \tau_{\beta_c}(x,y,z) \asymp \sqrt{\|x-y\|^{-d+\alpha}\|y-z\|^{-d+\alpha}\|z-x\|^{-d+\alpha}}
\]
for all distinct $x,y,z\in \Z^d$.
\end{thm}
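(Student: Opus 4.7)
The plan is to derive this three-point function estimate from the pointwise two-point estimate $\P_{\beta_c}(x\leftrightarrow y)\asymp\|x-y\|^{-(d-\alpha)}$ (established for $\alpha<1$ in \cite{hutchcroft2024pointwise}) together with the volume moment estimates of \cref{thm:volume_tail_simple}. The guiding heuristic is that for $x$, $y$, $z$ to lie in a common cluster, that cluster must have radius at least $r:=\max_{i<j}\|x_i-x_j\|$ and hence, by the fractal-dimension identity $d_f=(d+\alpha)/2$, must contain at least $\asymp r^{d_f}$ vertices, an event of probability $\P_{\beta_c}(|K|\gtrsim r^{d_f})\asymp r^{-(d-\alpha)/2}$ by the volume tail; given this event, the conditional probability that each of $y,z$ individually lies in the cluster should be $\asymp r^{d_f-d}=r^{-(d-\alpha)/2}$, producing the target $r^{-3(d-\alpha)/2}$.

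For the \textbf{upper bound}, the naive tree-graph / BK inequality $\tau_{\beta_c}(x,y,z)\le\sum_w \prod_{i=1}^3\P_{\beta_c}(x_i\leftrightarrow w)$ yields $\asymp r^{3\alpha-2d}$, which is strictly weaker than the target $r^{-3(d-\alpha)/2}$ in the regime $d<3\alpha$. My plan is to refine this by conditioning on the cluster size, writing
\[
\tau_{\beta_c}(x,y,z) \le \sum_{n\gtrsim r^{d_f}} \P_{\beta_c}\bigl(x,y,z\in K_x,\ |K_x|\in[n,2n]\bigr),
\]
bounding each term via Bayes' rule and the volume tail $\P_{\beta_c}(|K|\ge n)\asymp n^{-(d-\alpha)/(d+\alpha)}$, and controlling the conditional inclusion probabilities $\P_{\beta_c}(y\in K_x\mid |K_x|\asymp n)$ using second-moment estimates that follow from the moment bounds $\E_{\beta_c}|K\cap B_r|^p\asymp r^{\alpha+(p-1)(d+\alpha)/2}$ of \cref{thm:volume_tail_simple}.

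For the \textbf{lower bound}, I would apply a Paley--Zygmund / second-moment argument. Let $A$ be a ball of radius $\asymp r$ positioned so that $\|w-x_i\|\asymp r$ for every $w\in A$ and every $i\in\{1,2,3\}$, and set $N=|\{w\in A:w,x,y,z\text{ all in the same cluster}\}|$, giving $\tau_{\beta_c}(x,y,z)\ge\P_{\beta_c}(N\ge 1)\ge(\E_{\beta_c}N)^2/\E_{\beta_c}N^2$. The second moment is controlled via $\E_{\beta_c}N^2 \le \E_{\beta_c}|K_x\cap B_x(Cr)|^2\asymp r^{(d+3\alpha)/2}$ by translation invariance and \cref{thm:volume_tail_simple}, while the first moment is a four-point sum $\E_{\beta_c}N=\sum_{w\in A}\tau_{\beta_c}(x,y,z,w)$ that must be bounded below by $\asymp r^{(3\alpha-d)/2}$ to produce the target --- strictly larger than the FKG prediction $\prod_i\tau_{\beta_c}(x_i,w)\asymp r^{-3(d-\alpha)}$ and therefore requiring a bootstrapping step in which the lower bound for the three- and four-point functions is established simultaneously by induction on the number of points.

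The \textbf{main obstacle} is precisely the failure of both FKG and BK to be sharp in this regime: they miss the true behaviour by a factor of $r^{3(d-\alpha)/2}$ on opposite sides. Bridging this gap requires essential use of the hyperscaling data $d_f=(d+\alpha)/2$ encoded in \cref{thm:volume_tail_simple}, and arranging the argument so that the M\"obius-covariant form $\sqrt{\prod r_{ij}^{-(d-\alpha)}}$ --- which reflects the fact that the percolation ``spin'' has scaling dimension $(d-\alpha)/2$ --- emerges from the cluster geometry rather than from any direct application of standard correlation inequalities.
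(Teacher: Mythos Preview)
Your proposal has genuine gaps on both sides.

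\textbf{Upper bound.} You are missing the key tool entirely: the Gladkov inequality $\tau(x,y,z)\le\sqrt{8\,\tau(x,y)\tau(y,z)\tau(z,x)}$ from \cite{gladkov2024percolation}, which combined with the pointwise two-point bound gives the upper bound in one line. Your alternative route via conditioning on $|K_x|$ does not work as stated: moment estimates like $\E_{\beta_c}|K\cap B_r|^p\asymp r^{\alpha+(p-1)d_f}$ control \emph{sums} over $y,z$, not the pointwise conditional probability $\P(y,z\in K_x\mid |K_x|\asymp n)$ that you need. There is no obvious way to extract a pointwise bound from these moments without an additional spatial-uniformity input you have not identified. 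Moreover, your guiding heuristic based only on $r=\max\|x_i-x_j\|$ already gives the wrong exponent when the configuration is far from equilateral (two close points, one far): the target involves all three pairwise distances, and your single-scale picture produces $r^{-3(d-\alpha)/2}$ rather than $r_1^{-(d-\alpha)/2}r_2^{-(d-\alpha)}$.

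\textbf{Lower bound.} Your Paley--Zygmund scheme is circular: to lower-bound $\E N=\sum_w\tau(x,y,z,w)$ you need a four-point lower bound, which is no easier than the three-point problem you are trying to solve, and the FKG bound you note is too weak by exactly the factor you need. ``Bootstrapping'' is asserted but not explained, and it is not clear what would anchor the induction. The paper's approach is different and avoids this circularity: it works directly with the cut-off measures $\P_{\beta_c,r}$ in a multi-scale construction. For each point $a_i$ one associates a scale $R_i$ via an arborescence on $\{x,y,z\}$; the hyperscaling input from \cref{thm:volume_tail_simple} (specifically, the negligibility of mesoscopic clusters) gives $\P(|K_{a_i}^{R_i}\cap B_{R_i}(a_i)|\gtrsim R_i^{d_f})\succeq R_i^{-(d-\alpha)/2}$, and FKG makes these events occur simultaneously. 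The clusters are then merged scale by scale via \emph{direct addition of a single long edge}, which succeeds with uniformly positive probability precisely because both clusters have volume $\asymp R_i^{d_f}=R_i^{(d+\alpha)/2}$ and the edge density at scale $R_i$ is $\asymp R_i^{-d-\alpha}$. This yields $\prod_i R_i^{-(d-\alpha)/2}=\operatorname{sweep}(x,y,z)^{-(d-\alpha)/2}$, which matches the target including for non-equilateral configurations.
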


Both of these theorems apply to the entire effectively long-range low-dimensional regime $\alpha\in (1/3,1)$ when $d=1$.
To state theorems concerning the entire regime LR LD for $d=2,3,4,5$ we will first need to give a precise \emph{definition} of what it means to belong to this regime (the rigorous computation of its upper boundary $\alpha_c(d)$ appearing to be beyond the scope of current techniques), which we do in \cref{subsec:defining_the_effectively_long_range_regime}.

\medskip

\noindent \textbf{Organization of the introduction.}
The rest of this introduction is structured as follows.
  In \cref{subsec:defining_the_effectively_long_range_regime} we recall the real-space renormalization group framework introduced in \cref{I-subsec:definitions}, use this framework to formulate our axiomatic definition of the effectively long-range regime via the condition \eqref{CL}, and state our results on the values of $\alpha$ and $d$ where the model is proven to be effectively long-range in this sense. 
  In \cref{subsec:main_results_low_dim} we state our main results concerning critical exponents and $k$-point functions in the effectively long-range low-dimensional regime and the validity of Sak's prediction \eqref{eq:CL_Sak} throughout the entire effectively long-range regime (where we obtain new results even for models of high and critical effective dimension). In \cref{subsec:subcritical_intro} we then state our results concerning slightly subcritical two-point functions and volume moments, yielding the scaling relations $\gamma=(2-\eta)\nu$ and $\Delta=d_f\nu$. Finally, in \cref{subsec:glimpse_of_conformal_invariance_} we explain how our $k$-point estimates relate to the conjectured conformal invariance of critical long-range percolation.

\subsection{Defining the effectively long-range regime}
\label{subsec:defining_the_effectively_long_range_regime}

In this section we give an axiomatic definition of what it means to belong to the effectively long-range regime and state the generalization of \cref{thm:volume_tail_simple} to this entire regime. 

We begin by recalling our normalization assumptions on the kernel $J$ and the real-space renormalization group framework introduced in \cref{I-subsec:definitions}.
As explained above, we will assume that our kernel is of the form $J(x,y)=\mathbbm{1}(x\neq y)J(\|x-y\|)$ for some decreasing, differentiable function $J(r)$ satisfying
\[
|J'(r)| = C(1+\delta_r) r^{-d-\alpha-1}
\]
for some constant $C>0$ and measurable function $r\mapsto \delta_r$  such that  $\delta_r\to 0$ as $r\to \infty$ and $\int_{r_0}^\infty |\delta_r| \frac{\dif r}{r}<\infty$ for some $r_0<\infty$.
 \textbf{We may assume without loss of generality that the unit ball in $\|\cdot\|$ has unit Lebesgue measure and that $C=1$, so that \begin{equation}
\label{eq:normalization_conventions}
\tag{$*$}
|J'(r)|\sim r^{-d-\alpha-1} \qquad \text{ and } \qquad |B_r|:=|\{x\in \Z^d:\|x\|\leq r\}|\sim r^d,\end{equation}and will do this throughout the paper.} (Any other norm and kernel of the form we consider can be normalized to satisfy \eqref{eq:normalization_conventions}, which changes the value of $\beta_c$ but does not change other features of the model at criticality.)
%
%
%
%
%
Given a kernel $J$ as above and a parameter $r>0$, we write $J_r$ for the \textbf{cut-off kernel}
\[
J_r(x,y) := \mathbbm{1}(x\neq y, \|x-y\|\leq r) \int_{\|x-y\|}^r |J'(s)| \dif s
\]
and write $\P_{\beta,r},\E_{\beta,r}$ for probabilities and expectations taken with respect to the law of long-range percolation on $\Z^d$ with kernel $J_r$ at parameter $\beta$. We will always write $\beta_c$ for the critical parameter associated to the original kernel $J$, so that the cut-off measure $\P_{\beta_c,r}$ is subcritical at $\beta_c$ when $r<\infty$. (Indeed, for every $r<\infty$ there exists $\eps=\eps(r)>0$ such that $J_r(x,y)\leq (1-\eps)J(x,y)$ for every $x\neq y$, so that $\P_{\beta_c,r}$ is stochastically dominated by the subcritical measure $\P_{(1-\eps)\beta_c}$.)

As explained in detail in \cref{I-subsec:definitions}, our method (which can be thought of as a real-space renormalization group method; see \cref{I-subsec:RG}) is based on understanding the asymptotics of various quantities associated to the measures $\E_{\beta_c,r}$ as $r\to \infty$, such as the moments $\E_{\beta_c,r}|K|^p$ as well as spatially-weighted moments like $\E_{\beta_c,r}\sum_{x\in K}\|x\|_2^2$. In the effectively high- and critical-dimensional regimes treated in \cite{LRPpaper1,LRPpaper3}, this is done by establishing the validity of ``mean-field'' asymptotic ODEs such as
\begin{equation}
\label{eq:volume_moments_ODE_intro}
  \frac{d}{dr}\E_{\beta_c,r}|K|^p \sim \beta_c r^{-\alpha-1} \sum_{\ell=0}^{p-1}\binom{p}{\ell} \E_{\beta_c,r}|K|^{\ell+1}\E_{\beta_c,r}|K|^{p-\ell}.
\end{equation}
In contrast, the low-dimensional analysis of this paper will hinge on the fact that the mean-field relationship \eqref{eq:volume_moments_ODE_intro} \emph{cannot} hold in the effectively long-range, low-dimensional regime, which we will argue is actually sufficient to \emph{determine} several exponents in light of the one-sided estimates established in \cite{hutchcroft2022sharp}. Thus, there is an interesting sense in which our analysis of the effectively \emph{low-dimensional} regime builds directly on our \emph{high-dimensional} analysis, which we apply in the context of a proof by contradiction.


We now define the correlation length condition for effectively long-range critical behaviour \eqref{CL}, the assumption under which our low-effective-dimensional results will hold, beginning with some motivation. In \cref{I-prop:radius_of_gyration,I-prop:displacement_moments} (which apply when $d=3\alpha$ by \cref{III-thm:critical_dim_hydro}), we compute the asymptotics of the \emph{$L^p$ radius of gyration} $\xi_{p}(\beta_c,r)$ (defined in \eqref{eq:radius_of_gyration_intro}) for even integers $p$ throughout the effectively high dimensional regime and the effectively long-range critical dimensional regime $d=3\alpha<6$, finding that
\begin{equation}
\label{eq:radius_of_gyration_intro}
\xi_{p}(\beta_c,r):= \left[\frac{\E_{\beta_c,r}\left[\sum_{x\in K} \|x\|_2^{p} \right]}{\E_{\beta_c,r}|K|}\right]^{1/p} \sim 
C_p\cdot\;\begin{cases}
 r^{\alpha/2} &\; \alpha>2, d>6\phantom{\alpha} \quad \text{(SR HD)}\\
 r \sqrt{\log r} &\; \alpha = 2, d\geq 6\phantom{\alpha} \quad \text{(mSR HD)}\\
r &\; \alpha < 2, d \geq 3\alpha \quad \text{(LR HD\&CD)}
\end{cases}
\end{equation}
as $r\to\infty$ for each even $p\geq 2$, where $C_p$ is a positive constant that may depend on the precise choice of kernel.
The quantity $\xi_{p}(\beta_c,r)$ measures the ``typical distance between two points in a typical large cluster'' (in an $L^p$ sense) under the measure $\P_{\beta_c,r}$.  
It can be thought of as one\footnote{See \cite[Corollary 2]{MR879034} for a proof that this notion coincides up-to-consants with other notions of correlation length for $2d$ nearest neighbour percolation in the slightly subcritical regime. Analogous results for high-dimensional finite-range percolation follow as immediate corollaries of \cite[Theorem 1.1 and 1.2]{hutchcroft2023high}.} precisely defined version of the \textbf{correlation length} $\xi(r)$ of the cut-off measure $\P_{\beta_c,r}$, where the correlation length is defined loosely as the ``scale at which the model begins to behave subcritically''. 
 We see from \eqref{eq:radius_of_gyration_intro} that (in the regimes treated in \cite{LRPpaper1,LRPpaper3}), the effectively long-range regime is characterised by the correlation length $\xi(r)$  associated to the cut-off measure $\P_{\beta_c,r}$ satisfying $\xi(r)=O(r)$ as $r\to \infty$. In other words, the model in the effectively long-range regime feels the effect of the cut-off \emph{at the same scale it is applied}, in contrast to the effectively short-range regime where the effect of the cut-off is not felt (in the sense that the model does not start to ``behave subcritically'') until a much larger scale.
We use this property to \emph{define} the effectively long-range regime, including for models that may be \emph{below} their upper critical dimension:


\begin{defn}[CL]
\label{def:CL}
 We say that the model satisfies the \textbf{correlation length condition for effectively long-range critical behaviour} \mytag[CL]{{\color{blue}CL}}\!\! if $\beta_c<\infty$ and for each $p>0$ there exists a constant $C_p<\infty$ such that $\xi_p(\beta_c,r) \leq C_p r$ for every $r\geq 1$.
\end{defn}

\begin{figure}
\centering
\includegraphics[scale=0.975]{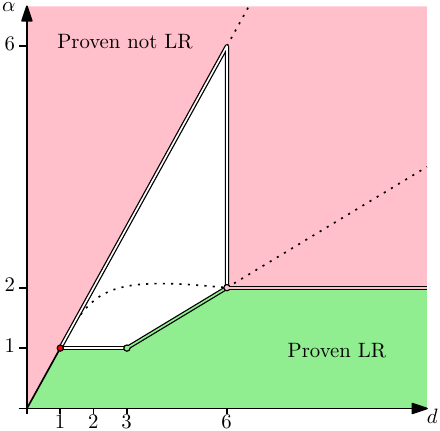}
\caption{Schematic illustration of the values of $\alpha$ and $d$ where the model is proven to satisfy \eqref{CL} (green), proven not to satisfy \eqref{CL} (pink), or neither (white). The coloured ribbons and vertices represent our knowledge on the boundary points of these regions, with the red vertex at $d=\alpha=1$ representing the discontinuous phase transition that is conjectured to occur only at this point \cite{MR868738,duminil2020long}. (The line $d=\alpha<1$ is left uncoloured since it has no real meaning in the model.)
 In the cases $d>6$, $\alpha \geq 2$ and $d=6$, $\alpha=2$ the model is also proven to be effectively short-range in the stronger sense that it has the same super-Brownian scaling limit as the nearest-neighbour model (\cref{I-thm:superprocess_main}). When $d>6$, effectively short-range behaviour is established between the two dotted lines $d=\alpha$ and $d=3\alpha$ only under the numerical assumptions needed to implement the lace expansion. The dotted curve in the white region represents the conjectured curve separating the LR and SR regions.}
\label{fig:CL}
\end{figure}

Noting that $\xi_p(\beta,r)$ is increasing in $p$ by Jensen's inequality, it follows from \cref{I-prop:radius_of_gyration,I-prop:radius_of_gyration_d=3alpha>6,I-prop:displacement_moments} as summarised in \eqref{eq:radius_of_gyration_intro} that \eqref{CL} holds for the models of high and critical effective dimension treated in the other two papers of the series \cite{LRPpaper1,LRPpaper3} if and only if $\alpha<2$. This leads in particular to the following corollary (which in the case $d=3\alpha$ relies on the validity of the hydrodynamic condition as proven in \cref{III-thm:critical_dim_hydro}).

\begin{corollary}
\label{cor:HD_CL}
If $d\geq 3\alpha$ then \eqref{CL} holds if and only if $\alpha<2$.
\end{corollary}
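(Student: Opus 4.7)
The proof should be essentially a direct deduction from the asymptotic identity \eqref{eq:radius_of_gyration_intro} combined with a simple monotonicity argument, so the plan is short and the ``main obstacle'' is more of a bookkeeping issue than a genuine difficulty. First I would verify that under the hypothesis $d\geq 3\alpha$ one has $\beta_c<\infty$ (so that the first clause of \cref{def:CL} holds automatically): if $d\geq 2$ this follows from the cited results of Newman--Schulman, while if $d=1$ then $d\geq 3\alpha$ forces $\alpha\leq 1/3$, so we are in the interval $(0,1]$ where $\beta_c<\infty$.

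Next I would handle the ``if'' direction. Assuming $\alpha<2$ and $d\geq 3\alpha$, we are either in the LR HD regime (where $d>3\alpha$) or on the LR CD boundary $d=3\alpha$. In the former case \cref{I-prop:radius_of_gyration,I-prop:radius_of_gyration_d=3alpha>6,I-prop:displacement_moments} yield $\xi_p(\beta_c,r)\sim C_p\, r$ as $r\to\infty$ for every even integer $p\geq 2$; in the latter case the same asymptotic is supplied by \cref{I-prop:radius_of_gyration_d=3alpha>6} conditional on the hydrodynamic condition, which is proven in \cref{III-thm:critical_dim_hydro}. Combined with $r\geq 1$ and continuity, this gives a bound $\xi_p(\beta_c,r)\leq C'_p r$ uniformly in $r\geq 1$ for every such $p$. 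To promote this to all $p>0$ I would invoke Jensen's inequality, which gives $\xi_p(\beta_c,r)\leq \xi_q(\beta_c,r)$ whenever $p\leq q$: any $p>0$ can be dominated by an even integer $q\geq p$, and the even-integer bound transfers.

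For the ``only if'' direction, suppose $\alpha\geq 2$ and $d\geq 3\alpha$. If $\alpha=2$ then $d\geq 6$ puts us in the mSR HD regime, where \eqref{eq:radius_of_gyration_intro} gives $\xi_p(\beta_c,r)\sim C_p\, r\sqrt{\log r}$ for some $C_p>0$ (taking any even $p\geq 2$); if $\alpha>2$ then $d>6$ gives the SR HD asymptotic $\xi_p(\beta_c,r)\sim C_p\, r^{\alpha/2}$ with $\alpha/2>1$. In either case $\xi_p(\beta_c,r)/r\to\infty$, so no uniform bound $\xi_p(\beta_c,r)\leq C_p r$ can hold, and \eqref{CL} fails.

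The only mild subtlety is the one already mentioned: \cref{def:CL} quantifies over all real $p>0$ while \eqref{eq:radius_of_gyration_intro} is stated for even integers, and the case $d=3\alpha$ requires inserting the hydrodynamic input from the third paper of the series. Both are handled cleanly (Jensen's inequality and a citation, respectively), so the corollary essentially reduces to reading off \eqref{eq:radius_of_gyration_intro}.
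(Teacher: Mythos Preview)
Your proposal is correct and follows essentially the same approach as the paper: the paper's justification (given in the paragraph immediately preceding the corollary) likewise reads off the asymptotics \eqref{eq:radius_of_gyration_intro} supplied by \cref{I-prop:radius_of_gyration,I-prop:radius_of_gyration_d=3alpha>6,I-prop:displacement_moments}, uses Jensen's inequality to pass from even-integer $p$ to all $p>0$, and notes that the case $d=3\alpha$ relies on the hydrodynamic condition from \cref{III-thm:critical_dim_hydro}. Your additional check that $\beta_c<\infty$ under the hypothesis $d\geq 3\alpha$ is a small but welcome piece of bookkeeping that the paper leaves implicit.
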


It also follows from \cref{I-prop:radius_of_gyration} that \eqref{CL} does \emph{not} hold for ``spread-out'' models with $d>6$ and $\alpha\geq 2$ whose two-point function can be analyzed using the lace expansion \cite{MR2430773,MR3306002,MR4032873,liu2025high}.  We also prove that \eqref{CL} does not hold when $\alpha >d$ (\cref{cor:large_alpha_not_CL}); see \cref{fig:CL} for a summary.

\medskip

Using the results of \cite{hutchcroft2024pointwise}, we prove that \eqref{CL} is also satisfied whenever $\alpha<1$:

\begin{thm}
\label{thm:alpha<1_CL}
If $\alpha<1$ then \eqref{CL} holds.
\end{thm}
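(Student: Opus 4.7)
\medskip\noindent\textbf{Proof plan.} The plan is to combine the pointwise estimate $\tau_{\beta_c}(0,x)\asymp \|x\|^{-d+\alpha}$ of \cite{hutchcroft2024pointwise} (valid for $\alpha<1$) with the fact that the truncated kernel $J_r$ supports only edges of length $\le r$, so that clusters under $\P_{\beta_c,r}$ cannot efficiently reach far beyond scale $r$. From stochastic domination $\P_{\beta_c,r}\preceq\P_{\beta_c}$ one immediately gets $\tau_{\beta_c,r}(0,x)\preceq\|x\|^{-d+\alpha}$, and summing over $\|x\|\le r$ yields both $\sum_{\|x\|\le r}\tau_{\beta_c,r}(0,x)\preceq r^\alpha$ and $\sum_{\|x\|\le r}\|x\|^p\tau_{\beta_c,r}(0,x)\preceq r^{p+\alpha}$. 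To obtain a matching lower bound $\chi(\beta_c,r)\succeq r^\alpha$, I would show $\tau_{\beta_c,r}(0,x)\succeq\tau_{\beta_c}(0,x)$ for $\|x\|\le r/\kappa$ ($\kappa$ large): writing $\tau_{\beta_c}-\tau_{\beta_c,r}$ as the probability that $\{0\leftrightarrow x\}$ holds in $\P_{\beta_c}$ but requires an edge of length $>r$, applying BK to the first such edge, and using the pointwise bound on the two resulting $\tau_{\beta_c}$ factors, the error is bounded by a convolution of order $(\|x\|/r)^{\alpha}\,\tau_{\beta_c}(0,x)$, which is small for $\kappa$ large; summing gives $\chi(\beta_c,r)\succeq r^\alpha$.

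\medskip\noindent\textbf{The main obstacle.} The hard part is controlling the tail $\sum_{\|x\|>r}\|x\|^p\tau_{\beta_c,r}(0,x)$, since the pointwise bound alone is insufficient ($\sum_{\|x\|>r}\|x\|^{p-d+\alpha}$ diverges). The plan is to apply the BK/Simon--Lieb inequality with box $B_r$: any open path from $0$ to $x$ with $\|x\|>2r$ must contain a first edge $\{y,z\}$ with $y\in B_r$, $z\notin B_r$ and $\|y-z\|\le r$, yielding
\[
  \tau_{\beta_c,r}(0,x) \;\le\; \beta_c\!\!\sum_{\substack{y\in B_r,\,z\notin B_r\\ \|y-z\|\le r}}\!\! \tau_{\beta_c,r}^{B_r}(0,y)\, J_r(y,z)\, \tau_{\beta_c,r}(z,x).
\]
Iterating this bound across a nested sequence of expanding boxes $B_r\subset B_{2r}\subset B_{4r}\subset\cdots$ and controlling each bridge weight using the averaged upper bound $\sum_{\|y\|\le R}\tau_{\beta_c}(0,y)\preceq R^\alpha$ recalled at \eqref{eq:Sak_upper_restate} from \cite{hutchcroft2022sharp} should produce super-polynomial decay of $\E_{\beta_c,r}|K\cap B_R^c|$ in $R/r$. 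Taking the polynomial order larger than $p+d$ and combining with the previous paragraph gives $\sum_{\|x\|>r}\|x\|^p\tau_{\beta_c,r}(0,x)\preceq r^{p+\alpha}$, and dividing by $\chi(\beta_c,r)\asymp r^\alpha$ yields $\xi_p(\beta_c,r)^p\preceq r^p$. The key subtlety, and what I expect to be the technical heart of the proof, is that each Simon--Lieb step multiplies by a bridge weight of order $r^\alpha$ rather than by a small constant, so the geometric decay must come from the repeated composition of a bounded operator whose effective norm is controlled by the averaged bound, not from any single step.
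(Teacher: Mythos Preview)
Your overall strategy matches the paper's: reduce \eqref{CL} to a truncated two-point bound $\tau_{\beta_c,r}(0,x)\preceq\|x\|^{-d+\alpha}h(\|x\|/r)$ with $h$ super-polynomially small (this equivalence is Lemma~\ref{lem:CL_Sak_upper}) and obtain $h$ by iterating Simon--Lieb. The gap is exactly where you flag the ``key subtlety.'' Your assertion that each bridge weight is ``of order $r^\alpha$'' is wrong, and the framing in terms of a bounded operator whose repeated composition somehow yields decay is not how the argument works; in particular you have not identified where $\alpha<1$ is used.

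The correct mechanism is that a \emph{single} bridge weight $\phi_{\beta_c,r}(\Lambda_n)$ is already $\le 1/2$ for some $n\asymp r$, and this is precisely where $\alpha<1$ enters. The paper finds such an $n$ by averaging over box sizes (Lemma~\ref{lem:alpha<1_phi}, the ``random box'' trick of \cite{baumler2022isoperimetric}): swapping the sums over $n$ and $x$, each $x$ contributes $\tau_{\beta_c,r}(0,x)\sum_{k=1}^r k^{-\alpha}\asymp\tau_{\beta_c,r}(0,x)\,r^{1-\alpha}$ to $\sum_{n\le N}\phi_{\beta_c,r}(\Lambda_n)$, where the estimate $\sum_{k\le r}k^{-\alpha}\asymp r^{1-\alpha}$ is the sole place $\alpha<1$ is needed; by \eqref{eq:Sak_upper_restate} the average over $n\le\lambda r$ is then $\preceq\lambda^{\alpha-1}$. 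Choosing $\lambda$ large yields a good $n\le\lambda r$, and one then iterates Simon--Lieb with that \emph{fixed} box size translated to successive exit points (your concentric doubling boxes do not factor into a product of $\phi$'s, since each step must start from the previous exit point, not the origin). Each of the $\asymp\|x\|/r$ steps contributes a factor $\le 1/2$, and combined with the pointwise bound of \cite{hutchcroft2024pointwise} on the last connection this gives $h(t)\preceq e^{-ct}$ (Proposition~\ref{prop:alpha<1_sCL}), stronger than super-polynomial. Finally, the lower bound $\chi(\beta_c,r)\succeq r^\alpha$ you devote a paragraph to is already available from \eqref{eq:mean_lower_bound} with no convolution argument.
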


This theorem applies in particular when $d=1$ and $\alpha \in (1/3,1)$ or when $d=2$ and $\alpha\in (2/3,1)$, in which case the model is \emph{below} its upper critical dimension.

\begin{remark}
In light of our work, it is an interesting problem to give equivalent characterisations of when \eqref{CL} holds, ideally leading to ``dichotomy theorems'' separating the effectively long-range and short-range regimes: we make the (imprecise) conjecture that \eqref{CL} coincides with all other natural definitions of the effectively long-range regime, and in particular that the model has the same critical exponents and scaling limit as the nearest-neighbour model whenever $d\geq 2$ and \eqref{CL} does not hold (with possible logarithmic corrections to scaling for $\alpha=\alpha_c(d)$).
With a view to eventually proving such dichotomy theorems, we prove several of our main results (including the volume tail estimate \cref{thm:main_low_dim}) under a much weaker assumption we call the \textbf{weak correlation length condition} \eqref{wCL} that is introduced in \cref{sec:the_maximum_cluster_size}. Let us also stress that we are not yet able to prove that the set of $\alpha$ for which \eqref{CL} holds is an interval for each $d$, or indeed that the validity of the condition \eqref{CL} is determined by the choice of $d$ and $\alpha$ alone (i.e., does not depend on the precise choice of kernel).
\end{remark}

\subsection{Critical exponents in the effectively long-range, low-dimensional regime.}
\label{subsec:main_results_low_dim}

We are now ready to state our results concerning critical behaviour in the effectively long-range low-dimensional regime.  (Note that the theorems in this section are valid even when $\alpha=d$, and show in this case that if \eqref{CL} holds then the model has a discontinuous phase transition.)

\medskip

\noindent \textbf{The two-point function.}
We begin with the following theorem, which establishes the validity of Sak's predicted value  \eqref{eq:Sak} for the two-point exponent $\eta=2-\alpha$ throughout the entire effectively long-range regime (as defined by \eqref{CL}).


\begin{theorem}[LR critical two-point functions]
\label{thm:CL_Sak}
If \eqref{CL} holds then
\begin{equation}
\label{eq:CL_Sak}
\P_{\beta_c}(x\leftrightarrow y)\asymp \|x-y\|^{-d+\alpha}
\end{equation}
for every distinct $x,y\in \Z^d$.  Moreover, \eqref{CL} holds if and only if there exists a decreasing function $h:(0,\infty)\to (0,1]$ decaying faster than any power of $x$ such that
\begin{equation}
  \P_{\beta_c,r}(x\leftrightarrow y)\preceq \|x-y\|^{-d+\alpha} \,h\Biggl(\frac{\|x-y\|}{r}\Biggr)
  \label{eq:CL_h}
\end{equation}
for every $r\geq 1$ and distinct $x,y\in \Z^d$.
\end{theorem}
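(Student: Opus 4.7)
The plan is to first prove the ``moreover'' equivalence between \eqref{CL} and the cut-off bound \eqref{eq:CL_h}, and then deduce the pointwise asymptotic \eqref{eq:CL_Sak} from \eqref{eq:CL_h} by a limiting argument together with a matching lower bound of essentially unconditional character. The easier direction \eqref{eq:CL_h}$\Rightarrow$\eqref{CL} is direct weighted summation: for each $p>0$,
\[
\xi_p(\beta_c,r)^p\,\E_{\beta_c,r}|K| \;=\; \sum_y \|y\|^p\,\P_{\beta_c,r}(0\leftrightarrow y) \;\preceq\; \int \|y\|^{p-d+\alpha}\,h(\|y\|/r)\,dy \;\preceq_p\; r^{p+\alpha},
\]
while the same computation at $p=0$ yields $\E_{\beta_c,r}|K|\preceq r^\alpha$, and the matching lower bound $\E_{\beta_c,r}|K|\succeq r^\alpha$ follows from standard sharpness/BK arguments together with the divergence of the full susceptibility at $\beta_c$. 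Dividing yields $\xi_p\preceq_p r$, which is \eqref{CL}.

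For the hard direction \eqref{CL}$\Rightarrow$\eqref{eq:CL_h} I would proceed in three stages. \emph{Stage 1 (susceptibility bound):} bootstrap $\E_{\beta_c,r}|K|\preceq r^\alpha$ by splitting $\sum_y\P_{\beta_c,r}(0\leftrightarrow y)=\sum_{\|y\|\le Mr}+\sum_{\|y\|>Mr}$; the first sum is $\preceq (Mr)^\alpha$ using $\P_{\beta_c,r}\le\P_{\beta_c}$ and the spatially averaged bound of \cite{hutchcroft2022sharp}, while the tail is $\le (Mr)^{-1}\xi_1(\beta_c,r)\,\E_{\beta_c,r}|K|\preceq M^{-1}\E_{\beta_c,r}|K|$ by Markov and \eqref{CL}, and is absorbed for $M$ large. \emph{Stage 2 (moment bounds):} combining this with \eqref{CL} gives $\sum_y \|y\|^p\,\P_{\beta_c,r}(0\leftrightarrow y)\preceq_p r^{p+\alpha}$ for every $p$. \emph{Stage 3 (pointwise upgrade -- the main obstacle):} convert this to the pointwise bound \eqref{eq:CL_h}. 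For $\|x\|\gg r$, applying Markov on a ball $B_{c\|x\|}(x)$ gives the average bound $\frac{1}{|B_{c\|x\|}(x)|}\sum_{y\in B_{c\|x\|}(x)}\P_{\beta_c,r}(0\leftrightarrow y)\preceq_p (\|x\|/r)^{-p}\,\|x\|^{-d+\alpha}$, and I would convert this to a pointwise bound on $\P_{\beta_c,r}(0\leftrightarrow x)$ via an FKG/Harnack-type regularity estimate comparing the two-point function at nearby points (for instance by inserting bounded-range connections within the ball). Super-polynomial decay then follows by taking $p$ arbitrarily large. The complementary regime $\|x\|\le r$ is handled by applying the same machinery at cut-off scale $\|x\|$ (yielding $\P_{\beta_c,\|x\|}(0\leftrightarrow x)\preceq \|x\|^{-d+\alpha}$), combined with a coupling/monotonicity argument showing that enlarging the cut-off from $\|x\|$ to $r$ changes the two-point function only by a bounded multiplicative factor.

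Once \eqref{eq:CL_h} is established, the upper bound in \eqref{eq:CL_Sak} follows by bounding $h\le 1$ and passing to $r\to\infty$ via monotone convergence, since $\P_{\beta_c,r}(x\leftrightarrow y)$ is increasing in $r$. The matching lower bound $\P_{\beta_c}(0\leftrightarrow x)\succeq \|x\|^{-d+\alpha}$ combines the averaged lower bound $\sum_{y\in B_R}\P_{\beta_c}(0\leftrightarrow y)\succeq R^\alpha$ (available from sharpness/isoperimetric results as in \cite{baumler2022isoperimetric}) with the same FKG regularity estimate used above. The entire argument hinges on Stage~3 of the hard direction: in the long-range setting the presence of arbitrarily long edges precludes the naive local-path comparisons that suffice for nearest-neighbour Harnack estimates, and supplying a cleanly-stated regularity estimate compatible with the power-law kernel that converts averaged moment bounds into pointwise decay is the essential new technical input required.
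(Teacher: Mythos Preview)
Your easy direction \eqref{eq:CL_h}$\Rightarrow$\eqref{CL} and your Stages 1--2 are essentially the same as what the paper does (Lemmas~\ref{lem:first_moment_CLb} and~\ref{lem:CL_Sak_upper}). The gap is in Stage~3 and in the lower bound, both of which you hang on an unproven Harnack-type regularity estimate that you yourself flag as ``the essential new technical input required.'' The paper shows that this input is \emph{not} required, and the route around it is the main idea you are missing.

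For the upper bound (Proposition~\ref{prop:CL_to_two_point}), rather than passing through moment bounds and then trying to de-average, the paper applies Russo's formula and BK \emph{directly to the pointwise two-point function}, yielding
\[
\frac{d}{dr}\P_{\beta_c,r}(x\leftrightarrow y)\;\le\;\beta_c|J'(r)|\sum_{\|a-b\|\le r}\P_{\beta_c,r}(x\leftrightarrow a)\,\P_{\beta_c,r}(b\leftrightarrow y).
\]
For $r\le\|x-y\|/8$ the geometric constraint forces one of $a,b$ to be at distance $\succeq\|x-y\|$ from its endpoint, so the right-hand side is controlled by $r^{-d-\alpha-1}\,\E_{\beta_c,r}|K|\cdot\E_{\beta_c,r}|K\setminus B_{\|x-y\|/4}|$; the second factor is bounded by $(\|x-y\|/r)^{-p}\E_{\beta_c,r}|K|$ via \eqref{CLp} and Markov. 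Integrating in $r$ gives the super-polynomial decay in $\|x-y\|/r$ \emph{pointwise}, with no averaging step and hence no need for any regularity comparison. For $r\ge\|x-y\|/8$ one simply drops the constraint $\|a-b\|\le r$ and integrates $r^{-d+\alpha-1}$, which yields the additive contribution $\|x-y\|^{-d+\alpha}$ (so your ``bounded multiplicative factor'' claim is morally right but for the wrong reason).

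For the lower bound (Lemma~\ref{lem:two_point_lower}), the paper again avoids any Harnack input: it runs a second-moment/Paley--Zygmund argument on the count $Z$ of pairs $(a,b)$ with $a\in K_x\cap B_r(x)$, $b\in K_y\cap B_r(y)$, and $\{a,b\}$ open at scale above $r$. The first moment is $\succeq r^{2\alpha}\|x-y\|^{-d-\alpha}$ from the susceptibility lower bound, and the obstruction---that $x$ and $y$ might already be connected at scale $r$---is controlled using the four-point Gladkov inequality (Lemma~\ref{lem:Gladkov_application_for_two_point_lower}), which gives the needed bound on $\E[|K_x\cap B_r||K_y\cap B_r|\mathbbm{1}(x\leftrightarrow y)]$. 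This is where the paper's machinery genuinely differs from what you propose: the Gladkov inequality supplies exactly the correlation bound needed to make the second-moment method close, whereas your route via averaged lower bounds plus regularity would require the missing Harnack lemma.
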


\begin{remark}
We show in \cref{prop:alpha<1_sCL} that if $\alpha<1$ then \eqref{eq:CL_h} holds with $h$ of the form $h(x)=e^{-cx}$ for some positive constant $c$. The estimates \eqref{eq:CL_Sak} and \eqref{eq:CL_h} together capture a further sense in which the measure $\P_{\beta_c,r}$ has correlation length of order at most $r$ under \eqref{CL}.
\end{remark}

Although our focus here is on the effectively low-dimensional regime, \cref{thm:CL_Sak} also has important consequences in high and critical
 effective dimension. Indeed, the following corollary was only previously known when either $\alpha<1$ \cite{hutchcroft2024pointwise} or for ``spread-out'' kernels with $d>3\alpha$ which can be studied using the lace expansion \cite{MR2430773,MR3306002,MR4032873,liu2025high}. 

\begin{corollary}
\label{cor:high_dimensional_two_point_pointwise}
If $d\geq 3\alpha$ and $\alpha < 2$ then $\P_{\beta_c}(x\leftrightarrow y) \asymp \|x-y\|^{-d+\alpha}$ for every distinct $x,y\in \Z^d$.
\end{corollary}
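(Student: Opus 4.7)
The plan is to obtain this corollary as an immediate assembly of two results stated earlier in the paper. First I would apply \cref{cor:HD_CL}, which asserts that in the regime $d \geq 3\alpha$ the condition \eqref{CL} is equivalent to $\alpha < 2$; the two hypotheses of the present corollary therefore ensure that \eqref{CL} is in force. Then I would feed this into the first conclusion of \cref{thm:CL_Sak}, which asserts that \eqref{CL} implies the pointwise two-point estimate $\P_{\beta_c}(x\leftrightarrow y)\asymp \|x-y\|^{-d+\alpha}$ for all distinct $x,y\in \Z^d$; this is precisely the conclusion being claimed.

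Since the corollary is a purely formal consequence of two previously established statements, there is no genuine obstacle at this level — all of the substantive work has been pushed into \cref{cor:HD_CL} (which in turn rests on the radius-of-gyration asymptotics summarized in \eqref{eq:radius_of_gyration_intro} together with the critical-dimensional hydrodynamic limit established in \cite{LRPpaper3}) and into \cref{thm:CL_Sak} itself. The point of recording this as a standalone statement is to highlight that the full pointwise form of Sak's prediction is now available throughout the effectively long-range part of the effectively high- and critical-dimensional regime $d \geq 3\alpha$, $\alpha<2$: previously the matching pointwise lower bound was known only in the small-$\alpha$ range $\alpha<1$ via \cite{hutchcroft2024pointwise}, or for ``spread-out'' kernels via the lace expansion of \cite{MR2430773,MR3306002,MR4032873,liu2025high}, while the corresponding upper bound was already known up-to-constants in the spatially averaged sense through \cite{hutchcroft2022sharp}. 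The new content, then, is really the pointwise lower bound supplied by \cref{thm:CL_Sak}, and no additional computation is needed beyond checking the hypothesis of that theorem.
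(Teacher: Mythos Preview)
The proposal is correct and matches the paper's own reasoning exactly: the corollary is stated in the paper as an immediate consequence of \cref{cor:HD_CL} (to verify \eqref{CL}) together with \cref{thm:CL_Sak} (to extract the pointwise two-point bound), with the paper only adding a parenthetical remark that the $d=3\alpha<6$ case depends on \cref{III-thm:critical_dim_hydro} from the third paper of the series. Your exposition of the context and novelty is also accurate.
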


(The $d=3\alpha<6$ case of this corollary, where the model is at its upper critical dimension, is once again contingent on \cref{III-thm:critical_dim_hydro}, not proven until the third paper of the series, which was used to deduce \cref{cor:HD_CL} from \cref{I-prop:radius_of_gyration,I-prop:radius_of_gyration_d=3alpha>6,I-prop:displacement_moments} in this case.)
See also \cref{III-thm:pointwise_three_point} for pointwise estimates on the \emph{three-point function} in the same regime building on this corollary.

\medskip

\noindent \textbf{The volume tail and fractal dimension.}
We next state our main theorem on the cluster volume distribution in the effectively long-range low-dimensional regime, which verifies the prediction \eqref{eq:Sak_delta} throughout this regime. The proof of this theorem establishes not only the claimed estimates on the volume distribution but establishes precise versions of the heuristic hyperscaling theory used to derive the prediction \eqref{eq:Sak_delta} from \eqref{eq:Sak} as  we explain in detail in \cref{subsec:hyperscaling}.

\begin{theorem}[LR LD critical cluster volumes]
\label{thm:main_low_dim}
If $d<3\alpha$ and 
 \eqref{CL} holds then 
\[
\P_{\beta_c}(|K|\geq n) \asymp 
n^{-(d-\alpha)/(d+\alpha)} 
\qquad \text{ and } \qquad \E_{\beta_c}|K \cap B_r|^p  \asymp_p 
r^{\alpha+(p-1)\frac{d+\alpha}{2}}
\]
for every $n,r\geq 1$ and integer $p\geq 1$.
 In particular, the exponents $\delta$ and $d_f$ are well-defined and given by $\delta=(d+\alpha)/(d-\alpha)$ and $d_f=(d+\alpha)/2$ respectively.
\end{theorem}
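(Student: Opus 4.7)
The plan is to bootstrap the pointwise two-point function estimate of \cref{thm:CL_Sak} into sharp moment asymptotics and thence into the volume tail, using a proof by contradiction against \eqref{CL} to rule out mean-field (HD-type) scaling of moments in the regime $d<3\alpha$. The $p=1$ case is immediate: summing the bound $\P_{\beta_c}(0\leftrightarrow y)\asymp \|y\|^{-(d-\alpha)}$ over $y\in B_r$ yields $\E_{\beta_c}|K\cap B_r|\asymp r^{\alpha}$, where I use $\alpha<d$. (The degenerate case $\alpha\geq d$ would force a discontinuous phase transition; I would dispense with it separately by showing that \eqref{CL} in fact fails in that range.)

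The heart of the argument is the upper bound $\E_{\beta_c}|K\cap B_r|^2\preceq r^{\alpha+(d+\alpha)/2}$, which pins down the fractal dimension $d_f=(d+\alpha)/2$ and, via the heuristic hyperscaling discussed in \cref{subsec:hyperscaling}, also $\delta=(d+\alpha)/(d-\alpha)$. My plan is to prove the cut-off moment bound $\E_{\beta_c,r}|K|^p\preceq_p r^{\alpha+(p-1)(d+\alpha)/2}$ for every $r\geq 1$ by induction on $p$, driven by the one-sided differential inequality
\[
\frac{d}{dr}\E_{\beta_c,r}|K|^p \preceq r^{-\alpha-1}\sum_{\ell=0}^{p-1}\binom{p}{\ell}\E_{\beta_c,r}|K|^{\ell+1}\,\E_{\beta_c,r}|K|^{p-\ell},
\]
obtained by combining the Russo-type identity of \cref{I-subsec:RG} with the tree-graph/BK bound. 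Both the LR LD exponent sequence $a_p:=\alpha+(p-1)(d+\alpha)/2$ and the mean-field sequence $\tilde a_p:=(2p-1)\alpha$ are fixed points of the associated balance relation $a_p=a_{\ell+1}+a_{p-\ell}-\alpha$, so this ODE alone cannot distinguish them; the role of \eqref{CL} is to select the smaller (LR LD) solution. Concretely, if the second moment were to grow like $r_k^{3\alpha-o(1)}$ along some unbounded sequence of scales $r_k$, then combining this with the spatially-averaged two-point bound \eqref{eq:Sak_upper_restate} of \cite{hutchcroft2022sharp} and the high-dimensional moment machinery of \cite{LRPpaper1} would force the radius of gyration $\xi_p(\beta_c,r_k)$ to be of order strictly smaller than $r_k$, contradicting \eqref{CL}. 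Making this contradiction scheme quantitative on \emph{every} scale (and not merely asymptotically along a sequence) is the step I expect to be the main obstacle.

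Once the sharp second moment is established, the cluster volume tail follows by a two-scale argument. For the lower bound I would choose $r\asymp n^{2/(d+\alpha)}$ so that $r^{d_f}\asymp n$, and combine a Paley--Zygmund estimate for $|K\cap B_r|$ (together with an FKG step to upgrade $\{|K\cap B_r|\geq cn\}$ to $\{|K|\geq n\}$) with the one-arm estimate $\P_{\beta_c}(0\leftrightarrow \partial B_r)\succeq r^{-(d-\alpha)/2}$, which itself comes from the first and second moment bounds via a restricted second-moment computation on concentric annuli. The matching upper bound on the tail follows from the complementary one-arm estimate, combined with the faster-than-power cut-off decay \eqref{eq:CL_h} that dispatches the contribution from clusters of diameter much larger than $r$. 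Finally, the higher moments $p\geq 3$ fall out of an analogous induction that uses the already-established volume tail and the balance equation $a_p=a_{\ell+1}+a_{p-\ell}-\alpha$ to upgrade the one-sided differential inequality into a two-sided asymptotic, closing the bootstrap.
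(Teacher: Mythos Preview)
Your proposal has the right overall shape (contradiction against mean-field ODEs to pin down $d_f$, then hyperscaling to get $\delta$), but there are two genuine gaps.

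\textbf{The contradiction is stated backwards.} You write that if $\E_{\beta_c,r_k}|K|^2\sim r_k^{3\alpha-o(1)}$ then the radius of gyration would be forced to be of order \emph{strictly smaller} than $r_k$, contradicting \eqref{CL}. But \eqref{CL} is the \emph{upper} bound $\xi_p(\beta_c,r)\leq C_p r$; a small radius of gyration is perfectly consistent with it. The paper's contradiction runs in the opposite direction: from \eqref{eq:Sak_upper_restate} together with the universal tightness inequality $\E_{\beta_c,r}|K\cap B_{Cr}|^2\preceq M_{Cr}\E_{\beta_c,r}|K\cap B_{Cr}|$ and $M_r\preceq r^{(d+\alpha)/2}$, one obtains the \emph{upper} bound $\E_{\beta_c,r}|K|^2\preceq r^{\alpha+(d+\alpha)/2}$. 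If the hydrodynamic condition $M_r=o(r^{(d+\alpha)/2})$ held, the mean-field ODE would force $\E_{\beta_c,r}|K|^2$ to be regularly varying of index $3\alpha$, which is strictly larger than $\alpha+(d+\alpha)/2$ when $d<3\alpha$. This gives $M_r\asymp r^{(d+\alpha)/2}$, and the second-moment \emph{lower} bound then follows from $\E_{\beta_c,r}|K|^2\succeq M_r^3/r^d$. For the moment \emph{upper} bounds the paper does not use your ODE induction (which, as you correctly observe, cannot by itself select between the two fixed points) but instead applies the iterated Gladkov inequality directly to bound $\tau_{\beta_c,r}(x_1,\ldots,x_k)$ pointwise and then sums.

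\textbf{The one-arm route to the volume tail is problematic for long-range models.} Your proposed lower bound uses $\P_{\beta_c}(0\leftrightarrow\partial B_r)\succeq r^{-(d-\alpha)/2}$, but in long-range percolation arm events are contaminated by ``hairs'' (single long edges reaching far without carrying volume), so the relation $\P_{\beta_c}(x\leftrightarrow y)\asymp\P_{\beta_c}(0\leftrightarrow\partial B_r)^2$ is not expected to hold and the arm probability is not the right proxy for ``$0$ is in a typical large cluster on scale $r$''. The paper instead works entirely with volume: it proves a \emph{negligibility of mesoscopic clusters} statement, namely that for each $\eps>0$ there is $\delta>0$ with $\E_{\beta_c,r}[\min\{|K|,\delta r^{(d+\alpha)/2}\}]\leq\eps\,\E_{\beta_c,r}|K|$. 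This is obtained by a truncated-moment differential inequality argument (if the truncated first moment were a fixed fraction of the full one over a long range of scales, the second moment would grow like $r^{3\alpha(1-o(1))}$, again overshooting the upper bound). Together with $M_r\asymp r^{(d+\alpha)/2}$ this feeds into the abstract volume-tail proposition \cref{prop:up-to-constants_volume-tail} to give both directions of $\P_{\beta_c}(|K|\geq n)\asymp n^{-(d-\alpha)/(d+\alpha)}$ without any reference to arm events.
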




\noindent 
\textbf{Higher $k$-point functions.} We now state our results regarding the critical $k$-point function in the effectively long-range low-dimensional regime, generalizing \cref{thm:k_point_simple}. We will state our up-to-constants estimates in terms of the function $S$
 from finite subsets of $\R^d$ with at least two elements to $(0,\infty)$ defined recursively by
\begin{align}
S(\{x,y\})&=\|x-y\|_2^2;
\nonumber\\  S(A) &= \min \sqrt{S(A_1 \cup A_2)S(A_2 \cup A_3)S(A_3 \cup A_1)} \quad \text{ when $|A|\geq 3$}, \label{eq:S_def}
\end{align}
where the minimum is taken over partitions of $A$ into three non-empty subsets $A_1$, $A_2$, and $A_3$; a more transparently geometric function that is equivalent to this quantity up to constants is given in \cref{subsec:higher_gladkov_inequalities_and_the_k_point_function}.
We also write $S(x_1,\ldots,x_k)=S(\{x_1,\ldots,x_k\})$ whenever $x_1,\ldots,x_k\in \R^d$ are distinct. For three points there is only one way to partition into three non-empty subsets, so that
\[S(x,y,z) = \|x-y\|_2\|y-z\|_2\|z-x\|_2.\]
Our next main result yields an up-to-constants relationship between the critical $k$-point function and $S^{-(d-\alpha)/2}$ for models in the effectively long-range low-dimensional regime. 

\begin{theorem}[LR LD critical $k$-point functions]
\label{thm:k_point_S}
If $d<3\alpha$ and 
 \eqref{CL} holds 
 then the critical $k$-point function satisfies
\begin{equation}
\label{eq:k_point_S_intro}
\tau_{\beta_c}(x_1,\ldots,x_k) \asymp_k S(x_1,\ldots,x_k)^{-(d-\alpha)/2}
\end{equation}
for every  $k$-tuple of distinct points $x_1,\ldots,x_k \in \Z^d$.
\end{theorem}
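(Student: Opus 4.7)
My plan is to prove the theorem by induction on $k = |A|$, with the base case $k = 2$ supplied by \cref{thm:CL_Sak}: noting that $S(\{x,y\})^{-(d-\alpha)/2} = \|x-y\|_2^{-(d-\alpha)} \asymp \|x-y\|^{-(d-\alpha)} \asymp \tau_{\beta_c}(x,y)$. For the inductive step at $k\ge 3$, both bounds will exploit the recursive structure of the definition \eqref{eq:S_def} of $S$ directly.

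For the upper bound, the key ingredient I would prove is a higher Gladkov-type tripod inequality of the form
\[
\tau_{\beta_c}(A) \le C_k \sqrt{\tau_{\beta_c}(A_1 \cup A_2)\,\tau_{\beta_c}(A_2 \cup A_3)\,\tau_{\beta_c}(A_3 \cup A_1)},
\]
valid for every tripartition $A = A_1 \sqcup A_2 \sqcup A_3$ into three non-empty subsets. Since each $A_i \cup A_j$ has at most $k-1$ elements, the induction hypothesis bounds each factor on the right; applying this at the tripartition that realizes the minimum in \eqref{eq:S_def} then produces $\tau_{\beta_c}(A) \preceq_k S(A)^{-(d-\alpha)/2}$. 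Establishing the tripod inequality is the main technical content, and should be the subject of the section on higher Gladkov inequalities referenced at \cref{subsec:higher_gladkov_inequalities_and_the_k_point_function}; conceptually it should come from a BK-type decomposition of the $k$-fold connection event into three ``legs'' meeting at a common region, with the legs grouped according to the chosen tripartition.

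For the lower bound I would use a two-scale gluing construction. Let $R = \mathrm{diam}(A)$ and choose a tripartition $A = A_1 \sqcup A_2 \sqcup A_3$ with the three groups at mutual distance $\asymp R$ and achieving the $S(A)$-minimum up to constants. First, working in the cut-off model $\P_{\beta_c, cR}$ for a suitably small $c>0$, I would use the induction hypothesis together with the volume moment bounds from \cref{thm:main_low_dim} and a Paley--Zygmund argument to show that the probability of the event $\mathcal{E}_i$ that the points of $A_i$ all lie in a common cut-off cluster of volume at least $\gamma R^{d_f}$ localized in a $cR$-neighbourhood of $A_i$ is bounded below by a factor matching the three-way product decomposition of $S(A)^{-(d-\alpha)/2}$. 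Since the three $\mathcal{E}_i$ depend on edges in disjoint regions of space, they are independent. Second, conditionally on the $\mathcal{E}_i$, the expected number of length-$\asymp R$ edges between any two of the sub-clusters is $\succeq R^{d_f} \cdot R^{d_f} \cdot R^{-d-\alpha} \asymp 1$ thanks to the hyperscaling identity $d_f = (d+\alpha)/2$, so a second-moment or FKG argument shows that with positive conditional probability all three sub-clusters glue into one, yielding $\tau_{\beta_c}(A) \succeq_k S(A)^{-(d-\alpha)/2}$.

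The main obstacle will be proving the higher Gladkov inequality underlying the upper bound. Standard tree-graph or BK-type upper bounds via $\tau_{\beta_c}(A) \le \sum_u \prod_i \tau_{\beta_c}(x_i,u)$ scale like $R^{d-k(d-\alpha)}$ for $k$ equidistant points at distance $R$, and this is strictly weaker than the required $R^{-k(d-\alpha)/2}$ for small $k$ throughout the regime $d<3\alpha$; a hyperscaling-sensitive combinatorial decomposition is therefore needed, and developing it is where the real work lies. The lower bound construction, by contrast, is morally analogous to the one in the hierarchical setting of \cite{hutchcroft2022critical} and should transfer to the Euclidean setting once the correlation-length condition \eqref{CL} is invoked to guarantee localization of cut-off clusters at the required scale.
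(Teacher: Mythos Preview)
Your upper bound strategy is essentially the paper's: one proves a higher Gladkov inequality (\cref{thm:higher_Gladkov}) bounding $\tau_{\beta_c}(A)^2$ by a constant times $\max \tau_{\beta_c}(A_1\cup A_2)\tau_{\beta_c}(A_2\cup A_3)\tau_{\beta_c}(A_3\cup A_1)$ over tripartitions, then inducts. One small correction: the inequality the paper establishes (and all that the proof of Gladkov's type yields) is the \emph{max} over tripartitions, not validity for every tripartition as you state. This does not matter for the induction: applying the induction hypothesis turns the max over $\prod\tau$ into a max over $\prod S^{-(d-\alpha)/2}$, and since the exponent is negative this equals $[\min\prod S]^{-(d-\alpha)/2}=S(A)^{-(d-\alpha)}$ by \eqref{eq:S_def}.

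Your lower bound, however, has a genuine gap. You assume one can choose a tripartition achieving the $S$-minimum (up to constants) with the three groups at mutual distance $\asymp R=\operatorname{diam}(A)$, so that the events $\mathcal E_i$ depend on disjoint regions and are independent. Such a tripartition need not exist: take $A=\{0,1,R\}\subset\Z$ with $R$ large. The only tripartition is into singletons, and two of the groups are at distance $1$, not $\asymp R$. More generally, whenever $A$ has points clustered at several very different scales, no single cut-off radius $cR$ makes all three pieces simultaneously well-separated, and your spatial-independence step fails.

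The paper circumvents this with a genuinely multi-scale construction that does \emph{not} recurse through tripartitions. It passes to the \emph{sweep} functional (\cref{def:sweep_and_spread}, shown in \cref{lem:sweep_recursion} to satisfy $\operatorname{sweep}(A)\asymp_{|A|}S(A)$), fixes an arboresence realizing the sweep, and assigns to each point $a\in A$ its own scale $R_a=\operatorname{diam}(A_a)$. Ordering the scales $R_1\le\cdots\le R_n$, the proof works in the standard monotone coupling of the measures $\P_{\beta_c,r}$, first using Harris--FKG to lower-bound the probability that every point has a large intersection with its ball at each of its relevant scales, and then, conditioning on the filtration $\mathcal F_i$ generated by edges of length $\le R_i'$, gluing one child to its parent at each successive scale via a single long edge (whose conditional probability is $\succeq c$ by the hyperscaling identity $d_f=(d+\alpha)/2$). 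The key lemma enabling this is \cref{lem:large_clusters_are_large_on_meso_scales}, which says that a cluster that is large at scale $R$ is, with high conditional probability, already large at every smaller scale $r\le R$; this is what replaces your spatial-independence claim.
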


As we explain in \cref{subsec:_k_point_function_hyperscaling}, this theorem is strongly evocative of the \emph{conformal invariance} of the model conjectured to hold when $J \sim C \|x-y\|_2^{-d-\alpha}$ is asyptotically rotationally invariant. We also explain in \cref{subsec:_k_point_function_hyperscaling} that \cref{thm:k_point_S} can be interpreted as a \emph{hyperscaling relation} for the $k$-point function, rather than a special feature of long-range percolation, with a similar relation between the two-point and $k$-point functions also holding for \emph{nearest-neighbour} percolation on planar lattices (\cref{thm:planar_k_point}).

\medskip

%
The proof of the \emph{upper bound} of \cref{thm:k_point_S} relies on a very interesting recent result of Gladkov \cite{gladkov2024percolation} which states that the three-point function for Bernoulli bond percolation on any weighted graph, at any $\beta\geq 0$, satisfies
\begin{equation}
\label{eq:Gladkov_intro}
  \tau(x,y,z) \leq \sqrt{8 \tau(x,y)\tau(y,z)\tau(z,x)}.
\end{equation}
The upper bound of \cref{thm:k_point_S} follows immediately from this inequality and \cref{thm:CL_Sak} in the case $k=3$, while the case $k\geq 4$ follows from a generalization of the Gladkov inequality \eqref{eq:Gladkov_intro} to higher $k$-point functions proven in \cref{subsec:higher_gladkov_inequalities_and_the_k_point_function}. (The \emph{lower bound} requires the full machinery of the paper, and is related to the hyperscaling postulates discussed in \cref{subsec:hyperscaling}.)

 \begin{remark}
While the Gladkov inequality \eqref{eq:Gladkov_intro} is always true, it is sharp for critical percolation only in the effectively low-dimensional regime, and can be thought of as a hyperscaling inequality as we explain in \cref{subsec:_k_point_function_hyperscaling}. Above the critical dimension, it is instead the Aizenman-Newman \cite{MR762034} tree-graph inequality 
\[\tau(x,y,z) \leq \sum_{w\in \Z^d} \tau(x,w)\tau(w,y)\tau(w,z)\]
that is sharp; this change in behaviour of the three-point function is yet another manifestation of the distinction between the high- and low-effective-dimensional regimes; see \cref{remark:HD_spread} for further comparison of $k$-point scaling in the two regimes.
In the critical dimension both inequalities are wasteful by a polylogarithmic factors as established in \cref{III-thm:pointwise_three_point} (see also \cref{III-remark:tree_graph_vs_Gladkov}).
\end{remark}



\subsection{Slightly subcritical scaling relations}
\label{subsec:subcritical_intro}

All the results we have stated so far concern percolation \emph{at the critical point}. We now state some related results concerning \emph{slightly subcritical} percolation. Like \cref{thm:CL_Sak}, the results we state in this section are valid 
for all effectively long-range models (which may be at, above, or below the upper critical dimension), but are arguably most interesting in the effectively low-dimensional regime. As before, we will state several corollaries concerning the effectively long-range critical-dimensional model ($d=3\alpha<6$) that rely on the results proven in the third paper in the series (specifically \cref{III-thm:critical_dim_hydro,III-thm:critical_dim_moments_main}).


The main exponents we will be interested in for the purposes of this section are the \emph{susceptibility exponent} $\gamma$, the \emph{correlation length exponent} $\nu$, and the \emph{gap exponent}~$\Delta$. The susceptibility exponent $\gamma$ is defined (if it exists) by
\[
  \chi(\beta):=\E_{\beta}|K| = |\beta-\beta_c|^{-\gamma \pm o(1)} \qquad \text{ as $\beta\uparrow \beta_c$}.
\]
Meanwhile, the correlation length exponent $\nu$ and gap exponent $\Delta$ are defined imprecisely by
\[
  \xi(\beta) = |\beta-\beta_c|^{-\nu \pm o(1)} \qquad \text{ and } \qquad \zeta(\beta) = |\beta-\beta_c|^{-\Delta \pm o(1)} \qquad \text{ as $\beta \uparrow \beta_c$, }
\]
 where $\xi(\beta)$ denotes the correlation length and $\zeta(\beta)$ denotes the ``typical size of a large cluster'' under the measure $\P_{\beta}$. 
 We give a more precise meaning to the exponent $\nu$ by defining $\nu$ to exist if and only if
\[
  \xi_p(\beta) = |\beta-\beta_c|^{-\nu \pm o(1)} \qquad \text{ as $\beta\uparrow \beta_c$}
\]
for all sufficiently small positive $p$, where $\xi_p(\beta)=\xi_p(\beta,\infty)$ denotes the $L^p$ radius of gyration defined in \eqref{eq:radius_of_gyration_intro}. (For long-range models it is too strong to require that this holds for all $p$; indeed one can easily check that $\xi_p(\beta)=\infty$ for every $p\geq \alpha$ and $\beta>0$.) For $\Delta$, it is a standard assumption of heuristic scaling theory that 
\begin{equation}
  \E_\beta |K|^p \asymp_p \zeta(\beta)^{p-1}\chi(\beta),
\label{eq:scaling_hypothesis_moments}
\end{equation}
and we will \emph{define} $\zeta(\beta)$ via the $p=2$ case of this purported relation, that is, define $\zeta(\beta):=\E_{\beta}|K|^2/\E_{\beta}|K|$. With this definition in hand, the gap exponent becomes precisely defined (if it exists) via $\zeta(\beta)=|\beta-\beta_c|^{-\Delta \pm o(1)}$ as $\beta\uparrow \beta_c$.
 It will be convenient to extend this definition to the measures $\P_{\beta,r}$ in the obvious way, so that $\zeta(\beta,r):=\E_{\beta,r}|K|^2/\E_{\beta,r}|K|$.


In contrast to the exponents $\eta$, $\delta$, and $d_f$ studied in \cref{cor:low_dim_moments}, which all have simple exact expressions throughout the effectively long-range regime, the exponents $\gamma$, $\nu$, and $\Delta$ are \emph{not} conjectured to have closed-form expressions in the effectively long-range low-dimensional (LR LD) regime. As such, rather than compute any of these exponents, we limit ourselves to proving relations between them. These relations are valid throughout the entire effectively long-range regime and also lead to new results in high and critical effective dimension (conditional on the results of the third paper of this series) as we discuss below. The Fisher relation $\gamma=(2-\eta) \nu$ was previously verified under the stronger condition that $\alpha<1$ \cite[Theorem 1.5 and Remark 1.6]{hutchcroft2024pointwise}.

\begin{thm}[LR scaling relations]
\label{thm:scaling_relations}
Suppose that \eqref{CL} holds. If at least one of the exponents $\gamma$, $\nu$, or $\Delta$ is well-defined then all three exponents are well-defined and related by the scaling relations
\[
  \gamma=(2-\eta)\nu \qquad \text{ and } \qquad \Delta = \nu d_f
\]
where $2-\eta=\alpha$ and $d_f = \min\{(d+\alpha)/2,2\alpha\}$.
\end{thm}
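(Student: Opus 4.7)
The plan is to establish, under \eqref{CL}, the two uniform two-sided relations
\[
\chi(\beta)\asymp\xi_p(\beta)^\alpha
\qquad\text{and}\qquad
\zeta(\beta)\asymp\xi_p(\beta)^{d_f}
\]
valid for every sufficiently small $p>0$ and all $\beta<\beta_c$ close to $\beta_c$. Given these, if any one of $\gamma$, $\nu$, or $\Delta$ is well-defined, the resulting polynomial asymptotic in $|\beta-\beta_c|$ transfers directly to the other two, and the scaling relations $\gamma=\alpha\nu$ and $\Delta=\nu d_f$ follow by taking logarithms.

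The key analytic ingredient is a Newman--Schulman-type bound in the subcritical regime: decomposing $\{0\leftrightarrow x\}$ according to its last edge and applying the BK inequality yields, for every $\beta<\beta_c$ and all $\|x\|$ sufficiently large,
\[
\tau_\beta(0,x)\preceq \chi(\beta)^2\,\|x\|^{-d-\alpha}.
\]
Combining this with the upper bound $\tau_\beta(0,x)\leq\tau_{\beta_c}(0,x)\asymp\|x\|^{-d+\alpha}$ from \cref{thm:CL_Sak} gives the unified cutoff-type estimate
\[
\tau_\beta(0,x)\preceq \|x\|^{-d+\alpha}\min\bigl\{1,(\chi(\beta)^{1/\alpha}/\|x\|)^{2\alpha}\bigr\},
\]
in direct analogy with the critical cut-off bound \eqref{eq:CL_h} at scale $r=\chi(\beta)^{1/\alpha}$. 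The Fisher relation then follows from standard splitting arguments. For the upper bound $\chi(\beta)\preceq\xi_p(\beta)^\alpha$, split $\sum_x\tau_\beta(0,x)$ at $R\asymp\xi_p(\beta)$, bounding the bulk by Sak and the tail by Markov's inequality applied to $\chi\xi_p^p$. For the reverse inequality, write $\chi(\beta)\xi_p(\beta)^p=\sum_x\|x\|^p\tau_\beta(0,x)$ and split at $R$: the Sak bound contributes $\preceq R^{p+\alpha}$ on the bulk and the Newman--Schulman bound contributes $\preceq\chi(\beta)^2 R^{p-\alpha}$ on the tail for any $0<p<\alpha$; optimising at $R=\chi(\beta)^{1/\alpha}$ yields $\xi_p(\beta)\preceq\chi(\beta)^{1/\alpha}$.

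For the gap relation, one uses the Gladkov inequality \eqref{eq:Gladkov_intro} in the LR LD regime (and the Aizenman--Newman tree-graph inequality in the LR HD/CD regimes) to bound $\E_\beta|K|^2=\sum_{x,y}\tau^{(3)}_\beta(0,x,y)$ by sums of products of two-point functions, and then splits at scale $R\asymp\xi_p(\beta)$. The bulk of the sum reduces to the critical moment computation already performed in the proof of \cref{thm:main_low_dim} (respectively the analogous results in \cite{LRPpaper1,LRPpaper3}), giving the upper bound $\E_\beta|K|^2\preceq\xi_p(\beta)^{\alpha+d_f}$; the tail contributions are controlled by Cauchy--Schwarz together with the Newman--Schulman decay, which makes the tail a small fraction of the bulk once the cut-off scale is taken slightly larger than $\xi_p(\beta)$. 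The main obstacle is the matching \emph{lower} bound $\zeta(\beta)\succeq\xi_p(\beta)^{d_f}$, which is not accessible by direct two-point estimates since monotonicity in $\beta$ only yields upper bounds on tail events of $|K|$. The cleanest route is a comparison between $\P_\beta$ and the cut-off critical measure $\P_{\beta_c,r}$ at scale $r\asymp\xi_p(\beta)$: this cut-off measure satisfies $\zeta(\beta_c,r)\asymp r^{d_f}$ as a consequence of \eqref{CL} and the critical moment estimates above, and a coupling or renormalization-type comparison then transfers the lower bound to the subcritical measure.
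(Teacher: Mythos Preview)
Your Fisher-relation argument is essentially correct and matches the paper's route: the paper proves the sharp two-sided subcritical two-point estimate of \cref{thm:Fisher_relation} via the same Newman--Schulman/Sak combination and then reads off $\xi_p(\beta)\asymp\chi(\beta)^{1/\alpha}$ for $0<p<\alpha$.

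The gap relation is where your proposal falls short. For the \emph{upper} bound $\zeta(\beta)\preceq\xi_p(\beta)^{d_f}$, your Gladkov-plus-splitting plan runs into a genuine obstacle in the LR LD regime: once you split $\sum_{x,y}\tau_\beta(0,x,y)$ at scale $R$ and try to control the tail via Gladkov and Cauchy--Schwarz, you are led to sum $\sqrt{\tau_\beta(0,x)}\preceq\chi(\beta)\|x\|^{-(d+\alpha)/2}$ over $\|x\|>R$, and this diverges whenever $d>\alpha$ (which occurs throughout LR LD for $d\geq 2$, e.g.\ $d=2$, $\alpha\in(2/3,1)$). The paper avoids this entirely: its \cref{lem:moments_scaling_hypothesis_upper} proves the upper bound by induction on $p$ using the $r$-differential inequality \eqref{eq:pth_moment_derivative} for $\E_{\beta,r}|K|^p$, introducing the integrating factor $\exp\bigl(-\int_{\xi^*(\beta)}^r C_p\chi(\beta)s^{-\alpha-1}\,ds\bigr)$ and integrating from $r=\xi^*(\beta)$ (where one invokes the cut-off bound $\E_{\beta_c,\xi^*(\beta)}|K|^p\preceq\zeta^*(\beta)^{p-1}\chi(\beta)$ of \cref{lem:moments_scaling_hypothesis_cut_off}) to $r=\infty$.

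For the \emph{lower} bound $\zeta(\beta)\succeq\xi_p(\beta)^{d_f}$ you explicitly flag the obstacle, but your ``coupling or renormalization-type comparison between $\P_\beta$ and $\P_{\beta_c,r}$'' is not a proof, and the paper's actual argument is quite different. It does not compare the two measures directly. Instead it applies Markov's inequality under the size-biased law to obtain $\E_\beta\bigl[|K|\mathbbm{1}(|K|\leq 2\hat\E_\beta|K|)\bigr]\geq\tfrac12\chi(\beta)$, then bounds this truncated first moment \emph{from above} via the monotone inequality $\E_\beta[\,\cdot\,]\leq\E_{\beta_c}[\,\cdot\,]=\sum_{n\leq 2\hat\E_\beta|K|}\P_{\beta_c}(|K|\geq n)$, and finally invokes the already-established critical volume-tail asymptotics \eqref{eq:volume_tail_all_dimensions} to evaluate the sum explicitly as a function of $\hat\E_\beta|K|$. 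Comparing the two sides forces $\hat\E_\beta|K|\succeq\zeta^*(\beta)$. The key idea you are missing is that the lower bound requires no direct subcritical second-moment estimate at all: it follows from the critical tail estimate combined with monotonicity applied to a \emph{first}-moment quantity.
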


We refer the reader to \cite[Chapter 9]{grimmett2010percolation} for a detailed overview of heuristic scaling theory.
The same scaling relations (among various others) were verified for nearest-neighbour percolation in the plane by Kesten \cite{MR879034} (see \cite{duminil2021near} for an alternative approach and \cite{duminil2020planar,MR3940769} for extensions to the random cluster model and Voronoi percolation), and played an important role in the eventual computation of critical exponents for site percolation on the triangular lattice \cite{smirnov2001critical}. They have also been verified for various ``spread-out'' high-dimensional models to which the lace expansion applies; see e.g.\ \cite{HutchcroftTriangle} or \cite{heydenreich2015progress} for an overview. The scaling relations have also been verified in dimensions $d=3,4,5$ conditional on (as yet unproven) \emph{hyperscaling postulates} in \cite{MR1716769}.
Unconditionally and in general dimension we have only \emph{inequalities} including the \emph{mean-field lower bounds} $\gamma \geq 1$ \cite{MR762034}, $\delta \geq 2$ \cite{aizenman1987sharpness}, and $\Delta \geq 2$ \cite{durrett1985thermodynamic}, and the scaling inequalities $\Delta \leq \gamma +1 \leq \delta$ \cite{1901.10363}, and $2(\delta-1) \leq \gamma \delta \leq \Delta (\delta-1)$ \cite{newman1987inequalities,newman1986some}. 
See \cite[Chapter 10]{grimmett2010percolation} and the introduction of \cite{1901.10363} for further discussion.
These inequalities ensure that if any of the exponents $\delta,\gamma$, or $\Delta$ takes its mean-field value then the other exponents do also, but are not sharp outside of the mean-field case.
When $d\geq 3\alpha$, it follows from these inequalities together with the equality $\delta=2$ established in \cref{I-thm:hd_moments_main,III-thm:critical_dim_moments_main} that $\gamma=1$ and $\Delta=2$; \cref{thm:scaling_relations} yields that $\nu=1/\alpha$, which is a new result at this level of generality.


\cref{thm:scaling_relations} will be deduced from the more concrete statements \cref{thm:Fisher_relation,thm:subcritical_volume}, below, which do not require one to assume that any exponents are well-defined. Moreover, these theorems provide \emph{up-to-constants} estimates on all relevant quantities, rather than just exponent relations, and as such can be used to relate not only critical exponents but also the logarithmic corrections to scaling that occur at the critical dimension. (Note that we do not compute the logarithmic corrections to scaling for any of $\chi(\beta)$, $\zeta(\beta)$, or $\xi(\beta)$ at $d=3\alpha<6$ in this series of papers, but plan to do this in future work that will rely on the theorems stated here.)


The first of these results 
 establishes up-to-constants estimates on the two-point function in the slightly subcritical regime, 
  leading to a rigorous verification of the \emph{Fisher relation} $\gamma = (2-\eta)\nu$ \cite[Eq. 9.21]{grimmett2010percolation} assuming that at least one of $\gamma$ or $\nu$ is well-defined. Note that the subcritical two-point function is known to be given asymptotically by
\begin{equation}
\label{eq:two_point_subcritical_asymptotics}
  \P_{\beta}(x\leftrightarrow y) \sim \beta \chi(\beta)^2 J(x,y)
\end{equation}
when $\beta<\beta_c$ is fixed and $x-y\to \infty$. As explained in \cite{aoun2021sharp} (where this result is extended to the random cluster model), one can interpret
this estimate as stating that the most efficient way to connect between distant vertices $x$ and $y$ in the subcritical model is for a \emph{single long edge} to join two
“typical clusters” around $x$ and $y$, with each such cluster having expected size $\chi(\beta)$. The following theorem can be interpreted as stating that the two-point function transitions between critical $\|x-y\|^{-d+\alpha}$ and subcritical $\chi(\beta)^2\|x-y\|^{-d-\alpha}$ scaling in the simplest possible way as $\|x-y\|$ crosses from below to above the correlation length $\xi(\beta) \asymp \chi(\beta)^{1/\alpha}$. A similar theorem was established in \cite[Theorem 1.5]{hutchcroft2024pointwise} under the stronger assumption that $\alpha<1$.

\begin{theorem}[LR slightly subcritical two-point functions]
\label{thm:Fisher_relation}
Suppose that \eqref{CL} holds. If we define $\xi^*(\beta)= \chi(\beta)^{1/\alpha}$ for each $0<\beta < \beta_c$ then
\[
  \P_{\beta}(x \leftrightarrow y) \asymp \|x-y\|^{-d+\alpha} \left(1 \vee \frac{\|x-y\|}{\xi^*(\beta)}\right)^{-2\alpha}
\]
for every $\beta_c/2\leq \beta< \beta_c$ and distinct $x,y \in \Z^d$. As a consequence, $\xi_p(\beta) \asymp_p \xi^*(\beta)$ for all $0<p<\alpha$ and $\beta_c/2\leq \beta<\beta_c$.
\end{theorem}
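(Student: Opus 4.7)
Set $r := \xi^*(\beta) = \chi(\beta)^{1/\alpha}$. The target estimate is equivalent to the pair of matching bounds $\tau_\beta(x,y) := \P_\beta(x\leftrightarrow y) \asymp \|x-y\|^{-d+\alpha}$ for $\|x-y\| \leq r$ and $\tau_\beta(x,y) \asymp \chi(\beta)^2 \|x-y\|^{-d-\alpha}$ for $\|x-y\|>r$. The plan is to establish both by comparison with the critical cutoff measure $\P_{\beta_c,r}$, whose two-point function is controlled pointwise by \cref{thm:CL_Sak} and whose susceptibility is of order $r^\alpha \asymp \chi(\beta)$.

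For the \emph{upper bound}, monotonicity $\tau_\beta \leq \tau_{\beta_c}$ together with \cref{thm:CL_Sak} handles the regime $\|x-y\|\leq r$. For $\|x-y\|>r$, a Hammersley--Simon--Lieb-type decomposition over the first edge of length $>r$ used by an open path gives, via the BK inequality,
\[
\tau_\beta(x,y)\leq \tau_{\beta_c,r}(x,y)+\sum_{\{u,v\}:\|u-v\|>r}\tau_\beta(x,u)\bigl(1-e^{-\beta J(u,v)}\bigr)\tau_\beta(v,y).
\]
The first term is bounded by $\|x-y\|^{-d+\alpha}h(\|x-y\|/r)\preceq \chi(\beta)^2\|x-y\|^{-d-\alpha}$ using the super-polynomial decay of $h$ from \cref{thm:CL_Sak}. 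For the convolution I would run a bootstrap with trial bound $\tau_\beta(z,w)\leq C\|z-w\|^{-d+\alpha}(1\vee \|z-w\|/r)^{-2\alpha}$: this bound has $\ell^1$ norm of order $\chi(\beta)$, and the principal contribution, from $u$ near $x$ and $v$ near $y$ with $J(u,v)\asymp\|x-y\|^{-d-\alpha}$, factors into $\chi(\beta)^2\|x-y\|^{-d-\alpha}$, while off-diagonal contributions are controlled using the decay of the trial bound.

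For the \emph{lower bound}, I would use the FKG inequality applied to the three increasing events $\{x\leftrightarrow u\}$, $\{\{u,v\}\text{ open}\}$, $\{v\leftrightarrow y\}$ combined with a Paley--Zygmund argument on the count $N$ of pairs $(u,v)\in B_R(x)\times B_R(y)$ for which they all hold, yielding $\tau_\beta(x,y)\succeq \|x-y\|^{-d-\alpha}\chi^*_R(x)\chi^*_R(y)$ with $\chi^*_R(z) := \sum_{u\in B_R(z)}\tau_\beta(z,u)$. A multiscale bootstrap then produces the pointwise lower bound $\tau_\beta(z,w)\succeq \|z-w\|^{-d+\alpha}$ for $\|z-w\|\leq r$: if the lower bound holds up to scale $M$ then $\chi^*_M\succeq M^\alpha$, and the FKG estimate with $R\asymp M$ extends it to scale $\asymp 2M$, starting from the trivial base case at bounded scale; for $\|x-y\|>r$, taking $R\asymp r$ gives $\tau_\beta(x,y)\succeq r^{2\alpha}\|x-y\|^{-d-\alpha}\asymp\chi(\beta)^2\|x-y\|^{-d-\alpha}$. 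The main obstacle is the second-moment computation: since the events witnessing distinct pairs are not independent, the bound $\E N^2\preceq \E N(1+\E N)$ must be derived via tree-graph/BK inequalities in combination with the pointwise upper bound just established. Once the two-point estimate is proven, the consequence $\xi_p(\beta)\asymp \xi^*(\beta)$ for $0<p<\alpha$ follows by splitting $\sum_y\|y\|_2^p\tau_\beta(0,y)$ at $\|y\|=r$: the inner part evaluates to $\int_0^r s^{p+\alpha-1}\dif s\asymp r^{p+\alpha}$ and the outer to $\chi(\beta)^2\int_r^\infty s^{p-\alpha-1}\dif s\asymp \chi(\beta)^2 r^{p-\alpha}=r^{p+\alpha}$ (the outer integral converging precisely because $p<\alpha$), so dividing by $\chi(\beta)\asymp r^\alpha$ yields $\xi_p(\beta)^p\asymp r^p$.
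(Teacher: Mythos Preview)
Your upper bound via a Hammersley--Simon--Lieb decomposition plus bootstrap is a plausible alternative, but the paper's route is quite different and more direct: it bounds $\partial_r \P_{\beta,r}(x\leftrightarrow y)$ via Russo's formula and BK and integrates in $r$, splitting the range at $\|x-y\|/8$ and at $\xi^*(\beta)$. The key external input is \cref{prop:subcritical_gyration}, which extends the \eqref{CL} displacement bound to all $\beta\le\beta_c$ uniformly, i.e.\ $\E_{\beta,r}|K\setminus B_{\lambda r}|\preceq_p \lambda^{-p}\E_{\beta,r}|K|$; this controls the contribution from $a$ or $b$ far from $x,y$ in the Russo--BK expansion without any bootstrap. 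Your HSL route can presumably be closed, but the bootstrap step (especially the off-diagonal contributions where $u$ sits near $y$ or $v$ near $x$) needs more than you have written.

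Your lower bound has a genuine gap at exactly the point you flag. Working entirely in $\P_\beta$, the bridging edge $\{u,v\}$ and the connections $x\leftrightarrow u$, $v\leftrightarrow y$ live in the \emph{same} configuration; all contributing events are increasing, so FKG goes the wrong way for an upper bound on $\E N^2$, while BK bounds disjoint occurrence rather than intersections, and tree-graph bounds introduce a factor of $\E_\beta|K\cap B_R|^2$ on each side, which is the wrong order. The paper's resolution is to work in the monotone coupling of $\P_{\beta,r}$ and $\P_\beta$ with $r\asymp\min\{\|x-y\|,\xi^*(\beta)\}$: the clusters of $x$ and $y$ are taken in the cutoff configuration $\omega_r$, while the bridging edge is drawn from the \emph{independent} increment $\omega'=\omega\setminus\omega_r$. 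Conditioning on $\{x\nleftrightarrow y\text{ in }\omega_r\}$, the disjoint-clusters correlation inequality \eqref{eq:BK_disjoint_clusters_covariance} factorizes the second moment, and the complementary event $\{x\leftrightarrow y\text{ in }\omega_r\}$ is shown to contribute negligibly to $\E|Z|$ via the four-point Gladkov inequality (\cref{lem:Gladkov_application_for_two_point_lower}). Neither ingredient appears in your sketch. Your multiscale bootstrap is also unnecessary: the input $\E_{\beta,r}|K\cap B_r|\succeq r^\alpha$ follows directly from \cref{lem:subcritical_susceptibility} and \cref{prop:subcritical_gyration}, and this single estimate feeds the first-moment lower bound at every scale simultaneously. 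Your derivation of $\xi_p(\beta)\asymp_p\xi^*(\beta)$ from the two-point estimate is correct.
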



Our next theorem establishes both the validity of the scaling relation $\Delta=\nu d_f$ and of the scaling hypothesis \eqref{eq:scaling_hypothesis_moments} throughout the effectively long-range regime. Recall that we defined $\zeta(\beta_c,r)=(\E_{\beta_c,r}|K|^2)/(\E_{\beta_c,r}|K|)$.

\begin{theorem}[LR slightly subcritical volume distributions]
\label{thm:subcritical_volume}
If \eqref{CL} holds and we define $\xi^*(\beta)=\chi(\beta)^{1/\alpha}$ and $\zeta^*(\beta) := \zeta(\beta_c,\xi^*(\beta))$
then
\[
\E_{\beta}|K|^p \asymp_p \zeta^*(\beta)^{p-1} \chi(\beta)\]
for every $\beta_c/2 \leq \beta <\beta_c$ and integer $p\geq 1$,  so that $\zeta(\beta) \asymp \zeta^*(\beta)$ for every $\beta_c/2\leq \beta<\beta_c$. Moreover, there exists a positive constant $c$ and a decreasing function $h:(0,\infty)\to (0,1]$ decaying faster than any power such that
\[
\mathbbm{1}(n\leq c\, \zeta^*(\beta)) \preceq
\frac{\P_{\beta}(|K|\geq n)}{\P_{\beta_c}(|K|\geq n)}
  \preceq h\!\left(\frac{n}{\zeta^*(\beta)}\right)
\]
for every $\beta_c/2\leq \beta <\beta_c$ and $n\geq 1$.
\end{theorem}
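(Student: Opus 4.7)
The overall strategy is to establish an up-to-constants matching between the slightly subcritical measure $\P_\beta$ and the cut-off critical measure $\P_{\beta_c, \xi^*(\beta)}$, extending the matching of two-point functions already provided by \cref{thm:Fisher_relation} and the characterisation of \eqref{CL} in \cref{thm:CL_Sak} to all higher integer moments and to the full volume distribution. Combined with the $p=1$ identity $\chi(\beta) \asymp \xi^*(\beta)^\alpha \asymp \E_{\beta_c, \xi^*(\beta)}|K|$ (obtained by summing the two-point function estimates of \cref{thm:Fisher_relation} and \eqref{eq:CL_h} respectively) and the tautological $p=2$ identity $\E_\beta|K|^2 = \zeta(\beta)\chi(\beta)$, this matching immediately yields both the comparison $\zeta(\beta) \asymp \zeta^*(\beta)$ and the general moment estimate $\E_\beta|K|^p \asymp_p \zeta^*(\beta)^{p-1}\chi(\beta)$, once we establish the analogous scaling $\E_{\beta_c, r}|K|^p \asymp_p \zeta(\beta_c, r)^{p-1}\E_{\beta_c, r}|K|$ for the cut-off critical measure by the same methods.

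For the moment upper bound, I would expand $\E_\beta|K|^p = \sum_{x_1, \ldots, x_p} \P_\beta(0, x_1, \ldots, x_p \text{ all connected})$, apply the higher Gladkov-type inequalities from \cref{subsec:higher_gladkov_inequalities_and_the_k_point_function} (together with tree-graph bounds above the critical dimension) to dominate each $(p+1)$-point function by products of two-point functions, then integrate using the sharp two-point bound of \cref{thm:Fisher_relation}: the subcritical correction factor $(1 \vee \|x-y\|/\xi^*(\beta))^{-2\alpha}$ effectively restricts each summation to $B_{\xi^*(\beta)}$, yielding the claimed scaling. The lower bound is more delicate; I would proceed by induction on $p$, using FKG together with the lower bound $\P_\beta(0 \leftrightarrow y) \succeq \|y\|^{-d+\alpha}$ for $\|y\| \le \xi^*(\beta)$ to reduce the estimate on $\E_\beta|K|^p$ to an iterative second-moment-type computation at scale $\xi^*(\beta)$, reusing the lower bound on $\E_\beta|K|^{p-1}$.

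For the volume tail, Markov's inequality applied to $\E_\beta|K|^p$ with $p \asymp \log(n/\zeta^*(\beta))$ yields $\P_\beta(|K| \ge n) \preceq (\zeta^*(\beta)/n)^p \chi(\beta)/\zeta^*(\beta)$, which upon dividing by the known critical tail $\P_{\beta_c}(|K| \ge n)$ gives a ratio decaying faster than any power of $n/\zeta^*(\beta)$ as required. For the lower bound at $n \le c\zeta^*(\beta)$, I would argue that under $\P_{\beta_c}$ a typical cluster of size $\asymp \zeta^*(\beta)$ has diameter of order $\xi^*(\beta)$ (via a second-moment computation using the critical $k$-point function), so that such clusters are essentially unaffected by replacing $J$ by $J_{\xi^*(\beta)}$; a coupling, together with the matching of two-point functions between $\P_\beta$ and $\P_{\beta_c, \xi^*(\beta)}$, then yields the required comparison $\P_\beta(|K| \ge n) \succeq \P_{\beta_c}(|K| \ge n)$ in this range. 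The main obstacle is the lower bound throughout: unlike its upper counterpart, it demands uniform (non-perturbative) hyperscaling-type estimates relating cluster volumes to correlation lengths, ultimately hinging on sharp lower bounds for the three-point function at the correlation length scale, which is where the one-sided estimates from \cite{hutchcroft2022sharp} must be upgraded to two-sided control via the machinery introduced elsewhere in the paper.
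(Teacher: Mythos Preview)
Your upper bounds are on the right track, though the paper takes a different route: rather than expanding the $(p+1)$-point function and applying Gladkov-type inequalities (which only give the sharp order when $d<3\alpha$ and would force you into a case split), the paper bounds $\E_\beta|K|^p$ via the $r$-derivative differential inequality \eqref{eq:pth_moment_derivative}, inducts on $p$, and uses an integrating factor on $[\xi^*(\beta),\infty)$ to close the argument uniformly across all dimensional regimes. Your tail upper bound via Markov at large $p$ is essentially what the paper does.

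The real gaps are in your lower bounds. For the moment lower bound, ``FKG together with the two-point lower bound, iterated'' gives at best $\E_\beta|K|^p \geq \chi(\beta)^p$, which is strictly weaker than $\zeta^*(\beta)^{p-1}\chi(\beta)$ in every regime (e.g.\ in LR~HD this is $\chi^p$ versus $\chi^{2p-1}$). The paper instead reduces to $p=2$ by H\"older, then proves $\hat\E_\beta|K|\succeq\zeta^*(\beta)$ by a size-biased Markov argument that crucially imports the already-established critical tail asymptotics \eqref{eq:volume_tail_all_dimensions} (different in each of the HD, CD, LD regimes) together with \eqref{eq:zeta*_asymptotics}. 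Your proposal does not invoke the critical tail at all, and I do not see how to get the correct lower bound without it.

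For the tail lower bound, the coupling you propose between $\P_\beta$ and $\P_{\beta_c,\xi^*(\beta)}$ is problematic: neither measure stochastically dominates the other, so there is no monotone coupling to work with, and ``matching of two-point functions'' does not by itself transfer tail events. The paper avoids this entirely: it applies Paley--Zygmund to the size-biased measure $\hat\P_\beta$ to get $\P_\beta(|K|\asymp\zeta^*(\beta))\succeq\chi(\beta)/\zeta^*(\beta)\asymp\P_{\beta_c}(|K|\geq\zeta^*(\beta))$, and then for smaller $n$ uses the intermediate value theorem to find $\beta'\leq\beta$ with $\hat\E_{\beta'}|K|\asymp n$, so that ordinary monotonicity in $\beta$ (not a coupling to a cut-off measure) finishes the argument.
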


\begin{remark}It follows from \cref{I-thm:hd_moments_main} (in the case $d>3\alpha$), \cref{III-thm:critical_dim_moments_main} (in the case $d=3\alpha$), and \cref{lem:sCL_moments} (in the case $d<3\alpha$) that if \eqref{CL} holds then
\[
  \zeta(\beta_c,r) := \frac{\E_{\beta_c,r}|K|^2}{\E_{\beta_c,r}|K|} \asymp \begin{cases} r^{2\alpha} &d>3\alpha \qquad \text{(LR HD)}\\ 
   r^{2\alpha} (\log r)^{-1/2} & d=3\alpha \qquad\text{(LR CD)}\\
  r^{(d+\alpha)/2} & d<3\alpha \qquad \text{(LR LD)}.
  \end{cases}
\]
It follows in particular that $\zeta(\beta_c,r) = r^{d_f-o(1)}$ with $d_f=\min\{2\alpha,(d+\alpha)/2\}$, so that \cref{thm:subcritical_volume} does indeed imply that the relation $\Delta=\nu d_f$ holds whenever at least one of $\Delta$ or $\nu$ is well-defined.
\end{remark}





\subsection{A glimpse of conformal invariance.}
\label{subsec:glimpse_of_conformal_invariance_}

In this section we explain how the $k$-point function estimate of \cref{thm:k_point_S} relates to the conjectured conformal invariance of the model, beginning with a general discussion of scale invariance and conformal invariance in $\R^d$; we refer the reader to e.g.\ \cite{MR3874867} for further background and e.g.\ \cite{lawler2008conformally,duminil2012conformal} for an overview of conformal invariance in the context of two-dimensional, short-range models. The standard approach to taking scaling limits in the physics literature is via the rescaled $k$-point functions
\begin{equation}
\label{eq:continuum_k_point}
T(x_1,\ldots,x_k) := \lim_{\lambda\to \infty} \lambda^{\Delta k} \tau_{\beta_c}( [\lambda x_1],\ldots,[\lambda x_k]),
\end{equation}
where for each $x\in \R^d$ we write $[x]$ for the closest lattice point to $x$ (breaking ties arbitrarily). Here $\Delta$ is a \emph{scaling dimension} that should be taken to be $(d-2+\eta)/2$ 
 to get a non-trivial limit in the case $k=2$. (At the critical dimension one may need to replace $\lambda^{\Delta k}$ with $(\ell(\lambda) \lambda^{\Delta})^k$ for some slowly-varying correction $\ell$ to get a non-trivial limit; this does not significantly change the rest of the discussion.)
If this continuum $k$-point function is well-defined for distinct $x_1,\ldots,x_n \in \R^d$ 
 then it must satisfy the scale-covariance relation
\begin{equation}
\label{eq:scale_invariance_k_point}
  T(\lambda x_1,\ldots,\lambda x_n) = \lambda^{-\Delta n} T(x_1,\ldots,x_n)
\end{equation}
for every distinct $x_1,\ldots,x_n \in \R^d$.

If the model is defined with respect to the isometry-invariant kernel of the form $J(x,y)\propto \|x-y\|_2^{-d-\alpha}$ then the continuum limit is also expected to be invariant under Euclidean isometries (e.g.\ rotations).
It is a standard prediction of the physics literature (although still not considered a completely settled problem by physicists \cite{nakayama2015scale}) that ``most'' critical models exhibiting scale and rotation invariance are also \emph{conformally invariant}, i.e., invariant under maps that behave like compositions of (orientation-preserving) Euclidean isometries and dilations \emph{infinitesimally around each point}.
By Liouville's theorem, the only conformal transformations of $\R^d$ with $d\geq 2$ are the M\"obius transformations $\Mobius(\R^d)$, which can be defined\footnote{Strictly speaking this definition includes both the M\"obius and \emph{anti-M\"obius} transformations (such as reflections through hyperplanes), which are orientation-reversing and anti-conformal. This is not a problem since we also expect our scaling limits to be reflection-invariant.} as the group of transformations of $\R^d$ generated by Euclidean isometries, dilations, and the inversion in the unit sphere $x\mapsto x/\|x\|_2^2$; for $d\geq 3$ these are also the only \emph{local} conformal transformations of $\R^d$, in the sense that any conformal map between open connected subsets of $\R^d$ is the restriction of a M\"obius transformation. (In two dimensions there are many local conformal maps that are not M\"obius transformations as provided by e.g.\ the Riemann mapping theorem.)
If it is well-defined, the global continuum $k$-point function $T$ is said to be \textbf{conformally covariant} with \textbf{scaling dimension} $\Delta$ if
\begin{equation}
\label{eq:conformal_transformation_law}
T(\psi(x_1),\ldots,\psi(x_k)) =  T(x_1,\ldots,x_k)\prod_{i=1}^k |\!\det D\psi(x_i)|^{-\Delta/d}
\end{equation}
for every $\psi \in \mathrm{M \ddot{o}}(\R^d)$ and distinct $x_1,\ldots,x_k \in \R^d \setminus \psi^{-1}(\infty)$. Here the $d$th root of the determinant Jacobian $|\!\det D\psi(x_i)|^{1/d}$ should be thought of as a local scaling factor, with the map $\psi(x)=\lambda x$ having $|\!\det D\psi(x)|^{1/d}\equiv \lambda$, so that  \eqref{eq:conformal_transformation_law} is indeed a generalization of \eqref{eq:scale_invariance_k_point}.
 The conformal covariance property \eqref{eq:conformal_transformation_law} was proven for the scaling limit of the $k$-point function for site percolation on the triangular lattice by Camia \cite{camia2024conformal} (see also \cite{camia2025partner,camia2024logarithmic,camia2024conformallyOPE}).

\medskip

Although conformal invariance is a much weaker property in dimensions $d\geq 3$ than in two dimensions, it has long been known in the physics literature that conformal invariance still places surprisingly strong constraints on critical behaviour in these dimensions \cite{rychkov2017epfl}; see also \cite{paulos2016conformal} for a discussion in the setting of long-range models.
One simple example of such a constraint is that the continuum two- and three-point functions should satisfy
\begin{equation}
\label{eq:two_and_three_point_Mobius}
  T(x,y) = C_2 \|x-y\|_2^{-2\Delta} \quad \text{ and } \quad T(x,y,z) = C_3 \|x-y\|_2^{-\Delta} \|y-z\|_2^{-\Delta} \|z-x\|_2^{-\Delta}
\end{equation}
for some constants $C_2$ and $C_3$, this scaling behaviour being forced by M\"obius covariance since the M\"obius transformations act transitively on triples of distinct points in $\R^d$. See \cite{ang2021integrability,camia2024conformal} for the strongest rigorous results in this direction for 2d nearest-neighbour percolation. (The behaviour of higher $k$-point functions is less rigid since M\"obius transformations do not act transitively on sets of size $k\geq 4$.)
Conformal invariance considerations have led to the current best theoretical understanding of the 3d Ising critical point via the (higher dimensional) \emph{conformal bootstrap} \cite{simmons2017conformal,poland2019conformal}, and it remains an important challenge for mathematicians to transport these advances to the rigorous study of lattice models beyond two dimensions; see \cite{MR3874867} for a detailed survey.

\medskip

For long-range percolation models with $d_\mathrm{eff} > d_c=6$, these conformal invariance relations are relatively uninteresting since one expects that the higher $n$-point functions vanish in the continuum limit \eqref{eq:continuum_k_point}: If $d_\mathrm{eff}>d_c=6$ and $\Delta=(d-2+\eta)/2$ where $\eta=\max\{2-\alpha,0\}$ is mean-field value of the two-point function exponent then it follows easily from the tree-graph inequalities \cite{MR762034} that
\begin{equation}
  T(x_1,\ldots,x_n) = \lim_{\lambda\to \infty} \lambda^{\Delta n} \tau_{\beta_c}( [\lambda x_1],\ldots,[\lambda x_n]) \equiv 0 \quad \text{ for $n\geq 3$ and $x_1,\ldots,x_n$ distinct.}
  \label{eq:vacuous_conformal_invariance}
\end{equation}
(In particular, the constant $C_3$ from \eqref{eq:two_and_three_point_Mobius} is zero in this case! This is also true but harder to justify in the critical dimensional case $d=3\alpha<6$, see \cref{III-thm:pointwise_three_point}.) 
 As such, in this setting the only non-trivial conformal invariance relation is the power-law scaling of the two-point function $T(x,y)=A \|x-y\|^{-2\Delta}_2$  that is already forced by scale and rotation invariance alone.
 Conformal invariance, if true, should carry more interesting content in low effective dimensions, where \cref{thm:k_point_S} implies that the limiting higher $k$-point functions are non-trivial (if they are well-defined).

\medskip

While we are not yet able to verify scale, rotation, or conformal invariance of long-range percolation models of low effective dimension, 
our results do show that conformal invariance holds in an ``up-to-constants'' sense.
Indeed, in \cref{subsec:conformal_invariance} we prove that the quantity $S(A)$ defined in \eqref{eq:S_def} is conformally covariant of scaling dimension $-1$ in the sense that if $\psi\in \Mobius(\R^d)$ is a M\"obius transformation then
\begin{equation}
\label{eq:S_Mobius_invariance}
  S(\psi(A))=  S(A) \prod_{a\in A} |\!\det D\psi(a)|^{1/d}
\end{equation}
for every finite set $A\subseteq \R^d \setminus \psi^{-1}(\infty)$. 
As such, it follows from \cref{thm:k_point_S} that the critical $k$-point function satisfies a similar conformal covariance property with scaling dimension $(d-\alpha)/2$ ``up-to-constants''  in the effectively long-range low-dimensional regime:

\begin{corollary}
\label{cor:Mobius}
If $d<3\alpha$ and \eqref{CL} holds then
 \begin{equation}
\label{eq:S_Mobius_invariance}
  \tau_{\beta_c}([\psi(x_1)], \ldots, [\psi(x_k)]) \asymp_k  \tau_{\beta_c}(x_1,\ldots,x_k)\prod_{i=1}^k |\!\det D\psi(x_i)|^{-\frac{d-\alpha}{2d}}
\end{equation}
for every $\psi\in \Mobius(\R^d)$ and 
 $x_1,\ldots,x_k \in \Z^d \setminus \psi^{-1}(\infty)$ with $\|\psi(x_i)-\psi(x_j)\|_\infty > 1$ for every $i\neq j$.
\end{corollary}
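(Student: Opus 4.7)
The plan is to chain together three ingredients: the $k$-point estimate of \cref{thm:k_point_S}, the Möbius covariance identity \eqref{eq:S_Mobius_invariance} for $S$ (proven in the section on conformal invariance), and a comparison of $S$ at the continuum points $\psi(x_i)$ versus the rounded lattice points $[\psi(x_i)]$. First, apply \cref{thm:k_point_S} to both the left and right sides of the target estimate. This is legitimate on the left because the hypothesis $\|\psi(x_i)-\psi(x_j)\|_\infty>1$ guarantees that the rounded points $[\psi(x_i)]$ are pairwise distinct elements of $\Z^d$, and is legitimate on the right because $x_1,\ldots,x_k$ are distinct by assumption. This reduces the claim to the equivalence
\[
S\bigl([\psi(x_1)],\ldots,[\psi(x_k)]\bigr)^{-(d-\alpha)/2} \asymp_k S(x_1,\ldots,x_k)^{-(d-\alpha)/2} \prod_{i=1}^k |\!\det D\psi(x_i)|^{-\frac{d-\alpha}{2d}}.
\]
The Möbius covariance \eqref{eq:S_Mobius_invariance} applied at the continuum points $\psi(x_i)\in \R^d$ rewrites the right-hand side as $S(\psi(x_1),\ldots,\psi(x_k))^{-(d-\alpha)/2}$, so all that remains is the discretization estimate
\[
S\bigl([\psi(x_1)],\ldots,[\psi(x_k)]\bigr) \asymp_k S\bigl(\psi(x_1),\ldots,\psi(x_k)\bigr).
\]

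For this discretization step, the hypothesis $\|\psi(x_i)-\psi(x_j)\|_\infty>1$ is exactly what is needed. Nearest-lattice-point rounding perturbs each coordinate by at most $1/2$, so pairwise $\ell_2$ distances change by at most an additive $\sqrt{d}$; meanwhile the hypothesis forces $\|\psi(x_i)-\psi(x_j)\|_2\ge 1$ and simultaneously ensures $[\psi(x_i)]\neq[\psi(x_j)]$, hence $\|[\psi(x_i)]-[\psi(x_j)]\|_2\ge 1$. Splitting into the cases where $\|\psi(x_i)-\psi(x_j)\|_2$ is larger or smaller than $2\sqrt{d}$ then yields a two-sided comparison of all pairwise distances up to a multiplicative constant depending only on $d$.

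Finally, an easy induction on $|A|$ using the recursive definition
\[
S(\{x,y\})=\|x-y\|_2^2, \qquad S(A)=\min_{A=A_1\sqcup A_2\sqcup A_3} \sqrt{S(A_1\cup A_2)\,S(A_2\cup A_3)\,S(A_3\cup A_1)}
\]
lifts the up-to-constants equivalence of pairwise distances to an up-to-constants equivalence of $S$-values on all nonempty subsets of the two point configurations, with the constants in the equivalence depending only on $k$ and $d$. Combining the four equivalences closes the proof.

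There is no substantial obstacle here: the work has already been done in \cref{thm:k_point_S} and in the derivation of \eqref{eq:S_Mobius_invariance}, and the only mildly technical step is checking the discretization comparison, for which the separation hypothesis $\|\psi(x_i)-\psi(x_j)\|_\infty>1$ was precisely engineered.
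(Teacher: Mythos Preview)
Your proposal is correct and follows exactly the approach the paper intends: it states only that \cref{cor:Mobius} is deduced from \cref{thm:k_point_S} together with the M\"obius covariance of $S$ (\cref{prop:sweep_conformal_invariance}), leaving the details implicit. You have correctly identified and verified the one point requiring care, namely the discretization comparison $S([\psi(x_1)],\ldots,[\psi(x_k)])\asymp_k S(\psi(x_1),\ldots,\psi(x_k))$, for which the separation hypothesis $\|\psi(x_i)-\psi(x_j)\|_\infty>1$ was indeed designed.
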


\begin{remark}
 The up-to-constants M\"obius-covariance estimate \eqref{eq:S_Mobius_invariance} holds even for kernels satisfying e.g.\ $J(x,y) \propto \|x-y\|_{1}^{-d-\alpha}$ where $d=2$, $\alpha\in (2/3,1)$, and $\|\cdot\|_1$ is the $\ell^1$ norm, which are \emph{not} expected to yield rotationally-invariant scaling limits. As such, one should not be able to improve \eqref{eq:S_Mobius_invariance} to a first-order asymptotic equality without imposing additional assumptions on the model. (Effectively \emph{short-range} models should have rotationally invariant scaling limits assuming only that the kernel is invariant under, say, coordinate permutations: see \cref{I-thm:superprocess_main,I-remark:renormalized_covariance_matrix_symmetry} for related results in high effective dimension.)
\end{remark}

\begin{remark}
In the 2d critical percolation literature one typically thinks of conformal invariance in terms of connection probabilities for macroscopic regions (such as quad-crossing probabilities) rather than $k$-point functions \cite{schramm2011scaling,smirnov2001critical2}. This perspective is not suitable for long-range models. Indeed, if we consider long-range percolation on the rescaled lattice  $\eps \Z^d$ then any two open sets in $\R^d$ are connected by a single edge with probability converging to $1$ as $\eps \downarrow 0$ whenever $\alpha<d$.
\end{remark}


\section{The correlation length condition and its consequences}
\label{subsec:long_edge_relevancy_for_small_alpha}

In this section we prove \cref{thm:alpha<1_CL}, which states that \eqref{CL} holds for $\alpha<1$, along with the following lemma which establishes everything in \cref{thm:CL_Sak} other than the matching \emph{lower bound} for $\P_{\beta_c}(x\leftrightarrow y)$ (which we will return to in \cref{subsec:_k_point_function_hyperscaling}).

\begin{lemma}
\label{lem:CL_Sak_upper}
If \eqref{CL} holds then
$\P_{\beta_c}(x\leftrightarrow y)\preceq \|x-y\|^{-d+\alpha}$
for every distinct $x,y\in \Z^d$.  Moreover, \eqref{CL} holds if and only if there exists a decreasing function $h:(0,\infty)\to (0,1]$ decaying faster than any power of $x$ such that
\begin{equation}
  \P_{\beta_c,r}(x\leftrightarrow y)\preceq \|x-y\|^{-d+\alpha} \,h\Biggl(\frac{\|x-y\|}{r}\Biggr)
\end{equation}
for every $r\geq 1$ and distinct $x,y\in \Z^d$.
\end{lemma}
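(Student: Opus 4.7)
The lemma contains three implications: \eqref{CL} implies the pointwise critical upper bound, \eqref{CL} implies the cutoff bound \eqref{eq:CL_h} with a fast-decaying $h$, and the cutoff bound implies \eqref{CL}. Since monotone convergence $\P_{\beta_c,r}(x\leftrightarrow y)\uparrow \P_{\beta_c}(x\leftrightarrow y)$ as $r\uparrow\infty$ reduces the first implication to the second, the content lies in the latter two implications.

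The easy direction is that the cutoff bound implies \eqref{CL}. Given $\P_{\beta_c,r}(0\leftrightarrow x)\preceq \|x\|^{-d+\alpha} h(\|x\|/r)$, one splits $\sum_x \|x\|^p \P_{\beta_c,r}(0\leftrightarrow x) \preceq \sum_x \|x\|^{p-d+\alpha} h(\|x\|/r)$ into the ranges $\|x\|\leq r$ and $\|x\|>r$. The first contributes $\asymp r^{p+\alpha}$ (using $h\leq 1$), while the second contributes $\preceq_p r^{p+\alpha}$ by dyadic summation, using that $h$ decays faster than $\lambda^{-(p+\alpha+1)}$. Combined with a matching lower bound $\E_{\beta_c,r}|K|\succeq r^\alpha$, inherited from the averaged lower bound on the critical two-point function of \cite{baumler2022isoperimetric} together with the fact that $\P_{\beta_c,r}$ agrees with $\P_{\beta_c}$ up to comparable constants on events localised below scale $r$, this yields $\xi_p(\beta_c,r)\leq C_p r$ as required.

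For the main direction, I would apply Markov's inequality to the moment bound $\sum_z \|z\|^p \P_{\beta_c,r}(0\leftrightarrow z)\leq C_p^p r^p\, \E_{\beta_c,r}|K|$ provided by \eqref{CL}, obtaining the pointwise decay $\P_{\beta_c,r}(0\leftrightarrow x) \leq C_p^p r^p\, \E_{\beta_c,r}|K|/\|x\|^p$ for every $p>0$. The first step is to self-bootstrap this into $\E_{\beta_c,r}|K|\preceq r^\alpha$: choosing $p$ sufficiently large shows $\E_{\beta_c,r}|K\cap B_{Cr}^c|\leq \tfrac{1}{2}\E_{\beta_c,r}|K|$, while the spatially-averaged upper bound \eqref{eq:Sak_upper_restate} and monotonicity $\P_{\beta_c,r}\leq\P_{\beta_c}$ give $\E_{\beta_c,r}|K\cap B_{Cr}|\preceq r^\alpha$. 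Plugging back in yields the super-polynomial decay $\P_{\beta_c,r}(0\leftrightarrow x)\preceq_p r^{p+\alpha}/\|x\|^p$, which handles the regime $\|x\|\gg r$ directly upon optimising over $p$. The complementary range $\|x\|\leq r$ then reduces via $\P_{\beta_c,r}\leq \P_{\beta_c}$ to the pointwise critical upper bound $\P_{\beta_c}(0\leftrightarrow x)\preceq \|x\|^{-d+\alpha}$. To establish this latter bound I would use a first-long-edge decomposition
\[
  \P_{\beta_c}(0\leftrightarrow x) \leq \P_{\beta_c,r}(0\leftrightarrow x) + \beta_c\sum_{\|y-z\|>r}(J-J_r)(y,z)\,\P_{\beta_c,r}(0\leftrightarrow y)\,\P_{\beta_c}(z\leftrightarrow x),
\]
set $r=c\|x\|$ and bootstrap: the short-range term is absorbed by monotonicity, and the long-range correction is estimated by combining the cutoff tail decay above with \eqref{eq:Sak_upper_restate} to control the convolution in $z$. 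The main obstacle will be closing this bootstrap cleanly on each dyadic scale without accumulating polynomial factors, which should be achievable by a careful inductive choice of $c$.
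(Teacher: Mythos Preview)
Your reverse direction (the cutoff bound implies \eqref{CL}) is essentially the paper's argument, though your cited source for $\E_{\beta_c,r}|K|\succeq r^\alpha$ is off: the paper obtains it not from \cite{baumler2022isoperimetric} plus localisation but from the elementary differential inequality $\frac{d}{dr}(\E_{\beta_c,r}|K|)^{-1}\geq -\beta_c|J'(r)||B_r|$, integrated from $r$ to $\infty$ (this is \eqref{eq:mean_lower_bound}).

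The forward direction has a genuine gap. Your Markov bound $\P_{\beta_c,r}(0\leftrightarrow x)\preceq_p r^{p+\alpha}/\|x\|^p$ is too weak by a factor of $r^d$ to yield the required form. Writing $\lambda=\|x\|/r$, Markov gives $\preceq_p r^\alpha\lambda^{-p}$, whereas the target $\|x\|^{-d+\alpha}h(\lambda)=r^{-d+\alpha}\lambda^{-d+\alpha}h(\lambda)$ needs a prefactor $r^{-d+\alpha}$. At $\|x\|=2r$ your bound reads only $\preceq r^\alpha$, not $\preceq r^{-d+\alpha}$. Combining with the critical bound $\P_{\beta_c}(0\leftrightarrow x)\preceq\|x\|^{-d+\alpha}$ does not help: the critical bound by itself gives $h\equiv O(1)$, and the Markov bound only improves on it once $\lambda$ exceeds a threshold of order $r^{d/(p-d+\alpha)}$, which depends on $r$, so no function of $\lambda$ alone emerges. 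The same defect wrecks your bootstrap for the critical pointwise bound: with $r=c\|x\|$, the short-range term $\P_{\beta_c,r}(0\leftrightarrow x)$ is still controlled only by Markov, giving $c^{p+\alpha}\|x\|^\alpha$ rather than $\|x\|^{-d+\alpha}$, so the induction never starts with the correct exponent.

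The paper sidesteps this by working with the $r$-derivative of the \emph{pointwise} quantity via Russo's formula and BK:
\[
\frac{d}{dr}\P_{\beta_c,r}(x\leftrightarrow y)\leq \beta_c|J'(r)|\sum_{\|a-b\|\leq r}\P_{\beta_c,r}(x\leftrightarrow a)\,\P_{\beta_c,r}(b\leftrightarrow y).
\]
Both factors are in the cut-off measure, so their sums are controlled by $\E_{\beta_c,r}|K|\preceq r^\alpha$ with no circularity. Integrating from $r=\|x-y\|/8$ to $\infty$ (where the right-hand side is $\preceq r^{-d+\alpha-1}$) produces the prefactor $\|x-y\|^{-d+\alpha}$ directly; for $r\leq\|x-y\|/8$ the constraint $\|a-b\|\leq r$ forces one of $a,b$ far from its endpoint, and \eqref{CL} applied to $\E_{\beta_c,r}|K\setminus B_{\|x-y\|/4}|$ (the same Markov idea you use, but on a \emph{sum} rather than a single point, which is where the factor of $r^d$ is not lost) supplies the extra decay $(\|x-y\|/r)^{-p}$.
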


All of the results in this section apply to the entire effectively-long range regime and are not specific to low-dimensional models.

\subsection{The correlation length condition and the two-point function}
\label{subsec:the_correlation_length_condition_and_the_two_point_function}

In this subsection we prove \cref{lem:CL_Sak_upper}. For some of our intermediate results we will not need the full strength of the assumption \eqref{CL}; the minimal assumptions needed will be clarified through the following definition.

\begin{defn}
\label{def:CLp}
 Given $p>0$, we say that the model satisfies \mytag[CLp]{{\color{blue}CL$^p$}}\!\! if $\beta_c<\infty$ and there exists a constant $C_p<\infty$ such that $\xi_p(\beta_c,r) \leq C_p r$ for every $r\geq 1$.
\end{defn}

We begin by studying the asymptotics of the cut-off first moment $\E_{\beta_c,r}|K|$. We recall from \cref{I-cor:mean_lower_bound} that this quantity satisfies
\begin{equation}
\label{eq:mean_lower_bound}
\E_{\beta_c,r}|K|\geq (1-o(1)) \frac{\alpha}{\beta_c} r^\alpha.
\end{equation}
as $r\to \infty$ regardless of the value of $\alpha>0$. This bound is proven in \cref{I-prop:first_moment} to be sharp for effectively long-range models of high and critical effective dimension. The following lemma shows that it is of the correct \emph{order} assuming only that the model is effectively long-range in a fairly weak sense (see also \cref{lem:first_moment_CL}). 

\begin{lemma}
\label{lem:first_moment_CLb}
If \eqref{CLp} holds for some $p>0$ then 
$\E_{\beta_c,r}|K|\asymp r^\alpha$
for all $r\geq 1$. 
\end{lemma}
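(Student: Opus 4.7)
The plan is to deduce the lower bound directly from \cref{I-cor:mean_lower_bound} and to establish the upper bound by combining \eqref{CLp} with the spatially-averaged Sak upper bound \eqref{eq:Sak_upper_restate} and the stochastic monotonicity of $\P_{\beta_c,r}$ in $r$ (which follows from $J_r\leq J$). For the lower bound, \cref{I-cor:mean_lower_bound} gives $\E_{\beta_c,r}|K|\geq (1-o(1))(\alpha/\beta_c) r^\alpha$ as $r\to\infty$, and this combined with the trivial bound $\E_{\beta_c,r}|K|\geq 1$ for all $r\geq 1$ yields $\E_{\beta_c,r}|K|\succeq r^\alpha$ uniformly in $r\geq 1$.

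For the upper bound I will split the cluster into its intersection with the ball $B_{\lambda r}$ and its complement, for a large constant $\lambda$ to be chosen. Writing $\chi_r:=\E_{\beta_c,r}|K|$, Markov's inequality and \eqref{CLp} give
\[
\E_{\beta_c,r}|K\setminus B_{\lambda r}|\leq (\lambda r)^{-p}\E_{\beta_c,r}\!\sum_{x\in K}\|x\|_2^p=(\lambda r)^{-p}\chi_r\,\xi_p(\beta_c,r)^p\leq C_p^p\lambda^{-p}\chi_r,
\]
so choosing $\lambda$ (depending only on $p$ and $C_p$) large enough that $C_p^p\lambda^{-p}\leq 1/2$ yields $\E_{\beta_c,r}|K\cap B_{\lambda r}|\geq \chi_r/2$. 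Stochastic domination of $\P_{\beta_c,r}$ by $\P_{\beta_c}$ then gives
\[
\chi_r\leq 2\E_{\beta_c,r}|K\cap B_{\lambda r}|\leq 2\E_{\beta_c}|K\cap B_{\lambda r}|\preceq (\lambda r)^\alpha\preceq r^\alpha,
\]
where the penultimate inequality is \eqref{eq:Sak_upper_restate} applied to a ball of radius $\lambda r$ (which is contained in and contains $\ell^\infty$-boxes of comparable radius by norm equivalence). Combining the two bounds completes the proof of the upper bound.

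There is no significant obstacle here, and the argument uses \eqref{CLp} only for a single value of $p>0$. The role of the hypothesis is precisely to force the cluster under the cut-off measure to be localized at the natural scale $r$, so that the already-established averaged upper bound from \cite{hutchcroft2022sharp}, which controls total mass inside a ball of a given radius rather than the full susceptibility, is enough to control $\chi_r$.
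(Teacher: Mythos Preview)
Your proof is correct and follows essentially the same route as the paper: the lower bound from \eqref{eq:mean_lower_bound}, and the upper bound by using \eqref{CLp} via Markov's inequality to localize the cluster into $B_{\lambda r}$ for a suitably large constant $\lambda$, then invoking the spatially averaged Sak bound \eqref{eq:Sak_upper_restate}. The only cosmetic difference is that you spell out the stochastic domination $\P_{\beta_c,r}\leq \P_{\beta_c}$ before applying \eqref{eq:Sak_upper_restate}, which the paper leaves implicit.
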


\begin{proof}[Proof of \cref{lem:first_moment_CLb}]
The lower bound on $\E_{\beta_c,r}|K|$ follows from \eqref{eq:mean_lower_bound}.
For the coarse upper bound $\E_{\beta_c,r}|K| \preceq r^\alpha$, we can compute that if \eqref{CLp} holds for some $p>0$ then
\begin{equation}
\label{eq:CLp_application}
 \E_{\beta_c,r}|K \setminus B_{\lambda r}| \leq (\lambda r)^{-p} \E_{\beta_c,r}\sum_{x\in K}\|x\|^p \preceq_p \lambda^{-p}\E_{\beta_c,r}|K|
\end{equation}
for every $r,\lambda \geq 1$. It follows that if \eqref{CLp} holds for some $p>0$ then there exists a constant $C$ such that
\begin{equation}
\label{eq:CLp_application2}
  \E_{\beta_c,r}|K| \leq 2 \E_{\beta_c,r}|K \cap B_{C r}| \preceq r^\alpha
\end{equation}
as claimed, 
where the second inequality follows from the bound \eqref{eq:Sak_upper_restate} (i.e., the main theorem of \cite{hutchcroft2022sharp}). (Note that this bound together with \eqref{eq:mean_lower_bound} imply that $\E_{\beta_c,r}|K \cap B_{C r}| \asymp r^\alpha$ so that the lower bound of \eqref{eq:CL_Sak} holds in a spatially averaged sense when \eqref{CLp} holds for some $p>0$.)
\end{proof}

\cref{lem:first_moment_CLb} and its proof have the following corollary.

\begin{corollary}
\label{cor:large_alpha_not_CL}
If $\alpha>d$ then \eqref{CLp} does not hold for any $p>0$. In particular, if $\alpha>d$ then \eqref{CL} does not hold. If $\alpha=d$ and \eqref{CLp} holds for some $p>0$ then the model has a discontinuous phase transition in the sense that there exists an infinite cluster at $\beta_c$.
\end{corollary}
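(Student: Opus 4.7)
The plan is to deduce all three claims by combining the universal mean lower bound \eqref{eq:mean_lower_bound} with the trivial volumetric upper bound $\E_{\beta_c,r}|K \cap B_{Cr}| \le |B_{Cr}| \asymp r^d$ inside the argument underlying \cref{lem:first_moment_CLb}. If \eqref{CLp} holds for some $p>0$, the moment-truncation estimates \eqref{eq:CLp_application}--\eqref{eq:CLp_application2} in the proof of \cref{lem:first_moment_CLb} produce a constant $C=C(p)<\infty$ such that $\E_{\beta_c,r}|K| \le 2\,\E_{\beta_c,r}|K \cap B_{Cr}|$ for every $r\ge 1$. The essential observation is that in the regime $\alpha \ge d$ the result of \cite{hutchcroft2022sharp} invoked in \cref{lem:first_moment_CLb} is vacuous, but the trivial volumetric bound $\E_{\beta_c,r}|K\cap B_{Cr}| \le |B_{Cr}| \preceq r^d$ can be used as a substitute; combining this with $\E_{\beta_c,r}|K|\ge (1-o(1))(\alpha/\beta_c)r^\alpha$ yields $r^\alpha \preceq_p r^d$ as $r\to\infty$.

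When $\alpha>d$ this comparison fails for large $r$, giving a contradiction that rules out \eqref{CLp} for every $p>0$ (not merely for some $p$); the failure of \eqref{CL} is an immediate consequence, which handles the first two claims.

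When $\alpha=d$ the comparison is instead saturated: the same reasoning gives $\E_{\beta_c,r}|K\cap B_{Cr}| \asymp |B_{Cr}|$, and since $J_r \le J$ pointwise, stochastic domination of $\P_{\beta_c,r}$ by $\P_{\beta_c}$ yields $\E_{\beta_c}|K\cap B_{Cr}| \ge c\,|B_{Cr}|$ for some $c>0$ and every $r\ge 1$. I then upgrade this density bound to the existence of an infinite cluster via a standard largest-cluster argument. Letting $Y_r$ denote the maximum of $|C\cap B_{Cr}|$ over all clusters $C$ meeting $B_{Cr}$, the identity
\[
\sum_C |C\cap B_{Cr}|^2 = \sum_{u,v\in B_{Cr}} \mathbbm{1}(u\leftrightarrow v)
\]
together with translation invariance gives $\E_{\beta_c}\sum_C |C\cap B_{Cr}|^2 = |B_{Cr}|\cdot \E_{\beta_c}|K\cap B_{Cr}| \ge c\,|B_{Cr}|^2$, while on the other hand $\sum_C |C\cap B_{Cr}|^2 \le Y_r\,|B_{Cr}|$ since each vertex of $B_{Cr}$ lies in exactly one cluster. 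Combining these gives $\E_{\beta_c} Y_r \ge c\,|B_{Cr}|$, and since $Y_r \le |B_{Cr}|$ almost surely, the elementary inequality $\E X \le M\,\P(X\ge t)+t$ (with $M=|B_{Cr}|$, $t=c|B_{Cr}|/2$) yields $\P_{\beta_c}(Y_r \ge \tfrac{c}{2}|B_{Cr}|) \ge c/2$ for every $r\ge 1$. Hence $\P_{\beta_c}(\exists\text{ cluster of size}\ge n) \ge c/2$ for every fixed $n$ by taking $r$ sufficiently large, and monotone continuity as $n\to\infty$ gives $\theta(\beta_c)\ge c/2>0$.

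The only non-routine step is the last one: converting the $\Omega(|B_{Cr}|)$ lower bound on $\E_{\beta_c}|K\cap B_{Cr}|$ into positive probability of a macroscopic cluster. The second-moment identity handles this cleanly using only translation invariance, so no ingredient beyond \cref{lem:first_moment_CLb} and stochastic domination in the kernel is required.
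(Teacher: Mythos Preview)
Your argument for $\alpha>d$ is correct and matches the paper's. For $\alpha=d$, however, there are two issues.

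First, a minor one: the identity $\E_{\beta_c}\sum_C |C\cap B_{Cr}|^2 = |B_{Cr}|\cdot \E_{\beta_c}|K\cap B_{Cr}|$ is false. The correct statement is $\E_{\beta_c}\sum_C |C\cap B_{Cr}|^2 = \sum_{u\in B_{Cr}} \E_{\beta_c}|K_u\cap B_{Cr}|$, and translation invariance gives $\E_{\beta_c}|K_u\cap B_{Cr}| = \E_{\beta_c}|K_0\cap (B_{Cr}-u)|$, which depends on $u$. This is easily fixed by restricting to $u\in B_{Cr/2}$ and using the density bound at scale $r/2$.

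Second, a genuine gap: the monotone continuity step at the end is invalid. You obtain $\P_{\beta_c}(\exists\, C:|C|\ge n)\ge c/2$ for all $n$, and then claim $\theta(\beta_c)\ge c/2$. But $\bigcap_n\{\exists\, C:|C|\ge n\}=\{\sup_C|C|=\infty\}$, which is strictly larger than $\{\exists\text{ infinite }C\}$: at criticality for nearest-neighbour percolation on $\Z^2$, for instance, all clusters are finite yet arbitrarily large clusters exist almost surely. To repair your route you would need to show $\P_{\beta_c}(|K_0|\ge n)\ge c'$ for all $n$, which does follow from your $Y_r$ estimate after an additional translation-averaging step (e.g.\ averaging the event $\{|K_u\cap B_{Cr}|\ge c|B_{Cr}|/2\}$ over $u\in B_{Cr}$ and using $B_{Cr}\subseteq B_{2Cr}(u)$).

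The paper's proof avoids all of this: from $\E_{\beta_c}|K\cap B_{Cr}|\succeq |B_{Cr}|$ it reads off $\limsup_{x\to\infty}\P_{\beta_c}(0\leftrightarrow x)>0$ directly (if the connection probability decayed to zero the average over $B_{Cr}$ would also tend to zero), then applies reverse Fatou to a sequence $x_n\to\infty$ with $\P_{\beta_c}(0\leftrightarrow x_n)\ge c$ to get $\P_{\beta_c}(|K_0|=\infty)\ge c$, concluding via Kolmogorov's zero-one law.
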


\begin{proof}[Proof of \cref{cor:large_alpha_not_CL}]
If \eqref{CLp} holds for some $p>0$ then we have by \cref{lem:first_moment_CLb} and \eqref{eq:CLp_application2} that there exists a constant $C$ such that
\[
  \E_{\beta_c,r}|K \cap B_{Cr}| \succeq \E_{\beta_c,r}|K| \succeq r^\alpha 
\]
for every $r\geq 1$. Since the left hand side is trivially bounded by $|B_{Cr}| =O(r^d)$, this is not possible when $\alpha>d$. When $\alpha=d$ this is only possible if $\limsup_{x\to\infty}\P_{\beta_c}(0\leftrightarrow x)>0$, which implies that an infinite cluster exists almost surely under $\P_{\beta_c}$ by Fatou's theorem and Kolmogorov's zero-one law.
\end{proof}

We now apply \cref{lem:first_moment_CLb} to bound the cut-off two-point function.

\begin{prop}
\label{prop:CL_to_two_point}
If \eqref{CLp} holds for some $p>\max\{d-\alpha,0\}$ then
\[
  \P_{\beta_c,r}(x\leftrightarrow y) \preceq_p \|x-y\|^{-d+\alpha} \left(1 \vee \frac{\|x-y\|}{r}\right)^{-p+d-\alpha}
\] 
for every $r\geq 1$ and distinct $x,y\in \Z^d$.
\end{prop}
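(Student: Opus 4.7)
The plan is to combine the $p$-th moment control implied by \eqref{CLp} with the spatially averaged Sak upper bound from \cite{hutchcroft2022sharp} via a ball-averaging argument based on the FKG inequality. As a first step, combining \eqref{CLp} with \cref{lem:first_moment_CLb} yields the moment estimate
\[
  \sum_{z\in\Z^d} \|z-x\|^p\,\P_{\beta_c,r}(x\leftrightarrow z) \;\le\; C_p^p\, r^p\,\E_{\beta_c,r}|K_x| \;\preceq\; r^{p+\alpha}.
\]
Markov's inequality applied at the single point $y$ immediately gives the preliminary pointwise bound $\P_{\beta_c,r}(x\leftrightarrow y)\preceq r^{p+\alpha}/R^p$ with $R:=\|x-y\|$. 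This already captures the right Markov decay in $R$, but in the regime $R>r$ it overshoots the target $R^{-p}r^{p-d+\alpha}$ by a factor $r^{d-\alpha}$, which heuristically corresponds to the gap between the expected cluster mass $r^\alpha$ and the ambient volume $r^d$ at the correlation length scale $r$.

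To close this gap in the regime $R>r$, I would use the FKG ball-averaging inequality
\[
  \P_{\beta_c,r}(x\leftrightarrow y)\cdot \E_{\beta_c,r}|K_y\cap B_\rho(y)| \;\le\; \sum_{y'\in B_\rho(y)}\P_{\beta_c,r}(x\leftrightarrow y'),
\]
choosing $\rho$ to be a large multiple of $r$. The tail bound coming from \eqref{CLp} gives $\E_{\beta_c,r}|K_y\cap B_{Ar}(y)^c|\preceq A^{-p}r^\alpha$, so that $\E_{\beta_c,r}|K_y\cap B_{Ar}(y)|\asymp r^\alpha$ for $A$ a sufficiently large constant, using the matching first-moment lower bound from \cref{lem:first_moment_CLb}. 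For the right-hand sum, I would exploit that points $y'\in B_{Ar}(y)$ lie at distance $\gtrsim R$ from $x$ when $R\ge 2Ar$, and apply Markov to the moment estimate to bound the sum by $\preceq R^{-p}r^{p+\alpha}$. This ball-averaged version must then be combined with the spatially averaged Sak bound (which, via $\P_{\beta_c,r}\le \P_{\beta_c}$, gives the complementary estimate $\sum_{y'\in B_\rho(y)}\P_{\beta_c,r}(x\leftrightarrow y')\preceq (R+\rho)^\alpha$ when the whole ball sits in $B_{R+\rho}(x)$), and the two upper bounds on the numerator should be interpolated at the natural crossover scale to produce the target exponent.

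In the regime $R\le r$, the target $R^{-d+\alpha}$ is obtained from the stochastic domination $\P_{\beta_c,r}\le \P_{\beta_c}$ together with a pointwise Sak-type bound for the critical measure, which I would extract from the $R>r$ case by monotone convergence $\P_{\beta_c,r}(x\leftrightarrow y)\uparrow \P_{\beta_c}(x\leftrightarrow y)$ as $r\to\infty$ (a consequence of $J_r\uparrow J$). The boundary match of the two regimes at $R=r$, where the bound $R^{-p}r^{p-d+\alpha}$ reduces to $R^{-d+\alpha}$, ensures that the two cases glue consistently.

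The main obstacle I anticipate is the interpolation step used to close the $r^{d-\alpha}$ gap: bare FKG averaging gives a suboptimal saving over the pointwise Markov bound, so one must use the moment and averaged controls \emph{jointly}, balancing them at the scale $\rho\asymp r$ where they are both tight. The delicate bookkeeping here -- in particular pinning down the lower bound on $\E_{\beta_c,r}|K_y\cap B_\rho(y)|$ with the correct constant and combining it with both Markov and Sak upper bounds on the numerator -- is the true crux of the argument, and is what distinguishes the present proposition from the direct moment-based Markov estimates that were already available.
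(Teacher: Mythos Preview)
Your approach has a real gap at the step you yourself flag as the crux, and the paper's argument is entirely different.

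The paper differentiates in $r$ via Russo's formula and the BK inequality:
\[
\frac{d}{dr}\P_{\beta_c,r}(x\leftrightarrow y)\le\beta_c|J'(r)|\sum_{\|a-b\|\le r}\P_{\beta_c,r}(x\leftrightarrow a)\,\P_{\beta_c,r}(b\leftrightarrow y).
\]
For $r\le R/8$ (with $R=\|x-y\|$) the constraint $\|a-b\|\le r$ forces one of $\|x-a\|,\|y-b\|$ to be at least $R/4$, so the double sum is at most $\E_{\beta_c,r}|K|\cdot\E_{\beta_c,r}|K\setminus B_{R/4}|\preceq r^{2\alpha}(R/r)^{-p}$ by \eqref{CLp} and \cref{lem:first_moment_CLb}. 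The factor $|J'(r)|\sim r^{-d-\alpha-1}$ is precisely what supplies the missing $r^{-d}$; integrating in $r$ (the resulting power of $r$ exceeding $-1$ exactly when $p>d-\alpha$) gives the target bound, with the boundary term $\P_{\beta_c,1}(x\leftrightarrow y)$ controlled by exponential decay from sharpness. For $r\ge R/8$ one integrates the cruder bound $\frac{d}{dr}\P_{\beta_c,r}(x\leftrightarrow y)\preceq r^{-d+\alpha-1}$ from $R/8$ to $\infty$, which simultaneously establishes the pointwise critical upper bound $\P_{\beta_c}(x\leftrightarrow y)\preceq R^{-d+\alpha}$.

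Your FKG ball-averaging scheme cannot close the $r^d$ gap. With $\rho\asymp r$ the denominator is $\asymp r^\alpha$, and your two candidate upper bounds on the numerator $\sum_{y'\in B_{Ar}(y)}\P_{\beta_c,r}(x\leftrightarrow y')$ give, after division: (i) the moment tail bound $R^{-p}r^{p+\alpha}/r^\alpha=R^{-p}r^p$, which is exactly the direct Markov estimate you started from, and (ii) the Sak bound $\preceq R^\alpha/r^\alpha$, which is worse still when $R>r$. Varying $\rho$ does not help: raising $\rho$ above $r$ leaves the denominator capped at $\E_{\beta_c,r}|K|\asymp r^\alpha$ while neither numerator bound improves, and lowering $\rho$ only shrinks the denominator. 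There is no interpolation between (i) and (ii) that reaches the target $R^{-p}r^{p-d+\alpha}$: the $r^{-d}$ saving is inherently infinitesimal, coming from the edge density $|J'(r)|$ in a derivative-in-$r$ argument, and cannot be extracted from a static inequality at a single scale. (Your monotone-convergence reduction of the regime $r\ge R$ to the regime $r<R$ is also broken: a bound valid only for $r<R$ says nothing about $\lim_{r\to\infty}\P_{\beta_c,r}(x\leftrightarrow y)=\P_{\beta_c}(x\leftrightarrow y)$.)
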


\begin{proof}[Proof of \cref{prop:CL_to_two_point}]
We may assume by \cref{cor:large_alpha_not_CL} that $\alpha\leq d$.
We have by Russo's formula that
\[
  \frac{d}{dr}\P_{\beta_c,r}(x\leftrightarrow y) = \beta_c |J'(r)| \sum_{a,b\in \Z^d}\mathbbm{1}(\|a-b\|\leq r)\P_{\beta_c,r}\left( x\leftrightarrow a \nleftrightarrow b \leftrightarrow y\right)
\]
for every $x,y\in \Z^d$ and $r>0$ and hence by the BK inequality that
\begin{equation}
\label{eq:r_derivative_BK}
  \frac{d}{dr}\P_{\beta_c,r}(x\leftrightarrow y) \leq \beta_c |J'(r)| \sum_{a,b\in \Z^d}\mathbbm{1}(\|a-b\|\leq r)\P_{\beta_c,r}( x\leftrightarrow a)\P_{\beta_c,r}(b \leftrightarrow y).
\end{equation}
Using that $|J'(r)| \sim r^{-d-\alpha-1}$ and ignoring (for now) the constraint that $\|a-b\|\leq r$, it follows in particular that
\[
  \frac{d}{dr}\P_{\beta_c,r}(x\leftrightarrow y)  \preceq r^{-d-\alpha-1} \sum_{a,b\in \Z^d}\P_{\beta_c,r}( x\leftrightarrow a)\P_{\beta_c,r}(b \leftrightarrow y) \preceq r^{-d+\alpha-1},
\]
where we used \cref{lem:first_moment_CLb} in the second inequality. Integrating this inequality between $\|x-y\|/8$ and $\infty$ yields that
\[
  \P_{\beta_c,r}(x\leftrightarrow y) \preceq \P_{\beta_c,\|x-y\|/8}(x\leftrightarrow y) + \|x-y\|^{-d+\alpha}
\]
for every $r\geq \|x-y\|/8$. As such, it suffices to prove that
\begin{equation}
\label{eq:CL_Sak_small_r}
  \P_{\beta_c,r}(x\leftrightarrow y) \preceq_p \|x-y\|^{-d+\alpha} \left(\frac{\|x-y\|}{r}\right)^{-p+d-\alpha}
\end{equation}
for every $r\leq \|x-y\|/8$ and every $p>d-\alpha$ for which \eqref{CLp} holds. Note that if $r\leq \|x-y\|/8$ then for each pair $a,b \in \Z^d$ with $\|a-b\|\leq r$ we must either have that $\|x-a\|\geq \|x-y\|/4$, $\|y-b\|\geq \|x-y\|/4$, or both. Thus, we have by \eqref{eq:r_derivative_BK} and \eqref{eq:CLp_application} that if \eqref{CLp} holds and
 $r\leq \|x-y\|/8$ then
\[
  \frac{d}{dr} \P_{\beta_c,r}(x\leftrightarrow y) \preceq r^{-d-\alpha-1} \E_{\beta_c,r}|K|\E_{\beta_c,r}|K \setminus B_{\|x-y\|/4}| \preceq r^{-d+\alpha-1} \left(\frac{\|x-y\|}{r}\right)^{-p}.
\]
If $p>d-\alpha$ then the power of $r$ appearing here is strictly larger than $-1$ and we can integrate this differential inequality to obtain that if \eqref{CLp} holds and $r\leq \|x-y\|/8$ then
\begin{multline*}
   \P_{\beta_c,r}(x\leftrightarrow y) \preceq_p
r^{-d+\alpha+p} \|x-y\|^{-p} + \P_{\beta_c,1}(x\leftrightarrow y) 
\\
= \|x-y\|^{-d+\alpha} \left(\frac{\|x-y\|}{r}\right)^{-p+d-\alpha} + \P_{\beta_c,1}(x\leftrightarrow y).
\end{multline*}
The claim \eqref{eq:CL_Sak_small_r} follows since $\P_{\beta_c,1}(x\leftrightarrow y)$ decays exponentially in $\|x-y\|$ by the sharpness of the phase transition \cite{aizenman1987sharpness,duminil2015new,MR852458}.
\end{proof}

We are now ready to complete the proof of \cref{thm:CL_Sak}. The proof will apply the following elementary lemma which we make a note of since it will be used several times throughout the paper.

\begin{lemma}
\label{lem:scaling_sums}
Let $h:(0,\infty)\to (0,1]$ be a decreasing function decaying faster than any power in the sense that $h(x)=o(x^{-p})$ for every $p>0$. Then for each $a>-1$ there exists a constant $C_a=C_a(h)$ such that
\[
  \sum_{n=1}^\infty n^a h(n/r) \leq C_a r^{a+1}
\]
for every $r\geq 1$.
\end{lemma}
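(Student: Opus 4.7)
The plan is to split the sum dyadically at $n \approx r$ and use the two defining properties of $h$ on each piece. Specifically, I would write
\[
\sum_{n=1}^\infty n^a h(n/r) = \sum_{1\leq n\leq r} n^a h(n/r) + \sum_{n>r} n^a h(n/r)
\]
and bound the two pieces separately.

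For the first sum, I would use only that $h\leq 1$ (which is part of the hypothesis $h:(0,\infty)\to (0,1]$) together with the elementary estimate
\[
\sum_{1\leq n\leq r} n^a \leq C_a r^{a+1},
\]
valid because $a>-1$ (so the sum is comparable to $\int_0^r x^a\,\dif x = r^{a+1}/(a+1)$). For the second sum, the hypothesis that $h$ decays faster than any power lets me choose the exponent $p=a+2>1$ and obtain a constant $C=C(h,a)$ such that $h(y)\leq C y^{-(a+2)}$ for all $y\geq 1$. Since $n>r$ implies $n/r\geq 1$, this gives
\[
\sum_{n>r} n^a h(n/r) \leq C r^{a+2}\sum_{n>r} n^{-2} \preceq r^{a+2}\cdot r^{-1} = r^{a+1},
\]
using $\sum_{n>r} n^{-2}\asymp r^{-1}$. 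Adding the two bounds yields the claim.

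The main (minor) subtlety is that $h$ is only assumed measurable and decreasing, not continuous, but decreasing is already enough to extract $h(y)\leq Cy^{-(a+2)}$ from the ``faster than any power'' hypothesis applied with exponent $a+2$; the monotonicity also ensures the integral/sum comparison is valid without any smoothness. Since all the estimates hold uniformly in $r\geq 1$, the resulting constant $C_a$ depends only on $a$ and $h$, as required. There is no real obstacle here — it is a routine two-regime estimate included as a reusable tool.
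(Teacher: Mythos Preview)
Your proof is correct and essentially identical to the paper's: the paper also splits at $n\approx r$, uses $h\le 1$ on the first piece, and applies the superpolynomial decay with exponent $p=a+2$ on the tail (writing the two bounds together as $h(x)\le C_p(1\vee x)^{-p}$).
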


\begin{proof}[Proof of \cref{lem:scaling_sums}]
Since $h$ is decreasing, bounded, and decays faster than any power, it satisfies a bound of the form $h(x) \leq C_p(1 \vee x)^{-p}$ for every $p>0$. Taking $p=a+2>a+1$, we deduce that
\[
  \sum_{n=1}^\infty n^a h(n/r) \preceq_a \sum_{n=1}^{\lfloor r\rfloor} n^a + \sum_{n=\lfloor r\rfloor +1}^\infty n^{a-p} r^p \preceq_{a} r^{a+1} 
\]
for each $a>-1$ and every $r\geq 1$ as claimed.
\end{proof}

\begin{proof}[Proof of \cref{lem:CL_Sak_upper}]
The claimed consequences of \eqref{CL} regarding upper bounds on the two-point functions $\P_{\beta_c}(x\leftrightarrow y)$ and $\P_{\beta_c,r}(x\leftrightarrow y)$ follow immediately from \cref{prop:CL_to_two_point}. 
(Indeed, the claimed inequality with the rapidly decaying function $h$ is equivalent to satisfying an inequality of the form $\P_{\beta_c,r}(x\leftrightarrow y) \leq C_p \|x-y\|^{-d+\alpha} (1\vee \|x-y\|/r)^{-p}$ for every $p>0$.)
It remains to prove conversely that \eqref{eq:CL_h} implies \eqref{CL}.  To prove this, we can compute that
\[
  \E_{\beta_c,r}\sum_{x\in K} \|x\|_2^p \preceq \sum_{x\in \Z^d} \|x\|^{-d+\alpha+p} h(\|x\|/r) \preceq \sum_{n=1}^\infty n^{\alpha+p-1} h(n/r) \preceq_p r^{\alpha+p},
\]
where the final inequality follows from \cref{lem:scaling_sums}. Comparing this with the lower bound \eqref{eq:mean_lower_bound} yields that $\xi_p(r)\preceq_p r$ for every $p>0$ and $r\geq 1$ as required for \eqref{CL}.
\end{proof}

\subsection{The case $\alpha<1$}

In this section we prove \cref{thm:alpha<1_CL}, which states that \eqref{CL} holds when $\alpha<1$. In light of \cref{lem:CL_Sak_upper} it suffices to prove the following proposition.


\begin{prop}
\label{prop:alpha<1_sCL}
If $\alpha<1$ then there exists a positive constant $c$ such that
\[
  \P_{\beta_c,r}(x\leftrightarrow y) \preceq \|x-y\|^{-d+\alpha}\exp\left[-c\frac{\|x-y\|}{r}\right]
\]
for every $r\geq 1$ and distinct $x,y\in \Z^d$.
\end{prop}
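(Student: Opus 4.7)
The proof relies on the pointwise critical two-point function estimate $\P_{\beta_c}(x\leftrightarrow y)\asymp \|x-y\|^{-d+\alpha}$ for $\alpha<1$ from \cite[Theorem 1.4]{hutchcroft2024pointwise}. My plan is to establish a Simon--Lieb-type recursion based on the first crossing of a spherical boundary, and then iterate it at a scale comparable to $r$.

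For each scale $R\ge r$ I would apply the BKR inequality to the first edge of the path exiting $B=B(x,R)$, obtaining
\[
\tau_r(x,y) \;\le\; \sum_{\substack{w\in B,\, w'\notin B\\ \|w-w'\|\le r}} \tau_r^{B}(x,w)\,\bigl(1-e^{-\beta_c J_r(w,w')}\bigr)\,\tau_r(w',y) \qquad (\|x-y\|>R+r),
\]
where $\tau_r^{B}$ denotes the cut-off two-point function restricted to the induced subgraph on $B$. Writing $A_R := \sup_x \sum_{w,w'} \tau_r^{B}(x,w)\,\beta_c J_r(w,w')$ for the resulting escape-probability prefactor, iterating this inequality $k\asymp \|x-y\|/R$ times yields $\tau_r(x,y)\preceq A_R^{k} \cdot \sup_{w'}\tau_r(w',y)$. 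The target estimate $\tau_r(x,y)\preceq \|x-y\|^{-d+\alpha}e^{-c\|x-y\|/r}$ will then follow as soon as we establish $A_R \le 1-\delta$ for some $R=Cr$ and $\delta>0$; for the trivial range $\|x-y\|\le Cr$ the bound is already immediate from $\tau_r\le\tau_{\beta_c}\preceq \|x-y\|^{-d+\alpha}$.

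To bound $A_R$ I would use $\tau_r^{B}(x,w)\le \tau_{\beta_c}(x,w)\preceq \|x-w\|^{-d+\alpha}$ together with the elementary short-range kernel estimate
\[
\sum_{w':\,\|w-w'\|\le r,\,\|x-w'\|>R} \beta_c J_r(w,w') \;\preceq\; 1\wedge (R-\|x-w\|)^{-\alpha},
\]
obtained from $|J'(s)|\sim s^{-d-\alpha-1}$ by a direct geometric computation on the overlap region. After substitution this reduces $A_R$ to a Beta-function-type integral of the form $\int_0^1 (1-t)^{\alpha-1}t^{-\alpha}\dd t$, which is finite for $0<\alpha<1$; this yields $A_R\preceq 1$ uniformly in $R\ge r$, recovering only the polynomial critical estimate and not its exponential improvement. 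The main obstacle is to upgrade this boundedness into $A_R\le 1-\delta$ at scale $R\asymp r$: the critical pointwise estimate alone does not supply a sharp enough constant. I would close this gap by coupling the Simon--Lieb recursion with the slightly subcritical pointwise estimate $\P_\beta(x\leftrightarrow y)\asymp \|x-y\|^{-d+\alpha}(1\vee \|x-y\|/\xi^*(\beta))^{-2\alpha}$ from \cite[Theorem 1.5]{hutchcroft2024pointwise}: via a stochastic domination $\P_{\beta_c,r} \opleq \P_\beta$ for a suitably chosen $\beta<\beta_c$, one gains an extra polynomial pre-decay factor in the integrand defining $A_R$ that, upon choosing $R=Cr$ with $C$ large enough, drives $A_{Cr}\le 1/2$. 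Iterating then yields the claimed exponential decay at rate $c\asymp 1/C$.
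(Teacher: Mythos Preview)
Your Simon--Lieb iteration scheme is morally the same as the paper's, and the first four paragraphs are fine: the quantity $A_R$ is essentially the Duminil-Copin--Tassion functional $\phi_{\beta_c,r}(B_R)$, and once one knows $A_{Cr}\le 1/2$ the iteration yields the claim exactly as you describe. The problem is the final step where you try to force $A_{Cr}\le 1/2$.

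Your proposed fix via stochastic domination $\P_{\beta_c,r}\preceq \P_\beta$ does not close the gap. For such a domination to hold edge-by-edge you need $\beta_c J_r(x,y)\le \beta J(x,y)$ for \emph{every} pair, and the worst case is at the shortest edges, where $J_r/J = 1 - J(r)/J(\|x-y\|)$ is $1-O(r^{-d-\alpha})$. Hence the only admissible $\beta$ satisfy $\beta_c-\beta \preceq r^{-d-\alpha}$, which forces $\xi^*(\beta)\gg r$. The extra decay factor $(1\vee \|x-w\|/\xi^*(\beta))^{-2\alpha}$ from \cite[Theorem~1.5]{hutchcroft2024pointwise} is therefore trivial on the ball $B_R$ with $R\asymp r$, and you are back to $A_R\preceq 1$ with an uncontrolled constant. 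More generally, the heuristic ``cutting off at $r$ is like going to $\beta$ with $\xi^*(\beta)\asymp r$'' is exactly what one is trying to \emph{prove} here, so invoking it is circular.

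The paper avoids this by not trying to bound $\phi_{\beta_c,r}(\Lambda_R)$ for a fixed $R$ at all. Instead it averages over the box side-length: since all edges in the cut-off model have length at most $r$, each point of $K\cap\Lambda_N$ contributes to the boundary sum for at most $\sum_{k=1}^r k^{-\alpha}\asymp r^{1-\alpha}$ values of $n$, giving
\[
\frac{1}{N}\sum_{n=1}^N \phi_{\beta_c,r}(\Lambda_n)\;\preceq\; \frac{r^{1-\alpha}}{N}\,\E_{\beta_c,r}|K\cap\Lambda_N|\;\preceq\;(r/N)^{1-\alpha}.
\]
Taking $N=Cr$ with $C$ large makes this average at most $1/2$, so some $R\le Cr$ has $\phi_{\beta_c,r}(\Lambda_R)\le 1/2$. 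This ``random box'' averaging is where $\alpha<1$ is genuinely used and is the idea your argument is missing.
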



For each integer $n\geq 1$, let $\Lambda_n = [-n,n]^d \cap \Z^d$ be the box of side-length $2n+1$ in $\Z^d$. Following \cite{duminil2015new}, for each set $W \subseteq \Z^d$ containing the origin and each $r,\beta \geq 0$ we define
\[
  \phi_{\beta,r}(W) = \sum_{x\in W, y\notin W} \P_{\beta,r}(0 \xleftrightarrow{W} x) \beta J_r(x,y),
\]
where $\{0 \xleftrightarrow{W} x\}$ denotes the event that there is an open path from $0$ to $x$ using only vertices of $W$.
The correlation length is known in several contexts to coincide with the scale $n$ for which $\phi(\Lambda_n)$ is a small constant \cite{hutchcroft2024pointwise,hutchcroft2023high}. The following lemma shows that this  correlation length is $O(r)$ for the measure $\P_{\beta_c,r}$ when $\alpha<1$; its proof is very similar to similar estimates appearing for the measures $\P_{\beta}$ with $\beta<\beta_c$ in \cite{hutchcroft2024pointwise} and is related to the ``random box'' method of \cite{baumler2022isoperimetric}.

\begin{lemma}
\label{lem:alpha<1_phi}
If $\alpha<1$ then there exists a constant $C$ such that
\[
 \frac{1}{\lceil \lambda r \rceil}\sum_{n=1}^{\lceil \lambda r \rceil} \phi_{\beta_c,r}(\Lambda_{n}) 
 \leq C \lambda^{-1+\alpha}
\]
for every $\lambda,r\geq 1$.
\end{lemma}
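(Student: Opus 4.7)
The plan is to swap the order of summation over $n$ with the double sum defining $\phi_{\beta_c,r}$, so that the main quantity becomes a pair sum over edges $\{x,y\}$ weighted by the number of ``separating'' boxes $\Lambda_n$ plus the two-point function $\P_{\beta_c,r}(0 \leftrightarrow x)$.

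Let me set $N = \lceil \lambda r \rceil$. Expanding the definition of $\phi_{\beta_c,r}$ and using $\P_{\beta_c,r}(0 \xleftrightarrow{\Lambda_n} x) \leq \P_{\beta_c,r}(0 \leftrightarrow x)$, the essential observation is that for fixed $x,y$, the number of $n \in \{1,\ldots,N\}$ such that $x \in \Lambda_n$ and $y \notin \Lambda_n$ is at most $(\|y\|_\infty - \|x\|_\infty)_+ \leq \|y-x\|_\infty$, and also forces $\|x\|_\infty \leq N$. This gives
\[
\sum_{n=1}^N \phi_{\beta_c,r}(\Lambda_n) \preceq \sum_{x\in \Lambda_N}\P_{\beta_c,r}(0\leftrightarrow x) \sum_{y\in\Z^d} \beta_c J_r(x,y) \|x-y\|.
\]
Next I would estimate the inner sum. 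Since $\beta_c J_r(x,y) \preceq \|x-y\|^{-d-\alpha}\mathbbm{1}(\|x-y\|\leq r)$ by the normalization \eqref{eq:normalization_conventions} and the definition of $J_r$, a change of variables $z = y - x$ and a standard dyadic sum yield
\[
\sum_{y} \beta_c J_r(x,y) \|x-y\| \preceq \sum_{\substack{z\in\Z^d \\ \|z\|\leq r}} \|z\|^{1-d-\alpha} \preceq \sum_{k=1}^{\lfloor Cr\rfloor} k^{-\alpha} \preceq r^{1-\alpha},
\]
where the last bound uses $\alpha<1$ crucially (this is the only place where the hypothesis $\alpha<1$ enters, and is the sole reason the method breaks for $\alpha \geq 1$).

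Finally I would bound $\sum_{x \in \Lambda_N} \P_{\beta_c,r}(0\leftrightarrow x) \leq \sum_{x\in\Lambda_N}\P_{\beta_c}(0\leftrightarrow x) \preceq N^\alpha$ using the spatially averaged upper bound \eqref{eq:Sak_upper_restate} of \cite{hutchcroft2022sharp} (or equivalently the pointwise bound $\P_{\beta_c}(0\leftrightarrow x)\preceq \|x\|^{-d+\alpha}$ from \cite{hutchcroft2024pointwise}, which is already available for $\alpha<1$). Combining these estimates gives
\[
\sum_{n=1}^N \phi_{\beta_c,r}(\Lambda_n) \preceq r^{1-\alpha}\cdot (\lambda r)^\alpha = r\,\lambda^\alpha,
\]
and dividing by $N \asymp \lambda r$ produces the claimed bound $\frac{1}{N}\sum_{n=1}^N \phi_{\beta_c,r}(\Lambda_n) \preceq \lambda^{-1+\alpha}$. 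The main (only) technical obstacle is the edge-length sum, which is exactly where the condition $\alpha<1$ makes the integrand $\|z\|^{1-d-\alpha}$ have negative enough exponent for the contributions near $\|z\| \asymp r$ to dominate; there is no further difficulty.
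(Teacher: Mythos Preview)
Your proof is correct and follows essentially the same approach as the paper: swap the order of summation, bound the restricted connection probability by the unrestricted one, use $\alpha<1$ to control the edge-length sum (yielding the factor $r^{1-\alpha}$), and invoke the spatially averaged two-point bound to get $N^\alpha$. The only cosmetic difference is that the paper first sums over $y\notin\Lambda_n$ (getting a weight $\preceq (n-\|x\|_\infty)^{-\alpha}$, truncated at $r$) and then over $n$, whereas you first sum over $n$ (getting the count $\leq \|x-y\|_\infty$) and then over $y$; these are two sides of the same Fubini computation.
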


\begin{proof}[Proof of \cref{lem:alpha<1_phi}]
For $m\geq n \geq 1$ and each vertex $x \in \Lambda_n$, the total weight of edges with one endpoint at $x$ and the other in $\Lambda_m^c$ is of order at most $(m-n)^{-\alpha}$. Since $\alpha<1$ we can average over the sidelength of the box and obtain the bound
\begin{align*}
 \frac{1}{N}\sum_{n=1}^{N} \phi_{\beta_c,r}(\Lambda_{n}) \preceq \frac{1}{N} \sum_{x \in \Lambda_N} \P_{\beta_c,r}(x\in K) \sum_{k = 1}^{r} k^{-\alpha} 
 \preceq \frac{1}{N}r^{1-\alpha}\E_{\beta_c,r}|K\cap \Lambda_N| \preceq \left(\frac{r}{N}\right)^{1-\alpha},
\end{align*}
where we used that edges of length longer than $r$ do not contribute to $\phi_{\beta_c,r}(\Lambda_n)$ to truncate the sum at $r$ in the second term and used \cref{I-thm:two_point_spatial_average_upper} in the last inequality.
\end{proof}


\begin{proof}[Proof of \cref{prop:alpha<1_sCL}]
Let $C_1\geq 1$ be a constant such that $B_r$ is contained in $\Lambda_{C_1r}$.
By \cref{lem:alpha<1_phi}, there exists a constant $C_2 \geq 1$ such that
\[
 \frac{1}{\lceil C_2r \rceil}\sum_{n=1}^{\lceil C_2 r \rceil} \phi_{\beta_c,r}(\Lambda_{n})  \leq \frac{1}{2}
\]
for every $r\geq 1$.
Thus, for each $r\geq 1$ there exists an integer $R(r)$ with $R(r)\leq 2C_2r$ such that
\[\phi_{\beta_c,r}(\Lambda_{R(r)}) \leq \frac{1}{2}.\]
%
%
%
%
%
%
Let $R'(r)=R(r)+C_1 r$.
 If $0$ is connected to some point $x$ with $\|x\|_\infty\geq 8C_1 R(r)$, then there exists
  a sequence of points $0=x_0,x_1,\ldots,x_{2n}$ 
with 
\[
  n= \lfloor \|x\|_\infty / 2R'(r)\rfloor
\]
  such that $x_{2i+2}$ does not belong to $\Lambda_{R(r)}(x_{2i})$ for each $0\leq i \leq n-1$, $x_{2n}$ has $\ell_\infty$-distance at least $\|x\|_\infty/2$ from $\|x\|_\infty$, and the following events all occur disjointly:
\begin{enumerate}
  \item For each $0\leq i \leq n-1$, $x_{2i}$ is connected to $x_{2i+1}$ within the box $\Lambda_{R(r)}(x_{2i})$ and the edge $\{x_{2i+1},x_{2i+2}\}$ is open for each $1\leq i \leq n$.
  \item $x_{2n}$ is connected to $x$.
\end{enumerate}
Indeed, such a collection of points can be found by taking a simple open path from $0$ to $x$ and iteratively taking $\{x_{2i+1},x_{2i+2}\}$ to be the first edge taken by the path after $x_{2i}$ that leaves the box $\Lambda_{R(r)}(x_{2i})$, stopping after $n$ iterations; this final point $x_{2n}$ has 
\[\|x_{2n}\|_\infty \leq n(R(r)+C_1r) = n R'(r) \leq \frac{1}{2} \|x\|_\infty \]
so that $\|x_{2n}-x\|_\infty \geq \|x\|_\infty /2$ as required.
Applying a union bound and the BK inequality yields that there exists a positive constant $c$ such that
\[
 \P_{\beta_c,r}(0\leftrightarrow x) \preceq 2^{-\|x\|_\infty/2R'(r)} \|x\|^{-d+\alpha}
 \preceq \exp\left[-c\frac{\|x\|}{r} \right] \|x\|^{-d+\alpha},
\]
where we used that $\|x_n-x\|\succeq \|x\|$, that $R'(r)\preceq r$, and the pointwise upper bound of \eqref{eq:two_point_pointwise} (i.e.\ the main result of \cite{hutchcroft2024pointwise}) to bound the last connection probability.
\end{proof}

\section{Higher Gladkov inequalities and their interpretations}
\label{subsec:higher_gladkov_inequalities_and_the_k_point_function}

In this section we prove our results concerning \emph{upper bounds} on the $k$-point function in the effectively long-range low-dimensional regime. (We return to prove matching \emph{lower bounds} on the $k$-point function in \cref{subsec:_k_point_function_hyperscaling}.) 
%
%
%
%
We begin by proving the following $k$-point generalization of the Gladkov inequality \cite{gladkov2024percolation}, which applies to Bernoulli bond percolation on arbitrary weighted graphs.

\begin{thm}[Higher-order Gladkov inequality]
\label{thm:higher_Gladkov}
 Let $G=(V,E,J)$ be a weighted graph, let $\beta\geq 0$, and for each set of vertices $A$ let $\tau(A)=\tau_\beta^G(A)$ denote the probability that the set of vertices $A$ all belong to the same cluster of Bernoulli bond percolation on $G$. Then the inequality 
\[
  \tau(A)^2 \leq 4(|A|-1)^2 (2^{|A|-2}-1) \max  \tau(A_1\cup A_2)\tau(A_2\cup A_3)\tau(A_3\cup A_1)
\]
holds for every set $A$ with $|A|\geq 3$, where the maximum is taken over partitions of $A$ into three non-empty subsets $A_1$, $A_2$, and $A_3$.
\end{thm}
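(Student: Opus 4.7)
The plan is to deduce this generalized Gladkov inequality from the three-point version \eqref{eq:Gladkov_intro} by an induction on $k=|A|$, combined with a union bound over a family of auxiliary partitions.

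For the base case $k=3$, the only partition of $A$ into three non-empty subsets consists of three singletons, so that each $A_i\cup A_j$ is a two-element set and $\tau(A_i\cup A_j)$ reduces to the two-point function. The three-point Gladkov inequality \eqref{eq:Gladkov_intro} directly gives $\tau(A)^2 \leq 8\,\tau(A_1\cup A_2)\tau(A_2\cup A_3)\tau(A_3\cup A_1)$, which is strictly stronger than the claimed bound, whose constant $4\cdot 2^2\cdot 1=16$ is exactly twice Gladkov's.

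For the inductive step $k\geq 4$, the naive attempt of applying \eqref{eq:Gladkov_intro} to a triple of representatives $v_i\in A_i$ yields $\tau(A)^2 \leq 8\,\tau(v_1,v_2)\tau(v_2,v_3)\tau(v_3,v_1)$, but the two-point functions $\tau(v_i,v_j)$ appearing here are strictly larger than the set-connection probabilities $\tau(A_i\cup A_j)$ required by the theorem. To close this gap, I would fix a vertex $v_0\in A$ and sum the inequalities obtained by applying \eqref{eq:Gladkov_intro} to the triples $(v_0, b, c)$ over the $2^{k-2}-1$ unordered two-block partitions $(B, C)$ of $A\setminus\{v_0\}$ and over the representative pairs $(b,c)\in B\times C$ (contributing a factor bounded by $|B|\cdot|C|\leq (k-1)^2/4$). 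Applying the inductive hypothesis to the strict subsets $\{v_0\}\cup B$ and $\{v_0\}\cup C$ then converts the two-point functions into the required set-connection probabilities at the cost of multiplicative constants; optimising over the choice of $v_0$ and tracking these constants carefully yields the claimed prefactor $4(k-1)^2(2^{k-2}-1)$.

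The main obstacle will be the constant tracking in this inductive step: at each level of the induction the application of the inductive hypothesis multiplies the constant, and without a careful choice of representatives the blowup could easily become exponential of the form $C^k$ with $C>4$ rather than the desired $O(k^2\cdot 2^k)$. A cleaner alternative that might avoid this difficulty would be to directly adapt Gladkov's original switching/coupling argument to the set-valued setting, working throughout with connected subsets rather than using \eqref{eq:Gladkov_intro} as a black box and then patching up the mismatch between two-point and set-connection probabilities.
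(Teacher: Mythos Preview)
Your primary plan---reducing to the three-point inequality \eqref{eq:Gladkov_intro} by induction---has a genuine gap that is more serious than constant tracking. The set-connection probability $\tau$ is \emph{decreasing} in the set: $\tau(\{v_0,b\}) \geq \tau(\{v_0\}\cup B)$ whenever $b\in B$. So after applying \eqref{eq:Gladkov_intro} to a triple of representatives you obtain $\tau(A)^2 \leq 8\,\tau(v_0,b)\tau(b,c)\tau(c,v_0)$ with two-point functions on the right, and there is no way to replace these by the \emph{smaller} quantities $\tau(\{v_0\}\cup B)$, $\tau(B\cup C)$, $\tau(\{v_0\}\cup C)$ while preserving an upper bound. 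The inductive hypothesis is of no help either: it gives an upper bound on $\tau(\{v_0\}\cup B)^2$ in terms of tripartitions of that smaller set, which is the wrong quantity in the wrong direction. Summing over representatives or over bipartitions $(B,C)$ does not repair this, since each summand on the left is already at least $\tau(A)^2$ while each summand on the right remains a product of two-point functions.

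Your ``cleaner alternative''---re-running Gladkov's coupling argument directly in the set-valued setting---is exactly what the paper does, and is not merely an alternative but the approach that actually works. The paper fixes $a_0\in A$, explores the cluster of $a_0$ by depth-first search stopped at the moment all but one point of $A$ has been discovered, and couples two conditionally independent continuations from that stopping time. Jensen's inequality bounds below the probability that both continuations reach the missing point $a$; on that event the explored tree together with the two continuation paths can be split---using the DFS structure to guarantee both pieces are non-trivial---so as to witness the disjoint occurrence of $\{A_1\cup A_2\text{ connected}\}$, $\{A_2\cup A_3\text{ connected}\}$, $\{A_3\cup A_1\text{ connected}\}$ for a tripartition with $A_3=\{a\}$ a singleton. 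Gladkov's BK inequality for decision trees then yields the bound after summing over $a\in A\setminus\{a_0\}$ and applying Cauchy--Schwarz; the factors $(|A|-1)^2$ and $2^{|A|-2}-1$ arise from that Cauchy--Schwarz step and from the number of non-empty $S\subsetneq A\setminus\{a_0,a\}$, respectively.
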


\begin{remark}
\label{rmk:improved_higher_Gladkov} The proof actually establishes a stronger but less aesthetically pleasing inequality in which we restrict $A_3=\{x\}$ to be a singleton where $x\in A \setminus \{x_0\}$ for some $x_0$ that we are free to choose. As we will see, the symmetrized inequality we state here still leads to sharp up-to-constants estimates on $k$-point functions in the low-effective-dimensional regime.
\end{remark}

\begin{remark}
In addition to their low-dimensional applications in this section, these inequalities also have applications to bounding error terms in our study of superprocess scaling limits at the upper critical dimension in \cref{I-sec:superprocesses} (see the proof of \cref{I-lem:gyration_derivative}). 
\end{remark}

We can of course apply this inequality recursively to obtain bounds on $k$-point functions in terms of two-point functions. For $k=4$, we obtain the upper bound
\begin{multline}
\label{eq:4pt_Gladkov}
  \tau(x_1,x_2,x_3,x_4)^4
  \\ \preceq \left[\prod_{i<j} \tau(x_i,x_j)\right] \left(\tau(x_1,x_2)\tau(x_3,x_4)+\tau(x_1,x_3)\tau(x_2,x_4)+\tau(x_1,x_4)\tau(x_2,x_3)\right).
\end{multline}
(Since we are not keeping track of constant prefactors there is no difference between taking sums and maxima.) This inequality can be expressed diagrammatically as
\begin{equation}
\label{eq:4pt_Gladkov_diagrammatic}
  \tau(x_1,x_2,x_3,x_4) \preceq \left(\begin{array}{l}\includegraphics{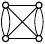}\end{array}+\begin{array}{l}\includegraphics{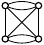}\end{array}+\begin{array}{l}\includegraphics{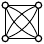}\end{array}\right)^{1/4},
\end{equation}
where each line or curve represents a copy of the two-point function. Continuing the same reasoning, it follows by induction that
\begin{equation}
\label{eq:kpt_Gladkov_graphs}
  \tau(x_1,\ldots,x_k) \preceq_k \left( \sum_Q \prod_{i<j} \tau(x_i,x_j)^{Q_{i,j}}\right)^{2^{-k+2}}
  \asymp_k  \sum_Q \prod_{i<j} \tau(x_i,x_j)^{2^{-k+2} Q_{i,j}}
\end{equation}
where the sum is taken over connected $2^{k-2}$-regular multigraphs $Q$ without self-loops on the vertex set $\{1,\ldots,k\}$ and $Q_{i,j}$ denotes the number of edges between $i$ and $j$. 

\medskip

\begin{proof}[Proof of \cref{thm:higher_Gladkov}]
We keep the proof brief as it is almost identical to the proof of the three-point Gladkov inequality \cite{gladkov2024percolation}. 
Let $A \subseteq V$ be finite, fix $a_0 \in A$, and consider exploring the cluster of $a_0$ in a depth first manner, stopping if and when all but one of the points $A$ have been found. Denote this stopping time by $T$. For each $a \in A\setminus \{a_0\}$, let $\mathscr{R}_a$ denote the event that the depth-first search discovers that the points $A \setminus \{a\}$ are connected before terminating or exploring the vertex $a$, so that $\tau(A)=\sum_{a\in A \setminus \{a_0\}} \P(\mathscr{R}_a \cap \{a_0 \leftrightarrow a\})$.  We can define two coupled copies of the depth-first search that are identical up to the stopping time $T$ and then continue in a conditionally independent manner from this stopping time onwards. If we define $\mathscr{A}$ to be the event that both coupled clusters include all of the points of $A$ and write $\mathscr{A}_a=\mathscr{A}\cap \mathscr{R}_a$ for each $a\in A\setminus \{a_0\}$, we can write
\begin{multline}
\label{eq:Gladkov_Jensen}
  \P(\mathscr{A}_a) = \P(\mathscr{R}_a) \E \left[ \P(a \in K \mid \mathcal{F}_T)^2 \mid \mathscr{R}_a \right] 
  \\\geq \P(\mathscr{R}_a)\P(a \in K\mid \mathscr{R}_a)^2 
  =\frac{\P(\{A\text{ all connected}\}\cap \mathscr{R}_a)^2}{\P(\mathscr{R}_a)}, 
\end{multline}
where $\mathcal{F}_T$ is the sigma-algebra generated by the first stage of the exploration process and where we used Jensen's inequality on the second line. On the other hand, if the event $\mathscr{A}_a$ holds, then in our two coupled clusters we must have a shared tree $H$ of open edges discovered in the first stage of the depth-first search connecting $a_0$ to all of the points in $A\setminus \{a_0,a\}$ together with paths $\gamma_1$ and $\gamma_2$ in each of the two continuations of the cluster connecting this tree to $a$; these two paths $\gamma_1$ and $\gamma_2$ may be taken to use \emph{only} edges that are revealed \emph{after} time $T$, when the two explorations are evolving in a conditionally independent manner, and may also be taken to visit the set of vertices of $H$ only at their starting points. By symmetry, the probability given $\mathscr{A}_a$ that $a_0$ is connected in $H$ to the first vertex of the path $\gamma_1$ by a path that does not use any vertices visited by $\gamma_2$ other than possibly at its endpoint is at least $1/2$.

\begin{figure}
\centering
\includegraphics{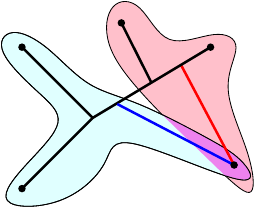}
\caption{Schematic illustration of the disjoint occurrence in the proof of the Gladkov inequality.}
\label{fig:Gladkov_proof}
\end{figure}

\medskip

On this event, we can split the tree $H$ into two edge-disjoint connected subgraphs meeting at the starting point of the path $\gamma_1$, with the vertex sets of these two subgraphs yielding a covering of the set $A\setminus \{a\}$ by two non-trivial subsets. The fact that both subsets are non-trivial follows from the fact that $H$ was computed by a depth-first search, since any edge adjacent to $H$ that was not explored in the first stage of the algorithm (and is therefore a candidate to be the first edge in one of the two paths $\gamma_1$ or $\gamma_2$) must lie on a path from $a_0$ to the last point of $A$ revealed when running the search up to time $T$. (It is possible that the paths meet $H$ \emph{at} this vertex, in which case we take one of the edge-disjoint subgraphs to be the graph consisting of this vertex and no edges. This is not a problem.) See \cref{fig:Gladkov_proof} for an illustration.  As such, we have by Gladkov's BK inequality for decision trees \cite[Theorem 4.3]{gladkov2024percolation} that
\begin{equation}
\label{eq:Gladkov_BK}
  \P(\mathscr{A}_a) \leq  2\sum_{\substack{S \subseteq A \setminus \{a_0,a\} \\ \text{nonempty}}} \tau(S \cup \{a\})\tau(A \setminus S).
\end{equation}
Putting the two inequalities \eqref{eq:Gladkov_Jensen} and \eqref{eq:Gladkov_BK} together and rearranging, we obtain that
\begin{equation*}
  \P(\{A \text{ all connected}\}\cap \mathscr{R}_a)^2 \leq 
  2\P(\mathscr{R}_a)\sum_{\substack{S \subseteq A \setminus \{a_0,a\} \\ \text{nonempty}}} \tau(S \cup \{a\})\tau(A \setminus S)
\end{equation*}
for every $2\leq i \leq n$. Noting that $\P(\mathscr{R}_a) \leq \tau(A \setminus \{a\})$ for every $a\in A\setminus \{a_0\}$, we have by Cauchy-Schwarz that
\begin{align*}
  \tau(A)^2 &= \Bigl(\sum_{a\in A\setminus\{a_0\}} \P(\{A \text{ all connected}\}\cap \mathscr{R}_a)\Bigr)^2
  \\
  &\leq (|A|-1) \sum_{a\in A\setminus \{a_0\}} \P(\{A \text{ all connected}\}\cap \mathscr{R}_a)^2 
  \\ &\leq 2(|A|-1) \sum_{a\in A\setminus \{a_0\}}
  \tau(A\setminus \{a\})\sum_{\substack{S \subseteq A \setminus \{a_0,a\} \\ \text{nonempty}}} \tau(S \cup \{a\})\tau(A \setminus S)
  \\ &\leq 2(|A|-1)^2 (2^{|A|-2}-1) \max \tau(A_1\cup A_2)\tau(A_2\cup A_3)\tau(A_3\cup A_1)
\end{align*}
as claimed, where the maximum is taken over partitions of $A$ into three non-empty subsets.
\end{proof}

\subsection{Geometric Gladkov}
\label{subsec:geometric_gladkov}

In the case that the two-point function has power-law decay, the recursive structure of the higher Gladkov inequality is captured by the function $S$ introduced in \eqref{eq:S_def}:

\begin{corollary}
Let $J$ be a symmetric kernel on $\Z^d$ and suppose that critical long-range percolation with kernel $J$ satisfies a two-point function bound of the form $\tau_{\beta_c}(x,y) \leq C \|x-y\|_2^{-d+2-\eta}$ for some $\eta\in \R$ and $C>0$. Then 
\[\tau_{\beta_c}(A) \preceq_{|A|} S(A)^{-\frac{d-2+\eta}{2}} \]
for every finite set $A \subseteq \Z^d$ with $|A|\geq 2$.
\end{corollary}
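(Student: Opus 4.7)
The approach is strong induction on $|A|$, powered by the higher-order Gladkov inequality (\cref{thm:higher_Gladkov}) on the analytic side and by the recursive definition \eqref{eq:S_def} of $S$ on the geometric side. The base case $|A|=2$ is immediate: the hypothesis gives $\tau_{\beta_c}(x,y) \le C\|x-y\|_2^{-d+2-\eta} = C \bigl(\|x-y\|_2^2\bigr)^{-(d-2+\eta)/2} = C\, S(\{x,y\})^{-(d-2+\eta)/2}$.

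For the inductive step, fix $A$ with $|A|\ge 3$ and consider any ordered partition $(A_1,A_2,A_3)$ of $A$ into non-empty subsets. Each pairwise union $A_i \cup A_j$ ($i\ne j$) has cardinality between $2$ and $|A|-1$, so the inductive hypothesis applies and gives $\tau_{\beta_c}(A_i \cup A_j) \preceq_{|A|} S(A_i \cup A_j)^{-(d-2+\eta)/2}$. Multiplying three such bounds and substituting into \cref{thm:higher_Gladkov} yields
\[
  \tau_{\beta_c}(A)^2 \preceq_{|A|} \max_{(A_1,A_2,A_3)} \bigl[S(A_1\cup A_2)\,S(A_2\cup A_3)\,S(A_3\cup A_1)\bigr]^{-(d-2+\eta)/2},
\]
where the maximum ranges over ordered partitions of $A$ into three non-empty parts. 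In the regime of interest $d-2+\eta \ge 0$ (which is precisely where the two-point hypothesis carries non-trivial content at large distances), the map $x \mapsto x^{-(d-2+\eta)/2}$ is non-increasing on $(0,\infty)$, so the maximum on the right pulls inside as a minimum on $S(A_1\cup A_2)S(A_2\cup A_3)S(A_3\cup A_1)$. By the defining relation
\[
  S(A) = \min \sqrt{S(A_1\cup A_2)\,S(A_2\cup A_3)\,S(A_3\cup A_1)},
\]
the minimum of the product is exactly $S(A)^2$. Hence $\tau_{\beta_c}(A)^2 \preceq_{|A|} S(A)^{-(d-2+\eta)}$, and extracting square roots closes the induction.

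The proof is essentially a transparent matching of the trilinear Gladkov recursion with the trilinear recursion defining $S$, and I do not expect any serious obstacle. The only mild point requiring care is the monotonicity step converting $\max$ over partitions into $\min$, which uses $(d-2+\eta)/2 \ge 0$; the complementary regime $d-2+\eta<0$ is essentially vacuous since the two-point hypothesis then fails to be informative at large distances and the conclusion follows trivially from $\tau_{\beta_c}(A) \le 1$ after a cosmetic adjustment of constants. The implicit constant degrades with $|A|$, as permitted by the $\preceq_{|A|}$ notation.
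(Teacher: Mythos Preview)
Your proof is correct and follows the same inductive scheme the paper uses (cf.\ the proof of \cref{cor:geometric_Gladkov}): base case from the two-point hypothesis, inductive step by combining \cref{thm:higher_Gladkov} with the recursive definition \eqref{eq:S_def} of $S$. The paper does not give a separate proof of this corollary, treating it as immediate, and your argument is exactly the intended one; your remark on the sign of $d-2+\eta$ is a nice point of care that the paper leaves implicit.
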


Our next goal is to give a more transparent geometric interpretation of this inequality that will be useful when we prove matching lower bounds in \cref{subsec:_k_point_function_hyperscaling}.

\begin{defn}[Sweep and spread]
\label{def:sweep_and_spread}
An \textbf{arboresence} on $A$ is a directed tree with vertex set $A$ oriented towards some root vertex (so that every vertex other than the root has exactly one directed edge emanating from it). Given an arboresence on $A$, we define the \textbf{parent} of each non-root vertex $x$ to be the other endpoint of the unique directed edge in the arboresence emanating from $x$, and for each $x\in A$ define $A_x$ to be the set of vertices consisting of $x$, the parent of $x$, and all vertices having $x$ as an ancestor, setting $A_x=A$ when $x$ is the root vertex. We define the \textbf{sweep}  of a set $A \subseteq \Z^d$ to be
\[
  \operatorname{sweep}(A) = \min \prod_{x\in A} \operatorname{diam}(A_{x})
\]
where the minimum is taken over arboresences on $A$ and diameters are computed with respect to the Euclidean norm. (See \cref{fig:sweep} for an illustration.) Note that if $A=\{x,y\}$ then $\operatorname{sweep}(\{x,y\})=\|x-y\|_2^2$.
Following Benjamini, Kesten, Peres, and Schramm \cite{BeKePeSc04}, we also define the \textbf{spread} of a set $A \subseteq \Z^d$ to be 
\[
  \operatorname{spread}(A) := \min_T \prod_{\{x,y\} \in T} \|x-y\|_2
\]
where the minimum is taken over subsets $T$ of the set of unordered pairs of distinct elements of $A$ that form the edge set of a tree with vertex set $A$. By \cite[Lemma 2.6 and Remark 2.7]{BeKePeSc04}, the spread is given up to constants by
\begin{equation}
\label{eq:spread_simple_expression}
  \operatorname{spread}(\{x_1 \ldots x_n\}) \asymp_n \prod_{i=2}^{n} d(x_i,\{x_1,\ldots,x_{i-1}\}).
\end{equation}
 (It follows in particular that this product does not depend on the ordering of the $x_i$ up to an $n$-dependent constant.)
 \end{defn}
%

\begin{figure}
\centering
\includegraphics[height=6.5cm]{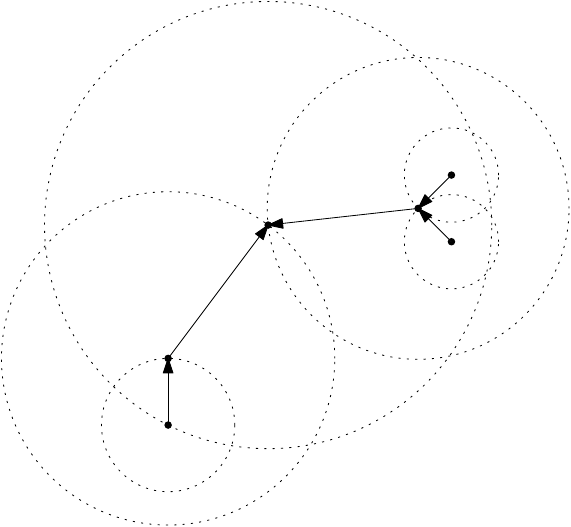}
\caption{An example of the arboresence attaining the minimum in the definition of the sweep on a set of six points in the plane. Each point $x\in A$ is associated to a disc that bounds the diameter of its associated set $A_x$ in the arboresence (dotted circles); the sweep is the product of the radii of these discs in the optimal arboresence.}
\label{fig:sweep}
\end{figure}


The definition we have given of the sweep is chosen to make the proof of our $k$-point \emph{lower bound} of \cref{thm:k_point_S,thm:planar_k_point} go through as cleanly as possible. The following lemma relates the sweep to the spread and therefore yields a simple method to compute the sweep up to constants.

\begin{lemma}
\label{lem:sweep_and_spread}
If $A\subseteq \Z^d$ is finite with $|A| \geq 2$ then $\operatorname{sweep}(A) \asymp_{|A|} \operatorname{diam}(A) \operatorname{spread}(A)$.
\end{lemma}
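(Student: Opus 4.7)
The plan is to establish the two inequalities separately. For the \emph{lower bound} $\operatorname{sweep}(A) \geq \operatorname{diam}(A)\operatorname{spread}(A)$, I would unpack the definitions: for any arborescence on $A$ with root $r$, the factor corresponding to $r$ in the product $\prod_{x\in A}\operatorname{diam}(A_x)$ equals $\operatorname{diam}(A)$, and for every non-root vertex $x$ the set $A_x$ contains both $x$ and its parent, so $\operatorname{diam}(A_x)\geq \|x-\operatorname{parent}(x)\|_2$. Since the undirected skeleton of the arborescence is a spanning tree of $A$, the product $\prod_{x\neq r}\|x-\operatorname{parent}(x)\|_2$ is bounded below by $\operatorname{spread}(A)$ by definition, which yields the lower bound after multiplying by $\operatorname{diam}(A)$.

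For the matching \emph{upper bound} I plan to construct an explicit arborescence via a farthest-first traversal of $A$. Fix any $x_1\in A$ and iteratively choose $x_{i+1}\in A\setminus\{x_1,\ldots,x_i\}$ to maximize $d(\cdot,\{x_1,\ldots,x_i\})$, with ties broken arbitrarily; set $d_i := d(x_i,\{x_1,\ldots,x_{i-1}\})$ for $i\geq 2$ and take the parent of $x_i$ in the arborescence to be any nearest point in $\{x_1,\ldots,x_{i-1}\}$, so that $\|x_i-\operatorname{parent}(x_i)\|_2 = d_i$. A one-line induction using the farthest-first rule (each $x_{i+2}$ still lies in $A\setminus\{x_1,\ldots,x_i\}$, so its distance to that set is at most $d_{i+1}$) shows that $d_2\geq d_3\geq\cdots\geq d_n$.

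The key computation is then the bound $\operatorname{diam}(A_{x_i})\leq |A|\cdot d_i$. Since $A_{x_i}$ is the vertex set of a subtree of the arborescence --- namely $\{x_i,\operatorname{parent}(x_i)\}$ together with all descendants of $x_i$ --- its Euclidean diameter is controlled, via the triangle inequality along tree paths, by the sum of Euclidean lengths of edges in that subtree, which equals $d_i+\sum_{x_k\in D_i}d_k$ with $D_i$ the set of proper descendants of $x_i$. Every element of $D_i$ has index strictly greater than $i$, so the monotonicity of $d_\bullet$ gives $d_k\leq d_i$ for every $x_k\in D_i$ and the claimed bound follows. Multiplying across $i=2,\ldots,n$, combining with the root factor $\operatorname{diam}(A_{x_1})=\operatorname{diam}(A)$, and invoking the order-independent identity $\operatorname{spread}(A)\asymp_{|A|}\prod_{i=2}^n d_i$ from \eqref{eq:spread_simple_expression} then yields $\operatorname{sweep}(A)\leq C_{|A|}\operatorname{diam}(A)\operatorname{spread}(A)$. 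I do not anticipate any serious obstacles: every step is either a triangle inequality along a tree path or a direct application of \eqref{eq:spread_simple_expression}, and the only conceptual ingredient is the choice of the farthest-first ordering, which is precisely what is needed so that descendants in the arborescence carry smaller $d_\bullet$-values and the subtree edge-length sum collapses to the single factor $d_i$ coming from the parent edge of $x_i$.
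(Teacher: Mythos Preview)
Your proposal is correct and follows essentially the same approach as the paper: the same trivial lower bound from unpacking the definitions, and the same farthest-first traversal arborescence for the upper bound, with the same key estimate $\operatorname{diam}(A_{x_i})\leq |A|\cdot d_i$ justified via monotonicity of the $d_i$. If anything, you give slightly more detail than the paper does in justifying that descendants of $x_i$ have larger index and hence smaller $d_\bullet$-value.
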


\begin{proof}[Proof of \cref{lem:sweep_and_spread}]
Given an arboresence $T^\rightarrow$ with associated undirected tree $T$,  we have trivially that
\[
  \prod_{x\in A} \operatorname{diam}(A_x) \geq \operatorname{diam}(A) \prod_{\{x,y\} \in T} \|x-y\|_2,
\]
where the $\operatorname{diam}(A)$ term comes from the root of the arboresence. Taking minima, this implies that 
\[\operatorname{sweep}(A) \geq \operatorname{diam}(A) \operatorname{spread}(A)\]
for every set $A\subseteq \Z^d$.
To prove the other bound, first observe that 
if we fix $a_1 \in A$ arbitrarily and then at each subsequent step pick $a_i \in A \setminus \{a_1,\ldots,a_{i-1}\}$ to maximize $d(a_i,\{a_1,\ldots,a_{i-1}\})$ then the resulting sequence of distances $(d(a_i,\{a_1,\ldots,a_{i-1}\}))_{i=2}^n$ is decreasing. If we consider the associated arboresence that has root $a_1$ and $a_i$ connected to the point in $\{a_1,\ldots,a_{i-1}\}$ of minimal distance to $a_i$ for each $2\leq i \leq n$, which we denote by $p(a_i)$, then the diameter of the set $A_{a}$ is bounded by $|A|\cdot \|a_i-p(a_i)\|_2$ for each $2\leq i \leq n$. This ensures that 
\[
  \operatorname{sweep}(A) \leq |A|^{|A|-1} \operatorname{diam}(A)   \prod_{i=2}^n d(a_i,\{a_1,\ldots,a_{i-1}\}) \asymp \operatorname{diam}(A) \operatorname{spread}(A),
\]
where the final estimate follows from \eqref{eq:spread_simple_expression}. \qedhere
\end{proof}

We now want to relate the sweep to the recursive structure of the Gladkov inequality. This lemma also establishes the up-to-constants equivalence between the sweep and the quantity $S(A)$ defined in the introduction: We recall that
this quantity was defined recursively by $S(x,y)=\|x-y\|_2^2$ and  
$S(A) = \min \sqrt{S(A_1 \cup A_2)S(A_2 \cup A_3)S(A_3 \cup A_1)}$ when $|A|\geq 3$, where the minimum is taken over partitions of $A$ into three non-empty subsets.

\begin{lemma}
\label{lem:sweep_recursion}
If $A\subseteq \Z^d$ is finite with $|A|\geq 3$ then
\[\operatorname{sweep}(A) \asymp_{|A|} \min \sqrt{\operatorname{sweep}(A_1 \cup A_2)\operatorname{sweep}(A_2 \cup A_3)\operatorname{sweep}(A_3 \cup A_1)},
\]
 where the minimum is taken over partitions of $A$ into three non-empty subsets $A_1$, $A_2$, and $A_3$. Consequently, the estimate
 \[
   \operatorname{sweep}(A) \asymp_{|A|} S(A)
 \]
holds for every subset $A \subseteq \Z^d$ with at least two points.
\end{lemma}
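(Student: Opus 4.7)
I would prove both statements jointly by strong induction on $|A|$, noting that the base case $|A|=2$ is trivial since $\operatorname{sweep}(\{x,y\}) = \|x-y\|_2^2 = S(\{x,y\})$. For the inductive step with $|A|\ge 3$, the second assertion $\operatorname{sweep}(A)\asymp_{|A|} S(A)$ is a formal consequence of the first: both quantities are obtained by applying the same ``$\min\sqrt{\,\cdot\,}$'' operation over three-part partitions of $A$ to the values of the corresponding function on the pair-unions $A_i\cup A_j$, each of which has strictly fewer than $|A|$ elements (because the omitted part is non-empty), so the inductive hypothesis $\operatorname{sweep}(A_i\cup A_j)\asymp_{|A|} S(A_i\cup A_j)$ and the claimed recursion imply the result. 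The real work is therefore the first statement, which I would attack using \cref{lem:sweep_and_spread} to replace $\operatorname{sweep}$ by $\operatorname{diam}\cdot\operatorname{spread}$ throughout.

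For the upper bound direction (which needs to hold for every partition), I would handle diameter and spread factors separately. Given a partition $A = A_1\sqcup A_2\sqcup A_3$ and a diametral pair $(x^\star,y^\star)$ of $A$, one of the three pair-unions contains both $x^\star$ and $y^\star$ (so its diameter is $\operatorname{diam}(A)$), and the triangle inequality applied to any third point shows that at least one of the other two has diameter $\ge \operatorname{diam}(A)/2$; hence $\prod_{\{i,j\}}\operatorname{diam}(A_i\cup A_j) \succeq \operatorname{diam}(A)^2$. For the spread factors, taking the three optimal spanning trees $T_{12},T_{23},T_{13}$ on the pair-unions, I would form a spanning tree of $A$ by starting from $T_{12}$ and attaching each vertex of $A_3$ to $A_1$ using a subforest extracted from $T_{13}$; by cycling the construction through the three choices of which part plays the role of $A_1$ and combining (multiplying the three resulting bounds and using the diameter factors as multiplicative slack), I would aim to obtain $\operatorname{diam}(A)\operatorname{spread}(A)\preceq_{|A|} \sqrt{\prod_{\{i,j\}}\operatorname{diam}(A_i\cup A_j)\operatorname{spread}(A_i\cup A_j)}$, as required.

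For the lower bound direction, I need to exhibit a partition witnessing the bound. Take an arboresence $T^{\to}$ on $A$ realising $\operatorname{sweep}(A)=\prod_x \operatorname{diam}(A_x)$ with root $\rho$, and walk down the tree along the chain of uniquely-childed vertices until reaching the first vertex $v$ with at least two children $c_1,\dots,c_k$; let $D_i$ denote the subtree rooted at $c_i$ and $P$ the path from $\rho$ to the parent of $v$. Propose the partition $A_1 = P\cup\{v\}$, $A_2 = D_1$, $A_3 = D_2\cup\cdots\cup D_k$, with the obvious modifications in degenerate cases (no branching vertex exists, $P$ is empty, or $v=\rho$). Both $A_1\cup A_2$ and $A_1\cup A_3$ are subtrees of $T^\to$ rooted at $\rho$, while $A_2\cup A_3$ admits a natural arboresence obtained by re-rooting at $c_1$. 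The inherited arboresences bound each pair-sweep from above, and I would then check that every factor $\operatorname{diam}(A_x)$ from the product defining $\operatorname{sweep}(A)$ appears with multiplicity exactly two in the product of the three pair-sweeps, modulo a bounded number of ``boundary'' factors at $v$ and the $c_i$ whose deficit is absorbed into extra diameter factors, each at most $\operatorname{diam}(A)$.

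\textbf{Main obstacle.} The principal difficulty is the combinatorial bookkeeping in the lower bound: verifying that the factors occurring in the three pair-sweeps aggregate to $\operatorname{sweep}(A)^2$ up to a bounded ($|A|$-dependent) number of extraneous diameter factors, and choosing the right partition in the degenerate cases of the arboresence. A secondary challenge is to combine the three cyclic spanning-tree constructions in the upper bound efficiently enough to obtain the square-root exponent rather than a weaker $2/3$ power; this is where the diameter factors play an essential role alongside the spreads, and separating the two naively (as if the inequality $\operatorname{spread}(A)^2\preceq \prod\operatorname{spread}(A_i\cup A_j)$ held, which it does not in general) would fail.
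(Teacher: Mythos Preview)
Your induction scheme and the reduction of the second assertion to the first are correct; the substance is indeed in the recursion for $\operatorname{sweep}$.

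Your upper bound direction is plausible but vaguer than the paper's. The paper handles the spread factors not by building spanning trees but by directly applying the product formula \eqref{eq:spread_simple_expression}: fixing an enumeration with $a_1\in A_1$ and $a_2\in A_3$, one obtains $\prod_{\{i,j\}}\operatorname{spread}(A_i\cup A_j)\succeq \operatorname{spread}(A)^2/d(a_1,a_2)\ge \operatorname{spread}(A)^2/\operatorname{diam}(A_1\cup A_3)$, and the diameter argument gives $\operatorname{diam}(A_1\cup A_2)\operatorname{diam}(A_2\cup A_3)\ge \tfrac12\operatorname{diam}(A)^2$; the remaining factor $\operatorname{diam}(A_1\cup A_3)$ cancels exactly against the denominator. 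Your spanning-tree construction could perhaps be made to work, but the paper's route avoids the ``square-root versus $2/3$'' issue you flag by producing this clean cancellation.

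Your lower bound construction has a genuine gap. Consider four points in $\R^2$: $\rho=(0,R)$, $v=(0,0)$, $c_1=(1,0)$, $c_2=(M,0)$ with $1\ll M\ll R$. The arboresence with root $\rho$, $v$ its child, and $c_1,c_2$ children of $v$ is optimal, with $\operatorname{sweep}(A)\asymp R^2 M$. Your rule gives $A_1=\{\rho,v\}$, $A_2=\{c_1\}$, $A_3=\{c_2\}$, for which $\operatorname{sweep}(A_1\cup A_2)\asymp R^2$, $\operatorname{sweep}(A_1\cup A_3)\asymp R^2 M$, and $\operatorname{sweep}(A_2\cup A_3)=\|c_1-c_2\|^2\asymp M^2$; the square root of the product is $\asymp R^2 M^{3/2}$, which exceeds $\operatorname{sweep}(A)$ by a factor $M^{1/2}$, not an $|A|$-dependent constant. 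The failure of your ``multiplicity two'' bookkeeping is precisely that $c_1$'s factor in the $A_2\cup A_3$ arboresence is $\|c_1-c_2\|\asymp M$, not the original $\operatorname{diam}(A_{c_1})=\|v-c_1\|=1$: re-rooting on $A_2\cup A_3$ destroys the information that $c_1$ is close to $v$. The paper's partition is instead \emph{distance-based}: take $a_1,a_2$ a diametral pair, $a_3$ maximizing $d(a_3,\{a_1,a_2\})$ (reordered so $d(a_3,a_1)\le d(a_3,a_2)$), then assign each subsequent point greedily to the part containing its nearest predecessor. In the example this puts $c_1$ with $v$, and the construction is tailored so that the crucial estimate $\operatorname{diam}(A_1\cup A_3)\asymp d(a_1,a_3)$ holds.
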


\begin{proof}[Proof of \cref{lem:sweep_recursion}]
All constants in this lemma may depend on $|A|$, and we omit this dependence from our notation throughout the proof.
We begin with the upper bound.
Let $x,y\in A$ satisfy $\|x-y\|_2 =\operatorname{diam}(A)$, let $A=A_1\cup A_2 \cup A_3$ be a partition of $A$ into three non-empty sets, and assume without loss of generality that $x,y\notin A_1$ so that $\operatorname{diam}(A_2\cup A_3)=\operatorname{diam}(A)$.
If we pick $z \in A_1$ then either $\|z-x\|_2\geq \frac{1}{2} \|x-y\|_2$ or $\|z-y\|_2\geq \frac{1}{2} \|x-y\|_2$, and if we assume without loss of generality that $x\in A_2$ and $\|z-x\|_2\geq \frac{1}{2} \|x-y\|_2$ then
\begin{equation}
\label{eq:tripartition_diameter}
  \operatorname{diam}(A_1 \cup A_2) \operatorname{diam}(A_2 \cup A_3) = \operatorname{diam}(A_1 \cup A_2) \operatorname{diam}(A) \geq \frac{1}{2} \operatorname{diam}(A)^2.
\end{equation}
Pick an enumeration $A=\{a_1,\ldots,a_n\}$ with $a_1\in A_1$ and $a_2\in A_3$, and for each $i \geq 1$ and $j\in \{1,2,3\}$ let $A_{j,i}^* = \{a_1,\ldots,a_{i-1}\}\setminus A_j$. We can use \eqref{eq:spread_simple_expression} to write
\begin{multline*}
  \operatorname{spread}(A_1 \cup A_2)\operatorname{spread}(A_2 \cup A_3)\operatorname{spread}(A_3 \cup A_1)
  \\ \asymp \prod_{i\in A_1} d(a_i,A_{2,i}^*)d(a_i,A_{3,i}^*) \prod_{i\in A_2} d(a_i,A_{1,i}^*)d(a_i,A_{3,i}^*) \prod_{i\in A_3} d(a_i,A_{1,i}^*)d(a_i,A_{2,i}^*)
\end{multline*}
where we set $d(a_i,A_j^*)=1$ if $A_j^*=\emptyset$, which is only possible if $i=1$ or $j=1$ and $i=2$. Observe that the distance $d(a_i,A_j^*)$ is at least the distance $d(a_i,\{a_n:n<i\})$ except when $j=1$ and $i=2$, so that
\begin{align*}
  \operatorname{spread}(A_1 \cup A_2)\operatorname{spread}(A_2 \cup A_3)\operatorname{spread}(A_3 \cup A_1) &\succeq \frac{\operatorname{spread}(A)^2}{d(a_1,a_2)} \geq \frac{\operatorname{spread}(A)^2}{\operatorname{diam}(A_1 \cup A_3)}.
\end{align*}
Putting this together with \eqref{eq:tripartition_diameter} yields that
\[\operatorname{sweep}(A) \preceq \sqrt{\operatorname{sweep}(A_1 \cup A_2)\operatorname{sweep}(A_2 \cup A_3)\operatorname{sweep}(A_3 \cup A_1)}, \]
and the claimed upper bound follows since our tripartition $A=A_1\cup A_2 \cup A_3$ was arbitrary (up to our choice of labellings of the three sets which we assumed had some specific properties without loss of generality). 

\medskip

We now turn to the lower bound. 
Let $|A|\geq 3$, let $a_1,a_2 \in A$ be such that $d(a_1,a_2)=\operatorname{diam}(A)$, 
let $a_3$ be chosen to maximize $d(a_3,\{a_1,a_2\})$ 
 and, if necessary, reorder the pair $a_1,a_2$ so that $d(a_3,a_1) \leq d(a_3,a_2)$.
  We define the sets $A_1,A_2,A_3$ greedily starting with $a_1\in A_1$, $a_2\in A_2$, and $a_3\in A_3$ and for each $i\geq 4$ letting $a_i$ belong to the set $A_j$ for which $d(a_i,\{a_1,\ldots,a_{i-1}\}) = d(a_i,\{a_1,\ldots,a_{i-1}\} \cap A_j)$.
\begin{multline}
  \operatorname{spread}(A_1 \cup A_2)\operatorname{spread}(A_2 \cup A_3)\operatorname{spread}(A_3 \cup A_1)
 \\  \asymp d(a_1,a_2) d(a_2,a_3) d(a_3,a_1) \prod_{i=4}^n d(a_i,\{a_j:1\leq j \leq i\})^2 
\\ \asymp \frac{d(a_2,a_3)}{d(a_1,a_2)d(a_1,a_3)} \operatorname{spread}(A)^2\asymp \frac{\operatorname{spread}(A)^2}{d(a_1,a_3)},
\label{eq:spread_d(a1a3)}
\end{multline}
where in the final estimate we used that $a_3$ is closer to $a_1$ than to $a_2$, we must have that $d(a_2,a_3) \asymp d(a_1,a_2)$. The same reasoning shows that $\operatorname{diam}(A)=\operatorname{diam}(A_1 \cup A_2) \asymp \operatorname{diam}(A_2 \cup A_3)$, so that to complete the proof it suffices to show that $\operatorname{diam}(A_1 \cup A_3) \asymp d(a_1,a_3)$.
If $d(a_1,a_3) \geq \frac{1}{4} d(a_1,a_2)$ then both distances are of the same order as $d(a_1,a_2)=\operatorname{diam}(A)$ and the claim is immediate. Otherwise, by choice of $a_1,a_2,a_3$, the entire set $A$ is contained in the union of the two balls of radius $d(a_1,a_3) \leq \frac{1}{4} \operatorname{diam}(A)$ around $a_1$ and $a_2$. In this case, our definition of the three sets $A_1,A_2$, and $A_3$ ensures that $A_1$ and $A_3$ are contained in the ball of radius $d(a_1,a_3)$ around $a_1$, while $A_2$ is contained in the ball of radius $d(a_1,a_3)$ around $a_2$. In this case we therefore have that $\operatorname{diam}(A_1 \cup A_3) \leq 2 d(a_1,a_3)$ and the desired inequality
\[
  \operatorname{sweep}(A_1 \cup A_2)\operatorname{sweep}(A_2 \cup A_3)\operatorname{sweep}(A_3 \cup A_1) \preceq \operatorname{sweep}(A)^2
\]
follows by substituting these diameter estimates into \eqref{eq:spread_d(a1a3)} and rearranging.\qedhere
\end{proof}

We are now ready to state our geometric version of the higher Gladkov inequality for percolation models with polynomially decaying two-point functions.

\begin{corollary}[Geometric Gladkov]
\label{cor:geometric_Gladkov}
Let $J$ be a symmetric kernel on $\Z^d$ and suppose that critical long-range percolation with kernel $J$ satisfies a two-point function bound of the form $\tau_{\beta_c}(x,y) \leq C \|x-y\|_2^{-d+2-\eta}$ for some $\eta\in \R$ and $C>0$. Then 
\[\tau_{\beta_c}(A) \preceq_{|A|} \operatorname{sweep}(A)^{-\frac{d-2+\eta}{2}} \asymp_{|A|} S(A)^{-\frac{d-2+\eta}{2}} \]
for every finite set $A \subseteq \Z^d$ with $|A|\geq 2$.
\end{corollary}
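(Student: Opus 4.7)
The plan is to prove the bound $\tau_{\beta_c}(A) \preceq_{|A|} S(A)^{-(d-2+\eta)/2}$ by induction on $|A|$, and then invoke \cref{lem:sweep_recursion} to convert $S(A)$ to $\operatorname{sweep}(A)$ up to a constant depending on $|A|$. Throughout we may assume $d-2+\eta > 0$, since otherwise the hypothesized two-point function bound is vacuous for large $\|x-y\|_2$ and the conclusion follows trivially from $\tau_{\beta_c}(A)\le 1$ combined with $S(A)\ge 1$ (the pairwise distances entering the recursion defining $S$ are all at least $1$).

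The base case $|A|=2$ is exactly the hypothesis, since $S(\{x,y\})=\|x-y\|_2^2$ and hence $S(\{x,y\})^{-(d-2+\eta)/2}=\|x-y\|_2^{-(d-2+\eta)}$. For the inductive step with $|A|\ge 3$, assume the desired bound for all sets of size strictly less than $|A|$. Apply the higher-order Gladkov inequality (\cref{thm:higher_Gladkov}) to obtain a tripartition $A=A_1^*\cup A_2^*\cup A_3^*$ into three non-empty subsets with
\[
\tau_{\beta_c}(A)^2 \preceq_{|A|} \tau_{\beta_c}(A_1^*\cup A_2^*)\,\tau_{\beta_c}(A_2^*\cup A_3^*)\,\tau_{\beta_c}(A_3^*\cup A_1^*).
\]
Since each $A_j^*$ is non-empty, every pairwise union $A_i^*\cup A_j^*$ has size strictly less than $|A|$, so the inductive hypothesis applies to each factor, giving
\[
\tau_{\beta_c}(A)^2 \preceq_{|A|} \bigl[S(A_1^*\cup A_2^*)\,S(A_2^*\cup A_3^*)\,S(A_3^*\cup A_1^*)\bigr]^{-(d-2+\eta)/2}.
\]

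The final step is to replace the right-hand side by $S(A)^{-(d-2+\eta)}$ using the recursive definition of $S$. By \eqref{eq:S_def}, $S(A)^2$ equals the \emph{minimum} of the product $S(A_1\cup A_2)S(A_2\cup A_3)S(A_3\cup A_1)$ over all tripartitions, so in particular $S(A_1^*\cup A_2^*)\,S(A_2^*\cup A_3^*)\,S(A_3^*\cup A_1^*)\ge S(A)^2$. Since $(d-2+\eta)/2>0$, raising to the power $-(d-2+\eta)/2$ reverses the inequality and yields $\tau_{\beta_c}(A)^2\preceq_{|A|}S(A)^{-(d-2+\eta)}$; taking square roots closes the induction. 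The equivalence $S(A)\asymp_{|A|}\operatorname{sweep}(A)$ is the content of \cref{lem:sweep_recursion}, completing the proof.

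The argument is essentially a soft combination of the higher Gladkov inequality and the recursive definition of $S$; the only nontrivial point is the direction-matching between the \emph{maximum} over tripartitions appearing in \cref{thm:higher_Gladkov} and the \emph{minimum} defining $S(A)$. This matches up correctly precisely because the Gladkov bound holds for the specific maximizing tripartition, which in turn is at least $S(A)^2$ by virtue of $S(A)^2$ being the min. The main conceptual work, namely proving \cref{thm:higher_Gladkov} and the geometric interpretation \cref{lem:sweep_recursion}, has already been done; this corollary is essentially the pay-off.
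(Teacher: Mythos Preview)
The proposal is correct and takes essentially the same approach as the paper: induction on $|A|$ using the higher-order Gladkov inequality (\cref{thm:higher_Gladkov}) for the inductive step, then matching against the recursive structure of $S$ (or sweep). The only cosmetic difference is that you induct on $S$ (whose recursion \eqref{eq:S_def} is an exact equality, making the direction-matching step clean) and invoke \cref{lem:sweep_recursion} once at the end, whereas the paper inducts directly on $\operatorname{sweep}$ and applies the approximate recursion of \cref{lem:sweep_recursion} within the induction step; both routes are equivalent.
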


\begin{proof}[Proof of \cref{cor:geometric_Gladkov}]
 We prove the claim by induction on $|A|$. The base case for sets of size two holds by assumption, while if $|A|>2$ then
\begin{align*}
  \tau(A) &\preceq_{|A|} \max \sqrt{\tau(A_1 \cup A_2) \tau(A_2 \cup A_3)\tau(A_3\cup A_1)}
  \\
  &\preceq_{|A|} \left( \min \operatorname{sweep}(A_1 \cup A_2)\operatorname{sweep}(A_2 \cup A_3)\operatorname{sweep}(A_3 \cup A_1)\right)^{-(d-2+\eta)/2}
  \\
  &\preceq_{|A|} \operatorname{sweep}(A)^{-(d-2+\eta)/2},
\end{align*}
where we used the \cref{thm:higher_Gladkov} in the first inequality, the induction hypothesis in the second, and \cref{lem:sweep_recursion} in the third.
\end{proof}

See \cref{subsec:_k_point_function_hyperscaling} for an interpretation of the geometric Gladkov inequality as a hyperscaling inequality.

\subsection{M\"obius invariance of $S$}
\label{subsec:conformal_invariance}

We now connect the functions $S$ and $\operatorname{Sweep}$ studied in \cref{subsec:geometric_gladkov} to the discussion of conformal invariance in the introduction by proving that the functional $S(A)$ is conformally covariant.

\begin{prop}
\label{prop:sweep_conformal_invariance}
The function $S(A)$ is conformally covariant of scaling dimension $-1$:
If $\psi \in \Mobius(\R^d)$ then
\[
  S(\psi(A)) = S(A) \prod_{a\in A} |\det D \psi(a)|^{1/d}
\]
for every finite set $A \subseteq \R^d \setminus \psi^{-1}(\infty)$.
\end{prop}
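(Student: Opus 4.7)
My plan is to prove the proposition by induction on $|A|$, reducing the entire problem to the conformal distortion identity for two points:
\[
\|\psi(x)-\psi(y)\|_2^2 \;=\; \|x-y\|_2^2\,|\!\det D\psi(x)|^{1/d}\,|\!\det D\psi(y)|^{1/d}
\qquad (\star)
\]
which is simply the statement that $\psi$ is locally a conformal scaling. Once $(\star)$ is in hand, the recursive structure of $S$ handles everything.

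\textbf{Step 1: Verify $(\star)$ on generators and check stability under composition.} By Liouville's theorem, $\Mobius(\R^d)$ is generated by Euclidean isometries, dilations $x\mapsto \lambda x$, and the inversion $\iota(x)=x/\|x\|_2^2$. For isometries both sides of $(\star)$ equal $\|x-y\|_2^2$; for dilations both sides equal $\lambda^2 \|x-y\|_2^2$. For the inversion I will use the standard identity $\|\iota(x)-\iota(y)\|_2=\|x-y\|_2/(\|x\|_2\|y\|_2)$ together with $|\!\det D\iota(x)|=\|x\|_2^{-2d}$, so both sides of $(\star)$ equal $\|x-y\|_2^2/(\|x\|_2^2\|y\|_2^2)$. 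Finally, $(\star)$ is preserved under composition: if $\psi=\psi_1\circ\psi_2$ and $(\star)$ holds for both $\psi_1$ and $\psi_2$, then applying $(\star)$ for $\psi_1$ at the points $\psi_2(x),\psi_2(y)$, then for $\psi_2$ at $x,y$, and using the chain rule $|\!\det D\psi(x)|=|\!\det D\psi_1(\psi_2(x))|\cdot|\!\det D\psi_2(x)|$ yields $(\star)$ for $\psi$.

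\textbf{Step 2: Induction on $|A|$.} The base case $|A|=2$ is exactly $(\star)$ since $S(\{x,y\})=\|x-y\|_2^2$. For the inductive step, suppose the covariance identity holds for all sets of cardinality strictly less than $k$, and let $|A|=k\ge 3$. Since $\psi$ is a bijection, partitions $\psi(A)=B_1\cup B_2 \cup B_3$ into three nonempty subsets correspond bijectively via $B_i=\psi(A_i)$ to partitions $A=A_1\cup A_2\cup A_3$. Each $A_i\cup A_j$ has cardinality $\le k-1$ (as $A_\ell$ is nonempty), so the induction hypothesis gives
\[
S(\psi(A_i)\cup\psi(A_j)) \;=\; S(A_i\cup A_j)\!\!\prod_{a\in A_i\cup A_j}\!\!|\!\det D\psi(a)|^{1/d}.
\]
The crucial combinatorial observation is that each $a\in A$ lies in exactly one $A_\ell$ and hence in exactly two of the three unions $A_i\cup A_j$. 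Therefore multiplying the three identities together and taking the square root produces
\[
\sqrt{\prod_{\{i,j\}}S(\psi(A_i)\cup\psi(A_j))} \;=\; \sqrt{\prod_{\{i,j\}}S(A_i\cup A_j)}\,\prod_{a\in A}|\!\det D\psi(a)|^{1/d}.
\]
Since the Jacobian product does not depend on the partition, taking the minimum over partitions commutes with this multiplicative factor and the recursive definition \eqref{eq:S_def} gives $S(\psi(A))=S(A)\prod_{a\in A}|\!\det D\psi(a)|^{1/d}$, completing the induction.

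The main subtle point is the bookkeeping in the induction step: one needs to observe that each point of $A$ contributes to exactly two of the three pairwise unions in the recursion, which is what allows the square root in the definition of $S$ to exactly absorb the doubling and produce one clean Jacobian factor per point of $A$. This is essentially the reason the functional $S$ was defined with this particular recursive form, and it is why the scaling dimension of $S$ is $-1$ rather than $-2$ per point. All nontrivial analytic content is pushed into the base case $(\star)$, which is a classical computation on the three types of generators.
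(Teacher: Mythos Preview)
Your proof is correct and follows essentially the same approach as the paper's: induction on $|A|$, with the base case reduced to the conformal distortion identity for the inversion and the induction step relying on the fact that each point of $A$ lies in exactly two of the three pairwise unions (the paper phrases this as $D_\psi(A_1\cup A_2)D_\psi(A_2\cup A_3)D_\psi(A_3\cup A_1)=D_\psi(A)^2$). The only minor difference is that you explicitly handle stability under composition via the chain rule, while the paper verifies the base case directly on the generators and reduces the inversion computation to the complex plane.
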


This Proposition is used in particular to deduce \cref{cor:Mobius} from \cref{thm:k_point_S}.


\begin{proof}[Proof of \cref{prop:sweep_conformal_invariance}]
 We will prove the claim by induction on $|A|$.
We begin by verifying the claim for sets of size two, in which case $S(\{x,y\})=\|x-y\|_2^2$ is just the squared Euclidean distance. 
 Since $S$ clearly transforms in the desired way under dilations and Euclidean isometries, it suffices to verify the claim for the inversion in the unit sphere, $\psi(x)=-x/\|x\|_2^2$. The Jacobian of this inversion map is
\[
 \det D\psi(x) = - \|x\|_2^{-2d},
\]
so that it suffices to show that
\[
  \|\psi(x)-\psi(y)\|_2^2 = \frac{\|x-y\|_2^2}{\|x\|_2^2\|y\|_2^2}.
\]
To prove this identity, it suffices moreover to consider the case $d=2$, as we can do all computations in the plane spanned by $x$, $y$, and $0$ (which is fixed by $\psi$). Following this reduction to $\R^2$, we can use complex notation to write $\psi(z)=1/z$ and compute
\[
  \psi(z_1)-\psi(z_2) = \frac{1}{z_1}- \frac{1}{z_2} = \frac{z_2-z_1}{z_1z_2}
\qquad\text{so that}\qquad
  |\psi(z_1)-\psi(z_2)|^2 = \frac{|z_1-z_2|^2}{|z_1|^2|z_2|^2}
\]
as claimed.

Now suppose that $|A|\geq 3$ and that the claim has been established for all strictly smaller sets. To lighten notation we write $D_\psi(A):= \prod_{a\in A} |\!\det D \psi(a)|^{1/d}$. We have by the recursive definition of $S(A)$ and the induction hypothesis that 
\begin{align*}
  S(\psi(A)) &= \min \left[S(\psi(A_1) \cup \psi(A_2))S(\psi(A_2) \cup \psi(A_3))S(\psi(A_3) \cup \psi(A_1))\right]^{1/2}
  \\
  &= \min \bigl[
  D_\psi(A_1\cup A_2)
   S(A_1 \cup A_2) D_\psi(A_2\cup A_3) S(A_2 \cup  A_3)
    D_\psi(A_3\cup A_1) S(A_3 \cup A_1)\bigr]^{1/2}
   \\
   &= \min D_\psi(A) \left[
   S( A_1 \cup A_2) S(A_2 \cup A_3) S( A_3 \cup A_1)\right]^{1/2}
   \\
   &=D_\psi(A) S(A),
\end{align*}
where in the third equality we used that
\[
  D_\psi(A_1 \cup A_2) D_\psi(A_2 \cup A_3) D_\psi(A_3 \cup A_1) = D_\psi(A)^2
\]
for any partition of $A$ into three non-empty subsets (a property that holds for any functional of the form $F(A)=\prod_{a\in A}f(a)$). This completes the induction step and hence the proof.
\end{proof}

\section{The maximum cluster size}
\label{sec:the_maximum_cluster_size}

The primary goal of this section is to understand the size of the largest critical cluster in the ball of radius $r$ in the effectively long-range low-dimensional regime. More precisely, we will study the large-$r$ asymptotics of the \textbf{edians}
\[
  M_r =\min\left\{n\geq 0: \P_{\beta_c,r}\left(\max_{x\in \Z^d} |K_x \cap B_r| \geq n\right)\leq e^{-1}\right\}
\]
and
\[
  M_r^* =\min\left\{n\geq 0: \P_{\beta_c}\left(\max_{x\in \Z^d} |K_x \cap B_r| \geq n\right)\leq e^{-1}\right\},
\]
which capture the typical size of the largest intersection of cluster with the ball $B_r$ under the measures $\P_{\beta_c,r}$ and $\P_{\beta_c}$ respectively. (The universal tightness theorem \cite[Theorem 2.2]{hutchcroft2020power} implies that $M_r$ and $M_r^*$ are of the same order as the \emph{mean} of the same random variables.) Our analysis of this quantity, along with the proof of \cref{thm:main_low_dim} in the next section, will not require the full strength of \eqref{CL} but can be carried out using the following weaker condition, which we show is implied by \eqref{CL} in \cref{lem:CL_and_wCL}.

\begin{defn}
\label{def:wCL}
 We say that the model satisfies the \textbf{weak correlation length condition for effectively long-range critical behaviour} \eqref{wCL}
  if $\beta_c<\infty$ and
%
for each $\eps>0$ there exists a constant $C=C(\eps)$ such that
\begin{equation}
\label{wCL}
\tag{wCL}
 \E_{\beta_c,r} |K|^p \leq (1+\eps)\E_{\beta_c,r} |K \cap B_{Cr}|^p
\end{equation}
for every $r\geq 1$ and each $p=1,2,3,4$. 
\end{defn}

The following theorem is the main result of this section.

\begin{theorem}[The maximum cluster size]
\label{thm:max_cluster_size_LD}
If $d<3\alpha$ and \eqref{wCL} holds then $M_r \asymp M_r^* \asymp r^{(d+\alpha)/2}$ for every $r\geq 1$.
\end{theorem}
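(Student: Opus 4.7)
The plan is to split the theorem into the upper bound $M_r^*\preceq r^{(d+\alpha)/2}$ (which will simultaneously give $M_r\leq M_r^*$ via stochastic domination) and the matching lower bound $M_r\succeq r^{(d+\alpha)/2}$, with the latter being the main technical challenge.

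For the upper bound I would use a straightforward second moment argument. Set $N=\max_x|K_x\cap B_r|$ and $Z=\sum_{x,y\in B_r}\mathbf{1}[x\leftrightarrow y]=\sum_C|C\cap B_r|^2$, so that $N^2\leq Z$. Then $\E_{\beta_c} N^2\leq\E_{\beta_c} Z=\sum_{x,y\in B_r}\P_{\beta_c}(x\leftrightarrow y)\preceq r^{d+\alpha}$ by the spatially averaged two-point function bound \eqref{eq:Sak_upper_restate} of \cite{hutchcroft2022sharp}, and Markov's inequality yields $\P_{\beta_c}(N\geq c r^{(d+\alpha)/2})\leq c^{-2}$, hence $M_r^*\preceq r^{(d+\alpha)/2}$. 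The estimate $M_r\leq M_r^*$ follows because $\{N\geq n\}$ is an increasing event and $\P_{\beta_c,r}$ is stochastically dominated by $\P_{\beta_c}$.

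For the lower bound the first preparatory task is to show that $\E_{\beta_c,r} Z\succeq r^{d+\alpha}$. I would apply \eqref{wCL} at the smaller cut-off scale $r'=r/(2C_*)$, where $C_*$ is the wCL constant corresponding to (say) $\eps=1$: together with the first moment lower bound \cref{I-cor:mean_lower_bound}, this yields $\E_{\beta_c,r'}|K\cap B_{C_*r'}|\asymp r^\alpha$, and since $B_{C_*r'}=B_{r/2}$ and $\P_{\beta_c,r'}\preceq\P_{\beta_c,r}$, we obtain $\E_{\beta_c,r}|K_0\cap B_{r/2}|\succeq r^\alpha$. Translation invariance of $\P_{\beta_c,r}$ then gives $\E_{\beta_c,r}|K_x\cap B_r|\succeq r^\alpha$ for each $x\in B_{r/2}$, so summing over such $x$ yields $\E_{\beta_c,r}Z\succeq r^{d+\alpha}$.

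The main obstacle is converting $\E Z\asymp r^{d+\alpha}$ into $\E N\succeq r^{(d+\alpha)/2}$: the naive pigeonhole $N\geq Z/|B_r|$ only yields $\E N\succeq r^\alpha$, which is much too weak when $\alpha<d$. My plan is a two-step concentration argument. First, expand $\E Z^2$ as a sum of four-point functions and bound it via the higher-order Gladkov inequality \cref{thm:higher_Gladkov} together with Cauchy--Schwarz and the spatially averaged two-point bound; combined with the $p=4$ case of \eqref{wCL} to control boundary leakage, this should yield $\E Z^2\preceq(\E Z)^2$ and hence Paley--Zygmund concentration of $Z$. Second, on the event $\{Z\geq c\E Z\}$, use a dyadic decomposition $Z=\sum_k Z_k$ where $Z_k$ collects clusters with $|C\cap B_r|\in[2^k,2^{k+1})$: combining the a priori bound $\E\#\{C:|C\cap B_r|\geq t\}\leq t^{-1}|B_r|\,\P_{\beta_c,r}(|K_0\cap B_r|\geq t)$ with Markov estimates on $\P_{\beta_c,r}(|K_0\cap B_r|\geq t)$ derived from the wCL moment bounds ($p=2,3,4$), one would show that the total contribution from clusters of size below $c' r^{(d+\alpha)/2}$ is $o(\E Z)$. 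This forces some cluster to have size $\succeq r^{(d+\alpha)/2}$ on the concentration event, and the universal tightness theorem of \cite{hutchcroft2020power} upgrades this positive-probability lower bound on $N$ into a lower bound on the median $M_r$. The most delicate point is this last dyadic step: ruling out the ``many intermediate clusters'' scenario must use the gap $d<3\alpha$, since precisely at this threshold the mean-field moment scaling would violate the trivial constraint $|K\cap B_r|\leq|B_r|$, and the failure of the mean-field ODE emphasized in the introduction should force the mass of $Z$ into a few fractal-dimensional clusters. Making this quantitative without circularly invoking \cref{thm:main_low_dim} is where I expect the argument to require the most care.
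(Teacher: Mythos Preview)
Your upper bound argument is fine and matches the paper's. The gap is in the lower bound, and it is precisely at the point you flag yourself.

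The dyadic step --- showing that clusters of size below $\delta r^{(d+\alpha)/2}$ contribute $o(\E Z)$ to $Z$ --- is essentially the ``negligibility of mesoscopic clusters'' statement (the paper's \cref{prop:negligibility_of_mesoscopic}), and in the paper this is proved \emph{after} and \emph{using} \cref{thm:max_cluster_size_LD}. Your proposed route to it, via Markov bounds on $\P_{\beta_c,r}(|K_0\cap B_r|\geq t)$ coming from the $p\leq 4$ moments and (wCL), does not close: those moments only give $\E_{\beta_c,r}|K\cap B_{Cr}|^p\preceq r^{\alpha+(p-1)(d+\alpha)/2}$, and plugging this into $\E Z_k\preceq 2^k|B_r|\,\P(|K_0\cap B_r|\geq 2^k)$ leaves each dyadic scale contributing order $r^{d+\alpha}$, so the truncated sum is the same order as $\E Z$ (even a log factor larger). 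Paley--Zygmund concentration of $Z$ does not help here: it tells you $Z$ is large with good probability, but gives no information about whether this largeness comes from one big cluster or many moderate ones, which is exactly the issue. From $Z\leq N|B_r|$ you only recover $N\succeq r^\alpha$.

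The paper's approach is genuinely different: it does not try to control the cluster-size distribution at a fixed scale, but instead runs a differential-inequality argument in $r$ for the second moment $\E_{\beta_c,r}|K|^2$. The point is that the error term $\cE_{2,r}$ in the ODE $\frac{d}{dr}\E_r|K|^2=3(1-\cE_{2,r})\beta_c|J'(r)||B_r|\,\E_r|K|\,\E_r|K|^2$ satisfies $\cE_{2,r}\preceq M_{Cr}^2/r^{d+\alpha}$ under (wCL), so smallness of $M_r$ on an interval $[s,R]$ forces $\E_r|K|^2$ to grow there like $r^{3\alpha(1-\delta)}$. When $d<3\alpha$ this exceeds the a~priori upper bound $\E_r|K|^2\preceq r^{\alpha+(d+\alpha)/2}$, and combining with the lower bound $\E_s|K|^2\succeq M_s^3/s^d$ at the start of a \emph{maximal} bad interval (where $M_s\asymp s^{(d+\alpha)/2}$) bounds $R/s$ by a constant. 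The existence of at least one good scale to anchor this argument comes from first showing (\cref{prop:low_dim_not_hydro}) that (wCL) with $d<3\alpha$ rules out the hydrodynamic condition. The use of $d<3\alpha$ is thus \emph{dynamical} --- it is the exponent gap $3\alpha>\alpha+(d+\alpha)/2$ in the ODE growth rate --- rather than a static pigeonhole on cluster counts.
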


Let us now give some context for this theorem and outline its proof.
As explained in detail in \cref{I-sub:previous_results_on_long_range_percolation} (see \cref{I-cor:M_upper_bound_general}), it is an immediate consequence of the results of \cite{hutchcroft2022sharp} that 
\begin{equation}
\label{eq:M_r_upper_restate}
M_r \leq M_r^* \preceq r^{(d+\alpha)/2}
\end{equation} for every $r\geq 1$, regardless of the value of $\alpha>0$ (the bound being vacuous for $\alpha\geq d$). The geometric significance of this bound is that if $A_1$ and $A_2$ are disjoint subsets of the ball $B_r$ then the expected number of edges between $A_1$ and $A_2$ that are open in the configuration associated to the measure $\P_{\beta_c,2r}$ but not $\P_{\beta_c,r}$ (in the standard monotone coupling) is of order $|A_1||A_2|r^{-d-\alpha}$, so that the probability such an open edge exists is high when $|A_1|,|A_2| \gg r^{(d+\alpha)/2}$, low when $|A_1|,|A_2| \ll r^{(d+\alpha)/2}$, and bounded away from $0$ and $1$ when $|A_1|$ and $|A_2|$ are both of order $r^{(d+\alpha)/2}$. Thus, the bound \eqref{eq:M_r_upper_restate} shows that the largest clusters in two adjacent balls of radius $r$ cannot have a \emph{high} probability of merging via the direct addition of a single edge when we pass from scale $r$ to scale $2r$. The fact that the inequality \eqref{eq:M_r_upper_restate} is sharp in low effective dimensions (as established by \cref{thm:max_cluster_size_LD}) but \emph{not} in high or critical effective dimensions (as proven in \cref{I-cor:HD_hydro} and \cref{III-thm:critical_dim_hydro}) is an important qualitative distinction between these two regimes, closely related to the (in)validity of the \emph{hyperscaling relations} (as discussed in more detail in \cref{subsec:hyperscaling}). Note in particular that when $M_r \asymp r^{(d+\alpha)/2}$ the large clusters on scale $r$ also have a good probability to be joined by \emph{more than one} edge on the same scale, so that clusters contain macroscopic cycles on all scales and do \emph{not} look like trees; this contrasts the high- and critical-dimensional cases in which clusters look like trees and have superprocess scaling limits (\cref{I-thm:superprocess_main,III-cor:superprocess_main_CD}).


\medskip

 Following \cref{I-def:Hydro}, we say that the \textbf{hydrodynamic condition} \eqref{Hydro} holds if the upper bound \eqref{eq:M_r_upper_restate} admits a strict improvement in the sense that
%
%
\begin{equation}\label{Hydro}
\tag{Hydro}
M_r=o(r^{(d+\alpha)/2}) \qquad \text{ as $r\to \infty$}.
\end{equation}
The hydrodynamic condition is proven to hold for $d>3\alpha$ in \cref{I-cor:HD_hydro} and for $d=3\alpha$ in \cref{III-thm:critical_dim_hydro}, and is shown to imply the validity of the mean-field asymptotic ODEs \eqref{eq:volume_moments_ODE_intro} for $d= 3\alpha$ in \cref{I-sec:analysis_of_moments}. (When $d>3\alpha$ we prove that these ODEs hold without using the hydrodynamic condition directly.)
In \cref{sub:the_hydrodynamic_condition_does_not_hold} we show that the mean-field asymptotic ODE \eqref{eq:volume_moments_ODE_intro} also holds for $p=1,2$ whenever \eqref{CL} and \eqref{Hydro} both hold, regardless of the value of $d$ and $\alpha$. 
(Studying these ODEs for the $p$th moment leads to error terms involving the $(p+2)$th moment, which is why we need assumptions on the first \emph{four} moments in the definition of \eqref{wCL}.) 
 We then show that these asymptotic ODEs are \emph{inconsistent} with the upper bound $M_r^* \preceq r^{(d+\alpha)/2}$ when $d<3\alpha$, so that in fact \eqref{CL} and \eqref{Hydro} \emph{cannot} both hold when $d<3\alpha$. 
 To prove the first part of \cref{thm:hyperscaling} we need the stronger result that $M_r \succeq r^{(d+\alpha)/2}$ on \emph{every} scale (rather than on an unbounded set of scales as shown by the argument we have just sketched); this is proven via a more careful,  finitary version of the same argument in \cref{subsec:low_dim_max_cluster}. Together with the upper bound \eqref{eq:M_r_upper_restate} this shows that $M_r \asymp r^{(d+\alpha)/2}$, completing the proof of \cref{thm:max_cluster_size_LD} and giving us our first computation of a non-mean-field exponent $d_f=(d+\alpha)/2$ under the hypotheses of \cref{thm:main_low_dim}.


\subsection{Volume moments under \eqref{CL} and the weak correlation length condition}

In this section we prove the following two results regarding volume moments in the effectively long-range low-dimensional regime.

\begin{theorem}
\label{thm:CL_volume_moments}
If \eqref{CL} holds and $d<3\alpha$ then $\E_{\beta_c,r}|K|^p \asymp_p r^{\alpha+(p-1)\frac{d+\alpha}{2}}$ for each integer $p\geq 1$ and every $r\geq 1$.
\end{theorem}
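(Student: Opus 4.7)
The plan is to prove the upper and lower bounds on $\E_{\beta_c,r}|K|^p$ separately. For the lower bound I would use the maximum cluster size estimate $M_r \asymp r^{(d+\alpha)/2}$ from \cref{thm:max_cluster_size_LD} (available because $d<3\alpha$ and \eqref{CL} implies \eqref{wCL}). Combining the cluster-enumeration identity $\sum_{y \in B_r}|K_y \cap B_r|^p = \sum_C |C \cap B_r|^{p+1}$ (summed over distinct clusters $C$) with translation invariance $\E_{\beta_c,r}\sum_{y\in B_r}|K_y\cap B_r|^p = |B_r|\,\E_{\beta_c,r}|K \cap B_r|^p$, then dropping all but the maximum cluster on the right and using that the definition of $M_r$ yields $\E_{\beta_c,r}[(\max_C|C\cap B_r|)^{p+1}]\succeq_p M_r^{p+1}$, I obtain $\E_{\beta_c,r}|K \cap B_r|^p \succeq_p M_r^{p+1}/|B_r| \asymp r^{\alpha + (p-1)(d+\alpha)/2}$; the desired lower bound on $\E_{\beta_c,r}|K|^p$ follows since $|K| \geq |K \cap B_r|$.

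For the upper bound I would combine the two-point estimate $\P_{\beta_c,r}(x\leftrightarrow y) \preceq \|x-y\|^{-(d-\alpha)} h(\|x-y\|/r)$ from \cref{lem:CL_Sak_upper} with the geometric Gladkov inequality of \cref{cor:geometric_Gladkov}. Tracing through the recursive proof of the latter while carrying the rapid-decay factor $h$ at each step should yield the refined bound
\[
\tau_{\beta_c,r}(\{0,x_1,\ldots,x_p\}) \preceq_p S(\{0,x_1,\ldots,x_p\})^{-(d-\alpha)/2}\,\tilde h\bigl(\operatorname{diam}(\{0,x_1,\ldots,x_p\})/r\bigr)
\]
for some rapidly decaying $\tilde h$. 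Writing $\E_{\beta_c,r}|K|^p = \sum_{x_1,\ldots,x_p\in\Z^d}\tau_{\beta_c,r}(\{0,x_1,\ldots,x_p\})$ and using $\tilde h$ to discard configurations of diameter larger than $Cr$, it remains to bound the bulk sum $\sum_{x_i\in B_{Cr}}S(\{0,x_1,\ldots,x_p\})^{-(d-\alpha)/2}$. The scaling identity $S(rA)=r^{|A|}S(A)$, which is immediate from the recursive definition of $S$, together with the substitution $x_i=ru_i$ reduces this sum (up to discretization) to $r^{dp-(p+1)(d-\alpha)/2}\int_{B_C^p}S(\{0,u_1,\ldots,u_p\})^{-(d-\alpha)/2}\,du$, and the direct algebraic identity $dp-(p+1)(d-\alpha)/2=\alpha+(p-1)(d+\alpha)/2$ gives the matching exponent.

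Finiteness of the unit-ball integral follows from $S(A)\asymp_p \operatorname{diam}(A)\operatorname{spread}(A)$ (\cref{lem:sweep_and_spread}) combined with $\operatorname{spread}(A)\asymp_p \prod_{i=1}^p d(u_i,\{0,u_1,\ldots,u_{i-1}\})$ from \eqref{eq:spread_simple_expression}: the trivial bound $\operatorname{diam}(A)\geq \|u_1\|$ absorbs the diameter factor into an additional $\|u_1\|^{-(d-\alpha)/2}$, after which iterative integration in the order $u_p,u_{p-1},\ldots,u_2,u_1$ is finite at each step because $(d-\alpha)/2<d$, and the final integral $\int_{B_C}\|u_1\|^{-(d-\alpha)}\,du_1$ converges since $\alpha>0$. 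The main technical obstacle I expect is propagating the rapid decay from the two-point $h$ factor through the recursive Gladkov argument to produce the claimed $\tilde h(\operatorname{diam}(A)/r)$ at the level of the $p$-point function; at each step of the Gladkov tripartition at least one of the three sub-sets appearing in the recursion has diameter comparable to $\operatorname{diam}(A)$, so the rapid decay survives the induction, but writing this out precisely requires careful bookkeeping across the minimum over tripartitions.
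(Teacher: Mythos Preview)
Your lower-bound argument is circular. You invoke \cref{thm:max_cluster_size_LD} to obtain $M_r \asymp r^{(d+\alpha)/2}$, justifying this by ``\eqref{CL} implies \eqref{wCL}''. But in the paper's logical order that implication is \cref{lem:CL_and_wCL}, and its proof is carried out \emph{jointly with} the proof of \cref{thm:CL_volume_moments}: one needs the lower bound $\E_{\beta_c,r}|K|^p \succeq_p r^{\alpha+(p-1)(d+\alpha)/2}$ to show that the tail contribution $\E_{\beta_c,r}[|K|^p - |K\cap B_{Cr}|^p]$ (which \cref{lem:sCL_moments} bounds by $O_p(C^{-2}r^{\alpha+(p-1)(d+\alpha)/2})$) is small \emph{relative to} $\E_{\beta_c,r}|K\cap B_{Cr}|^p$. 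There is no independent route from \eqref{CL} to \eqref{wCL} for $p\geq 2$ available at this point in the paper, and both proofs of \cref{thm:max_cluster_size_LD} (the short one under \eqref{CL} and the longer one under \eqref{wCL} in \cref{subsec:low_dim_max_cluster}) sit downstream of the moment lower bound you are trying to establish.

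The paper instead proves the lower bound on $\E_{\beta_c,r}|K|^2$ directly by a differential inequality in $r$: Russo's formula plus the correlation inequality \eqref{eq:BK_disjoint_clusters_covariance2} gives a lower bound on $\frac{d}{dr}\E_{\beta_c,r}|K|^2$ with an error controlled by $\E_{\beta_c,r}|K|^3$, and the Gladkov-derived inequality of \cref{lem:third_moment_Gladkov} (yielding $\E_{\beta_c,r}|K|^3 \preceq r^{(d+\alpha)/2}\E_{\beta_c,r}|K|^2$ via \eqref{eq:sCL_power_of_tau_sum}) closes the loop. One obtains that $r^{-\alpha-(d+\alpha)/2}\E_{\beta_c,r}|K|^2$ cannot decrease once small, hence is bounded below; higher $p$ then follow by H\"older.

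Your upper-bound strategy is in principle workable but more laborious than necessary. The paper (\cref{lem:sCL_moments}) bypasses the recursive sweep structure entirely: it uses the multigraph form \eqref{eq:kpt_Gladkov_graphs_restate} of the higher Gladkov inequality, applies H\"older to reduce each multigraph term to a product of one-variable sums $\sum_{x}\tau_{\beta_c,r}(0,x)^{\eta}$, and evaluates these directly via \eqref{eq:sCL_power_of_tau_sum} using the two-point bound from \cref{lem:CL_Sak_upper}. This sidesteps the bookkeeping problem you flag (propagating the rapid-decay factor $h$ through the tripartition recursion) and gives the same exponent with less work.
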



\begin{lemma}
\label{lem:CL_and_wCL}
If \eqref{CL} holds and $d<3\alpha$ then for each integer $p\geq 1$ and $\eps>0$ there exists a constant $C=C(p,\eps)$ such that
\[
  \E_{\beta_c,r}|K|^p \leq (1+\eps) \E_{\beta_c,r}|K \cap B_{C r}|^p
\]
for every $r\geq 1$. 
In particular, if \eqref{CL} holds and $d<3\alpha$ then \eqref{wCL} holds also.
\end{lemma}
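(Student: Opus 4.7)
The plan is to prove the bound via Minkowski's inequality after showing that $|K \setminus B_{Cr}|$ has small $L^p$ norm compared to $|K|$, which we do using Markov's inequality with high spatial moments together with a weighted $(p+1)$-point function estimate.

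\textbf{Setup.} With $U = |K \cap B_{Cr}|$ and $V = |K \setminus B_{Cr}|$, Minkowski's inequality gives
\[(\E_{\beta_c,r}|K|^p)^{1/p} \leq (\E_{\beta_c,r} U^p)^{1/p} + (\E_{\beta_c,r} V^p)^{1/p},\]
so if we can show $\E_{\beta_c,r} V^p \leq \eta^p\E_{\beta_c,r}|K|^p$ for some small $\eta = \eta(p,\eps)$ whenever $C=C(p,\eps)$ is large enough, uniformly in $r\geq 1$, then $\E_{\beta_c,r}|K|^p \leq (1-\eta)^{-p}\E_{\beta_c,r} U^p \leq (1+\eps)\E_{\beta_c,r} U^p$ as required.

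\textbf{Tail bound via Markov.} For any $q>0$, the inequality $\mathbbm{1}(\|x\|>Cr) \leq (Cr)^{-q}\|x\|^q$ gives
\[|K\setminus B_{Cr}|^p \leq (Cr)^{-pq}\Bigl(\sum_{x\in K}\|x\|^q\Bigr)^p.\]
Since $\E_{\beta_c,r}|K|^p \asymp_p r^{\alpha+(p-1)(d+\alpha)/2}$ by \cref{thm:CL_volume_moments}, it suffices to establish the weighted moment estimate
\[\E_{\beta_c,r}\Bigl(\sum_{x\in K}\|x\|^q\Bigr)^p \preceq_{p,q} r^{pq}\cdot\E_{\beta_c,r}|K|^p\quad\text{for every integer }p\geq 1 \text{ and } q>0,\]
as this would yield $\E_{\beta_c,r} V^p \preceq_{p,q} C^{-pq}\E_{\beta_c,r}|K|^p$, which is arbitrarily small for $C$ large.

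\textbf{Weighted moment estimate.} Expanding,
\[\E_{\beta_c,r}\Bigl(\sum_{x\in K}\|x\|^q\Bigr)^p = \sum_{x_1,\ldots,x_p\in\Z^d}\prod_{i=1}^p\|x_i\|^q\,\tau_{\beta_c,r}(0,x_1,\ldots,x_p).\]
Iterating the higher-order Gladkov inequality (\cref{thm:higher_Gladkov}) and inserting the pointwise cut-off two-point estimate $\tau_{\beta_c,r}(x,y)\preceq\|x-y\|^{-d+\alpha}h(\|x-y\|/r)$ from \cref{lem:CL_Sak_upper} (with $h$ decaying faster than any power), we obtain an upper bound on $\tau_{\beta_c,r}(0,x_1,\ldots,x_p)$ by a product over a multigraph $Q$ as in \eqref{eq:kpt_Gladkov_graphs}, each factor of which carries a rapidly decaying weight $h(\|x_i-x_j\|/r)^{Q_{ij}/2^{p-1}}$. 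The rapid decay forces the sum to localize on the scales $\|x_i-x_j\|=O(r)$, so the change of variables $\tilde x_i = x_i/r$ turns the sum into $r^{pq+\alpha+(p-1)(d+\alpha)/2}$ times a Riemann approximation to a finite continuum integral, matching $r^{pq}\E_{\beta_c,r}|K|^p$ up to constants. Finiteness of the integral follows from the decay of $h$ at infinity together with the integrability of the Gladkov product at the origin (automatic for $\alpha>0$).

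\textbf{Main obstacle.} The main technical work lies in the weighted moment estimate. The tree-graph inequality alone would overestimate $\tau_{\beta_c,r}(0,x_1,\ldots,x_p)$ by polynomial factors in $r$ in the effectively low-dimensional regime, so the sharper higher-order Gladkov bound is essential. Organizing the multigraph-indexed expansion, checking the scaling arithmetic, and tracking how the rapid-decay factors combine across different Gladkov factors is the main bookkeeping challenge.
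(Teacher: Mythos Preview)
Your approach is correct but more circuitous than the paper's. The paper avoids Minkowski entirely: from the elementary inequality $|K|^p - |K\cap B_{Cr}|^p \leq p\,|K|^{p-1}\,|K\setminus B_{Cr}|$ (valid since $a^p-b^p\le p\,a^{p-1}(a-b)$ for $a\ge b\ge 0$), together with the Markov bound $|K\setminus B_{Cr}|\le (Cr)^{-2}\sum_{x\in K}\|x\|_2^2$, the task reduces to the \emph{single}-weighted estimate
\[
\E_{\beta_c,r}\Bigl[|K|^{p-1}\sum_{x\in K}\|x\|_2^2\Bigr]\preceq_p r^{2+\alpha+(p-1)\frac{d+\alpha}{2}},
\]
which is precisely the second estimate of \cref{lem:sCL_moments}, already proven. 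Combined with the lower bound $\E_{\beta_c,r}|K|^p\succeq_p r^{\alpha+(p-1)(d+\alpha)/2}$ (the content of \cref{thm:CL_volume_moments}), this gives $\E_{\beta_c,r}|K|^p-\E_{\beta_c,r}|K\cap B_{Cr}|^p\preceq_p C^{-2}\E_{\beta_c,r}|K|^p$, and one is done.

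Your route instead requires the \emph{fully}-weighted estimate $\E_{\beta_c,r}\bigl(\sum_{x\in K}\|x\|^q\bigr)^p\preceq_{p,q} r^{pq}\E_{\beta_c,r}|K|^p$, which is genuinely stronger. Your sketch of this step is plausible but incomplete: the ``Riemann approximation'' description obscures the actual mechanism, and you do not explain how the weights $\|x_i\|^q$---which are distances \emph{to the origin}, not to neighbouring vertices in the Gladkov multigraph $Q$---are to be distributed across the edge factors. The fix is to use connectivity of $Q$ to bound $\|x_i\|\le\sum_e\|e\|$ along a path in $Q$ from $0$ to $x_i$, then expand the product and attach each $\|e\|^q$ to the corresponding two-point factor before applying \cref{lem:scaling_sums}; this works, but is exactly the ``bookkeeping challenge'' you flag as the main obstacle without resolving. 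The paper's approach sidesteps this entirely by needing only one weight, not $p$ of them.
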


\begin{remark}
The conclusions of \cref{lem:CL_and_wCL} also hold when $d\geq 3\alpha$ and \eqref{CL} holds (equivalently, $d\geq 3\alpha$ and $\alpha<2$), in which case it follows from \cref{I-thm:scaling_limit_diagrams,III-thm:critical_dim_hydro,cor:HD_CL}.
\end{remark}


We begin by using the Gladkov inequality to prove the following upper bounds on cluster volume moments under \eqref{CL}.

\begin{lemma}
\label{lem:sCL_moments}
If \eqref{CL} holds then
\[
  \E_{\beta_c,r}|K|^p \preceq_p r^{\alpha+ (p-1)\frac{d+\alpha}{2}} \qquad \text{ and } \qquad \E_{\beta_c,r}\left[|K|^{p-1}\sum_{x\in K} \|x\|^2_2 \right]  \preceq_p  r^{2+\alpha+ (p-1)\frac{d+\alpha}{2}}
\]
for each integer $p\geq 1$ and $r\geq 1$.
\end{lemma}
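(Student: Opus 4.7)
The plan is to establish a cut-off version of the geometric Gladkov inequality (\cref{cor:geometric_Gladkov}) that retains rapid decay in $\operatorname{diam}(A)/r$, and then to sum the resulting bound over tuples of points via a dyadic decomposition by diameter.

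First I would prove that for every finite $A \subseteq \Z^d$ with $|A|\geq 2$ and every $r \geq 1$,
\[
  \tau_{\beta_c,r}(A) \preceq_{|A|} \operatorname{sweep}(A)^{-(d-\alpha)/2}\, \tilde h_{|A|}\!\left(\operatorname{diam}(A)/r\right),
\]
where $\tilde h_k$ decays faster than any power for each fixed $k$. This is proved by induction on $|A|$, mimicking the proof of \cref{cor:geometric_Gladkov}: the base case $|A|=2$ is \cref{lem:CL_Sak_upper}, and for $|A|\geq 3$, higher Gladkov (\cref{thm:higher_Gladkov}) combined with the induction hypothesis gives
\[
  \tau_{\beta_c,r}(A)^2 \preceq_{|A|} \max_{A=A_1 \sqcup A_2 \sqcup A_3}\prod_{\{i,j\}} \operatorname{sweep}(A_i\cup A_j)^{-(d-\alpha)/2}\, \tilde h\!\left(\operatorname{diam}(A_i\cup A_j)/r\right).
\]
The identity $S \asymp \operatorname{sweep}$ from \cref{lem:sweep_recursion} identifies the maximum over tripartitions of $\prod \operatorname{sweep}(A_i\cup A_j)^{-(d-\alpha)/2}$ with $\operatorname{sweep}(A)^{-(d-\alpha)}$ up to constants. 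Moreover, any two points of $A$ realizing $\operatorname{diam}(A)$ lie in at most two of the three parts in any tripartition, so some union $A_i\cup A_j$ has diameter $\geq \operatorname{diam}(A)$ and at least one $\tilde h$ factor decays at the required rate; taking square roots completes the induction (since $\sqrt{\tilde h}$ still decays faster than any power).

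Next I would compute $\E_{\beta_c,r}|K|^p = \sum_{x_1,\ldots,x_p}\tau_{\beta_c,r}(\{0,x_1,\ldots,x_p\})$ by decomposing dyadically according to $R := \operatorname{diam}(\{0,x_1,\ldots,x_p\}) \asymp 2^k$, with degenerate tuples (repeated coordinates) absorbed into a short induction on $p$. Using \cref{lem:sweep_and_spread} ($\operatorname{sweep}\asymp\operatorname{diam}\cdot\operatorname{spread}$) and the product representation $\operatorname{spread}(\{0,x_1,\ldots,x_p\}) \asymp \prod_{i=1}^p d(x_i,\{0,x_1,\ldots,x_{i-1}\})$ from \eqref{eq:spread_simple_expression}, the contribution from an $R$-shell is bounded by
\[
  \tilde h(R/r)\, R^{-(d-\alpha)/2} \sum_{x_1,\ldots,x_p \in B_{2R}} \prod_{i=1}^p d(x_i,\{0,x_1,\ldots,x_{i-1}\})^{-(d-\alpha)/2}.
\]
Summing iteratively from $i=p$ down to $i=1$ and using the elementary estimate $\sum_{y \in B_{2R}} d(y,S)^{-(d-\alpha)/2} \preceq |S|\, R^{(d+\alpha)/2}$ (valid since $(d-\alpha)/2<d$) bounds the spread sum by $R^{p(d+\alpha)/2}$, so each shell contributes at most $\tilde h(R/r)\, R^{\alpha + (p-1)(d+\alpha)/2}$. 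Summing over dyadic $R$ via \cref{lem:scaling_sums} then yields the first bound $r^{\alpha + (p-1)(d+\alpha)/2}$.

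The second bound follows from the identical argument applied to the sum $\sum_{x_1,\ldots,x_{p-1},y}\|y\|_2^2\, \tau_{\beta_c,r}(\{0,x_1,\ldots,x_{p-1},y\})$: on a diameter-$R$ shell one has $\|y\|_2^2 \leq 4R^2$, so each shell gains an extra factor $R^2$ that becomes $r^2$ after the dyadic sum. The main technical obstacle is the cut-off geometric Gladkov of the first step: producing a single diameter-based $\tilde h$ factor rather than one per edge (as would come from naively combining the iterated Gladkov bound \eqref{eq:kpt_Gladkov_graphs} with the cut-off two-point estimate) requires the inductive treatment above together with the geometric observation about tripartitions.
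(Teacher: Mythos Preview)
Your proposal is correct and reaches the same conclusion, but the paper takes a more algebraic and shorter route that bypasses the sweep entirely. The paper works directly with the multigraph form of iterated Gladkov \eqref{eq:kpt_Gladkov_graphs}: for each $2^{k-2}$-regular multigraph $Q$ it uses H\"older's inequality and transitivity to decouple the sum over $x_\ell$ into a one-variable sum $\sum_x \tau_{\beta_c,r}(0,x)^{\eta}$ with $\eta = 2^{-k+2}Q_\ell$, bounds each such sum by $r^{d-\eta(d-\alpha)}$ via the cut-off two-point estimate of \cref{lem:CL_Sak_upper} and \cref{lem:scaling_sums}, and then observes that the exponents assemble to $\alpha+(k-2)\tfrac{d+\alpha}{2}$ because the total degree $\sum_{\ell\geq 2}Q_\ell$ equals the edge count $k\,2^{k-3}$ of $Q$. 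The rapid decay in $r$ is carried implicitly by the $h$ factor in each summand, so no separate induction is needed to track the diameter.

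Your route instead proves a pointwise cut-off geometric Gladkov bound (with an explicit $\tilde h(\operatorname{diam}(A)/r)$ factor) by induction on $|A|$, and then sums via the spread product representation and a dyadic shell decomposition. This is more geometric and yields a cut-off $k$-point bound that could be useful elsewhere, but it requires the sweep/spread identities of \cref{lem:sweep_and_spread} and \cref{lem:sweep_recursion} together with the extra induction and dyadic bookkeeping that the paper's H\"older argument avoids.
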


\begin{proof}[Proof of \cref{lem:sCL_moments}]
We may assume by \cref{cor:large_alpha_not_CL} that $\alpha \leq d$.
Fix $r\geq 1$ and write $\tau=\tau_{\beta_c,r}$ for the $k$-point function associated to $\P_{\beta_c,r}$.
Iteratively applying the higher Gladkov inequality (\cref{thm:higher_Gladkov}) as in \eqref{eq:kpt_Gladkov_graphs} yields that 
\begin{equation}
\label{eq:kpt_Gladkov_graphs_restate}
 \tau(x_1,\ldots,x_k) \preceq_k \sum_{Q} \left(\prod_{i<j} \tau(x_i,x_j)^{Q_{i,j}} \right)^{2^{-k+2}}
\end{equation}
where the sum is taken over adjacency matrices of $2^{k-2}$-regular connected multigraphs over $\{1,\ldots,k\}$ without self-loops. 
For each such multigraph $Q$, $k$-tuple of points $x_1,\ldots,x_k$, and index $2\leq \ell \leq k$ we define
\[
  \tau_Q(x_1,\ldots,x_\ell) :=  \left(\prod_{i<j \leq \ell} \tau(x_i,x_j)^{Q_{i,j}} \right)^{2^{-k+2}}.
\]
Letting $Q_\ell = \sum_{i<\ell} Q_{i,\ell}$ for each $1 \leq \ell \leq k$, we can apply H\"older's inequality then use transitivity to deduce that
\begin{multline*}
 \sum_{x_\ell \in \Z^d} \tau_Q(x_1,\ldots,x_\ell) 
 =\tau_Q(x_1,\ldots,x_{k-1}) \sum_{x_\ell\in \Z^d} \prod_{i<\ell} \tau(x_i,x_\ell)^{2^{-k+2}Q_{i,\ell}}
 \\\leq \tau_Q(x_1,\ldots,x_{k-1}) \sum_{x_\ell \in \Z^d} \tau(x_1,x_\ell)^{2^{-k+2} Q_\ell}
\end{multline*}
and hence by induction on $\ell$ that
\begin{equation}
\label{eq:Gladkov_to_moments}
  \sum_{x_3,x_4,\ldots,x_k \in \Z^d} \tau_Q(x_1,\ldots,x_k) \leq \tau(x_1,x_2)^{2^{-k+2}Q_{1,2}} \prod_{\ell=3}^k \sum_{x_\ell \in \Z^d} \tau(x_1,x_\ell)^{2^{-k+2} Q_\ell}.
\end{equation}
Using \cref{lem:CL_Sak_upper}, the assumption \eqref{CL} ensures that there exists a decreasing function $h:(0,\infty)\to(0,1]$ decaying faster than any power such that 
$\tau_{\beta_c,r}(0,x)\preceq \|x-y\|^{-d+\alpha}h(\|x\|/r)$ and hence that
\begin{equation}
\label{eq:sCL_power_of_tau_sum}
\sum_{x\in \Z^d} \tau_{\beta_c,r}(0,x)^{\eta} \preceq_\eta \sum_{x\in \Z^d} \|x\|^{-\eta(d-\alpha)} h(\|x\|/r)^\eta \preceq_\eta r^{d-\eta(d-\alpha)}
\end{equation}
and
\[
\sum_{x\in \Z^d} \|x\|^2_2 \tau_{\beta_c,r}(0,x)^{\eta} \preceq_\eta \sum_{x\in \Z^d} \|x\|^{2-\eta(d-\alpha)} h(\|x\|/r)^\eta \preceq_\eta r^{2+d-\eta(d-\alpha)}
\]
for every $0<\eta\leq 1$ and $r\geq 1$, where we applied \cref{lem:scaling_sums} in both estimates.
 Substituting these estimates into \eqref{eq:Gladkov_to_moments} yields that
\[
  \sum_{x_2,\ldots,x_k \in \Z^d} \tau_Q(x_1,\ldots,x_k) \preceq_k r^{d(k-1)-(d-\alpha) 2^{-k+2}\sum_{\ell=2}^k Q_\ell} = r^{d(k-1)-\frac{k}{2}(d-\alpha)} = r^{\alpha +(k-2)\frac{d+\alpha}{2}}
\]
and 
\[
  \sum_{x_2,\ldots,x_k \in \Z^d} \|x_1-x_2\|_2^2 \tau_Q(x_1,\ldots,x_k) \preceq_k r^{2+d(k-1)-(d-\alpha) 2^{-k+2}\sum_{\ell=2}^k Q_\ell} = r^{2+\alpha +(k-2)\frac{d+\alpha}{2}},
\]
where we used that $\sum_{\ell=2}^k Q_\ell = k 2^{k-3}$ is the number of edges in the multigraph $Q$. The claim follows from this and \eqref{eq:kpt_Gladkov_graphs_restate}.
\end{proof}

In order to complete the proofs of \cref{thm:CL_volume_moments,lem:CL_and_wCL}, it suffices to prove that 
\[\E_{\beta_c,r}|K|^p \succeq_p r^{\alpha+(p-1) \frac{d+\alpha}{2}}\]
for every $p\geq 1$ under the assumption that $d<3\alpha$ and \eqref{CL} holds. 
(Indeed, \cref{thm:CL_volume_moments} is an immediate consequence of this and the first estimate of \cref{lem:sCL_moments}, while \cref{lem:CL_and_wCL} follows from this and the second estimate of \cref{lem:sCL_moments} by Markov's inequality by the same argument used in \eqref{eq:CLp_application}.)
Since we already have that $\E_{\beta_c,r}|K| \asymp r^\alpha$ by \cref{I-cor:mean_lower_bound} and \cref{lem:sCL_moments}, it suffices to prove that 
\[\E_{\beta_c,r}|K|^2 \succeq r^{\alpha+\frac{d+\alpha}{2}},\] with the corresponding lower bounds on higher moments following by H\"older's inequality.
(Indeed, the ratios $\E X^{p+1}/\E X^p$ are increasing in $p\geq 0$ for any non-negative random variable $X$.) The proof of the lower bound on the second moment will make use of the following inequality for general transitive weighted graphs, which is another consequence of the higher Gladkov inequality. Recall that a transitive weighted graph $G=(V,E,J)$ is said to be \textbf{unimodular} if it satisfies the mass-transport principle $\sum_{x\in V}F(o,x)=\sum_{x\in V}F(x,o)$ for every $F:V^2\to [0,\infty]$ satisfying $F(\gamma x,\gamma y)=F(x,y)$ for every $x,y\in V$ and automorphism $\gamma$ of $G$.
 To see that this holds for our translation-invariant model on $\Z^d$ note simply that
 \begin{equation}
 \label{eq:MTP}
   \sum_{x\in \Z^d} F(0,x)=\sum_{x\in \Z^d} F(0,-x)=\sum_{x\in \Z^d} F(x,0).
 \end{equation}
More generally, if $G=(V,E,J)$ is unimodular and if $F:V^k \to[0,\infty]$ satisfies $F(\gamma x_1,\ldots,\gamma x_k)=F( x_1,\ldots, x_k)$ for every $x_1,\ldots,x_k\in V$ and automorphism $\gamma$ then
 \begin{multline}
 \label{eq:MTP_general}
   \sum_{x_2,\ldots,x_k \in V} F(o,x_2,\ldots,x_k)=\sum_{x_1,x_3,\ldots,x_k\in V} F(x_1,o,x_3,\ldots,x_k)\\=\cdots = \sum_{x_1,\ldots,x_{k-1}\in V} F(x_1,x_2,\ldots,x_{k-1},o),
 \end{multline}
 as follows from repeated application of \eqref{eq:MTP}.

\begin{lemma}
\label{lem:third_moment_Gladkov}
Consider percolation on a unimodular transitive weighted graph $G=(V,E,J)$ and let $o$ be a vertex of $G$. The third moment of the volume of the cluster of $o$ can be bounded by
\[\E_{\beta}|K|^3 \preceq \E_\beta |K|^2 \sum_{x\in V} \sqrt{\tau_\beta(o,x)}\]
for every $\beta \geq 0$, where the implicit constant is universal.
\end{lemma}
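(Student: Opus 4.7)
The plan is to expand $\E_\beta|K|^3$ as a sum of three-point functions, apply the Gladkov three-point inequality to bound the summand by a symmetric product of square-roots of two-point functions, then break the symmetry via AM-GM in a way that allows us to sum out one of the two free variables using transitivity.

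Concretely, I would begin by writing
\[
  \E_\beta|K|^3 = \sum_{x,y\in V} \tau_\beta(o,x,y),
\]
where $\tau_\beta(o,x,y) = \P_\beta(o\leftrightarrow x \leftrightarrow y)$ is the three-point function. By the three-point Gladkov inequality \eqref{eq:Gladkov_intro} (equivalently, the $|A|=3$ case of \cref{thm:higher_Gladkov}, which holds on every weighted graph), there is a universal constant $C$ such that
\[
  \tau_\beta(o,x,y) \leq C \sqrt{\tau_\beta(o,x)\,\tau_\beta(x,y)\,\tau_\beta(y,o)}
\]
for every $x,y\in V$. To decouple the factor $\tau_\beta(o,x)$ from the pair $(x,y)$, I would then apply the elementary AM-GM bound $\sqrt{ab}\leq \tfrac12(a+b)$ to $a=\tau_\beta(x,y)$ and $b=\tau_\beta(y,o)$, yielding
\[
  \tau_\beta(o,x,y) \leq \frac{C}{2}\sqrt{\tau_\beta(o,x)}\bigl(\tau_\beta(x,y)+\tau_\beta(y,o)\bigr).
\]

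Summing over $x,y\in V$ and using the mass-transport consequence of unimodularity together with transitivity (namely $\sum_{y\in V}\tau_\beta(x,y) = \sum_{y\in V}\tau_\beta(y,o) = \E_\beta|K|$ for every $x\in V$), both terms on the right collapse to the same expression and we obtain
\[
  \E_\beta|K|^3 \leq C\,\E_\beta|K|\sum_{x\in V}\sqrt{\tau_\beta(o,x)}.
\]
Finally, since $o\in K$ almost surely we have $|K|\geq 1$ and hence $|K|^2 \geq |K|$, so that $\E_\beta|K|\leq \E_\beta|K|^2$. Substituting this into the previous display yields the claimed inequality.

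There is no substantive obstacle here: the whole proof is a three-line combination of the Gladkov bound, AM-GM, and the mass-transport principle \eqref{eq:MTP}, once one notices that breaking the symmetry of the product $\sqrt{\tau(o,x)\tau(x,y)\tau(y,o)}$ via AM-GM on two of the three factors produces exactly the structure needed to sum out $y$ via transitivity. The only minor point worth flagging is that the argument actually yields the slightly stronger bound $\E_\beta|K|^3\preceq \chi(\beta)\sum_{x}\sqrt{\tau_\beta(o,x)}$; the statement as given is then obtained by the trivial monotonicity $\chi(\beta)\leq \E_\beta|K|^2$, presumably stated in this form because $\E_\beta|K|^2$ is the quantity that arises naturally in the low-dimensional applications to the maximum cluster size.
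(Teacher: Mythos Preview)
Your very first line is wrong: the third moment expands as a sum of \emph{four}-point functions, not three-point functions. Since $|K|^3=\sum_{x,y,z}\mathbbm{1}(x,y,z\in K_o)$, one has
\[
\E_\beta|K|^3=\sum_{x,y,z\in V}\tau_\beta(o,x,y,z),
\]
whereas $\sum_{x,y}\tau_\beta(o,x,y)=\E_\beta|K|^2$. So what your argument actually establishes is the (correct but different) bound $\E_\beta|K|^2\preceq \chi(\beta)\sum_x\sqrt{\tau_\beta(o,x)}$, and the final step $\chi(\beta)\leq\E_\beta|K|^2$ then yields only a tautology, not the lemma.

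The paper's proof starts from the correct four-point expansion and applies the higher Gladkov inequality (\cref{thm:higher_Gladkov}) with the partition $\{o\},\{x\},\{y,z\}$ to obtain
\[
\tau_\beta(o,x,y,z)\preceq \sqrt{\tau_\beta(o,x)\,\tau_\beta(o,y,z)\,\tau_\beta(x,y,z)},
\]
and then sums using Cauchy--Schwarz on the two three-point factors together with transitivity to get $\E_\beta|K|^2$ on the right. Your AM--GM idea would also work at this stage (by transitivity $\sum_{y,z}\tau_\beta(o,y,z)=\sum_{y,z}\tau_\beta(x,y,z)=\E_\beta|K|^2$, so the two terms coincide), and in fact produces the factor $\E_\beta|K|^2$ directly without the wasteful monotonicity step. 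The gap is therefore not the AM--GM trick but the miscount of points in the moment expansion; fix that and use the four-point Gladkov, and your approach goes through.
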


\begin{proof}[Proof of \cref{lem:third_moment_Gladkov}]
We have by the higher Gladkov inequality (\cref{thm:higher_Gladkov}) that the four-point function can be bounded 
\[
  \tau(x_1,x_2,x_3,x_4) \preceq \sum_\pi \sqrt{\tau(x_{\pi(1)},x_{\pi(2)})\tau(x_{\pi(1)},x_{\pi(3)},x_{\pi(4)})\tau(x_{\pi(2)},x_{\pi(3)},x_{\pi(4)})}
\]
where the sum is taken over permutations of $\{1,2,3,4\}$. Since $G$ is unimodular, the sum 
\[\sum_{x_2,x_3,x_4}\sqrt{\tau(x_{\pi(1)},x_{\pi(2)})\tau(x_{\pi(1)},x_{\pi(3)},x_{\pi(4)})\tau(x_{\pi(2)},x_{\pi(3)},x_{\pi(4)})}\] does not depend on the choice of permutation $\pi$ and we obtain by Cauchy-Schwarz that
\begin{align*}
\E_\beta |K|^3 &\preceq \sum_{x,y,z\in V} \sqrt{\tau(o,y,z) \tau(x,y,z) \tau(o,x)} 
\\
&\leq \sum_{x\in V} \sqrt{\tau(o,x)} \sqrt{\sum_{y,z} \tau(o,y,z) \sum_{y,z} \tau(x,y,z)} = \E_\beta |K|^2 \sum_{x\in V} \sqrt{\tau_\beta(o,x)}
\end{align*}
as claimed.
\end{proof}

Returning to our usual setting of long-range percolation on $\Z^d$, we saw in \eqref{eq:sCL_power_of_tau_sum} that if \eqref{CL} holds then $\sum_{x\in \Z^d} \sqrt{\tau_{\beta_c,r}(0,x)} \preceq r^{(d+\alpha)/2}$, so that \cref{lem:third_moment_Gladkov} implies that
\begin{equation}
\label{eq:third_moment_Gladkov_sCL}
  \E_{\beta_c,r}|K|^3 \preceq  r^{(d+\alpha)/2} \E_{\beta_c,r}|K|^2
\end{equation}
for every $r\geq 1$. The following proof
 will also apply the correlation inequalities of \cref{I-lem:BK_disjoint_clusters_covariance}, which state that if $G=(V,E,J)$ is a weighted graph and $\beta\geq 0$ then
 \begin{equation}
 \label{eq:BK_disjoint_clusters_covariance}
\E_\beta\left[F(K_x)G(K_y)\mathbbm{1}(x\nleftrightarrow y)\right] \leq \E_\beta \left[F(K_x)\right] \E_\beta \left[G(K_y)\right]
\end{equation}
holds for every $x,y\in V$ and every pair of increasing non-negative functions $F$ and $G$ and that
\begin{equation}
 \label{eq:BK_disjoint_clusters_covariance2}
\E_\beta\left[F(K_x)G(K_y)\mathbbm{1}(x\nleftrightarrow y)\right] \geq \E_\beta \left[F(K_x)\right] \E_\beta \left[G(K_y)\right] - \E_\beta\left[F(K_x)G(K_y)\mathbbm{1}(x\leftrightarrow y)\right]
\end{equation}
and
\begin{equation}
\label{eq:FG_covariance}
  0\leq \E_\beta [F(K_x)G(K_y)] -  \E_\beta \left[F(K_x)\right] \E_\beta \left[G(K_y)\right] \leq \E_\beta\left[F(K_x)G(K_y)\mathbbm{1}(x\leftrightarrow y)\right]
\end{equation}
hold for every $x,y\in V$ and every pair of increasing non-negative functions $F$ and $G$ such that $\E_\beta F(K_x),\E_\beta G(K_y)<\infty$. (In fact in this proof we will only use \eqref{eq:BK_disjoint_clusters_covariance2}.)


\begin{proof}[Proof of \cref{thm:CL_volume_moments,lem:CL_and_wCL}]
As explained above, it suffices to prove that $\E_{\beta_c,r}|K|^2 \succeq r^{\alpha+\frac{d+\alpha}{2}}$ for every $r\geq 1$. To prove this, it suffices to prove that there exists $\delta>0$ and $r_0<\infty$ such that if $r\geq r_0$ and $\E_{\beta_c,r}|K|^2 \leq \delta r^{\alpha+(d+\alpha)/2}$ then
\[
 r^{\alpha + (d+\alpha)/2} \frac{d}{dr}\left(r^{-\alpha-(d+\alpha)/2} \E_{\beta_c,r}|K|^2 \right) =  \frac{d}{dr}  \E_{\beta_c,r}|K|^2
 - \left(\alpha+\frac{d+\alpha}{2}\right)  r^{-1} \E_{\beta_c,r}|K|^2 \geq 0.
\]
Indeed, once this is proven it will follow by the mean value theorem that $r^{-\alpha-(d+\alpha)/2} \E_{\beta_c,r}|K|^2 \geq \min\{\delta, r_0^{-\alpha-(d+\alpha)/2} \E_{\beta_c,r_0}|K|^2\}$ for every $r\geq r_0$.
By \eqref{eq:third_moment_Gladkov_sCL}, there exists a constant $C$ such that 
\begin{equation}
\label{eq:sCL_third_moment_upper}
\E_{\beta_c,r}|K|^3 \leq C r^{(d+\alpha)/2} \E_{\beta_c,r}|K|^2
\end{equation}
 for every $r\geq 1$.
  Let $\delta>0$ be such that
\begin{equation}
\label{eq:sCL_delta_def1}
3 \alpha (1-\delta^{1/4})(1-\delta)  - 3 \beta_c (1+\delta) \delta^{1/2} C  \geq \alpha + \frac{d+\alpha}{2}
\end{equation}
and
\begin{equation}
\label{eq:sCL_delta_def2}
(1-\delta) \left(1-\delta^{1/2} (1-\delta)^{-1} \frac{\beta_c}{\alpha}\right) \geq (1-\delta^{1/4}),
\end{equation}
where the first property is possible to satisfy since $d<3\alpha$, 
and let $r_0$ be sufficiently large that 
\begin{equation}
\label{eq:sCL_r_0_def}
\E_{\beta_c,r}|K| \geq (1-\delta) \frac{\alpha}{\beta_c} r^\alpha, \quad |J'(r)|\leq (1+\delta) r^{-d-\alpha-1}, \quad \text{ and } \quad |J'(r)||B_r| \geq (1-\delta) r^{-\alpha-1} 
\end{equation}
for every $r\geq r_0$; the last two constraints are possible to satisfy by our normalization convention \eqref{eq:normalization_conventions} while the first is possible to satisfy by \eqref{eq:mean_lower_bound}.
Suppose that $r\geq r_0$ and that $\E_{\beta_c,r}|K|^2\leq \delta r^{\alpha+ (d+\alpha)/2}$.
Applying Markov's inequality to the size-biased measure $\hat \E_{\beta_c,r}$, which has mean
\[
  \hat \E_{\beta_c,r}|K| = \frac{\E_{\beta_c,r}|K|^2}{\E_{\beta_c,r}|K|} \leq \delta (1-\delta)^{-1} \frac{\beta_c}{\alpha} r^{(d+\alpha)/2}
\] we deduce using \eqref{eq:sCL_delta_def2} that 
\begin{multline}
\label{eq:sCL_truncated_susceptibility_contradiction}
  \E_{\beta_c,r}\left[|K| \mathbbm{1}(|K|\leq \delta^{1/2} r^{(d+\alpha)/2}) \right]  = \E_{\beta_c,r}|K| \hat \P_{\beta_c,r}(|K|\leq \delta^{1/2} r^{(d+\alpha)/2})
  \\\geq (1-\delta)  \frac{\alpha}{\beta_c} \left(1-\delta^{1/2} (1-\delta)^{-1} \frac{\beta_c}{\alpha}\right) r^\alpha \geq (1-\delta^{1/4}) \frac{\alpha}{\beta_c} r^\alpha.
\end{multline}
Using Russo's formula, we can write
\begin{align*}
\frac{d}{dr}\E_r|K|^2 &= \beta_c |J'(r)| \left(
\E_r\left[|K| \sum_{y\in B_r} \mathbbm{1}(y\notin K) |K_y|^{2} \right]
+
2\E_r\left[|K|^{2} \sum_{y\in B_r} \mathbbm{1}(y\notin K) |K_y| \right]\right)\nonumber\\
&=3\beta_c |J'(r)| \E_r\left[|K|^{2} \sum_{y\in B_r} \mathbbm{1}(y\notin K) |K_y| \right] 
\nonumber\\&\geq 
3\beta_c |J'(r)| \E_r\left[|K|^{2} \sum_{y\in B_r} \mathbbm{1}(y\notin K) \min\{|K_y|,\delta^{1/2}r^{(d+\alpha)/2}\} \right]
\end{align*}
where the second equality follows by the mass-transport principle. Applying \eqref{eq:BK_disjoint_clusters_covariance2}, we deduce that
\begin{align*}
  \frac{d}{dr} \E_{\beta_c,r}|K|^2 &\geq 
  3 \beta_c|J'(r)| |B_r| \E_{\beta_c,r}\min\{|K|,\delta^{1/2} r^{(d+\alpha)/2}\} \E_{\beta_c,r}|K|^2 
  \\&\hspace{4cm}-3 \beta_c|J'(r)|\E_{\beta_c,r}\left[\sum_{y\in B_r} \mathbbm{1}(y\in K)|K|^2  \min\{|K|,\delta^{1/2} r^{(d+\alpha)/2}\}\right]
  \\
  &\geq 
  3 \beta_c|J'(r)| |B_r| \E_{\beta_c,r}\min\{|K|,\delta^{1/2} r^{(d+\alpha)/2}\} \E_{\beta_c,r}|K|^2 
  \\&\hspace{6.5cm}-3 \beta_c|J'(r)|\E_{\beta_c,r}\left[|K|^3  \min\{|K|,\delta^{1/2} r^{(d+\alpha)/2}\}\right]
  \\&\geq \left[3 \alpha (1-\delta^{1/4})(1-\delta)  - 3 \beta_c (1+\delta) \delta^{1/2} C \right] r^{-1} \E_{\beta_c,r}|K|^2 \geq \left[\alpha + \frac{d+\alpha}{2}\right] r^{-1} \E_{\beta_c,r}|K|^2
\end{align*}
as required, where we applied \eqref{eq:sCL_third_moment_upper} and \eqref{eq:sCL_truncated_susceptibility_contradiction} 
used the conditions placed on $\delta$ and $r_0$ in \eqref{eq:sCL_delta_def1} and \eqref{eq:sCL_r_0_def} in the last line.
\end{proof}

The estimates proven in this subsection already yield the estimate $M_r\asymp M_r^*\asymp r^{(d+\alpha)/2}$ of \cref{thm:max_cluster_size_LD} under the stronger assumption that \eqref{CL} holds. The proof will apply the \emph{universal tightness theorem} of \cite[Theorem 2.2]{hutchcroft2020power} via the following inequality, which is a special case of \cref{I-cor:universal_tightness_moments}: 
\begin{equation}
\label{eq:universal_tightness_moments_restate}
  \E_{\beta_c,r}|K \cap B_R|^{p+q} \preceq_{p,q} M_{R}^q\E_{\beta_c,r}|K \cap B_R|^p
\end{equation}
for every $R\geq r>0$ and $p,q\geq 1$. (In fact this is true for all $\beta$, as is clear from the much more general statement given in \cref{I-cor:universal_tightness_moments}; we have stated it this way to avoid introducing notation for the relevant analogue of $M_r$.)

\begin{proof}[Proof of \cref{thm:max_cluster_size_LD} assuming \eqref{CL}]
The upper bound $M_r\leq M_r^* \preceq r^{(d+\alpha)/2}$ follows from the main result of \cite{hutchcroft2022sharp} as discussed in \eqref{eq:M_r_upper_restate}, so that it suffices to prove the lower bound. Using \cref{thm:CL_volume_moments,lem:CL_and_wCL} and applying \eqref{eq:universal_tightness_moments_restate} with $p=q=1$ we obtain that $\E_{\beta_c,r}|K|\asymp r^\alpha$ and that there exists a constant $C$ such that
\[
  r^{\alpha+\frac{d+\alpha}{2}}\preceq \E_{\beta_c,r}|K|^2 \preceq \E_{\beta_c,r}|K \cap B_{Cr}|^2 \preceq M_{Cr} \E_{\beta_c,r}|K|,
\]
which rearranges to give the claimed lower bound on $M_r$.
\end{proof}

Concluding from the weaker assumption \eqref{wCL} will require a different argument and is done in the next two subsections. These sections will also introduce important techniques that will be used in \cref{sec:negligibility_of_mesoscopic_clusters_and_hyperscaling}.

\subsection{The hydrodynamic condition does not hold}
\label{sub:the_hydrodynamic_condition_does_not_hold}

The goal of this subsection is to prove the following proposition.

\begin{prop}
\label{prop:low_dim_not_hydro}
If $d<3\alpha$ and \eqref{wCL} holds then the hydrodynamic condition \eqref{Hydro} does \emph{not} hold. 
\end{prop}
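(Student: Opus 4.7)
The strategy is to assume that \eqref{wCL} and \eqref{Hydro} both hold when $d<3\alpha$ and to derive a contradiction between a polynomial lower bound on $f(r):=\E_{\beta_c,r}|K|^2$ coming from a mean-field differential inequality and the upper bound on $f(r)$ coming directly from \eqref{Hydro}. As a preliminary input I will use the first-moment asymptotic $\E_{\beta_c,r}|K|\sim (\alpha/\beta_c)r^\alpha$: the lower bound is \eqref{eq:mean_lower_bound}, and the matching upper bound follows by combining \eqref{wCL} at $p=1$ with the spatially averaged estimate \eqref{eq:Sak_upper_restate} together with the stochastic domination $\P_{\beta_c,r}\preceq \P_{\beta_c}$.

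The heart of the argument is a one-sided differential inequality for $f$. Starting from the Russo--mass-transport identity
\[
f'(r)=3\beta_c|J'(r)|\,\E_{\beta_c,r}\!\left[|K|^2\sum_{y\in B_r}\mathbbm{1}(y\notin K)\,|K_y|\right]
\]
used in the proof of \cref{lem:CL_and_wCL}, the covariance inequality \eqref{eq:BK_disjoint_clusters_covariance2} applied with $F(K_0)=|K|^2$ and $G(K_y)=|K_y|$ yields
\[
f'(r) \ge 3\beta_c|J'(r)|\Bigl(|B_r|\,\E_{\beta_c,r}|K|\cdot f(r) - \E_{\beta_c,r}\!\left[|K|^3\cdot|K\cap B_r|\right]\Bigr).
\]
The main term has leading behaviour $3\alpha\, r^{-1}f(r)$ using the normalizations $|J'(r)|\sim r^{-d-\alpha-1}$ and $|B_r|\sim r^d$ together with the first-moment asymptotic. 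For the error term, the trivial inequality $|K\cap B_r|\le |K|$ combined with \eqref{wCL} (at $p=2$ and $p=4$) and the universal tightness estimate \eqref{eq:universal_tightness_moments_restate} applied at $R=Cr$ with $p=q=2$ yields
\[
\E_{\beta_c,r}\!\left[|K|^3|K\cap B_r|\right]\le \E_{\beta_c,r}|K|^4 \preceq M_{Cr}^{2}\,f(r),
\]
which under \eqref{Hydro} is $o(r^{d+\alpha})f(r)$. Since $|B_r|\,\E_{\beta_c,r}|K|\asymp r^{d+\alpha}$, the error is negligible relative to the main term, so for every $\delta>0$ we obtain $f'(r)\ge (3\alpha-\delta)r^{-1}f(r)$ for all sufficiently large $r$. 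Integrating the resulting logarithmic differential inequality gives $f(r)\succeq_\delta r^{3\alpha-\delta}$ for every $\delta>0$.

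To close the contradiction I combine this with the complementary upper bound $f(r)\preceq M_{Cr}\,\E_{\beta_c,r}|K|\asymp M_{Cr}\,r^\alpha$ produced by \eqref{wCL} and \eqref{eq:universal_tightness_moments_restate} at $p=q=1$; under \eqref{Hydro} this reads $f(r)=o(r^{(d+3\alpha)/2})$. Matching exponents, $r^{3\alpha-\delta}\preceq_\delta f(r) = o(r^{(d+3\alpha)/2})$ for every $\delta>0$ forces $3\alpha\le (d+3\alpha)/2$, i.e.\ $d\ge 3\alpha$, contradicting our standing hypothesis $d<3\alpha$.

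The main obstacle is controlling the error term $\E_{\beta_c,r}[|K|^3|K\cap B_r|]$ in the derivation of the lower-bound ODE: the a priori bound $M_{Cr}\preceq r^{(d+\alpha)/2}$ from \cite{hutchcroft2022sharp} alone only makes this error exactly of the same order as the main term and thus cannot close the argument. It is precisely the strict improvement encoded in \eqref{Hydro} that tips the balance, so that \eqref{Hydro} itself powers the derivation of the lower bound which contradicts it.
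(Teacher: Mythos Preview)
Your proof is correct and follows essentially the same route as the paper's (\cref{lem:CL_Hydro_second_moment} and the proof of \cref{prop:low_dim_not_hydro}): both bound the error term $\E_{\beta_c,r}[|K|^3|K\cap B_r|]\preceq M_{Cr}^2 f(r)$ via \eqref{wCL} and universal tightness to obtain $f'(r)\geq (3\alpha-o(1))r^{-1}f(r)$, then contradict the upper bound $f(r)\preceq r^{\alpha+(d+\alpha)/2}$. One minor point: the sharp asymptotic $\E_{\beta_c,r}|K|\sim(\alpha/\beta_c)r^\alpha$ does not follow from \eqref{wCL} and \eqref{eq:Sak_upper_restate} alone (these give only the order $\asymp r^\alpha$; the paper obtains the asymptotic in \cref{lem:first_moment_CL} via a separate ODE argument using \eqref{Hydro}), but your argument only needs the \emph{lower} bound from \eqref{eq:mean_lower_bound} to secure the coefficient $3\alpha$, so this does not affect correctness.
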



As discussed at the beginning of the section, we will prove this proposition by assuming for contradiction that the hydrodynamic condition \emph{does} hold, using this to implement the high-dimensional analysis of \cref{I-sec:analysis_of_moments}, and then using this analysis to deduce false consequences regarding the growth of the second moment $\E_{\beta_c,r}|K|^2$ under the hypotheses of \cref{prop:low_dim_not_hydro}. More precisely, the high-dimensional analysis we will perform under the fictitious assumption that \eqref{wCL} and \eqref{Hydro} both hold will lead to second moment asymptotics of the form
\[
  \E_{\beta_c,r}|K|^2 = r^{3\alpha \pm o(1)}.
\]
This is the correct scaling of the second moment in the regimes treated in \cite{LRPpaper1,LRPpaper3} but for $d<3\alpha$ is inconsistent with the upper bound
\[
  \E_{\beta_c,r}|K|^2 \preceq 
  r^{\alpha+\frac{d+\alpha}{2}},
\]
which follows from the results of \cite{hutchcroft2022sharp} under the assumption \eqref{wCL} as we explain in detail below.

\medskip

We begin by recalling several key steps from the analysis of \cref{I-sec:analysis_of_moments}. The starting point of this analysis is the following formula for $r$-derivatives of $p$th moments (restated from \cref{I-lem:moment_derivative}), which is a consequence of Russo's formula:
%
\begin{align}
\frac{\partial}{\partial r}\E_{\beta,r}|K|^p &= \beta |J'(r)| \sum_{\ell=0}^{p-1} \binom{p}{\ell}\E_{\beta,r}\left[|K|^{\ell+1} \sum_{y\in B_r} \mathbbm{1}(y\notin K) |K_y|^{p-\ell} \right]
\label{eq:pth_moment_derivative}
\end{align}
for every integer $p\geq 1$, every $0\leq \beta \leq \beta_c$ and every $r>0$.
%
%
%
%
%
Fixing $\beta=\beta_c$, we can rewrite the $p=1,2$ cases of the equality \eqref{eq:pth_moment_derivative} as in \eqref{I-eq:first_moment_ODE_error_def} and \eqref{I-eq:second_moment_ODE_error_def} to obtain that
\begin{equation}
\label{eq:E1r_def_ODE}
\frac{d}{dr}\E_{\beta_c,r}|K| 
= (1-\cE_{1,r}) \beta_c |J'(r)| |B_r| (\E_{\beta_c,r}|K|)^2
\end{equation}
and
\begin{align}
\frac{d}{dr}\E_{\beta_c,r}|K|^2 &= 
 3(1-\cE_{2,r}) \beta_c |J'(r)| |B_r| \E_{\beta_c,r}|K|\E_{\beta_c,r}|K|^2
\label{eq:E2r_def_ODE}
\end{align}
where the error terms $\cE_{1,r}$ and $\cE_{2,r}$ are defined by
\begin{align*}
\cE_{1,r} &:= 
\frac{|B_r|(\E_{\beta_c,r}|K|)^2 - \E_{\beta_c,r} \left[|K| \sum_{y\in B_r} \mathbbm{1}(y\notin K) |K_y|\right]}{|B_r| (\E_{\beta_c,r}|K|)^2}
\intertext{and}
\cE_{2,r} &:= 
\frac{|B_r|\E_{\beta_c,r}|K|\E_{\beta_c,r}|K|^2 - \E_{\beta_c,r} \left[|K| \sum_{y\in B_r} \mathbbm{1}(y\notin K) |K_y|^2\right]}{|B_r| \E_{\beta_c,r}|K|\E_{\beta_c,r}|K|^2}.
\end{align*}
As in \eqref{I-eq:barE11_def_ODE}, we also define the errors $\cE_{0,r}$ and $\overline{\cE}_{1,r}$ by
\[
(1-\cE_{0,r}):=\frac{|J'(r)||B_r|}{r^{-\alpha-1}} \text{ and } (1-\overline{\cE}_{1,r}) = (1-\cE_{1,r})\frac{|J'(r)||B_r|}{r^{-\alpha-1}} = (1-\cE_{1,r})(1-\cE_{0,r}),
\]
so that
\begin{equation}
\label{eq:barE11_def_ODE}
\frac{d}{dr}\E_{\beta_c,r}|K| 
= (1-\overline{\cE}_{1,r}) \beta_c r^{-\alpha-1} (\E_{\beta_c,r}|K|)^2.
\end{equation}
It is noted in \cref{I-lem:E1_1} that the error $\cE_{1,r}$ satisfies the bounds
\begin{equation}
  0 \leq \cE_{1,r} \leq \frac{\E_{\beta_c,r}|K|^2|K\cap B_r|}{|B_r|(\E_{\beta_c,r}|K|)^2}.
\label{eq:E1r_bounds}
\end{equation}
Since $\cE_{0,r}$ converges to zero by our normalization assumptions \eqref{eq:normalization_conventions}, it follows that $\liminf_{r\to \infty}\overline{\cE}_{1,r} \geq 0$ and hence by an elementary ODE analysis (encapsulated in \cref{I-lem:f'=f^2}) that
\begin{equation}
\E_{\beta_c,r}|K|\geq (1-o(1))\frac{\alpha}{\beta_c} r^{\alpha}
\end{equation}
as $r\to \infty$ (\cref{I-cor:mean_lower_bound}) as we stated earlier in \eqref{eq:mean_lower_bound}.
The following lemma gives an upper bound matching this estimate to first order when \eqref{wCL} and \eqref{Hydro} both hold; it should be compared with \cref{I-prop:first_moment} which gives a similar estimate in the effectively high- and critical-dimensional cases without assuming \eqref{wCL}.

\begin{lemma}
\label{lem:first_moment_CL}
If \eqref{wCL} holds then 
$\E_{\beta_c,r}|K|\asymp r^\alpha$
for all $r\geq 1$. 
If the hydrodynamic condition \eqref{Hydro} also holds then $\cE_{1,r}\to 0$ as $r\to\infty$ and
\begin{equation}
  \E_{\beta_c,r}|K| \sim \frac{\alpha}{\beta_c}r^\alpha
\label{eq:first_moment_Hydro}
\end{equation}
as $r\to \infty$.
\end{lemma}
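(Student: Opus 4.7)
The plan is to first establish the two-sided bound $\E_{\beta_c,r}|K|\asymp r^\alpha$ using only \eqref{wCL}, then, assuming \eqref{Hydro} in addition, to show $\cE_{1,r}\to 0$ and extract the asymptotic \eqref{eq:first_moment_Hydro} by integrating the ODE \eqref{eq:barE11_def_ODE}. For the rough bound, the lower estimate $\E_{\beta_c,r}|K|\geq(1-o(1))(\alpha/\beta_c)r^\alpha$ is already recorded in \eqref{eq:mean_lower_bound}. For the matching upper estimate, applying \eqref{wCL} with $p=1$ and $\eps=1$ produces a constant $C$ such that $\E_{\beta_c,r}|K|\leq 2\,\E_{\beta_c,r}|K\cap B_{Cr}|$, and the right-hand side is $\preceq r^\alpha$ by the spatially-averaged estimate \eqref{eq:Sak_upper_restate} of \cite{hutchcroft2022sharp}; this in particular ensures $\E_{\beta_c,r}|K|\to\infty$ as $r\to\infty$, which I will need later.

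Next, assuming \eqref{Hydro}, I would combine the upper bound \eqref{eq:E1r_bounds} with the trivial inequality $|K\cap B_r|\leq|K|$, apply \eqref{wCL} with $p=3$, then invoke the universal tightness estimate \eqref{eq:universal_tightness_moments_restate} at radius $R=Cr$ with $p=1$, $q=2$, and finish using the first-moment bound already obtained. These steps give
\[
\cE_{1,r}\,\leq\,\frac{\E_{\beta_c,r}|K|^3}{|B_r|(\E_{\beta_c,r}|K|)^2}\,\preceq\,\frac{\E_{\beta_c,r}|K\cap B_{Cr}|^3}{|B_r|(\E_{\beta_c,r}|K|)^2}\,\preceq\,\frac{M_{Cr}^2\,\E_{\beta_c,r}|K\cap B_{Cr}|}{|B_r|(\E_{\beta_c,r}|K|)^2}\,\preceq\,\frac{M_{Cr}^2}{r^{d+\alpha}},
\]
and \eqref{Hydro} forces $M_{Cr}^2=o(r^{d+\alpha})$, so $\cE_{1,r}\to 0$.

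Finally, the normalization \eqref{eq:normalization_conventions} gives $\cE_{0,r}\to 0$, so $\overline{\cE}_{1,r}=1-(1-\cE_{1,r})(1-\cE_{0,r})\to 0$. Rewriting \eqref{eq:barE11_def_ODE} as $\tfrac{d}{dr}\bigl[1/\E_{\beta_c,r}|K|\bigr]=-(1-\overline{\cE}_{1,r})\beta_c r^{-\alpha-1}$ and integrating from $r$ to $\infty$ (the boundary term at infinity vanishes because $\E_{\beta_c,r}|K|\to\infty$) yields
\[
\frac{1}{\E_{\beta_c,r}|K|}\,=\,\frac{\beta_c}{\alpha}\,r^{-\alpha}\,-\,\beta_c\int_r^\infty \overline{\cE}_{1,s}\,s^{-\alpha-1}\,ds,
\]
and the remainder integral is $o(r^{-\alpha})$ since $\overline{\cE}_{1,s}\to 0$; inverting gives \eqref{eq:first_moment_Hydro}. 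The principal subtlety is the chain of estimates for $\cE_{1,r}$: the universal tightness theorem only controls moments of cluster volumes \emph{truncated to a fixed ball}, so one cannot bound $\E_{\beta_c,r}|K|^3$ directly by $M_{Cr}^2\,\E_{\beta_c,r}|K|$; the role of \eqref{wCL} is precisely to first collapse the untruncated third moment onto $\E_{\beta_c,r}|K\cap B_{Cr}|^3$ before universal tightness can be invoked.
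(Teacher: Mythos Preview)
Your proof is correct and follows essentially the same approach as the paper: both establish the rough bound by combining \eqref{eq:mean_lower_bound} with \eqref{wCL} and \eqref{eq:Sak_upper_restate}, both handle $\cE_{1,r}$ via the chain $|K\cap B_r|\leq |K|$, then \eqref{wCL} with $p=3$, then universal tightness, and both conclude the asymptotic by analyzing the ODE \eqref{eq:barE11_def_ODE}. The only cosmetic difference is that the paper defers the final ODE step to \cref{I-lem:f'=f^2}, whereas you spell out the explicit integration of $d(1/\E_{\beta_c,r}|K|)/dr$ and the $o(r^{-\alpha})$ bound on the remainder.
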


\begin{proof}[Proof of \cref{lem:first_moment_CL}]
The up-to-constants estimate $\E_{\beta_c,r}|K|\asymp r^\alpha$ follows by essentially the same argument as \cref{lem:first_moment_CLb}.
 We now turn to the analysis of $\cE_{1,r}$ under the hydrodynamic condition.
It follows from \eqref{wCL} that there exists a constant $C$ such that
\[
\E_{\beta_c,r} \left[|K|^2 |K\cap B_r|\right] \leq \E_{\beta_c,r} \left[|K|^3\right] \leq 2 \E_{\beta_c,r} \left[|K\cap B_{Cr}|^3\right].
\]
 Applying the universal tightness theorem via the inequality \eqref{eq:universal_tightness_moments_restate}, it follows that
\[
\E_{\beta_c,r} \left[|K|^2 |K\cap B_r|\right]  \preceq \E_{\beta_c,r} \left[|K\cap B_{Cr}|\right] M_{Cr}^2 = o(r^{d+2\alpha}),
\]
where the final estimate follows by a second application of \eqref{eq:Sak_upper_restate} and the hydrodynamic condition. The claim that $\cE_{1,r}\to 0$ as $r\to \infty$ follows from this together with \eqref{eq:E1r_bounds} and the lower bound \eqref{eq:mean_lower_bound}. The convergence $\cE_{1,r}$ of $\cE_{1,r}$ and hence of $\overline{\cE}_{1,r}$ to zero  implies that
\[
  \frac{d}{dr} \E_{\beta_c,r} |K| \sim \beta_c r^{-\alpha-1} (\E_{\beta_c,r} |K|)^2
\]
as $r\to \infty$, and the asymptotic estimate \eqref{eq:first_moment_Hydro} follows by an elementary ODE analysis as encapsulated by \cref{I-lem:f'=f^2} (see also \eqref{I-eq:first_moment_E1_formula}). 
\end{proof}


\cref{lem:first_moment_CL} implies the following corollary by the same reasoning used to deduce \cref{cor:large_alpha_not_CL} from \cref{lem:first_moment_CLb}.

\begin{corollary}
\label{cor:large_alpha_not_wCL}
If $\alpha>d$ then \eqref{wCL} does not hold. If $\alpha=d$ and \eqref{wCL} holds then the model has a discontinuous phase transition in the sense that there exists an infinite cluster at $\beta_c$.
\end{corollary}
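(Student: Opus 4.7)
The plan is to mimic directly the argument used to deduce \cref{cor:large_alpha_not_CL} from \cref{lem:first_moment_CLb}, now replacing the role of the two-sided moment estimate from \cref{lem:first_moment_CLb} by the matching estimate from \cref{lem:first_moment_CL} and using \eqref{wCL} in place of \eqref{CLp} at the truncation step. First, I would apply \cref{lem:first_moment_CL} to obtain $\E_{\beta_c,r}|K|\asymp r^\alpha$ for all $r\geq 1$ under \eqref{wCL}. Second, I would invoke \eqref{wCL} with $p=1$ and $\varepsilon=1$ to produce a constant $C<\infty$ such that $\E_{\beta_c,r}|K|\leq 2\E_{\beta_c,r}|K\cap B_{Cr}|$ for every $r\geq 1$. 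Combining these and using $|B_{Cr}|\asymp r^d$ gives the key chain
\[
r^\alpha \preceq \E_{\beta_c,r}|K\cap B_{Cr}| \leq |B_{Cr}| \asymp r^d
\]
for every $r\geq 1$.

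In the case $\alpha>d$, this chain is immediately contradictory as $r\to\infty$, which rules out \eqref{wCL} and yields the first claim. In the case $\alpha=d$, the chain instead tells us that the spatially averaged connection probability
\[
\frac{1}{|B_{Cr}|}\sum_{x\in B_{Cr}}\P_{\beta_c,r}(0\leftrightarrow x) = \frac{\E_{\beta_c,r}|K\cap B_{Cr}|}{|B_{Cr}|}
\]
is bounded below by a positive constant uniformly in $r\geq 1$. Since the cut-off measure is stochastically dominated by $\P_{\beta_c}$, this forces the existence of a sequence $x_n\in \Z^d$ with $\|x_n\|\to\infty$ along which $\P_{\beta_c}(0\leftrightarrow x_n)$ is bounded away from $0$. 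An application of Fatou's lemma to $\mathbbm{1}(0\leftrightarrow x_n)$, together with the observation that $\limsup_n\{0\leftrightarrow x_n\}\subseteq\{|K_0|=\infty\}$ when $\|x_n\|\to\infty$, yields $\P_{\beta_c}(|K_0|=\infty)>0$, and Kolmogorov's zero-one law then promotes this to the almost sure existence of an infinite cluster at $\beta_c$.

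There is no real obstacle here; the only point at which the proof for \eqref{wCL} differs from that for \eqref{CLp} is in how the truncation $K\mapsto K\cap B_{Cr}$ is justified, and \eqref{wCL} supplies this truncation essentially by definition. Everything else is bookkeeping identical to the proof of \cref{cor:large_alpha_not_CL}.
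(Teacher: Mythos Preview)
Your proposal is correct and follows essentially the same approach as the paper: the paper states that \cref{cor:large_alpha_not_wCL} follows from \cref{lem:first_moment_CL} ``by the same reasoning used to deduce \cref{cor:large_alpha_not_CL} from \cref{lem:first_moment_CLb},'' which is exactly what you carry out, using \eqref{wCL} with $p=1$ in place of the \eqref{CLp}-based truncation step. A minor remark is that you only use the lower bound $\E_{\beta_c,r}|K|\succeq r^\alpha$ from \cref{lem:first_moment_CL} (equivalently, from \eqref{eq:mean_lower_bound}), so invoking the full two-sided estimate is slightly more than necessary.
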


We next analyze the second moment. Recall that a measurable function $f:(0,\infty)\to(0,\infty)$ is said to be \textbf{regularly varying} of index $a\in \R$ if $f(\lambda x)/f(x)\to \lambda^a$ as $x\to\infty$ for each fixed $\lambda>0$, which implies that $f(x)=x^{a\pm o(1)}$ as $x\to \infty$.

\begin{lemma}
\label{lem:CL_Hydro_second_moment}
If \eqref{wCL} and \eqref{Hydro} both hold then $\E_{\beta_c,r}|K|^2$ is regularly varying of index $3\alpha$.
\end{lemma}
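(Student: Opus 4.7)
The plan is to show that the error term $\cE_{2,r}$ in the second-moment ODE \eqref{eq:E2r_def_ODE} tends to zero under the hypotheses, and then to read off regular variation from the resulting asymptotic ODE. This parallels the argument for $\cE_{1,r}$ in \cref{lem:first_moment_CL}, but requires one more moment of control.

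First, I would establish the bound
\[
 \cE_{2,r} \cdot |B_r|\, \E_{\beta_c,r}|K|\, \E_{\beta_c,r}|K|^2 \;\leq\; \E_{\beta_c,r}\bigl[|K|^3 |K\cap B_r|\bigr] \;\leq\; \E_{\beta_c,r}|K|^4.
\]
The left inequality follows by rewriting the numerator of $\cE_{2,r}$ using translation invariance as $\sum_{y\in B_r}\{\E_{\beta_c,r}|K|\,\E_{\beta_c,r}|K_y|^2 - \E_{\beta_c,r}[|K|\mathbbm{1}(y\notin K)|K_y|^2]\}$ and applying \eqref{eq:BK_disjoint_clusters_covariance2} with $F(K_0)=|K_0|$ and $G(K_y)=|K_y|^2$: on the event $\{0\leftrightarrow y\}$ one has $K=K_y$, and summing $\mathbbm{1}(y\in K)$ over $y\in B_r$ produces $|K\cap B_r|$. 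The right inequality is the trivial domination $|K\cap B_r|\leq |K|$.

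Next, I would control $\E_{\beta_c,r}|K|^4$ using \eqref{wCL} and the universal tightness inequality \eqref{eq:universal_tightness_moments_restate}. By \eqref{wCL} there is a constant $C$ such that $\E_{\beta_c,r}|K|^4 \leq 2\,\E_{\beta_c,r}|K\cap B_{Cr}|^4$, and then applying \eqref{eq:universal_tightness_moments_restate} with $p=q=2$ gives
\[
 \E_{\beta_c,r}|K|^4 \;\preceq\; M_{Cr}^2\, \E_{\beta_c,r}|K\cap B_{Cr}|^2 \;\leq\; M_{Cr}^2\, \E_{\beta_c,r}|K|^2.
\]
Combining with $|B_r|\asymp r^d$ and $\E_{\beta_c,r}|K|\asymp r^\alpha$ from \cref{lem:first_moment_CL} gives
\[
 \cE_{2,r} \;\preceq\; \frac{M_{Cr}^2\,\E_{\beta_c,r}|K|^2}{r^{d+\alpha}\,\E_{\beta_c,r}|K|^2} \;=\; \frac{M_{Cr}^2}{r^{d+\alpha}} \;=\; o(1),
\]
where the last estimate is exactly the square of the hydrodynamic bound \eqref{Hydro}. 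Note that $\cE_{2,r}\geq 0$ by the upper covariance inequality \eqref{eq:BK_disjoint_clusters_covariance}, so this gives $\cE_{2,r}\to 0$ as $r\to\infty$.

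Having established $\cE_{2,r}\to 0$, I would substitute back into \eqref{eq:E2r_def_ODE} together with the normalization $|J'(r)||B_r|\sim r^{-\alpha-1}$ and the first-moment asymptotic $\E_{\beta_c,r}|K|\sim (\alpha/\beta_c)r^{\alpha}$ from \cref{lem:first_moment_CL} to obtain
\[
 \frac{d}{dr}\log \E_{\beta_c,r}|K|^2 \;=\; \frac{3\alpha\,(1+o(1))}{r}\qquad \text{as } r\to\infty.
\]
Integrating this between $r$ and $\lambda r$ for arbitrary fixed $\lambda>0$ and using that the error is uniform on $[r,\lambda r]$ (since it only depends on $\sup_{s\geq r}|3\alpha - rs\,d_s\log\E|K|^2|\to 0$) gives $\log \E_{\beta_c,\lambda r}|K|^2 - \log\E_{\beta_c,r}|K|^2 \to 3\alpha \log\lambda$, i.e., regular variation of index $3\alpha$. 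The only genuinely non-routine step is the decay of $\cE_{2,r}$, which is where we crucially use that \eqref{wCL} bounds four moments (not just two) and that \eqref{Hydro} provides the square-root improvement needed to kill the ratio $M_{Cr}^2/r^{d+\alpha}$.
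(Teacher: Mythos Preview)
Your proof is correct and follows essentially the same route as the paper's: you bound the numerator of $\cE_{2,r}$ by $\E_{\beta_c,r}|K|^4$ via the covariance inequality, then invoke \eqref{wCL} and the universal tightness bound \eqref{eq:universal_tightness_moments_restate} with $p=q=2$ to reduce to $M_{Cr}^2/r^{d+\alpha}$, which vanishes by \eqref{Hydro}. The only cosmetic differences are that you rederive the upper bound on $\cE_{2,r}$ (the paper quotes it from \eqref{I-eq:E2_bound}) and that you integrate the logarithmic ODE by hand whereas the paper appeals to \cref{I-lem:ODE_self_referential}.
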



\begin{proof}[Proof of \cref{lem:CL_Hydro_second_moment}]
It suffices to prove that $\cE_{2,r}\to 0$ as $r\to \infty$. Indeed, once this is established it will follow from \eqref{eq:E2r_def_ODE} that
\[
  \frac{d}{dr}\E_{\beta_c,r}|K|^2 \sim 3 \beta_c|J'(r)| |B_r| \E_{\beta_c,r} |K| \E_{\beta_c,r}|K|^2 \sim 3 \alpha r^{-1} \E_{\beta_c,r}|K|^2,
\]
where in the second asymptotic equality we applied both our normalization conventions \eqref{eq:normalization_conventions} and the asymptotic equality $ \E_{\beta_c,r} |K| \sim \frac{\alpha}{\beta_c} r^\alpha$ established in \cref{lem:first_moment_CL}; the asymptotic relationship $f'\sim 3\alpha r^{-1} f$ implies regular variation of index $3\alpha$ for positive functions as explained in \cref{I-lem:ODE_self_referential}.
We will prove that $\cE_{2,r}\to 0$ using the inequality
\begin{equation}
\label{eq:E2r_bounds}
0\leq \cE_{2,r} \leq \frac{\E_{\beta_c,r}|K|^3|K\cap B_r|
}{|B_r| \E_{\beta_c,r}|K|\E_{\beta_c,r}|K|^2},
\end{equation}
established in \eqref{I-eq:E2_bound}. To bound the numerator, we use \eqref{wCL} to deduce the existence of a constant $C$ such that
\[
  \E_{\beta_c,r}|K|^3|K\cap B_r| \leq \E_{\beta_c,r}|K|^4 \leq 2 \E_{\beta_c,r}|K\cap B_{Cr}|^4
\]
and apply \eqref{eq:universal_tightness_moments_restate} to obtain that
\[
  \E_{\beta_c,r}|K\cap B_{Cr}|^4 \preceq M_{Cr}^2 \E_{\beta_c,r}|K\cap B_{Cr}|^2 \leq
  M_{Cr}^2 \E_{\beta_c,r}|K|^2.
\]
Substituting this estimate into \eqref{eq:E2r_bounds} we obtain that
\[
\cE_{2,r} \preceq \frac{M_{Cr}^2}{|B_r| \E_{\beta_c,r}|K|} \asymp r^{-d-\alpha} M_{Cr}^2
\]
from which the claim $\cE_{2,r} \to 0$ follows by a direct application of the hydrodynamic condition.
\end{proof}

\begin{proof}[Proof of \cref{prop:low_dim_not_hydro}]
It follows from \eqref{wCL} and the universal tightness theorem applied as in \eqref{eq:universal_tightness_moments_restate} that there exists a constant $C$ such that
\begin{equation}
\label{eq:CL_second_moment_upper}
  \E_{\beta_c,r}|K|^2 \leq 2 \E_{\beta_c,r}|K \cap B_{Cr}|^2 \preceq M_{Cr} \E_{\beta_c,r}|K \cap B_{Cr}| 
  \preceq r^{\alpha+\frac{d+\alpha}{2}},
\end{equation}
where the final inequality follows from \eqref{eq:Sak_upper_restate} and \eqref{eq:M_r_upper_restate}. On the other hand, it follows from \cref{lem:CL_Hydro_second_moment} that if \eqref{wCL} and \eqref{Hydro} both hold then $\E_{\beta_c,r}|K|^2$ is regularly varying of index $3\alpha$ and hence that $\E_{\beta_c,r}|K|^2 = r^{3\alpha \pm o(1)}$ as $r\to \infty$. These two estimates are inconsistent with each other when $d<3\alpha$ and $\alpha + \frac{d+\alpha}{2}<3\alpha$, so that \eqref{wCL} and \eqref{Hydro} cannot both hold when $d<3\alpha$.
\end{proof}


\subsection{Estimates at every scale}

\label{subsec:low_dim_max_cluster}

We saw in \cref{prop:low_dim_not_hydro} that if $d<3\alpha$ and \eqref{wCL} holds then the hydrodynamic condition does \emph{not} hold, so that $M_r$ is of order $r^{(d+\alpha)/2}$ for an unbounded set of $r$. In this section we complete the proof of \cref{thm:max_cluster_size_LD} by improving this to a  bound of the same order at \emph{every} scale.
%
%
The proof will use the inequality
\begin{equation}
\label{eq:moments_bounded_below_by_M}
\E_{\beta_c,r}|K|^p \succeq_p \frac{M_{r}^{p+1}}{r^d}
\end{equation}
which holds for every $r\geq 1$ and integer $p\geq 1$ as a special case of \cref{I-lem:moments_bounded_below_by_M}. (This inequality is an immediate consequence of the definition of $M_r$ and translation invariance.)

\begin{proof}[Proof of \cref{thm:max_cluster_size_LD}]
To lighten notation we write $\E_r=\E_{\beta_c,r}$.
It suffices to prove the lower bound $M_r \succeq r^{(d+\alpha)/2}$, with the upper bound $M_r^* \preceq r^{(d+\alpha)/2}$ (stated in \eqref{eq:M_r_upper_restate}) following from the results of \cite{hutchcroft2022sharp} as explained in \cref{I-sub:previous_results_on_long_range_percolation}. We have as in the proof of \cref{lem:CL_Hydro_second_moment} that
\begin{equation}\label{eq:diff_eq_E3_oops}
\frac{d}{dr} \log \E_r |K|^2  = (1-\cE_{2,r}) 3\beta_c r^{-d-\alpha-1}|B_r|\E_r|K|
\end{equation}
where 
\[
0\leq \cE_{2,r} 
\leq \frac{\E_{r}|K|^3|K\cap B_r|
}{|B_r| \E_{r}|K|\E_{r}|K|^2}.
\]
Moreover, the lower bound $\E_r|K| \geq (1-o(1)) \frac{\alpha}{\beta_c} r^\alpha$ of \eqref{eq:mean_lower_bound} (restated from \cref{I-cor:mean_lower_bound}) and the asymptotic formula $|B_r|\sim r^d$ (which comes from our normalization assumptions \eqref{eq:normalization_conventions}) lets us bound
\begin{equation}\label{eq:diff_eq_E3_oops}
\frac{d}{dr} \log \E_r |K|^2  \geq  (1\pm o(1))(1-\cE_{2,r}) 3\alpha r^{-1}.
\end{equation}
The assumption \eqref{wCL} implies that there exists a constant $C\geq 1$ such that
\[
\cE_{2,r} 
\leq \frac{\E_{r}|K|^4
}{|B_r| \E_{r}|K|\E_{r}|K|^2} \leq \frac{2\E_{r}|K \cap B_{Cr}|^4
}{|B_r| \E_{r}|K|\E_{r}|K \cap B_{Cr}|^2} \preceq \frac{M_{Cr}^2}{r^{d+\alpha}},
\]
where we used the lower bound $\E_r |K| \succeq r^\alpha$ from \eqref{eq:mean_lower_bound} and the universal tightness theorem as in \eqref{eq:universal_tightness_moments_restate} to bound $\E_{r}|K \cap B_{Cr}|^4\preceq M_{Cr}^2\E_{r}|K \cap B_{Cr}|^2$ in the last inequality. By increasing $C$ if necessary, we may assume without loss of generality that
\[
\cE_{2,r} 
\leq C\frac{M_{Cr}^2}{r^{d+\alpha}}
\]
for every $r\geq 1$.
Let $0<\delta\leq 1$ be
 maximal such that
\[
\limsup_{r\to \infty} r^{-(d+\alpha)/2} M_r \geq 2 \delta, \qquad C^{1+d+\alpha}\delta \leq 1, \qquad \text { and } \qquad 3(1-\delta)^2\alpha>\alpha+(d+\alpha)/2
\]
where the first condition is possible to satisfy by \cref{prop:low_dim_not_hydro} and the last condition is possible to satisfy since $d<3\alpha$, and let $r_0\geq 1$ be sufficiently large that $|B_r|\geq (1-\delta) r^d$. It suffices to prove that there exists a constant $\lambda_0$ such that there do not exist any intervals of the form $[s,\lambda_0s]$ with $s\geq r_0$ such that $M_r \leq \delta r^{(d+\alpha)/2}$ for every $r\in [s,\lambda_0 s]$.
Indeed, given this claim it follows that for each $r \geq \lambda_0r_0$ there exists $\lambda_0^{-1}r \leq s \leq r$ such that 
\[
M_r \geq M_s \geq \delta s^{(d+\alpha)/2} \geq \delta \lambda_0^{-(d+\alpha)/2} r^{(d+\alpha)/2} 
\]
as desired. Since $\limsup_{r\to \infty} r^{-(d+\alpha)/2} M_r > \delta$ and $M_r$ is non-decreasing in $r$, it suffices to prove that there exists a constant $\lambda_0$ such that if $R \geq r\geq r_0$ satisfy $M_{r} \geq \delta r^{(d+\alpha)/2}$ and $M_s \leq \delta s^{(d+\alpha)/2}$ for every $r < s \leq R$ then $R/r \leq \lambda_0$, that is, to bound the log-scale length of a \emph{maximal} interval of bad scales.
 Suppose that $R \geq r \geq r_0$ is such a maximal interval of bad scales with $R\geq Cr$. (If no such intervals exist then there is nothing to prove.) The condition that $C^{1+d+\alpha}\delta \leq 1$ ensures that 
\[\cE_{2,s}\leq Cs^{-d-\alpha}M^2_{Cs} \leq \delta^2 C^{1+d+\alpha} \leq \delta \] for every $r \leq s \leq R/C$. Writing $R'=R/C$ and integrating the estimate \eqref{eq:diff_eq_E3_oops} from $r$ to $R'$ yields that
\[
\frac{\E_{R'}|K|^2}{\E_r|K|^2} \geq \exp\left[ 3\alpha (1-\delta)^2 \int_r^{R'} \frac{1}{s} \dif s\right] = \left(\frac{R'}{r}\right)^{3(1-\delta)^2\alpha}.
\]
On the other hand, since $M_r \geq \delta r^{(d+\alpha)/2}$ we can use \eqref{eq:moments_bounded_below_by_M} to lower bound
$\E_r|K|^2 \succeq r^{-d}M_r^3 = r^{\alpha +(d+\alpha)/2}$,
so that 
\[
\E_{R'}|K|^2 \succeq \left(\frac{R'}{r}\right)^{3(1-\delta)^2\alpha} r^{\alpha +(d+\alpha)/2}.
\]
Comparing this with the upper bound
$\E_{R'}|K|^2 \preceq \E_{R'}|K\cap B_{C'R'}|^2 \preceq (R')^{\alpha + (d+\alpha)/2}$
which follows from \eqref{wCL} and \eqref{eq:CL_second_moment_upper}, yields that 
\[
\left(\frac{R'}{r}\right)^{3(1-\delta)^2\alpha} \preceq \left(\frac{R'}{r}\right)^{\alpha + (d+\alpha)/2}.
\]
Since we chose $\delta$ so that $3(1-\delta)^2\alpha>\alpha+(d+\alpha)/2$, we can rearrange this inequality to deduce that $R/r \preceq R'/r$ is bounded by a constant, concluding the proof. 
\end{proof}

Before moving on, let us note the following corollary of \cref{thm:max_cluster_size_LD}, which is a weaker analogue of \cref{thm:CL_volume_moments,lem:CL_and_wCL} holding under the weaker assumption \eqref{wCL}.

\begin{corollary}
\label{cor:low_dim_moments}
If $d<3\alpha$ and \eqref{wCL} holds then
\[
 \E_{\beta_c}|K \cap B_r|^p \asymp_p \E_{\beta_c,r}|K \cap B_r|^p \asymp_p r^{\alpha + \frac{d+\alpha}{2}(p-1)}
\]
as $r\to \infty$ for each fixed integer $p\geq 1$. Moreover, the bound
\[\E_{\beta_c,r}|K|^p \asymp \E_{\beta_c,r}|K \cap B_r|^p \asymp r^{\alpha + \frac{d+\alpha}{2}(p-1)}\]
holds for $p=1,2,3,4$.
\end{corollary}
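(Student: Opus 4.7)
The plan is to bootstrap from the sharp estimate $M_r\asymp M_r^*\asymp r^{(d+\alpha)/2}$ of \cref{thm:max_cluster_size_LD}, combining it with three standard tools already in hand: the universal tightness theorem \eqref{eq:universal_tightness_moments_restate}, the spatial-average upper bound \eqref{eq:Sak_upper_restate}, and a translation-invariance argument in the spirit of \eqref{eq:moments_bounded_below_by_M}. Since the main work is contained in \cref{thm:max_cluster_size_LD}, the remaining calculation should be short.

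For the upper bounds on $\E_{\beta_c,r}|K\cap B_r|^p$ and $\E_{\beta_c}|K\cap B_r|^p$, I would first bound $\E_{\beta_c,r}|K\cap B_r|\leq \E_{\beta_c}|K\cap B_r|\preceq r^\alpha$ using monotonicity in $\beta$ together with \eqref{eq:Sak_upper_restate}, then apply universal tightness in the form $\E|K\cap B_r|^p\preceq_p M_r^{p-1}\E|K\cap B_r|$ (and its full-measure analogue in terms of $M_r^*$), and finally substitute $M_r\asymp M_r^*\asymp r^{(d+\alpha)/2}$ to obtain the desired upper bound $\preceq_p r^{\alpha+(p-1)(d+\alpha)/2}$ for every integer $p\geq 1$.

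For the matching lower bounds, I would mimic the argument behind \eqref{eq:moments_bounded_below_by_M} but keep the localization to a ball. Since $\P_{\beta_c,r}(Y_r\geq M_r/2)\geq e^{-1}$ for $Y_r:=\max_x|K_x\cap B_r|$, and on this event some cluster $\mathcal{C}$ satisfies $|\mathcal{C}\cap B_r|\geq M_r/2$, every $x\in\mathcal{C}\cap B_r$ has $|K_x\cap B_r|\geq M_r/2$, so $\sum_{x\in B_r}|K_x\cap B_r|^p\geq (M_r/2)^{p+1}$ on this event. On the other hand, for each $x\in B_r$ we have $B_r\subseteq B_{2r}(x)$, so translation invariance gives $\E_{\beta_c,r}|K_x\cap B_r|^p\leq \E_{\beta_c,r}|K\cap B_{2r}|^p$, and summing over $x\in B_r$ yields $\E_{\beta_c,r}|K\cap B_{2r}|^p\succeq_p M_r^{p+1}/|B_r|\asymp r^{\alpha+(p-1)(d+\alpha)/2}$. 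Replacing $r$ by $r/2$ and using that $\P_{\beta_c,r}$ stochastically dominates $\P_{\beta_c,r/2}$ gives the same lower bound for $\E_{\beta_c,r}|K\cap B_r|^p$, and hence also for $\E_{\beta_c}|K\cap B_r|^p$ by one more application of stochastic domination.

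Finally, for $\E_{\beta_c,r}|K|^p$ with $p\in\{1,2,3,4\}$ the lower bound is immediate from $|K|\geq |K\cap B_r|$, while the upper bound follows directly from the defining inequality \eqref{wCL}: for these $p$ there is a constant $C=C(p)$ with $\E_{\beta_c,r}|K|^p\leq 2\E_{\beta_c,r}|K\cap B_{Cr}|^p\preceq_p r^{\alpha+(p-1)(d+\alpha)/2}$ by the estimates just established. No step is really an obstacle; the only place requiring any care is the bookkeeping of ball radii in the rescaling/stochastic-domination step, since the $M_r$-based lower bound most naturally controls $|K\cap B_{2r}|$ rather than $|K\cap B_r|$.
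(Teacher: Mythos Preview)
Your proposal is correct and follows essentially the same approach as the paper's proof: upper bounds via \eqref{eq:Sak_upper_restate} plus universal tightness \eqref{eq:universal_tightness_moments_restate} together with the known $M_r^*\preceq r^{(d+\alpha)/2}$, and lower bounds via the translation-invariance argument behind \eqref{eq:moments_bounded_below_by_M} combined with \cref{thm:max_cluster_size_LD}. The only cosmetic difference is that the paper cites \eqref{eq:moments_bounded_below_by_M} directly (writing $\E_{\beta_c,r}|K\cap B_r|^p\succeq_p r^{-d}M_{r/2}^{p+1}$) rather than re-deriving it in place as you do; your handling of the $B_r$ versus $B_{2r}$ rescaling is exactly the right fix.
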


\begin{proof}[Proof of \cref{cor:low_dim_moments}]
The upper bounds for $\E_{\beta_c}|K \cap B_r|^p$ follow from \eqref{eq:Sak_upper_restate}, \eqref{eq:M_r_upper_restate}, and the universal tightness theorem (applied as in \eqref{eq:universal_tightness_moments_restate}), which yield that
\[
 \E_{\beta_c,r}|K \cap B_r|^p \leq  \E_{\beta_c}|K \cap B_r|^p \preceq_p \E_{\beta_c}|K\cap B_r| M_r^{p-1} \preceq_p r^\alpha r^{(p-1)(d+\alpha)/2}.
\]
This bound does not require the assumptions that $d<3\alpha$ or that \eqref{wCL} holds. The corresponding upper bound on $\E_{\beta_c,r}|K|^p$ for $p=1,2,3,4$ follows from this bound and the the definition of \eqref{wCL}. The lower bound follows from \cref{thm:max_cluster_size_LD} and \eqref{eq:moments_bounded_below_by_M}, which yield that
\[
   \E_{\beta_c,r}|K \cap B_r|^p \succeq_p \frac{1}{r^d}M_{r/2}^{p+1} \succeq_p r^{(p+1)(d+\alpha)/2-d} = r^{\alpha+(p-1)(d+\alpha)/2}
\]
as required.
\end{proof}

\section{Hyperscaling}
\label{sec:negligibility_of_mesoscopic_clusters_and_hyperscaling}

In this section we complete the proofs of \cref{thm:main_low_dim,thm:k_point_S}, along with an extension of \cref{thm:main_low_dim} and the lower bound of \cref{thm:k_point_S} requiring only the weaker assumption \eqref{wCL}. We begin by explaining how \cref{thm:main_low_dim} relates to heuristic hyperscaling theory in \cref{subsec:hyperscaling}, in which we also state a related theorem (\cref{thm:hyperscaling}) verifying the basic assumptions of this heuristic theory when $d<3\alpha$ and \eqref{wCL} holds. In \cref{subsec:negligibility_of_mesoscopic_clusters} we then build on the techniques of \cref{sub:the_hydrodynamic_condition_does_not_hold,subsec:low_dim_max_cluster} to prove a proposition concerning the ``neglibility of mesoscopic clusters'' which we then show implies \cref{thm:main_low_dim,thm:hyperscaling}. Finally, in \cref{subsec:_k_point_function_hyperscaling} we complete the proof of \cref{thm:k_point_S} by proving $k$-point function lower bounds matching the upper bounds proven in \cref{subsec:higher_gladkov_inequalities_and_the_k_point_function}. We also prove a similar $k$-point function estimate for nearest-neighbour bond percolation on $\Z^2$, supporting the claim that the $k$-point estimate of \cref{thm:k_point_S} should be thought of as a hyperscaling relation rather than a special feature of long-range percolation.

\subsection{Overview of heuristic hyperscaling theory}
\label{subsec:hyperscaling}

In this section we explain how our results relate to the heuristic theory of \emph{hyperscaling relations}:
identities relating various critical exponents that are predicted to hold below but not above the upper critical dimension. 
 (The prefix \emph{hyper} distinguishes these relations from the other scaling relations, such as those we study in \cref{sec:subcritical}, which are predicted to hold in \emph{all} dimensions \cite[Chapter 9]{grimmett2010percolation}.) We first recall how heuristic hyperscaling theory leads to the prediction \eqref{eq:Sak_delta} and then explain how the results of our paper verify not only the resulting critical exponent formulae but the entire picture underlying their heuristic derivation.

\medskip

Roughly speaking, the hyperscaling relations can be thought of as encoding the fact that, in low effective dimension, there are only $O(1)$ ``macroscopic'' clusters on each scale that carry all the interesting geometric features of the model, whereas in high dimensions there is instead a ``soup'' of many large, weakly interacting clusters that are all equally important to the behaviour of the model. The effectively high- and critical-dimensional parts of this picture are explored in detail for long-range percolation in the other two papers of this series, where it is shown (in a certain precise sense, see \cref{I-thm:superprocess_main,III-cor:superprocess_main_CD} and the discussion around \eqref{I-eq:number_of_large_clusters} and \eqref{III-eq:number_of_large_clusters}) that the number of ``typical large clusters'' $N(r)$ on scale $r$ grows as a power of $r$ in high effective dimension and as a power of $\log r$ when $d=3\alpha \leq 6$; see \Cref{table:large_clusters} for a summary and \cite{MR1431856} for further discussion and related results regarding finite-range percolation in high dimensions.

\medskip

Suppose that we are able to define a ``fractal dimension'' exponent $d_f$, with the interpretation that a ``typical large critical cluster on scale $r$'' has size growing roughly like $r^{d_f}$; below the upper-critical dimension this should be the same as saying that the \emph{largest} critical cluster on scale $r$ has this size. (Above the upper critical dimension, the largest cluster should be larger than a typical large cluster by a logarithmic factor \cite{MR1431856}.) An important example of a hyperscaling relation is
\begin{equation}
\label{eq:hyperscaling_eta_df}
2-\eta=2d_f-d
\end{equation}
which comes from the heuristic derivation
\[
r^{2-\eta} \approx \sum_{x\in B_r}\P_{\beta_c}(x\leftrightarrow y) = \E_{\beta_c}|K \cap B_r| 
\approx \P_{\beta_c}(0 \text{ in the largest cluster on scale $r$}) r^{d_f}
\approx r^{2d_f-d},
\]
where the central ``equality'' supposes that the expectation of $|K\cap B_r|$ is of the same order as the contribution from the largest cluster on scale $r$. 
(Here we reserve the symbol $\approx$ for use in heuristic calculations where its precise meaning is left unspecified.)
Combining this with Sak's prediction one obtains the prediction that
\begin{equation}
\label{eq:df_hyperscaling}
  d_f = \frac{d+\alpha}{2}
\end{equation}
throughout the effectively long-range, low-dimensional regime. Writing $\eta(r)$ for the (imprecisely defined) probability that the origin belongs to a ``typical large cluster" on scale $r$, we should have moreover that
\begin{equation}
\label{eq:pth_moment_in_box_scaling}
  \E_{\beta_c}|K\cap B_r|^p \approx \eta(r) r^{d_fp} \approx r^{(p-1)d_f} \E_{\beta_c}|K\cap B_r|
\end{equation}
for every $p\geq 1$. (This leads to the \emph{definition} of the exponent $d_f$ via $\E_{\beta_c}|K\cap B_r|^2/\E_{\beta_c}|K\cap B_r| =r^{d_f\pm o(1)}$ that we gave in the introduction; this is a reasonable definition both in high dimensions and low dimensions, with \eqref{eq:pth_moment_in_box_scaling} being a scaling relation rather than a hyperscaling relation.) 

\medskip

One can also use heuristic hyperscaling theory to express the volume tail critical exponent $\delta$, 
defined by
\[
  \P_{\beta_c}(|K|\geq n) \approx n^{-1/\delta}
\]
in terms of the fractal dimension $d_f$ for models of low effective dimension. To do this, the key heuristic is that the best way for the cluster of the origin to be large is for it to belong to a typical large cluster on some scale $r$, which should satisfy $r^{d_f}\approx n$ for the cluster to have size at least $n$. Thus, in low effective dimension one should have that
\[
 n^{-1/\delta} \approx \P_{\beta_c}(|K|\geq n) \approx \P_{\beta_c}(0 \text{ in the largest cluster on scale $n^{1/d_f}$}) \approx n^{1-d/d_f}.
\]
Together with \eqref{eq:hyperscaling_eta_df}, this yields the (heuristic) hyperscaling relations
\begin{equation}
\label{eq:delta_hyperscaling}
  \delta = \frac{d_f}{d-d_f} = \frac{d+2-\eta}{d-2+\eta}
\end{equation}
in low effective dimension. Combining this with Sak's prediction and the prediction that $\delta=2$ in the high and critical effective dimensional regime one obtains the prediction \eqref{eq:Sak_delta}.


\begin{table}[t]
\centering
\renewcommand{\arraystretch}{1.6} 
\begin{tabular}{|c|c|c|}
\hline
 & \# Large Clusters & Typical size \\ 
\hline
LR HD & $r^{d-3\alpha}$ & $r^{2\alpha}$  \\ 
\hline
mSR HD & $r^{d-6}(\log r)^3$ & $r^4 (\log r)^{-2}$ \\ 
\hline
SR HD & $r^{d-6}$ & $r^4$  \\ 
\hline
LR CD & $\log r$ & $r^{2\alpha} (\log r)^{-1/2}$  \\ 
\hline
LR LD & $O(1)$ & $r^{(d+\alpha)/2}$ \\ 
\hline
\end{tabular}
\caption{Summary of results concerning the ``typical size of a large cluster'' and the ``number of typical large clusters'' in a box of radius $r$ for long-range percolation on $\Z^d$. Precise statements are given in \cref{I-thm:superprocess_main}, \cref{thm:hyperscaling} and \cref{III-cor:superprocess_main_CD}. (See also \eqref{I-eq:number_of_large_clusters} and \eqref{III-eq:number_of_large_clusters}.) In the effectively long-range low-dimensional case treated in this paper, the ``$O(1)$'' means more precisely that for each $\eps>0$ the number of clusters in the box with size at least $\eps r^{(d+\alpha)/2}$ is tight as $r\to \infty$.}
\label{table:large_clusters}
\end{table}

\medskip

The hyperscaling relations \eqref{eq:hyperscaling_eta_df} and \eqref{eq:delta_hyperscaling} (along with several other scaling and hyperscaling relations, some of which were discussed in \cref{subsec:subcritical_intro}) were verified for nearest-neighbour percolation on planar lattices by Kesten \cite{MR879034}, and played an important role in the eventual computation of critical exponents for site percolation on the triangular lattice \cite{smirnov2001critical}.
Beyond two dimensions, the hyperscaling relations have been verified for finite-range models conditional on appropriate ``hyperscaling postulates'' that remain unproven in dimensions $d=3,4,5$ \cite{MR1716769}. One-sided \emph{inequality} versions of the hyperscaling relations, which hold for both long-range and finite-range models in every dimension, were established in \cite{hutchcroft2020power} as a corollary of the \emph{universal tightness theorem} \cite[Theorem 2.2]{hutchcroft2020power} and play an important role in this paper as well as in the previous works \cite{hutchcroft2022sharp,hutchcroft2024pointwise,baumler2022isoperimetric}.

\begin{remark}
The standard approach to (heuristic) scaling and hyperscaling theory for \emph{finite-range} percolation places a strong emphasis on \emph{one-arm events}, with a basic hyperscaling relation being $\P_{p_c}(x\leftrightarrow y) \asymp \P_{p_c}(0\leftrightarrow \partial [-r,r]^d)^2$ for $r=\|x-y\|/4$. This approach to (hyper)scaling does not work well for long-range models, since arm events can be dominated by the presence of ``hairs'': long edges that connect the origin to a large distance but not too a large number of vertices at this large distance. (This is related to the ``instantaneous propagation'' property of L\'evy superprocesses \cite{perkins1990polar}.) See \cite{MR3418547} for a detailed analysis of this phenomenon for long-range percolation in high effective dimensions. 
\end{remark}

As mentioned above, the proof of our main theorems establish not only the asymptotic estimates claimed therein but also verifies the entire picture underlying the heuristic derivation of the hyperscaling relations we have just sketched. This is encapsulated formally in the following theorem, which shows that the largest cluster on scale $r$ has size of order $r^{(d+\alpha)/2}$, that clusters significantly smaller than this do not contribute significantly to cluster volume moments on scale $r$, and that the volume tail probability $\P_{\beta_c}(|K|\geq n)$ is dominated by the contribution from belonging to a typical large cluster on a scale of order $r=n^{2/(d+\alpha)}$. 


\begin{theorem}[Hyperscaling]
\label{thm:hyperscaling}
If $d<3\alpha$ and \eqref{wCL} holds then the following hold:
\begin{enumerate}
\item For each $\eps>0$ and integer $p\geq 1$ there exists $\delta>0$ such that
\[
\E_{\beta_c}\left[ |K \cap B_r|^p \mathbbm{1}\left(|K \cap B_r| \leq \delta r^{(d+\alpha)/2}\right)\right] \leq
\eps \E_{\beta_c}\left[ |K \cap B_r|^p \right]
\]
for every $r\geq 1$.
\item For every $\eps>0$ there exists $\lambda<\infty$ such that if we define $r(n)=n^{2/(d+\alpha)}$ then
\[
  \P_{\beta_c}\left(|K|\geq n \text{ \emph{but} } |K \cap (B_{\lambda r(n)}\setminus B_{\lambda^{-1} r(n)})| \leq n/2
  \right) \leq \eps \P_{\beta_c}(|K|\geq n)
\]
for every $n\geq 1$.
\end{enumerate}
\end{theorem}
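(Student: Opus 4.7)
The plan is to prove the two parts separately, with the main obstacle being control of the outer tail of the cluster in Part 2.

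For Part 1 with $p\geq 2$, the elementary truncation $|K\cap B_r|^p\mathbbm{1}(|K\cap B_r|\leq \delta r^{(d+\alpha)/2})\leq \delta r^{(d+\alpha)/2}|K\cap B_r|^{p-1}$ combined with the moment estimate $\E_{\beta_c}|K\cap B_r|^{p-1}\asymp_p r^{\alpha+(p-2)(d+\alpha)/2}$ from \cref{cor:low_dim_moments} gives, upon taking expectations, a bound of the form $C_p\,\delta\,\E_{\beta_c}|K\cap B_r|^p$, and it suffices to take $\delta$ small. For $p=1$ this truncation is useless since $\delta r^{(d+\alpha)/2}\gg r^\alpha\asymp\E_{\beta_c}|K\cap B_r|$ whenever $d>\alpha$; instead I plan to deduce it from a polynomial upper tail bound $\P_{\beta_c}(|K\cap B_r|\geq s)\preceq s^{-(d-\alpha)/(d+\alpha)}$ valid uniformly in $1\leq s\leq c\, r^{(d+\alpha)/2}$. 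Once this is available, the layer-cake formula yields $\E_{\beta_c}[|K\cap B_r|\mathbbm{1}(|K\cap B_r|\leq t)]\preceq t^{2\alpha/(d+\alpha)}$, and at $t=\delta r^{(d+\alpha)/2}$ this is $\preceq \delta^{2\alpha/(d+\alpha)}r^\alpha\asymp\delta^c\,\E_{\beta_c}|K\cap B_r|$ as required. The matching lower tail estimate $\P_{\beta_c}(|K\cap B_r|\geq s)\succeq s^{-(d-\alpha)/(d+\alpha)}$ is immediate from $M_{r'}\asymp r'^{(d+\alpha)/2}$ (\cref{thm:max_cluster_size_LD}) applied in a sub-box of radius $r'=s^{2/(d+\alpha)}$ together with translation invariance. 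The matching upper bound is the principal step within Part 1; I would expect to prove it by combining the $p=2$ case of Part 1, the universal tightness theorem, and the translation-invariance identity $|B_r|\E_{\beta_c}|K\cap B_r|^p = \E_{\beta_c}\sum_{C}|C\cap B_r|^{p+1}$ (the sum ranging over open clusters $C$) in a self-referential fashion across scales below $r^{(d+\alpha)/2}$.

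For Part 2, I would split the bad event into the inner event $\{|K\cap B_{\lambda^{-1}r(n)}|>n/4\}$ and the outer event $\{|K\setminus B_{\lambda r(n)}|>n/4\}$. The inner event is controlled by Markov's inequality applied to \cref{cor:low_dim_moments}: for any integer $p\geq 1$,
\[
\P_{\beta_c}\!\left(|K\cap B_{\lambda^{-1}r(n)}|>n/4\right)\preceq_p \lambda^{-\alpha-(p-1)(d+\alpha)/2}\,n^{-(d-\alpha)/(d+\alpha)},
\]
and the coarse lower bound $\P_{\beta_c}(|K|\geq n)\succeq n^{-(d-\alpha)/(d+\alpha)}$, which follows from $M_{r(n)}\asymp n$ together with translation invariance, ensures that this ratio tends to zero as $\lambda\to\infty$ for any fixed $p\geq 1$. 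The outer event is the main obstacle of the theorem as a whole, since $\E_{\beta_c}|K\setminus B_R|=\infty$ at criticality rules out the obvious Markov bound. My plan is to exploit the monotone coupling between $\P_{\beta_c}$ and the cut-off measure $\P_{\beta_c,\lambda r(n)}$: any vertex in $K\setminus K^{(\lambda r(n))}$ must be reached via at least one open edge of length exceeding $\lambda r(n)$, and the total expected long-edge weight incident to a cluster of size $m$ under $\P_{\beta_c,\lambda r(n)}$ is of order $m\,(\lambda r(n))^{-\alpha}$. Combining this with the Part 1 bounds applied at scale $\lambda r(n)$ to control typical cluster sizes, and iterating over successively larger boxes, should produce the required decay as $\lambda\to\infty$.
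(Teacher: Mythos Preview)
Your Part 1 argument for $p\ge 2$ is correct and in fact cleaner than the paper's Cauchy--Schwarz. For $p=1$, however, you are working too hard: the tail bound you call ``the principal step'' is exactly $\P_{\beta_c}(|K|\ge s)\preceq s^{-(d-\alpha)/(d+\alpha)}$ from \cref{thm:main_low_dim_wCL}, which is proven \emph{before} \cref{thm:hyperscaling} (it uses \cref{prop:negligibility_of_mesoscopic} and \cref{prop:up-to-constants_volume-tail}, not \cref{thm:hyperscaling}). Since $|K\cap B_r|\le |K|$, the layer-cake computation you describe goes through immediately. Your self-referential sketch is both unnecessary and too vague to assess.

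The inner half of Part 2 is fine. The outer half has a genuine gap. Your plan is to couple $K$ with the cut-off cluster $\tilde K=K^{(\lambda r(n))}$, observe that escaping $\tilde K$ costs an edge of weight $\asymp(\lambda r(n))^{-\alpha}$ per vertex of $\tilde K$, and then ``iterate over successively larger boxes''. The obvious implementations fail: a first-moment bound on $|K\setminus\tilde K|$ involves $\E_{\beta_c}|K|=\infty$; bounding the expected number of escaping edges by $|\tilde K|\cdot(\lambda r(n))^{-\alpha}$ gives something of order $(\lambda r(n))^{(d-\alpha)/2}$ in the typical case, which \emph{grows} with $\lambda$; and \eqref{wCL} controls $|\tilde K\setminus B_{C\lambda r(n)}|$, not $|K\setminus B_{\lambda r(n)}|$. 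The paper's proof does not iterate over boxes. It uses the ghost-field comparison inequality from the proof of \cref{prop:up-to-constants_volume-tail},
\[
\E_{\beta_c}[1-e^{-h|K|}]\le \E_{\beta_c,r}[1-e^{-h|K|}]+Cr^{-\alpha}\E_{\beta_c,r}\bigl[|K|e^{-h|K|}\bigr]\,\E_{\beta_c}[1-e^{-h|K|}],
\]
and then invokes \cref{prop:negligibility_of_mesoscopic} (not Part 1 of this theorem) to show that at $r=\lambda h^{-2/(d+\alpha)}$ the correction term is at most $\eps$. This yields $\P(K\cap\cG\neq\emptyset,\ \tilde K\cap\cG=\emptyset)\le\eps\,\P(K\cap\cG\neq\emptyset)$, after which \eqref{wCL} handles $\tilde K\cap\cG\setminus B_{C\lambda r}$ by a first-moment bound on the (finite) cut-off cluster. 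The negligibility-of-mesoscopic-clusters input is the piece your sketch is missing.
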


This theorem also gives another precise sense in which the fractal dimension exponent $d_f$ is well-defined and equal to $(d+\alpha)/2$ in the effectively long-range, low-dimensional regime.


\subsection{Negligibility of mesoscopic clusters}
\label{subsec:negligibility_of_mesoscopic_clusters}

The goal of this section is to complete the proofs of \cref{thm:main_low_dim,thm:hyperscaling}.
 Both theorems will be deduced from the following proposition, which establishes a kind of ``hyperscaling postulate" for the first moment in the cut-off model. 

\begin{prop}[Negligibility of mesoscopic clusters]
\label{prop:negligibility_of_mesoscopic}
 If $d<3\alpha$ and \eqref{wCL} holds then for every $\eps>0$ there exists $\delta>0$ such that
\[
\E_{\beta_c,r}\left[ \min\left\{|K|, \delta r^{(d+\alpha)/2}\right\}\right]  \leq
\eps \E_{\beta_c,r}|K|
\]
for every $r\geq 1$.
\end{prop}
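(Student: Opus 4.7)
The plan is to argue by contradiction, adapting the real-space renormalization ODE strategy developed in the proofs of \cref{prop:low_dim_not_hydro} and \cref{thm:max_cluster_size_LD}. Suppose the conclusion fails for some $\eps>0$, so that for every $\delta>0$ there exists $r\geq 1$ with $\E_{\beta_c,r}[\min\{|K|,\delta r^{(d+\alpha)/2}\}] > \eps\E_{\beta_c,r}|K|$. Bounded ranges of $r$ are handled by the trivial bound $\E_{\beta_c,r}[\min\{|K|,\delta r^{(d+\alpha)/2}\}]\leq \delta r^{(d+\alpha)/2}$ together with the lower bound $\E_{\beta_c,r}|K|\succeq 1$ from \cref{lem:first_moment_CLb}, so I may extract sequences $\delta_n\downarrow 0$ and $r_n\to\infty$ witnessing the failure.

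The first step is \emph{persistence on scale intervals}. For each fixed $M>0$ the cut-off measures are monotonically coupled in $r$ (increasing $r$ only adds edges), so $|K|$ is non-decreasing in the configuration and hence $r\mapsto \E_{\beta_c,r}[\min\{|K|,M\}]$ is non-decreasing in $r$. Combined with the uniform two-sided estimate $\E_{\beta_c,r}|K|\asymp r^\alpha$ from \cref{lem:first_moment_CLb}, this will show that if the bad inequality holds at $r_n$ with parameters $(\eps,\delta_n)$, then for every constant $C\geq 1$ it persists on the entire scale interval $[r_n,Cr_n]$ with parameters $(\eps',\delta'_n)$ where $\eps'\asymp \eps C^{-\alpha}$ and $\delta'_n\asymp \delta_n C^{-(d+\alpha)/2}\to 0$.

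The next step is to convert the persistent bad inequality into a sharp differential lower bound for $\E_{\beta_c,r}|K|^2$. Russo's formula (\cref{I-lem:moment_derivative}) combined with the BK correlation inequality for disjoint clusters \eqref{eq:BK_disjoint_clusters_covariance2} and the pointwise bound $|K_y|\geq \min\{|K_y|,M\}$ yields
\[
\frac{d}{dr}\E_{\beta_c,r}|K|^2\geq 3\beta_c|J'(r)|\Bigl[|B_r|\E_{\beta_c,r}|K|^2\,\E_{\beta_c,r}[\min\{|K|,M\}] - M\,\E_{\beta_c,r}|K|^3\Bigr],
\]
and the moment estimates $\E_{\beta_c,r}|K|^2\asymp r^{\alpha+(d+\alpha)/2}$ and $\E_{\beta_c,r}|K|^3\asymp r^{d+2\alpha}$ from \cref{cor:low_dim_moments} imply, for $\delta'_n$ small enough relative to $\eps'$, a lower bound of the form $\frac{d}{dr}\log\E_{\beta_c,r}|K|^2\succeq \eps' r^{-1}$ on $[r_n,Cr_n]$. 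The \emph{main obstacle} is to sharpen this into an inequality of the form $\frac{d}{dr}\log\E_{\beta_c,r}|K|^2\geq (3\alpha-\kappa)r^{-1}$ with $\kappa=\kappa(\delta'_n,\eps')\to 0$ as $\delta'_n\to 0$: this is the analogue of the bound $\cE_{2,r}\leq \delta$ used in the proof of \cref{thm:max_cluster_size_LD}, and requires a refinement of the BK analysis that combines the hypothesis with the sharp max-cluster bound $M_r\asymp r^{(d+\alpha)/2}$ of \cref{thm:max_cluster_size_LD} to control the error term $\cE_{2,r}$ from \eqref{eq:E2r_def_ODE}.

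Once such a sharpened differential inequality is available, the contradiction follows by integration exactly as in the proof of \cref{thm:max_cluster_size_LD}. Integrating on $[r_n,Cr_n]$ gives $\E_{\beta_c,Cr_n}|K|^2/\E_{\beta_c,r_n}|K|^2\succeq C^{3\alpha-\kappa}$, whereas the second-moment upper bound \eqref{eq:CL_second_moment_upper} combined with the matching lower bound $\E_{\beta_c,r_n}|K|^2\succeq r_n^{\alpha+(d+\alpha)/2}$ coming from \cref{thm:max_cluster_size_LD} and \eqref{eq:moments_bounded_below_by_M} forces $\E_{\beta_c,Cr_n}|K|^2/\E_{\beta_c,r_n}|K|^2\preceq C^{\alpha+(d+\alpha)/2}$. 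Since $d<3\alpha$ gives $3\alpha>\alpha+(d+\alpha)/2$, taking $n$ large enough that $3\alpha-\kappa>\alpha+(d+\alpha)/2$ and then $C$ sufficiently large produces the desired contradiction and completes the proof.
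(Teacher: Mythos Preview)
Your overall strategy---contradict the second-moment upper bound $\E_{\beta_c,r}|K|^2\preceq r^{\alpha+(d+\alpha)/2}$ by producing an interval of scales on which the logarithmic derivative of the second moment exceeds $\alpha+(d+\alpha)/2$---is the same as the paper's. However, the step you flag as the ``main obstacle'' is a genuine gap, and the mechanism you suggest for closing it does not work.

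The differential inequality you wrote (which is \cref{lem:lower_bounding_second_moment_derivative_with_truncation}) gives
\[
\frac{d}{dr}\log\E_{\beta_c,r}|K|^2 \gtrsim 3\beta_c r^{-\alpha-1}\,\E_{\beta_c,r}[|K|\wedge M]\cdot\bigl(1-O(M r^{(\alpha-d)/2}/\E_{\beta_c,r}[|K|\wedge M])\bigr).
\]
Your persistence argument (propagating the bad inequality \emph{forward} in $r$ via monotonicity) only yields $\E_{\beta_c,r}[|K|\wedge M]\geq \eps' r^\alpha$ with a \emph{fixed} $\eps'\asymp\eps C^{-\alpha}$. Plugging this in gives a logarithmic derivative of order $3\alpha\cdot(\beta_c/\alpha)\eps'\cdot r^{-1}$, which is bounded away from $3\alpha r^{-1}$; no refinement of the $\cE_{2,r}$-type error analysis or the bound $M_r\asymp r^{(d+\alpha)/2}$ will improve the prefactor $\eps'$ to something close to $1$. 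The gap cannot be closed in this direction.

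The paper resolves this by propagating \emph{backward} to smaller scales rather than forward. The key input is a new differential inequality for the truncated first moment itself (\cref{lem:truncated_expectation_differential_inequality}):
\[
\frac{\partial}{\partial r}\E_{\beta_c,r}[|K|\wedge m]\leq \beta_c|J'(r)||B_r|\bigl(\E_{\beta_c,r}[|K|\wedge m]\bigr)^2,
\]
which is the same ODE that the full first moment satisfies. Integrating this backward (\cref{lem:mean-field_lower_bound_for_truncated_expectation}) shows that if $\E_{\beta_c,r}[|K|\wedge m]\geq\eps r^\alpha$ at scale $r$, then at all sufficiently smaller scales $\ell\leq\delta r$ one has the near-sharp mean-field lower bound $\E_{\beta_c,\ell}[|K|\wedge m]\geq(1-\eps)\frac{\alpha}{\beta_c}\ell^\alpha$. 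This is what upgrades the prefactor from $\eps'$ to nearly $1$ and yields the required $(3\alpha-\kappa)r^{-1}$ growth. The remainder of the argument then proceeds exactly as you outlined.
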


 We begin by proving the following simple differential inequality regarding truncated first moments. (See also \cref{III-sub:relations_between_the_beta_derivative_and_the_second_moment} for a different upper bound on the same quantity that is sharper in some regimes.)

\begin{lemma}
\label{lem:truncated_expectation_differential_inequality}
$\frac{\partial}{\partial r}\E_{\beta,r} [|K|\wedge m] \leq \beta |J'(r)||B_r|\E_{\beta,r} [|K|\wedge m]^2$ for every $\beta,r>0$.
\end{lemma}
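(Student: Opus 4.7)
The plan is a Russo-plus-BK calculation applied to the increasing, non-negative functional $|K_o|\wedge m$. Since the edge probability $p_e(r) = 1 - \exp(-\beta J_r(x,y))$ satisfies $\tfrac{d p_e}{dr} = \beta|J'(r)|\mathbbm{1}(\|x-y\|\leq r)(1-p_e)$, Russo's formula gives, after dropping the $(1-p_e)\leq 1$ factor,
\[
  \frac{\partial}{\partial r}\E_{\beta,r}[|K|\wedge m] \;\leq\; \beta|J'(r)|\!\!\sum_{\substack{\{x,y\}\\ \|x-y\|\leq r}}\!\!\E_{\beta,r}\bigl[\Delta_{\{x,y\}}(|K_o|\wedge m)\bigr],
\]
where $\Delta_e g = g(\omega^e) - g(\omega_e)$. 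The increment $\Delta_{\{x,y\}}(|K_o|\wedge m)$ is non-zero only when exactly one of $x,y$ lies in $K_o$ in the closed configuration, say $o\leftrightarrow x$ and $o\nleftrightarrow y$, in which case opening the bond merges $K_o$ with $K_y$ and the elementary inequality $(a+b)\wedge m - a\wedge m \leq b\wedge m$ yields
\[
  \Delta_{\{x,y\}}(|K_o|\wedge m) \;\leq\; \mathbbm{1}(o\leftrightarrow x,\,o\nleftrightarrow y)(|K_y|\wedge m) + \mathbbm{1}(o\leftrightarrow y,\,o\nleftrightarrow x)(|K_x|\wedge m).
\]

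The main step is to invoke the BK-type disjoint clusters inequality \eqref{eq:BK_disjoint_clusters_covariance} in the form that upgrades each summand into a quantity involving only truncated cluster sizes. The key pointwise observation is that $\mathbbm{1}(o\leftrightarrow x)\leq|K_x|\wedge m$, valid once $m\geq 1$ (which we may assume), since $o\leftrightarrow x$ forces $x\in K_x$ and hence $|K_x|\wedge m\geq 1$. Combining this with the inclusion $\{o\leftrightarrow x,\,o\nleftrightarrow y\}\subseteq\{x\nleftrightarrow y\}$ and applying \eqref{eq:BK_disjoint_clusters_covariance} with the monotone non-negative functions $F(K_x)=|K_x|\wedge m$ and $G(K_y)=|K_y|\wedge m$ gives
\[
  \E\bigl[\mathbbm{1}(o\leftrightarrow x,\,o\nleftrightarrow y)(|K_y|\wedge m)\bigr] \;\leq\; \E\bigl[(|K_x|\wedge m)(|K_y|\wedge m)\mathbbm{1}(x\nleftrightarrow y)\bigr] \;\leq\; \E_{\beta,r}[|K|\wedge m]^{2},
\]
where the final step uses translation invariance to replace $\E[|K_x|\wedge m]=\E[|K_y|\wedge m]$ by $\E[|K|\wedge m]$.

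The final step is the summation over bonds. The hard part, and the main obstacle I expect, is that the uniform per-pair bound just obtained does not depend on $(x,y)$ and so cannot be summed naively over the infinitely many pairs $(x,y)$ with $\|x-y\|\leq r$: one must exploit the translation-invariant structure of the sum to reduce to a $|B_r|$-factor rather than a $|B_r|^{2}$-factor. The way this is arranged is to fix one endpoint (say $y$) via the mass-transport principle and sum the $\mathbbm{1}(o\leftrightarrow x)$ factor over the $|B_r|$ choices of $x\in B_r(y)\setminus\{y\}$, combining the two symmetric terms from Step~2 and accounting for the unordered-bond convention; this produces exactly $|B_r|$ copies of the bound $\E[|K|\wedge m]^{2}$ per base vertex, which by translation invariance yields the claimed inequality.
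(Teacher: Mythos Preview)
Your proof has a genuine gap in the final summation step, and the issue is not merely a matter of bookkeeping: the two manoeuvres you want to perform are mutually exclusive as written. On the one hand you want to use the pointwise bound $\mathbbm{1}(o\leftrightarrow x)\leq |K_x|\wedge m$ to produce the truncated factor; on the other you want to ``sum the $\mathbbm{1}(o\leftrightarrow x)$ factor'' over $x$ to localise the infinite edge-sum. But once you have replaced $\mathbbm{1}(o\leftrightarrow x)$ by $|K_x|\wedge m$ in the per-edge bound, the resulting term $\E[(|K_x|\wedge m)(|K_y|\wedge m)\mathbbm{1}(x\nleftrightarrow y)]$ no longer involves the origin at all, and summing it over all pairs $(x,y)$ with $\|x-y\|\leq r$ is genuinely infinite. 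Conversely, if you first sum $\mathbbm{1}(o\leftrightarrow x)$ over $x$ (which, after mass-transport, is exactly what the paper does) you obtain the \emph{untruncated} factor $|K_o|$, not $|K_o|\wedge m$, and your simplified increment bound $(a+b)\wedge m - a\wedge m \leq b\wedge m$ gives no way to truncate it after the fact.

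The missing idea is precisely the sharper elementary inequality
\[
a\bigl[(a+b)\wedge m - a\wedge m\bigr] \;\leq\; (a\wedge m)(b\wedge m),
\]
which the paper verifies by a three-line case analysis. This inequality is what allows one to keep the \emph{exact} increment $(|K_o|+|K_y|)\wedge m - |K_o|\wedge m$ through the mass-transport step (yielding the factor $|K_o|$ from $\sum_x \mathbbm{1}(o\leftrightarrow x)$) and then absorb and truncate that factor in a single stroke. After that, the BK-type inequality \eqref{eq:BK_disjoint_clusters_covariance} applies exactly as you intended. Your weaker bound $(a+b)\wedge m - a\wedge m \leq b\wedge m$ throws away the crucial fact that the increment vanishes once $a\geq m$, which is exactly what is needed to convert $|K_o|$ into $|K_o|\wedge m$.
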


\begin{proof}[Proof of \cref{lem:truncated_expectation_differential_inequality}]
We denote minima with $m$ using subscripts, so that $|K|_m=|K|\wedge m$. Using Russo's formula as usual, we can write
\begin{align*}
\frac{\partial}{\partial r}\E_{\beta,r} [|K|\wedge m] &= \beta |J'(r)| \E\left[\sum_{x\in K} \sum_{y\in B_r(x)} \mathbbm{1}(y\notin K) [(|K|+|K_y|)_m - |K|_m]\right]\\
&=\beta |J'(r)| \E\left[|K|\sum_{y\in B_r} \mathbbm{1}(y\notin K) [(|K|+|K_y|)_m - |K|_m]\right].
\end{align*}
To proceed, we claim that the inequality
\[
a[(a+b)_m-a_m] \leq a_mb_m
\]
holds for all $a,b,m\geq 0$. We verify this inequality via case analysis:
\begin{enumerate}
  \item If $a+b \leq m$ then $a[(a+b)_m-a_m]=ab=a_mb_m$ as required.
  \item If $a> m$ then $a[(a+b)_m-a_m]=0$, which is stronger than required.
  \item If $a\leq m$ and $a+b>m$ then $a[(a+b)_m-a_m]=a(m-a)\leq a_mb_m$ as required, where the final inequality holds since $b>m-a$.
\end{enumerate}
Thus, we have that
\begin{align*}
\frac{\partial}{\partial r}\E_{\beta,r} [|K|\wedge m] &\leq\beta |J'(r)| \E_{\beta,r}\left[|K|_m \sum_{y\in B_r} \mathbbm{1}(y\notin K) |K_y|_m\right],
\end{align*}
and the claim follows from \cref{I-lem:BK_disjoint_clusters_covariance} (as restated in \eqref{eq:BK_disjoint_clusters_covariance}).
\end{proof}

We next apply \cref{lem:truncated_expectation_differential_inequality} to prove the following lemma, which states roughly that if truncating the first moment at some volume $m$ does not change its order at some scale $r$, then at every much smaller scale the truncated first moment must admit a ``mean-field lower bound'' as in \cref{I-cor:mean_lower_bound} (restated here as \eqref{eq:mean_lower_bound}).

\begin{lemma}
\label{lem:mean-field_lower_bound_for_truncated_expectation}
For every $\eps>0$ there exists $\delta>0$ and $r_0<\infty$ such that the chain of implications
\begin{multline*}
\left(\E_{\beta_c,r} [|K|\wedge m] \geq \eps \E_{\beta_c,r} |K| \right) \Rightarrow \left(\E_{\beta_c,r} [|K|\wedge m] \geq \frac{\eps \beta_c}{2\alpha} r^\alpha \right)
\\ \Rightarrow \left( \E_{\beta_c,\ell}[|K| \wedge m] \geq (1-\eps)\frac{\alpha}{\beta} \ell^\alpha \text{ for every $\ell \leq \delta r$}\right)
\end{multline*}
holds for every $r \geq r_0$ and $m\geq 1$.
\end{lemma}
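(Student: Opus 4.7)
The first implication is immediate from the mean-field lower bound \eqref{eq:mean_lower_bound} (restated from \cref{I-cor:mean_lower_bound}), which gives $\E_{\beta_c,r}|K| \geq (1-o(1))(\alpha/\beta_c) r^\alpha$, so $\eps\E_{\beta_c,r}|K| \geq (\eps\alpha/(2\beta_c))r^\alpha$ once $r$ exceeds a kernel-dependent threshold $r_0$. The real content of the lemma is the second implication, which I would prove via a differential-inequality argument.

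Write $f(r) := \E_{\beta_c,r}[|K|\wedge m]$ and $g(r) := r^{-\alpha} f(r)$. Combining \cref{lem:truncated_expectation_differential_inequality} with the normalization \eqref{eq:normalization_conventions}, so that $|J'(r)||B_r| \leq (1+\eta) r^{-\alpha-1}$ for $r \geq r_0(\eta)$, yields the Bernoulli-type inequality
\[
g'(r) \leq r^{-1} g(r)\bigl[(1+\eta)\beta_c\, g(r) - \alpha\bigr].
\]
The key step is to pass to the reciprocal $h(r) = 1/g(r)$, which both flips the direction of the inequality (appropriate because we seek a lower bound on $g$) and linearises it into $h'(r) \geq r^{-1}[\alpha h(r) - (1+\eta)\beta_c]$. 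Setting $H(r) = h(r) - (1+\eta)\beta_c/\alpha$, this rearranges to $(r^{-\alpha} H(r))' \geq 0$, so $r^{-\alpha} H(r)$ is non-decreasing on $[r_0(\eta),\infty)$.

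With this monotonicity in hand, the hypothesis $g(r) \geq \eps\alpha/(2\beta_c)$ gives the upper bound $H(r) \leq C_\eps := (2/\eps - 1 - \eta)\beta_c/\alpha$, and monotonicity propagates this to $H(\ell) \leq (\ell/r)^\alpha C_\eps \leq \delta^\alpha C_\eps$ for every $r_0(\eta) \leq \ell \leq \delta r$ (the case $H(r) < 0$ yields a strictly stronger bound). Choosing $\eta = \eps/4$ and then $\delta$ small enough depending on $\eps$ so that $\delta^\alpha C_\eps \leq (\beta_c/\alpha)[(1-\eps)^{-1} - (1+\eta)]$, which is possible since the right-hand side is positive, gives $h(\ell) \leq \beta_c/((1-\eps)\alpha)$, equivalently $\E_{\beta_c,\ell}[|K|\wedge m] = \ell^\alpha g(\ell) \geq (1-\eps)(\alpha/\beta_c)\ell^\alpha$, which is the desired conclusion. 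The remaining case $\ell < r_0(\eta)$ below the kernel-normalization threshold can be handled separately using the trivial bound $f(\ell) \geq 1$ (since $0 \in K$ always), or absorbed by enlarging the overall $r_0$ in the statement.

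The main obstacle is recognising the reciprocal substitution. The differential inequality provided by \cref{lem:truncated_expectation_differential_inequality} is one-sided and Bernoulli-type, but the substitution $h = 1/g$ flips its direction (turning a lower bound on $g$ into an upper bound on $h$) and simultaneously linearises it, producing a linear ODE inequality whose integrating factor $r^{-\alpha}$ integrates cleanly into monotonicity of $r^{-\alpha}(h - (1+\eta)\beta_c/\alpha)$. Once this structural observation is in place, the remainder of the argument is careful tracking of the three small parameters $\eps$, $\eta$, and $\delta$, which must be chosen in the hierarchy $\eta \ll \eps$ followed by $\delta$ small depending on both.
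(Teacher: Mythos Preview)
Your proof is correct and uses essentially the same approach as the paper: the reciprocal substitution applied to the differential inequality of \cref{lem:truncated_expectation_differential_inequality}. The paper's presentation is slightly more direct --- it takes the reciprocal of $f(r)=\E_{\beta_c,r}[|K|\wedge m]$ itself (rather than of $g=r^{-\alpha}f$), obtaining $(1/f)' \geq -\beta_c|J'(r)||B_r| \sim -\beta_c r^{-\alpha-1}$, and integrates this from $\ell$ to $r$ in one line --- but your monotone quantity $r^{-\alpha}H(r)$ unpacks to $1/f(r) - (1+\eta)\beta_c/(\alpha r^\alpha)$, which is exactly the same content.
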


\begin{proof}[Proof of \cref{lem:mean-field_lower_bound_for_truncated_expectation}]
To lighten notation we write $\E_r=\E_{\beta_c,r}$.
The first implication is an immediate consequence of \eqref{eq:mean_lower_bound}; we focus on the second.
We can rewrite the differential inequality of \cref{lem:truncated_expectation_differential_inequality} as
\[\frac{d}{dr} \frac{1}{\E_r [|K|\wedge m]} \geq -\beta_c |J'(r)| |B_r| \sim -\beta_c r^{-\alpha-1}.\]
Integrating this inequality yields that
\[
\frac{1}{\E_\ell [|K|\wedge m]}  \leq \frac{1}{\E_r [|K|\wedge m]}+ \int_\ell^r \beta_c t^{-d-\alpha-1} |B_t|\dif t \sim \frac{\beta_c}{\alpha} \ell^{-\alpha} + \frac{1}{\E_r [|K|\wedge m]}-\frac{\beta_c}{\alpha}r^{-\alpha}
\]
for every $r\geq \ell$, where the asymptotic estimate holds uniformly as $\ell\to\infty$ and $r\geq \ell$ is arbitrary. This is easily seen to imply the claim, since the second two terms on the right hand side are much smaller than the first when $r\gg \ell$ and $\E_r [|K|\wedge m]$ is of order at least $r^\alpha$.
\end{proof}

Next, we prove a differential inequality for the second moment $\E_{\beta_c,r}|K|^2$ in terms of the truncated first moment $\E_{\beta_c,r}|K \wedge m|$. This is the first place in this section where we make use of the assumption that $d<3\alpha$ and that \eqref{wCL} holds.

\begin{lemma}
\label{lem:lower_bounding_second_moment_derivative_with_truncation} 
If $d<3\alpha$ and \eqref{wCL} holds then there exists a constant $C$ such that
\[
\frac{d}{dr} \E_{\beta_c,r}|K|^2 \geq 3 \beta_c |J'(r)||B_r| \Biggl(1- C \frac{m r^{\frac{\alpha-d}{2}}}{\E_{\beta_c,r}[|K|\wedge m]}\Biggr) \E_{\beta_c,r}[|K|\wedge m] \E_{\beta_c,r}[|K|^2]
\]
for every $r,m\geq 1$.
\end{lemma}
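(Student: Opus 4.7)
The plan is to run essentially the same differentiation + FKG/BK decoupling argument used in the proof of \cref{thm:CL_volume_moments,lem:CL_and_wCL}, but with a truncated test function $G(K_y) = |K_y|\wedge m$ in place of $|K_y|$, and then control the resulting error term using the bound $M_r \asymp r^{(d+\alpha)/2}$ from \cref{thm:max_cluster_size_LD}.

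First I will start with the symmetrized form of Russo's formula for the second moment (which is derived just as in the derivation of \eqref{eq:E2r_def_ODE} above using the mass-transport principle):
\[
\frac{d}{dr}\E_{\beta_c,r}|K|^2 \;=\; 3\beta_c |J'(r)|\, \E_{\beta_c,r}\!\left[|K|^2\sum_{y\in B_r}\mathbbm{1}(y\notin K)\,|K_y|\right].
\]
Since $|K_y|\geq |K_y|\wedge m$ for every $m\geq 1$, I may replace $|K_y|$ by $|K_y|\wedge m$ to get a lower bound. For each fixed $y\in B_r$, I now apply the correlation inequality \eqref{eq:BK_disjoint_clusters_covariance2} from \cref{I-lem:BK_disjoint_clusters_covariance} with the increasing functions $F(K_0)=|K_0|^2$ and $G(K_y)=|K_y|\wedge m$, then sum over $y\in B_r$ and use translation invariance to rewrite the main term as $|B_r|\,\E_{\beta_c,r}|K|^2\,\E_{\beta_c,r}[|K|\wedge m]$. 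The resulting error term, coming from the last summand in \eqref{eq:BK_disjoint_clusters_covariance2}, is
\[
\mathcal{R} := \E_{\beta_c,r}\!\left[|K|^2 (|K|\wedge m)\,|K\cap B_r|\right],
\]
where I used that on the event $\{0\leftrightarrow y\}$ we have $K_y=K$.

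The main step is then to bound $\mathcal{R}$. Since $|K|\wedge m \leq m$, I have $\mathcal{R}\leq m\,\E_{\beta_c,r}|K|^3$. Here the assumption $d<3\alpha$ combined with \eqref{wCL} enters: applying \eqref{wCL} gives a constant $C_0$ with $\E_{\beta_c,r}|K|^3 \leq 2\E_{\beta_c,r}|K\cap B_{C_0r}|^3$, and then the universal tightness inequality \eqref{eq:universal_tightness_moments_restate} together with \cref{thm:max_cluster_size_LD} (specifically $M_{C_0r}\preceq r^{(d+\alpha)/2}$) gives
\[
\E_{\beta_c,r}|K|^3 \;\preceq\; M_{C_0r}\,\E_{\beta_c,r}|K\cap B_{C_0r}|^2 \;\preceq\; r^{(d+\alpha)/2}\,\E_{\beta_c,r}|K|^2,
\]
where in the last step I used \eqref{wCL} once more to replace $\E_{\beta_c,r}|K\cap B_{C_0r}|^2$ by $\E_{\beta_c,r}|K|^2$ up to a constant.

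Combining these pieces, there is a constant $C$ with
\[
\frac{d}{dr}\E_{\beta_c,r}|K|^2 \;\geq\; 3\beta_c|J'(r)|\Bigl(|B_r|\,\E_{\beta_c,r}|K|^2\,\E_{\beta_c,r}[|K|\wedge m]\;-\;C\,m\,r^{(d+\alpha)/2}\,\E_{\beta_c,r}|K|^2\Bigr).
\]
Factoring out $|B_r|\,\E_{\beta_c,r}[|K|\wedge m]\,\E_{\beta_c,r}|K|^2$ and using our normalization convention $|B_r|\sim r^d$ to rewrite $r^{(d+\alpha)/2}/|B_r|\asymp r^{(\alpha-d)/2}$ (absorbing the $\sim$ into the constant $C$) yields the claim. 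The one potentially delicate point is the bound $\E_{\beta_c,r}|K|^3\preceq r^{(d+\alpha)/2}\E_{\beta_c,r}|K|^2$ under the weak condition \eqref{wCL} rather than \eqref{CL}; this is precisely where I need \cref{thm:max_cluster_size_LD}, which in turn relied on \eqref{wCL} and $d<3\alpha$ as established earlier in the section.
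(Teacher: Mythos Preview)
Your proof is correct and follows essentially the same route as the paper: symmetrized Russo formula via mass-transport, truncate $|K_y|$ to $|K_y|\wedge m$, decouple with \eqref{eq:BK_disjoint_clusters_covariance2}, and bound the error $\mathcal{R}=\E_{\beta_c,r}[|K|^2(|K|\wedge m)|K\cap B_r|]$ using $\E_{\beta_c,r}|K|^3\preceq r^{(d+\alpha)/2}\E_{\beta_c,r}|K|^2$.

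Your treatment of the error term is in fact slightly cleaner than the paper's. The paper splits $|K|^3(|K|\wedge m)\leq m|K|^3+m^3(|K|\wedge m)$ and handles the second piece via the lower bound $|B_r|\E_{\beta_c,r}|K|^2\asymp r^{3(d+\alpha)/2}$ from \cref{cor:low_dim_moments}, which is where the hypothesis $d<3\alpha$ genuinely enters; this also forces a separate (vacuous) treatment of the range $m\geq r^{(d+\alpha)/2}$. You instead use the direct bound $\mathcal{R}\leq m\,\E_{\beta_c,r}|K|^3$, which avoids the second term entirely and works uniformly in $m$. One small point: the upper bound $M_{C_0r}\preceq r^{(d+\alpha)/2}$ you invoke is already \eqref{eq:M_r_upper_restate} and does not require \cref{thm:max_cluster_size_LD} or the hypothesis $d<3\alpha$; only \eqref{wCL} (for $p=3$) and universal tightness are actually used in your error estimate.
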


\begin{proof}[Proof of \cref{lem:lower_bounding_second_moment_derivative_with_truncation}] 
We continue to write $\E_r=\E_{\beta_c,r}$. It suffices to consider the case $m\leq  r^{\frac{d+\alpha}{2}}$, the case $m\geq r^{\frac{d+\alpha}{2}}$ holding vacuously since $\frac{d}{dr} \E_{r}|K|^2\geq 0$ and $\E_r [|K|\wedge m] \leq\E_r|K| \asymp r^\alpha$ by \cref{lem:first_moment_CL}. As usual, we can write
\begin{align}
\frac{d}{dr}\E_r|K|^2 &= \beta_c |J'(r)| \left(
\E_r\left[|K| \sum_{y\in B_r} \mathbbm{1}(y\notin K) |K_y|^{2} \right]
+
2\E_r\left[|K|^{2} \sum_{y\in B_r} \mathbbm{1}(y\notin K) |K_y| \right]\right)\nonumber\\
&=3\beta_c |J'(r)| \E_r\left[|K|^{2} \sum_{y\in B_r} \mathbbm{1}(y\notin K) |K_y| \right],
\end{align}
where the second equality follows by the mass-transport principle. We can therefore use  \eqref{eq:BK_disjoint_clusters_covariance2} to bound
\begin{align}
\frac{d}{dr}\E_r|K|^2 &\geq 3\beta_c |J'(r)| \E_r\left[|K|^{2} \sum_{y\in B_r} \mathbbm{1}(y\notin K) (|K_y|\wedge m) \right]\label{eq:truncated_diff_ineq_expanded}\\
&\geq 
3\beta_c |J'(r)| |B_r| \E_r[|K|^{2}] \E_r[|K|\wedge m]
-3\beta_c |J'(r)| \E_r\left[|K|^{2}(|K|\wedge m) |K\cap B_r| \right].\nonumber
\end{align}
Using the trivial inequality
\[
|K|^{2}(|K|\wedge m) |K\cap B_r| \leq |K|^3(|K|\wedge m) \leq m |K|^3+m^3 (|K|\wedge m)
\]
we obtain that
\begin{align}
\E_r\left[|K|^{2}(|K|\wedge m) |K\cap B_r| \right] &\leq m\E_r|K|^3+m^3\E_r[|K|\wedge m]
\nonumber\\
&\asymp  \frac{mr^{\frac{\alpha-d}{2}}}{\E_r[|K|\wedge m]} |B_r|\E_r[|K|\wedge m] \E_r|K|^2
+\left(\frac{m}{r^{\frac{d+\alpha}{2}}}\right)^3|B_r|\E[|K|\wedge m] \E_r|K|^2
\nonumber\\
&\preceq \frac{mr^{\frac{\alpha-d}{2}}}{\E_r[|K|\wedge m]} |B_r| \E_r[|K|\wedge m] \E_r|K|^2,
\label{eq:truncation_diff_ineq_almost_done}
\end{align}
where we used that $\E_r|K|^3 \asymp r^{\frac{d+\alpha}{2}} \E_r|K|^2$ and $|B_r|\E_r|K|^2 \asymp (r^{\frac{d+\alpha}{2}})^3$ in the second line and the assumption that $m\leq r^{\frac{d+\alpha}{2}}$ and the inequality $\E_r[|K|\wedge m]\preceq r^\alpha$ in the third line: all three of these moment bounds hold due to \cref{cor:low_dim_moments}, which requires the assumption \eqref{wCL}. The claim follows by substituting \eqref{eq:truncation_diff_ineq_almost_done} into \eqref{eq:truncated_diff_ineq_expanded}.
\end{proof}

\begin{proof}[Proof of \cref{prop:negligibility_of_mesoscopic}]
We continue to write $\E_r=\E_{\beta_c,r}$. We begin by proving the claim concerning the first moment. Let $\eps,\delta>0$ and let $\mathscr{R}(\eps,\delta)=\{r\geq 1: \E_r[|K|\wedge \delta r^{\frac{d+\alpha}{2}}] \geq \eps r^\alpha\}$. By \cref{lem:mean-field_lower_bound_for_truncated_expectation}, there exists a constant $\lambda_0=\lambda_0(\eps)\geq 1$ such that if we define $\lambda_0^{-1}\mathscr{R}(\eps,\delta)=\{r\geq 1: \lambda_0r \in \mathscr{R}(\eps,\delta) \}$ 
 then
\[
\E_r [|K| \wedge \delta (2\lambda_0 r)^{\frac{d+\alpha}{2}}] \geq (1-\eps) \frac{\alpha}{\beta_c}r^\alpha
\]
for every $r\in \lambda_0^{-1}\mathscr{R}(\eps,\delta)$. (We stress that $\lambda_0$ does \emph{not} depend on the choice of $\delta$.) Thus, for each $\eps>0$ there exists $r_0=r_0(\eps)<\infty$ such that if $r\geq r_0$ then $|B_r|\geq (1-\eps)^{1/2} r^d$ and $|J'(r)|\geq(1-\eps)^{1/2}r^{-d-\alpha-1}$ and if $r$ also belongs to $\lambda_0^{-1}\mathscr{R}(\eps,\delta)$ then
\[
\frac{d}{dr} \E_r|K|^2 \geq 3 \alpha (1-\eps)^2 r^{-1}\Biggl(1- \frac{C \beta_c\delta (2\lambda_0)^{\frac{d+\alpha}{2}}}{(1-\eps)\alpha}\Biggr)  \E_r|K|^2.
\]
Since $\lambda_0$ does not depend on the choice of $\delta$, it follows that for each $\eps>0$ there exists $\delta_0(\eps)>0$ such that if $r\in \lambda_0^{-1}\mathscr{R}(\eps,\delta_0(\eps))$ is at least $r_0$ then
\[
\frac{d}{dr} \E_r|K|^2 \geq 3 \alpha (1-\eps)^3 r^{-1} \E_r|K|^2.
\]
Consider the constant
\[a= \frac{1}{2}\cdot \frac{d+3\alpha}{2\alpha} + \frac{1}{2} \cdot 3\]
which satisfies $a<3$ since $d<3\alpha$, 
 and let $\eps_0>0$ be maximal such that $3(1-\eps_0)^3\geq a$. Fix $0<\eps\leq \eps_0$ and let $\delta=\delta_0(\eps)$, so that
\[
\frac{d}{dr} \log \E_r|K|^2 \geq a \alpha r^{-1} 
\]
for every $r\geq r_0(\eps)$ in $\lambda_0^{-1}(\eps)\mathscr{R}(\eps,\delta_0(\eps))$. Integrating this inequality yields that
\[
\frac{\E_{r_2}|K|^2}{\E_{r_1}|K|^2} \geq \left(\frac{r_2}{r_1}\right)^{a \alpha}
\]
whenever $r_2\geq r_1 \geq r_0$ are such that $[r_1,r_2]\subseteq \lambda_0^{-1}\mathscr{R}(\eps,\delta_0(\eps))$. Since we also have by \cref{cor:low_dim_moments} that $\E_{r}|K|^2 \asymp r^{\alpha+\frac{d+\alpha}{2}}$ and by definition of $a$ that $(d+\alpha)/2 < a \alpha$, it follows that there exists a constant $C_1=C_1(\eps) > 1$ such that if $r_2 \geq r_1 \geq r_0$ are such that $[r_1,r_2]\subseteq \lambda_0^{-1}\mathscr{R}(\eps,\delta_0(\eps))$ then $r_2/r_1\leq C$. Thus, it follows from the definition of $\lambda_0^{-1}\mathscr{R}$ that if $r_2 \geq r_1 \geq \lambda_0 r_0$ are such that $[r_1,r_2] \subseteq \mathscr{R}$ then $r_2/r_1\leq C$ also. As such, for each sufficiently large $r$ there exists $r\leq r'\leq Cr$ not belonging to $\mathscr{R}(\eps,\delta_0(\eps))$, so that
\[
\E_r [|K| \wedge \delta_0(\eps) C^{\frac{d+\alpha}{2}} r^{\frac{d+\alpha}{2}}] \leq \E_{r'}[|K| \wedge \delta_0(\eps) (r')^{\frac{d+\alpha}{2}}] \leq \eps C^\alpha r^\alpha.
\]
This is easily seen to imply the claim. 
\end{proof}

\cref{thm:main_low_dim} will be deduced from the following more general theorem requiring \eqref{wCL} instead of \eqref{CL} (the latter implying the former when $d<3\alpha$ by \cref{lem:CL_and_wCL}).

\begin{theorem}
\label{thm:main_low_dim_wCL}
If $d<3\alpha$ and 
 \eqref{wCL} holds then 
\[
\P_{\beta_c}(|K|\geq n) \asymp 
n^{-(d-\alpha)/(d+\alpha)} 
\qquad \text{ and } \qquad \E_{\beta_c}|K \cap B_r|^p  \asymp_p 
r^{\alpha+(p-1)\frac{d+\alpha}{2}}
\]
for every $n,r\geq 1$ and integer $p\geq 1$.
 In particular, the exponents $\delta$ and $d_f$ are well-defined and given by $\delta=(d+\alpha)/(d-\alpha)$ and $d_f=(d+\alpha)/2$ respectively.
\end{theorem}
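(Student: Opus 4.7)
The plan is to deduce \cref{thm:main_low_dim_wCL} from \cref{thm:hyperscaling} (which, together with \cref{prop:negligibility_of_mesoscopic}, is to be proven in this section) combined with the moment bounds already established in \cref{cor:low_dim_moments}. The moment statement $\E_{\beta_c}|K\cap B_r|^p \asymp_p r^{\alpha+(p-1)(d+\alpha)/2}$ for every integer $p\geq 1$ is exactly the content of \cref{cor:low_dim_moments} under the hypotheses $d<3\alpha$ and \eqref{wCL}, so no further work is needed for that half of the theorem. It remains to establish the two-sided bound $\P_{\beta_c}(|K|\geq n)\asymp n^{-(d-\alpha)/(d+\alpha)}$.

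For the upper bound, apply Part 2 of \cref{thm:hyperscaling} with $\eps=1/2$ to obtain a constant $\lambda$ such that, writing $r(n)=n^{2/(d+\alpha)}$, we have
\[
\P_{\beta_c}(|K|\geq n)\leq 2\,\P_{\beta_c}\bigl(|K\cap B_{\lambda r(n)}|\geq n/2\bigr).
\]
Markov's inequality combined with $\E_{\beta_c}|K\cap B_{\lambda r(n)}|\asymp r(n)^{\alpha}=n^{2\alpha/(d+\alpha)}$ then yields the desired bound $\P_{\beta_c}(|K|\geq n)\preceq n^{-(d-\alpha)/(d+\alpha)}$.

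For the lower bound, apply Part 1 of \cref{thm:hyperscaling} with $p=1$ and $\eps=1/2$ to obtain $\delta>0$ such that
\[
\E_{\beta_c}\!\bigl[|K\cap B_r|\mathbbm{1}(|K\cap B_r|\geq \delta r^{(d+\alpha)/2})\bigr]\geq \tfrac{1}{2}\,\E_{\beta_c}|K\cap B_r|\asymp r^{\alpha}
\]
for every $r\geq 1$. The Cauchy--Schwarz inequality applied to this truncated expectation, combined with the second-moment bound $\E_{\beta_c}|K\cap B_r|^2\asymp r^{\alpha+(d+\alpha)/2}$ from \cref{cor:low_dim_moments}, gives
\[
\P_{\beta_c}\bigl(|K\cap B_r|\geq \delta r^{(d+\alpha)/2}\bigr)\geq \frac{\bigl(\E_{\beta_c}[|K\cap B_r|\mathbbm{1}(|K\cap B_r|\geq \delta r^{(d+\alpha)/2})]\bigr)^2}{\E_{\beta_c}|K\cap B_r|^2}\succeq r^{-(d-\alpha)/2}.
\]
Using $|K|\geq |K\cap B_r|$ and substituting $n=\delta r^{(d+\alpha)/2}$ (so that $r\asymp n^{2/(d+\alpha)}$) yields $\P_{\beta_c}(|K|\geq n)\succeq n^{-(d-\alpha)/(d+\alpha)}$ along this sequence; monotonicity of $n\mapsto \P_{\beta_c}(|K|\geq n)$ extends the bound to all $n\geq 1$.

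The hard work lies upstream in \cref{thm:hyperscaling} itself. Part~1 should be obtained from \cref{prop:negligibility_of_mesoscopic} by lifting from the cut-off measure $\P_{\beta_c,r}$ to the full measure $\P_{\beta_c}$ via monotone coupling together with the localization provided by \eqref{wCL}, and by reducing higher moments to the first-moment case using the trivial deterministic inequality $|K\cap B_r|^p\mathbbm{1}(|K\cap B_r|\leq m)\leq m^{p-1}|K\cap B_r|$ combined with the $p$th moment asymptotics of \cref{cor:low_dim_moments}. Part~2 is the more delicate statement: to prove that a cluster of size $\geq n$ must concentrate in the annulus $B_{\lambda r(n)}\setminus B_{\lambda^{-1}r(n)}$, one should rule out concentration on smaller scales using $M_R\asymp R^{(d+\alpha)/2}$ from \cref{thm:max_cluster_size_LD} and concentration on larger scales using the first-moment bound $\E_{\beta_c}|K\cap B_R|\asymp R^\alpha$, then combine these via a dyadic annular decomposition. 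Carrying out this two-sided concentration argument is where I expect the principal technical obstacle to lie.
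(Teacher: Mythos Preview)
Your treatment of the moment estimate via \cref{cor:low_dim_moments} matches the paper exactly. For the volume tail, however, the paper takes a much more direct route: rather than passing through \cref{thm:hyperscaling}, it applies \cref{prop:up-to-constants_volume-tail} (restated from Paper~I) with $f(r)=r^{(d+\alpha)/2}$. The hypothesis of part~2 of that proposition is precisely \cref{prop:negligibility_of_mesoscopic} (rewritten in the size-biased measure), and the hypothesis of part~1 follows from the moment asymptotics of \cref{cor:low_dim_moments} for $p=1,2,3$ by a Paley--Zygmund argument under $\hat\P_{\beta_c,r}$. This gives both directions of $\P_{\beta_c}(|K|\ge n)\asymp n^{-(d-\alpha)/(d+\alpha)}$ in one stroke, without any reference to \cref{thm:hyperscaling}.

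Your plan reverses the paper's logical order: in the paper, \cref{thm:hyperscaling} is proved \emph{after} \cref{thm:main_low_dim_wCL} and explicitly \emph{uses} the volume-tail asymptotic (e.g.\ to bound $\E_{\beta_c}\min\{|K|,\delta r^{(d+\alpha)/2}\}$ in Part~1, and to normalise by $\P_{\beta_c}(|K|\ge n)$ in Part~2). Your sketch for proving \cref{thm:hyperscaling} first, without this input, is where the real risk lies. In particular, the ``lifting'' of \cref{prop:negligibility_of_mesoscopic} from $\P_{\beta_c,r}$ to $\P_{\beta_c}$ is not immediate: under the monotone coupling $|K|$ only \emph{increases} when passing to the full measure, which is the wrong direction for bounding $\E_{\beta_c}\min\{|K|,m\}$ from above. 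The paper circumvents this by first establishing the tail (via \cref{prop:up-to-constants_volume-tail}, whose proof already contains the required transfer mechanism from cut-off to full measure) and only then returning to \cref{thm:hyperscaling}. Your deduction of the theorem \emph{from} \cref{thm:hyperscaling} is itself correct, but the upstream reorganisation you propose is both more laborious and not fully justified as sketched.
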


This theorem will in turn be deduced from 
 \cref{thm:max_cluster_size_LD,prop:negligibility_of_mesoscopic} using \cref{I-prop:up-to-constants_volume-tail}, which we restate here for the reader's convenience.

\begin{prop}[Up-to-constants volume-tail asymptotics from moment asymptotics; restated from \cref{I-prop:up-to-constants_volume-tail}]
\label{prop:up-to-constants_volume-tail}
Suppose that $\E_{\beta_c,r}|K| \asymp r^\alpha$ as $r\to \infty$ and let $f:(0,\infty)\to(0,\infty)$ be an increasing function whose inverse $f^{-1}$ is doubling.
\begin{enumerate}
  \item If there exist positive constants $c$ and $C$ such that $\hat \P_{\beta_c,r}(cf(r) \leq |K| \leq Cf(r))\geq c$ for every $r\geq 1$ then
  \[
\P_{\beta_c}(|K|\geq n) \succeq \frac{f^{-1}(n)^\alpha}{n}
  \]
  for every $n\geq 1$.
  \item If for each $\eps>0$ there exists $\delta>0$ such that $\hat \P_{\beta_c,r}(|K| \leq \delta f(r)) \leq \eps$ for every $r\geq 1$ then
  \[
\P_{\beta_c}(|K|\geq n) \preceq  \frac{f^{-1}(n)^\alpha}{n}
\]
for every $n\geq 1$.
\end{enumerate}
\end{prop}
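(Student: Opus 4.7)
Both parts translate the size-biased concentration hypothesis on the cut-off measure into an asymptotic statement about the unbiased full measure, but they do so in quite different ways.

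For Part~(1), I would begin from the identity
\[
\E_{\beta_c,r}\bra{|K|\cdot \mathbbm{1}(cf(r)\leq |K|\leq Cf(r))}
=\E_{\beta_c,r}|K|\cdot \hat\P_{\beta_c,r}(cf(r)\leq |K|\leq Cf(r)),
\]
which is just the definition of the size-biased measure. Applying the hypothesis and the asymptotic $\E_{\beta_c,r}|K|\asymp r^\alpha$ shows this quantity is $\succeq r^\alpha$. Bounding $|K|\leq Cf(r)$ inside the indicator yields $\P_{\beta_c,r}(|K|\geq cf(r))\succeq r^\alpha/f(r)$. In the standard monotone coupling of $\P_{\beta_c,r}$ and $\P_{\beta_c}$ we have $K_r\subseteq K$, so this transfers to $\P_{\beta_c}(|K|\geq cf(r))\succeq r^\alpha/f(r)$. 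Finally, setting $n=cf(r)$ (so $r=f^{-1}(n/c)$) and using doubling of $f^{-1}$ to absorb the constant $c$ gives $\P_{\beta_c}(|K|\geq n)\succeq f^{-1}(n)^\alpha/n$.

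Part~(2) is the harder direction because the monotone coupling now points the wrong way: an upper bound on $\P_{\beta_c,r}(|K|\geq n)$ would not give an upper bound on $\P_{\beta_c}(|K|\geq n)$. To circumvent this, I would work directly in the full measure, decomposing
\[
\P_{\beta_c}(|K|\geq n)\leq \P_{\beta_c}(|K\cap B_R|\geq cn)+\P_{\beta_c}(|K|\geq n,\;|K\cap B_R|<cn)
\]
for a suitable small $c>0$ and $R\asymp f^{-1}(n)$. Markov's inequality applied to $|K\cap B_R|$ together with the spatially-averaged two-point bound (which follows from the assumed first-moment asymptotic and monotone convergence $\E_{\beta_c}|K\cap B_R|=\lim_r\E_{\beta_c,r}|K\cap B_R|\asymp R^\alpha$) controls the first term by $\preceq R^\alpha/n\asymp f^{-1}(n)^\alpha/n$. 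The second term measures the probability that the cluster of $0$ is large but mostly lies outside $B_R$, and to bound it I would use a mass-transport comparison with the size-biased cut-off measure at scale $R$: by the hypothesis, in $\P_{\beta_c,R}$ a size-biased vertex lies in a cluster of size $\succeq f(R)$ with probability close to one, and a translation-averaging argument (together with the monotone coupling inequality $|K_R|\leq|K|$ applied in the other direction --- to identify large clusters in $\P_{\beta_c}$ with large clusters in $\P_{\beta_c,R}$ via conditioning on the containing cluster) rules out a disproportionate contribution from clusters whose mass sits mostly far from the origin.

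The main obstacle will be the second step of Part~(2): converting a concentration statement about the \emph{cut-off} size-biased measure into a bound on an event in the \emph{full} measure. This is the point at which the doubling property of $f^{-1}$ is essential, as it permits the scale $R$ supplying the cut-off to be chosen comparable to $f^{-1}(n)$ without losing constants. I anticipate the cleanest way to execute this step is via mass transport combined with the universal tightness machinery of \cite{hutchcroft2020power}, which provides exactly the kind of distributional comparisons between cluster sizes at different cut-offs required here.
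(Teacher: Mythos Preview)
Your Part~(1) is correct and matches the natural approach: convert the size-biased concentration into an unbiased lower tail bound via the identity $\E_{\beta_c,r}[|K|\mathbbm{1}(\cdot)]=\E_{\beta_c,r}|K|\cdot\hat\P_{\beta_c,r}(\cdot)$, then pass from the cut-off to the full measure via the monotone coupling $K_r\subseteq K$.

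Your approach to Part~(2), however, diverges from the paper's and has genuine gaps. First, the claim that $\E_{\beta_c}|K\cap B_R|\asymp R^\alpha$ follows from ``the assumed first-moment asymptotic and monotone convergence'' is incorrect: the hypothesis $\E_{\beta_c,r}|K|\asymp r^\alpha$ concerns the total cut-off cluster at its \emph{own} scale, and monotone convergence in $r$ gives no upper bound on $\E_{\beta_c}|K\cap B_R|$ in terms of $R$. (The bound you need is the spatially-averaged two-point estimate from \cite{hutchcroft2022sharp}, a separate result not among the proposition's hypotheses.) Second, and more seriously, your bound on the second term $\P_{\beta_c}(|K|\geq n,\;|K\cap B_R|<cn)$ is left vague, and there is no clear route: the hypothesis of Part~(2) is purely about \emph{volumes} under the cut-off size-biased measure and carries no spatial information, so a spatial decomposition does not engage with it. Neither mass transport nor universal tightness supplies an obvious bridge here.

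The paper (the proposition is proven in the first paper of the series; the key step is quoted in the present paper as \eqref{eq:volume_tail_union_bound}) avoids spatial information entirely by working with the magnetization. A BK-type argument on the first long edge used to reach a ghost field of intensity $h$ yields
\[
\E_{\beta_c}[1-e^{-h|K|}]\;\leq\;\E_{\beta_c,r}[1-e^{-h|K|}]\;+\;Cr^{-\alpha}\,\E_{\beta_c,r}\bigl[|K|e^{-h|K|}\bigr]\,\E_{\beta_c}[1-e^{-h|K|}].
\]
The correction term factors as $Cr^{-\alpha}\E_{\beta_c,r}|K|\cdot\hat\E_{\beta_c,r}[e^{-h|K|}]$, and the Part~(2) hypothesis is exactly what makes $\hat\E_{\beta_c,r}[e^{-h|K|}]$ small once $h\,\delta f(r)$ is large. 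Choosing $h=1/n$ and $r$ a large constant multiple of $f^{-1}(n)$ (doubling of $f^{-1}$ absorbs the constants), this rearranges to $\E_{\beta_c}[1-e^{-|K|/n}]\preceq\E_{\beta_c,r}[1-e^{-|K|/n}]\leq n^{-1}\E_{\beta_c,r}|K|\asymp f^{-1}(n)^\alpha/n$, and the tail bound follows from $\P_{\beta_c}(|K|\geq n)\leq(1-e^{-1})^{-1}\E_{\beta_c}[1-e^{-|K|/n}]$.
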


We are now ready to prove \cref{thm:main_low_dim,thm:hyperscaling}.

\begin{proof}[Proof of \cref{thm:main_low_dim,thm:main_low_dim_wCL}]
The claim concerning the moments $\E_{\beta_c}|K \cap B_r|^p$ has already been proven in \cref{cor:low_dim_moments}. The claim concerning the tail of the volume is an immediate consequence of \cref{thm:max_cluster_size_LD}, \cref{prop:negligibility_of_mesoscopic}, and \cref{I-prop:up-to-constants_volume-tail} (applied with $f(r)=r^{(d+\alpha)/2}$).
\end{proof}

\begin{proof}[Proof of \cref{thm:hyperscaling}]
For item 1, we have by Cauchy-Schwarz that
\begin{align*}
  &\E_{\beta_c}\left[|K\cap B_r|^p \mathbbm{1}(|K\cap B_r|\leq \delta r^{(d+\alpha)/2})\right] 
  \\&\hspace{5.5cm}\leq \left(\E_{\beta_c}|K\cap B_r|^{2p-2}\right)^{1/2}\left(\E_{\beta_c}|K\cap B_r| \mathbbm{1}(|K\cap B_r|\leq \delta r^{(d+\alpha)/2})\right)^{1/2}
  \\
  &\hspace{5.5cm}\leq 
  \left(\E_{\beta_c}|K\cap B_r|^{2p-2}\right)^{1/2}\left(\E_{\beta_c}\min\{|K|, \delta r^{(d+\alpha)/2})\}\right)^{1/2},
\end{align*}
for every $r,\delta>0$, where in the second inequality we used that $x \mathbbm{1}(x \leq y) \leq \min\{x,y\}$ and that the latter function is increasing in $x$. 
Using \cref{thm:main_low_dim} we obtain that
\begin{equation*}
\E_{\beta_c}\min\{|K|, \delta r^{(d+\alpha)/2})\} = \sum_{n=1}^{\lfloor \delta r^{(d+\alpha)/2}\rfloor} \P_{\beta_c}(|K|\geq n) \\\asymp 
 \sum_{n=1}^{\lfloor \delta r^{(d+\alpha)/2}\rfloor} n^{-(d-\alpha)(d+\alpha)}
 \preceq \delta^{2\alpha/(d+\alpha)} r^\alpha.
\end{equation*}
Applying \cref{cor:low_dim_moments} to bound $\E_{\beta_c}|K\cap B_r|^{2p-2}$ we deduce that for each $\eps>0$ there exists $\delta>0$ such that
\[
  \E_{\beta_c}\left[|K\cap B_r|^p \mathbbm{1}(|K\cap B_r|\leq \delta r^{(d+\alpha)/2})\right] \leq \eps (r^{\alpha+(2p-2)(d+\alpha)/2})^{1/2}(r^\alpha)^{1/2} = \eps r^{\alpha+(p-1)(d+\alpha)/2},
\]
from which the claim follows by a second application of \cref{cor:low_dim_moments}.

We now turn to item 2; this proof is closely related to the proof of \cref{prop:up-to-constants_volume-tail}.
In light of \cref{thm:main_low_dim}, it suffices to prove that for each $\eps>0$ there exists $\lambda<\infty$ such that 
\begin{equation}
\label{eq:small_ball}
  \P_{\beta_c}\left(|K\cap B_{\lambda^{-1} n^{2/(d+\alpha)}}| \geq n/2 \right) \leq \eps n^{-(d-\alpha)/(d+\alpha)}
\end{equation}
and
\begin{equation}
\label{eq:large_ball}
  \P_{\beta_c}\left(|K\setminus B_{\lambda n^{2/(d+\alpha)}}| \geq n/2 \right) \leq \eps n^{-(d-\alpha)/(d+\alpha)}.
\end{equation}
The first inequality \eqref{eq:small_ball} follows immediately from \eqref{eq:Sak_upper_restate} (i.e., the main result of \cite{hutchcroft2022sharp}) and Markov's inequality, which yield that
\[
   \P_{\beta_c}\left(|K\cap B_{\lambda^{-1} n^{2/(d+\alpha)}}| \geq n/2 \right) \preceq n^{-1} \E_{\beta_c}|K\cap B_{\lambda^{-1} n^{2/(d+\alpha)}}| \preceq \lambda^{-\alpha} n^{-(d-\alpha)/(d+\alpha)}.
\]
For the second inequality \eqref{eq:large_ball}, it is established in the proof of \cref{I-prop:up-to-constants_volume-tail} (see \eqref{I-eq:volume_tail_union_bound}) that there exists a constant $C$ such that
\begin{equation}
\label{eq:volume_tail_union_bound}
\E_{\beta_c}[1-e^{-h|K|}] \leq 
 \E_{\beta_c,r}[1-e^{-h|K|}] + 
Cr^{-\alpha}  \E_{\beta_c,r}\left[|K|e^{-h|K|}\right]
 \E_{\beta_c}[1-e^{-h|K|}],
\end{equation}
for every $h>0$ and $r\geq 1$. It follows from 
 \cref{lem:first_moment_CL,prop:negligibility_of_mesoscopic} that for each $\eps>0$ there exists $\lambda<\infty$ such that if $0<h\leq 1$ and $r=\lambda h^{-2/(d+\alpha)}$ then $Cr^{-\alpha}  \E_{\beta_c,r}\left[|K|e^{-h|K|}\right] \leq \eps$ and hence that
\begin{equation}
\label{eq:volume_tail_union_bound}
  \E_{\beta_c}[1-e^{-h|K|}] \leq (1-\eps)^{-1} \E_{\beta_c,\lambda h^{-2/(d+\alpha)}}[1-e^{-h|K|}] 
\end{equation}
for every $0<h\leq 1$. Fix one such $\lambda<\infty$, let $0<h\leq 1$, and write $r=h^{-2/(d+\alpha)}$. Letting $K\supseteq \tilde K$ be the clusters of the origin in the standard monotone coupling of the two measures $\E_{\beta_c}$ and $\E_{\beta_c,\lambda r}$ and letting $\cG$ be an independent ghost field of intensity $h$, the inequality \eqref{eq:volume_tail_union_bound} is equivalent to the inequality
\begin{equation}
\label{eq:hyperscaling_volume_tail1}
  \P(K \cap \cG \neq \emptyset \text{ but } \tilde K \cap \cG = \emptyset) \leq \eps \P(K \cap \cG \neq \emptyset),
\end{equation}
where we write $\P$ for the joint law of $K$, $\tilde K$, and $\cG$. 
On the other hand, it follows from \eqref{wCL} that for each $\eps>0$ there exists a constant $C$ such that
\begin{equation}
\label{eq:hyperscaling_volume_tail2}
  \P(\tilde K \cap \cG \setminus B_{C\lambda r} \neq \emptyset) \leq h\E |\tilde K \setminus B_{C\lambda r}| \leq \eps h \E |\tilde K| \preceq \eps h^{(d-\alpha)/(d+\alpha)},
\end{equation}
where the first inequality is an instance of Markov's inequality. Putting \eqref{eq:hyperscaling_volume_tail1} and \eqref{eq:hyperscaling_volume_tail2} together yields by a union bound that
\begin{multline*}
  \P(K \cap \cG \neq \emptyset \text{ but } K \cap \cG \cap B_{C\lambda r} = \emptyset) \leq 
  \P(K \cap \cG \neq \emptyset \text{ but } \tilde K \cap \cG = \emptyset) + \P(\tilde K \cap \cG \setminus B_{C\lambda r} \neq \emptyset)
  \\
  \leq \eps \P(K \cap \cG \neq \emptyset) + \eps h^{(d-\alpha)/(d+\alpha)} \preceq \eps h^{(d-\alpha)/(d+\alpha)}
\end{multline*}
for every $0<h \leq 1$, which is easily seen to imply \eqref{eq:large_ball} since $\eps>0$ was arbitrary.
\end{proof}

\subsection{$k$-point function hyperscaling}
\label{subsec:_k_point_function_hyperscaling}

In this section we complete the proof of \cref{thm:k_point_S} by proving the following proposition. 

\begin{prop}
\label{prop:k-point_hyperscaling_lower}
If $d<3\alpha$ and \eqref{wCL} holds then the critical $k$-point function satisfies
\[
\tau_{\beta_c}(A) \succeq_{|A|} \operatorname{sweep}(A)^{-\frac{d-\alpha}{2}}
\]
for every finite set $A \subseteq \Z^d$ with $|A|\geq 2$.
\end{prop}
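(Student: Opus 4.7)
The plan is to induct on $|A|$, with the base case $|A|=2$ establishing the matching pointwise lower bound $\tau_{\beta_c}(x,y) \succeq \|x-y\|^{-d+\alpha}$ that, together with \cref{lem:CL_Sak_upper}, completes the proof of \cref{thm:CL_Sak}. Setting $r := \|x-y\|$ and fixing a ball $B$ of radius $\asymp r$ containing both $x$ and $y$, the plan is to apply the Paley--Zygmund inequality to $N := |K_x \cap K_y \cap B|$: since $\{N \geq 1\} \subseteq \{x \leftrightarrow y\}$, we have $\tau_{\beta_c}(x,y) \geq (\E N)^2/\E N^2$. The denominator
\[
  \E N^2 = \sum_{v,v'\in B} \tau_{\beta_c}(\{x,y,v,v'\})
\]
is controlled from above by the higher Gladkov inequality of \cref{thm:higher_Gladkov} combined with the two-point upper bound of \cref{lem:CL_Sak_upper}: using the geometric formulation of \cref{cor:geometric_Gladkov}, a direct sum computation analogous to those in the proof of \cref{lem:sCL_moments} yields $\E N^2 \preceq r^{2\alpha}$. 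The numerator $\E N = \E[|K_x \cap B|\,\mathbbm{1}(x \leftrightarrow y)]$ is lower-bounded using the hyperscaling picture: the negligibility of mesoscopic clusters (\cref{prop:negligibility_of_mesoscopic}) combined with \cref{cor:low_dim_moments} and \cref{thm:max_cluster_size_LD} forces the bulk of $\E|K_x \cap B| \asymp r^\alpha$ to be carried by clusters of macroscopic size $\asymp r^{(d+\alpha)/2}$, and a mass-transport computation shows that the expected number of such macroscopic clusters meeting $B$ is bounded. These ingredients together yield $\E N \succeq r^{(3\alpha-d)/2}$, and the ratio gives $(\E N)^2/\E N^2 \succeq r^{\alpha-d}$, as required.

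For the inductive step with $|A|=k\geq 3$, the plan is to fix the tripartition $A = A_1 \cup A_2 \cup A_3$ attaining the minimum in \cref{lem:sweep_recursion} and combine the inductive hypothesis on the pairs $A_i \cup A_j$ (each of size $\leq k-1$) via a conditioning argument. The naive FKG bound $\tau_{\beta_c}(A) \geq \tau_{\beta_c}(A_1 \cup A_2)\,\tau_{\beta_c}(A_2 \cup A_3)$ falls short of the target by a factor involving $\operatorname{sweep}(A_3 \cup A_1)^{(d-\alpha)/4}$; the strategy is to recover this factor by conditioning on $A_2$ lying in a macroscopic cluster at the scale prescribed by the recursion and using the hyperscaling picture to show that the conditional events $\{A_1 \subseteq K_{A_2}\}$ and $\{A_3 \subseteq K_{A_2}\}$ inherit the required boost. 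The geometric identity $\operatorname{sweep}(A) \asymp \sqrt{\prod_{i<j} \operatorname{sweep}(A_i \cup A_j)}$ from \cref{lem:sweep_recursion} then closes the induction.

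The main obstacle is executing the base case: the averaged lower bound $\E_{\beta_c}|K\cap B_r| \asymp r^\alpha$ from \cref{lem:first_moment_CL} cannot be promoted to a matching pointwise estimate by FKG or union-bound arguments alone. The crucial input is the concentration of cluster mass on a bounded number of macroscopic clusters, which forces two fixed points at distance $r$ to belong to the same big cluster with probability $\succeq r^{\alpha-d}$. This concentration is precisely the content of the hyperscaling postulate \cref{prop:negligibility_of_mesoscopic} combined with the moment estimates of \cref{cor:low_dim_moments}, and must be combined carefully with the Gladkov upper bound to execute the Paley--Zygmund argument and obtain the pointwise lower bound.
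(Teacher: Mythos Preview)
The Paley--Zygmund setup in your base case is circular. You need $\E N = \E[|K_x \cap B|\,\mathbbm{1}(x\leftrightarrow y)] \succeq r^{(3\alpha-d)/2}$, but since $|K_x \cap B| \preceq r^{(d+\alpha)/2}$ with overwhelming probability (by universal tightness and $M_r^* \preceq r^{(d+\alpha)/2}$), this lower bound on $\E N$ is essentially equivalent to the bound $\P_{\beta_c}(x\leftrightarrow y) \succeq r^{\alpha-d}$ that you are trying to prove. The ingredient you invoke---that the \emph{expected} number of macroscopic clusters meeting $B$ is $O(1)$---does not close this gap: it says nothing about the \emph{joint} event that two \emph{specific} points $x,y$ lie in the \emph{same} macroscopic cluster, and is perfectly compatible with there being (say) exactly two macroscopic clusters, one through $x$ and one through $y$, whenever both points are in macroscopic clusters. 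Your inductive step suffers from the same issue: ``recover the missing factor by conditioning on $A_2$ lying in a macroscopic cluster'' again requires the points of $A_1$ and $A_3$ to land in the cluster already containing $A_2$, which is precisely the unresolved pointwise question.

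The paper does not induct on $|A|$. It works in the monotone coupling of the cut-off measures $(\P_{\beta_c,r})_{r\geq 0}$ and uses the explicit \emph{direct edge addition} mechanism of long-range percolation to perform the gluing. One takes the arboresence realizing $\operatorname{sweep}(A)$ with associated scales $R_a = \operatorname{diam}(A_a)$; by Harris--FKG together with \cref{lem:large_clusters_are_large_on_meso_scales}, each $a\in A$ has a cluster of size $\succeq R_j^{(d+\alpha)/2}$ at every relevant scale $R_j \leq R_a$ with joint probability $\succeq \prod_{a\in A} R_a^{-(d-\alpha)/2} = \operatorname{sweep}(A)^{-(d-\alpha)/2}$. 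Then, processing the arboresence from the smallest scale upward, each child--parent pair is merged with conditional probability $\succeq 1$ by a single long edge added between scales $R'$ and $2R'$ (since both clusters have size $\succeq (R')^{(d+\alpha)/2}$ inside a ball of radius $\asymp R'$ and the new edge density is $\asymp (R')^{-d-\alpha}$), and these gluing steps are composed via the tower property of conditional expectation. This direct-edge step is the pointwise input that your ``bounded number of macroscopic clusters'' heuristic does not supply. A secondary issue: your upper bound on $\E N^2$ via \cref{cor:geometric_Gladkov} requires the pointwise two-point upper bound of \cref{lem:CL_Sak_upper}, which is established under \eqref{CL} rather than \eqref{wCL}; the paper's proof of the lower bound uses no pointwise two-point upper bound and works under \eqref{wCL} alone.
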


(Note that this proposition also completes the proof of \cref{thm:CL_Sak} in the low-dimensional case $d<3\alpha$. In general, the relevant lower bound on the two-point function will be established in \cref{lem:two_point_lower} via a unified argument that does not distinguish between low and high dimensions.)

\medskip

 To see that the relationship $\tau_{\beta_c}(A)\asymp_{|A|} \operatorname{sweep}(A)^{-(d-\alpha)/2}$ should be thought of as a hyperscaling relation,
  note that, in low effective dimensions, $\operatorname{sweep}(A)^{-(d-\alpha)/2}$ is comparable to the probability that each point $x\in A$ belongs to a ``typical large cluster'' on an appropriately chosen scale $R_x$, with the different scales $R_x$ chosen ``minimally'' such that (assuming there are ``$O(1)$ typical large clusters'' in each box) all the points in $A$ belong to the \emph{same} cluster with good probability on the event that each of them belongs to the typical large cluster at their own scale. 

 \medskip

To further emphasize the point that \cref{thm:k_point_S} encodes a hyperscaling relation, rather than something specific to long-range percolation, we also prove the following analogous theorem for bond percolation on the square lattice. (The proof works for bond percolation on any planar lattice satisfying appropriate Russo-Seymour-Welsh estimates, and can be extended to site percolation using an appropriate generalization of the Gladkov inequality.) It is presumably possible to extend this theorem to nearest-neighbour percolation in dimension $d=3,4,5$ under appropriate hyperscaling postulates (as was done for other hyperscaling relations in following \cite{MR1716769}); this may be related to the results of \cite{kiss2014large}. (See \cite{camia2024conformal} for more precise results for the scaling limit of site percolation on the triangular lattice.)

\begin{theorem}
\label{thm:planar_k_point}
Consider nearest-neighbour Bernoulli bond percolation on the square lattice $\Z^2$ with $p=p_c=1/2$. If there exists $\eta>0$ such that
$
  \P_{p_c}(x\leftrightarrow y) \asymp \|x-y\|_2^{-\eta}
  $
for every $x \neq y\in \Z^2$ then
\[
\tau_{p_c}(x_1,\ldots,x_k) \asymp_k S(x_1,\ldots,x_k)^{-\eta/2}
\]
for every distinct $k$-tuple of points $x_1,\ldots,x_k \in \Z^2$. 
\end{theorem}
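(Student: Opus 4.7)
This is immediate: by Corollary~\ref{cor:geometric_Gladkov} applied to the assumed bound $\tau_{p_c}(x,y)\asymp \|x-y\|^{-\eta}$, we get $\tau_{p_c}(A)\preceq_{|A|} \operatorname{sweep}(A)^{-\eta/2}$, and Lemma~\ref{lem:sweep_recursion} identifies $\operatorname{sweep}(A)$ with $S(A)$ up to $|A|$-dependent constants.

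\textbf{Lower bound: setup.} For the matching lower bound I would construct, for each $A=\{x_1,\dots,x_k\}\subseteq \Z^2$, an increasing event forcing all of $A$ to lie in a common open cluster and having probability $\succeq_k \operatorname{sweep}(A)^{-\eta/2}$, then conclude by FKG. The first input is Russo--Seymour--Welsh, which holds unconditionally at $p_c=1/2$ on $\Z^2$; combined with the hypothesis on the two-point function and Kesten's arm-separation machinery (as developed in \cite{MR879034}), this yields the one-arm estimate $\P_{p_c}(0\leftrightarrow \partial B_r)\asymp r^{-\eta/2}$ as well as the quasi-multiplicativity of arm events. The second input is the greedy enumeration from the proof of Lemma~\ref{lem:sweep_recursion}: choose $a_1,a_2\in A$ with $\|a_1-a_2\|=\operatorname{diam}(A)$ and iteratively pick $a_i\in A\setminus\{a_1,\dots,a_{i-1}\}$ maximizing $d_i:=d(a_i,\{a_1,\dots,a_{i-1}\})$. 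Then $d_i$ is non-increasing and $\operatorname{sweep}(A)\asymp_k \operatorname{diam}(A)\prod_{i=2}^k d_i$ by Lemma~\ref{lem:sweep_and_spread} and \eqref{eq:spread_simple_expression}.

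\textbf{Events and FKG.} Fix a small $\delta=\delta(k)>0$. For each $i\geq 2$ let $B_i:=B_{\delta d_i}(a_i)$, which by greedy choice contains no $a_j$ with $j<i$, and let $E_i$ be the increasing event that $a_i$ is connected to $\partial B_i$ by an open path in $B_i$; by Step~1, $\P(E_i)\succeq_k d_i^{-\eta/2}$. For $a_1$, let $E_1$ be the analogous arm event at scale $\delta\operatorname{diam}(A)$, with $\P(E_1)\succeq_k \operatorname{diam}(A)^{-\eta/2}$. Processing the points in order of decreasing scale $d_i$, I would then add a bounded (in $k$) number of increasing RSW rectangle-crossing events $G_{i,j}$ that glue each newly arrived arm from $E_i$ into the growing cluster carried by previously constructed arms and corridors; each $G_{i,j}$ has bounded aspect ratio depending only on $k$ and hence probability $\succeq_k 1$ by RSW. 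Applying the FKG inequality then gives
\[
\tau_{p_c}(A)\;\geq\;\P\Bigl(\bigcap_i E_i\cap \bigcap_{i,j} G_{i,j}\Bigr)\;\succeq_k\; \operatorname{diam}(A)^{-\eta/2}\prod_{i=2}^k d_i^{-\eta/2}\;\asymp_k\; S(A)^{-\eta/2}.
\]

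\textbf{Main obstacle.} The delicate point is the gluing in the previous paragraph: the events $E_i$ only guarantee that \emph{some} open path from $a_i$ reaches $\partial B_i$, but not where it lands, and one must ensure that this arm joins the corridor connecting to the neighbouring scale. The standard remedy is Kesten's arm-separation lemma, which says that conditional on $E_i$ the arm's endpoint lies in any fixed arc of $\partial B_i$ of positive angular measure with conditional probability $\succeq 1$. Executing this uniformly across all possible geometric configurations of $A$, in particular when several points share essentially the same scale and the balls $B_i$ nearly touch, is the real technical work; it is a routine but involved exercise in 2D RSW technology and is where the bulk of the proof would sit.
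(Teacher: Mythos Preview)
Your upper bound is exactly the paper's. For the lower bound, your strategy is a valid one but it is considerably more involved than what the paper actually does, and the extra complication (arm-separation) is avoidable.

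The paper works not with the greedy enumeration but with an arboresence attaining the minimum in the definition of $\operatorname{sweep}(A)$, and sets $R_x=\operatorname{diam}(A_x)$ for each $x\in A$. It then considers two increasing events: $\mathscr{A}$, that each $x\in A$ has an arm to $\partial(x+[-8R_x,8R_x]^2)$; and $\mathscr{B}$, that for each $x\in A$ there is an open \emph{circuit} in the annulus $x+([-4R_x,4R_x]^2\setminus[-2R_x,2R_x]^2)$. By RSW and Harris, $\P(\mathscr{B})\geq c^k$, so $\P(\mathscr{A}\cap\mathscr{B})\succeq_k \prod_x R_x^{-\eta/2}=\operatorname{sweep}(A)^{-\eta/2}$. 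The crucial point is that $\mathscr{A}\cap\mathscr{B}$ forces all of $A$ into a single cluster by pure topology: since the parent $p(x)$ lies in $A_x$, it is within distance $R_x$ of $x$, hence inside the inner boundary of $x$'s annulus; and since $R_{p(x)}\geq R_x$, the arm from $p(x)$ reaches distance $8R_{p(x)}\geq 8R_x$ and must therefore cross $x$'s circuit. The arm from $x$ also crosses $x$'s circuit, so $x$ and $p(x)$ are connected, and one inducts up the arboresence.

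The difference with your approach is that circuits catch \emph{any} arm crossing the annulus regardless of where it lands, so arm-separation is simply not needed: the Jordan curve theorem does the gluing for free. Your rectangle-corridor scheme can be pushed through, but the ``main obstacle'' you identify is an artefact of the construction rather than an intrinsic difficulty, and the paper's choice of circuits around each vertex at its arboresence scale dissolves it entirely.
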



We will in fact prove \cref{thm:planar_k_point} \emph{before} proving \cref{prop:k-point_hyperscaling_lower} since the proof is easier.

\begin{proof}[Proof of \cref{thm:planar_k_point}]
The upper bound follows immediately from \cref{cor:geometric_Gladkov}; it suffices to prove the matching lower bound.
Let $\{x_1,\ldots,x_k\}=A$; all constants in this proof may depend on~$k$. Pick an arboresence achieving the minimum in the definition of $\operatorname{sweep}(A)$ (see \cref{def:sweep_and_spread}) with associated diameters $R_x=\operatorname{diam}(A_x)$. As proven in \cite{kesten1987scaling}, the two-point function and one-arm probabilities are related via
\[
  \P_{1/2}(x\leftrightarrow y) \asymp \P_{1/2}(0 \leftrightarrow \partial [-\|x-y\|_2,\|x-y\|_2]^2)^2
\]
for every $x,y\in \Z^2$ so that, by our assumption of power-law two-point function scaling $\P_{1/2}(x\leftrightarrow y) \asymp \|x-y\|^{-\eta}$, the one-arm probabilities satisfy
\[\P_{1/2}(0 \leftrightarrow \partial [-r,r]^2) \asymp r^{-\eta/2}\]
for all $r\geq 1$. Let $\mathscr{A}$ denote the event that each point $x\in A$ is connected to the boundary of the box $x+[-8R_x,8R_x]^2$. Harris's inequality implies that
\[
  \P_{1/2}(\mathscr{A}) \geq \prod_{x\in A} \P_{1/2}(0 \leftrightarrow \partial [-8R_x,8R_x]^2) \asymp_k \operatorname{sweep}(A)^{-\eta/2}.
\]
Now, let $\mathscr{B}$ denote the event that for each $x\in A$, there is an open circuit in the annulus $x+([-4R_x,4R_x]\setminus [-2R_x,2R_x])$. By the Russo-Seymour-Welsh theorem (see \cite[Section 11.7]{grimmett2010percolation}) and Harris's inequality, the probability of $\mathscr{B}$ is at least $c^k$ for some positive constant $c$. As such, a further application of Harris's inequality yields that
\[
    \P_{1/2}(\mathscr{A} \cap \mathscr{B}) \succeq_k \operatorname{sweep}(A)^{-\eta/2}.
\]
The claimed lower bound now follows from \cref{lem:sweep_recursion} and the topological observation that the points of $A$ must all belong to the same cluster on the event $\mathscr{A}\cap \mathscr{B}$.
\end{proof}

We now turn to \cref{prop:k-point_hyperscaling_lower}.
We begin by proving the following lemma, which states roughly that if the origin is in a typical large cluster on scale $R$ then it is likely to be in a typical large cluster on each smaller scale $r$.

\begin{lemma}
\label{lem:large_clusters_are_large_on_meso_scales}
If $d<3\alpha$ and \eqref{wCL} holds then for each $\eps>0$ there exists $\delta>0$ and $\lambda<\infty$ such that if $R\geq r$ then
\begin{equation*}
  \P\Bigl(|K^{r} \cap B_{ \lambda r}| \leq \delta r^{(d+\alpha)/2} \;\Big|\; |K^{R} \cap B_{R}| \geq \eps R^{(d+\alpha)/2}\Bigr) \leq \eps ,
\end{equation*}
where $K^r$ and $K^R$ denote the two clusters of the origin in the standard monotone coupling of the measures $\P_{\beta_c,r}$ and $\P_{\beta_c,R}$.
\end{lemma}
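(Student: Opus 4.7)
The plan is to apply Markov's inequality to reduce the claim to the moment estimate $\E[|K^R_0 \cap B_R| \mathbbm{1}_A] \leq \eta R^\alpha$, where $A = \{|K^r_0 \cap B_{\lambda r}| \leq \delta f(r)\}$ with $f(r) = r^{(d+\alpha)/2}$, and $\eta$ can be made arbitrarily small by choosing $\delta$ small and $\lambda$ large. This will be combined with the lower bound $\P(|K^R_0 \cap B_R| \geq \eps f(R)) \gtrsim R^\alpha/f(R)$, which I would obtain by applying \cref{prop:negligibility_of_mesoscopic} at scale $R$ to get $\E[|K^R_0 \cap B_R|; |K^R_0 \cap B_R| \geq \eps f(R)] \gtrsim R^\alpha$ for $\eps$ sufficiently small, combined with Cauchy--Schwarz against the second moment $\E|K^R_0 \cap B_R|^2 \asymp R^{\alpha+(d+\alpha)/2}$ from \cref{cor:low_dim_moments} (a Paley--Zygmund-type argument). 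Together these give $\P(A \mid B) \lesssim \eta/\eps$, which can be made $\leq \eps$ by choosing $\delta,\lambda$ so that $\eta$ is a suitably small constant multiple of $\eps^{2}$.

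For the moment estimate, I would split the contribution to $\E[|K^R_0 \cap B_R|\mathbbm{1}_A]$ from each $y \in B_R$ according to whether $y \in K^r_0$ (Case A) or $y \in K^R_0 \setminus K^r_0$ (Case B). Case A is straightforward:
\[
\E[|K^r_0 \cap B_R| \mathbbm{1}_A] \leq \E[|K^r_0| \wedge \delta f(r)] + \E|K^r_0 \setminus B_{\lambda r}| \leq \eta r^\alpha \leq \eta R^\alpha,
\]
where the first term is bounded via \cref{prop:negligibility_of_mesoscopic} applied at scale $r$ and the second via \eqref{wCL}. For Case B, I would use the ``first extra edge'' BK-type decomposition: if $y \in K^R_0 \setminus K^r_0$, then there exist $u,v$ with $u \in K^r_0$, $\{u,v\}$ being $R$-open but not $r$-open (i.e.\ ``extra''), and $v \leftrightarrow y$ in $K^R$, disjointly. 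Applying Reimer's inequality (needed to accommodate the decreasing event $A$) yields
\[
\P(y \in K^R_0 \setminus K^r_0, A) \leq \sum_{\{u,v\}:|u-v|\leq R} \P(u \in K^r_0, A) \cdot p^{\mathrm{extra}}_{uv} \cdot \tau_R(v,y),
\]
where $p^{\mathrm{extra}}_{uv}$ is the probability that $\{u,v\}$ is $R$-open but not $r$-open. Summing over $y \in B_R$ and $v$ and using $\sum_{y \in B_R} \tau_R(v,y) \leq \E|K^R_v| \asymp R^\alpha$ (by translation invariance and \cref{lem:first_moment_CL}) together with the elementary bound $\sum_v p^{\mathrm{extra}}_{uv} \lesssim r^{-\alpha}$ (which comes from $|J'(s)| \sim s^{-d-\alpha-1}$ and a direct computation), we obtain
\[
\sum_{y \in B_R} \P(y \in K^R_0 \setminus K^r_0, A) \lesssim R^\alpha\, r^{-\alpha}\, \E[|K^r_0|\mathbbm{1}_A] \lesssim \eta R^\alpha,
\]
using the same Case A estimate to bound $\E[|K^r_0| \mathbbm{1}_A]$ by $\eta r^\alpha$.

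The main obstacle is Case B: without the indicator $\mathbbm{1}_A$, the analogous computation gives only the trivial bound $\sum_y \P(y \in K^R_0 \setminus K^r_0) \lesssim R^\alpha$, which is of the correct order but lacks the crucial small factor $\eta$. Extracting this factor requires the indicator $\mathbbm{1}_A$ to propagate through the BK-style decomposition, but $A$ is a decreasing event, so the standard BK inequality for increasing events does not directly apply. Reimer's inequality is the natural tool, but one must verify carefully that the ``first extra edge'' decomposition of $\{y \in K^R_0 \setminus K^r_0\} \cap A$ exhibits the disjoint-occurrence structure required by Reimer's theorem---the witness set for $\{u \in K^r_0\} \cap A$ (some collection of $r$-edges encoding both the $r$-connection to $u$ and the constraint on $K^r_0 \cap B_{\lambda r}$) must be taken disjoint from the single-edge witness for $\{u,v\}$ being extra-open and from the $R$-path witness for $v \leftrightarrow y$. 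Such a disjoint decomposition can be arranged by a standard truncation argument in which one works in a large but finite region and passes to the limit.
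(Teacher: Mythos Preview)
Your approach is essentially the same as the paper's: both reduce via Markov to bounding $\E[|K^R|\mathbbm{1}_A]$, split into $K^r$ and $K^R\setminus K^r$, handle the first piece via negligibility of mesoscopic clusters plus \eqref{wCL}, handle the second via a first-edge/BK-type decomposition, and combine with a lower bound on $\P(B)$ of order $R^{-(d-\alpha)/2}$.

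Your concern about needing Reimer in Case B can be sidestepped. Instead of the \emph{first extra edge}, take the \emph{last exit from $K^r_0$}: if $y\in K^R_0\setminus K^r_0$, the last boundary edge $\{u,v\}$ (with $u\in K^r_0$, $v\notin K^r_0$) on a simple $\omega_R$-path is necessarily extra, and the remaining path from $v$ to $y$ stays entirely in $(K^r_0)^c$. Now simply condition on $K^r_0=C$: the event $A$ is determined, the extra status of $\{u,v\}$ depends only on $\omega'$, and the cluster of $v$ inside $C^c$ depends only on $\omega_R|_{C^c}$, which has its unconditional law given $K^r_0=C$. This gives $\E[|K^R_0|\mid K^r_0]\le |K^r_0|(1+Cr^{-\alpha}\E|K^R|)$ directly, and hence $\E[|K^R_0|\mathbbm{1}_A]\preceq (1+r^{-\alpha}\E|K^R|)\,\E[|K^r_0|\mathbbm{1}_A]$ without any appeal to Reimer. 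This is exactly the ``standard BK argument'' the paper invokes.
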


\begin{proof}[Proof of \cref{lem:large_clusters_are_large_on_meso_scales}]
It follows from \cref{thm:hyperscaling} that for each $\eps'>0$ there exists a constant $\delta=\delta(\eps')$ such that
\begin{align*}
  &\E\left[|K^{r}| \mathbbm{1}(|K^{r} \cap B_{ \delta^{-1}r}| 
  \leq \delta r^{(d+\alpha)/2})\right] \\
  &\hspace{3cm}\leq  \E |K^{r} \setminus B_{ \delta^{-1}r}| + \E\left[|K^{r} \cap B_{\delta^{-1}r}| \mathbbm{1}(|K^{r} \cap B_{ \delta^{-1}r}|\leq \delta r^{(d+\alpha)/2})\right]
  \leq \eps' r^\alpha.
\end{align*}
Considering the first edge that does not belong to $K^r$ used by the path connecting the origin to a vertex of $K^R$, we have by the standard BK argument that
\begin{multline*}
  \E\left[|K^{R}| \mathbbm{1}(|K^{r} \cap B_{ \delta^{-1}r}|\leq \delta r^{(d+\alpha)/2})\right] 
  \\\preceq (1+r^{-\alpha} \E|K^R|)\E\left[|K^{r}| \mathbbm{1}(|K^{r} \cap B_{ \delta^{-1}r}|\leq \delta r^{(d+\alpha)/2})\right]
   \preceq \eps' R^\alpha,
\end{multline*}
where $r^{-\alpha}$ is an upper bound on the order of the number of edges adjacent to the origin that flip from closed to open when passing between the measures $\P_{\beta_c,r}$ and $\P_{\beta_c,R}$, and the $1$ inside the term $(1+r^{-\alpha} \E|K^R|)$ accounts for the number of vertices that already belong to $K^r$.
Letting $\eps>0$, it follows by Markov's inequality and \eqref{eq:low_dim_locally_large_cluster} that
\begin{multline*}
  \P\left(|K^{r} \cap B_{ \delta^{-1}r}| \leq \delta r^{(d+\alpha)/2} \mid |K^{R} \cap B_{R}| \geq \eps R^{(d+\alpha)/2}\right)  \\\leq \frac{\E\left[|K^{R}| \mathbbm{1}(|K^{r} \cap B_{ \delta^{-1}r}|\leq \delta r^{(d+\alpha)/2})\right] }{\eps R^{(d+\alpha)/2}\P(|K^{R} \cap B_{R}| \geq \eps R^{(d+\alpha)/2})} 
  \preceq \frac{\eps' R^\alpha}{\eps^{1-(d-\alpha)/(d+\alpha)}R^{(d+\alpha)/2}R^{-(d-\alpha)/2}} = \eps' \eps^{-2\alpha/(d+\alpha)},
\end{multline*}
and the claim follows by taking $\eps'= c\eps^{1+2\alpha/(d+\alpha)}$ for a sufficiently small constant $c>0$.
\end{proof}

\begin{proof}[Proof of \cref{prop:k-point_hyperscaling_lower}]
Fix a set $A$ and an arborsence attaining the minimum in the definition of the sweep (\cref{def:sweep_and_spread}) and let $R_a =\operatorname{diam}(A_a)$ for each $a\in A$.
All constants in this proof may depend on $|A|$, and we omit this dependence from our notation. Enumerate $A=\{a_1,\ldots,a_n\}$ so that the sequence $(R_i)_{i=1}^n:=(R_{a_i})_{i=1}^n$ is increasing, $a_n$ is the root of the arboresence, and the function $\sigma:\{1,\ldots,n-1\}\to \{2,\ldots,n\}$ sending each vertex $a_i$ to its parent $a_{\sigma(i)}$ in the arboresence satisfies $\sigma(i) > i$ for each $1\leq i\leq n-1$. (Such an enumeration can be constructed by listing the points of $a$ in increasing order of their associated radii $r_a$, which are always increasing as we move towards the root of the arboresence, and breaking ties using the arboresence when necessary.) Let $\eps = c_2|A|^{-2}>0$ and let $\delta>0$ and $1\leq \lambda<\infty$ be as in \cref{lem:large_clusters_are_large_on_meso_scales}. Define the sequence $R_i'=(4\lambda)^{i} R_i$, so that $R_i\asymp_{|A|} R_i'$ for every $1\leq i \leq n$ and $R_{i+1}' \geq 4\lambda R_i'$ for each $1\leq i \leq n-1$.

\medskip

Consider the standard monotone coupling of the measures $\E_{\beta_c,r}$ with $r\geq 0$, so that we have an increasing family of percolation configurations $(\omega_r)_{r\geq 0}$ such that $\omega_r$ has law $\P_{\beta_c,r}$ for each $r\geq 0$ and $\omega_\infty=\bigcup_{r\geq 0}\omega_r$ has law $\P_{\beta_c}$. We write $\P$ and $\E$ for probabilities and expectations taken with respect to these coupled configurations. For each $1\leq i,j\leq n$ let $K_{i,j}$ and $K_{i,j}'$ be the clusters of $a_i$ in the configurations associated to the scales $R_j'$ and $2R_j'$ respectively.
We first note that, by \cref{thm:max_cluster_size_LD} and \cref{lem:large_clusters_are_large_on_meso_scales}, there exists a constant $c_1>0$ such that
\begin{equation}
\label{eq:low_dim_locally_large_cluster}
\P_{\beta}\bigl(|K_{i,j} \cap B_{R_j'}(a_i)| \geq c_1 R_j^{(d+\alpha)/2} \text{ for every $1\leq j\leq i$} \bigr) \succeq R_i^{-(d-\alpha)/2}
\end{equation}
for every $1\leq i \leq n$.
For each $1\leq i \leq n$ let $\mathscr{A}_i$ and $\mathscr{G}_i$ denote the events that the following conditions hold:
\begin{enumerate}
  \item $\mathscr{A}_i$: For each  $j \geq i$, the intersection $|K_{j,i} \cap B_{R_i}(a_j)|$ has size at least $\delta R_i^{(d+\alpha)/2}$.
    \item $\mathscr{G}_i$: If $a_i$ is the child of $a_j$ for some $j > i$, then $a_i$ belongs to $K_{j,i}'$.
\end{enumerate}
Let $\mathcal{F}_i$ be the sigma-algebra generated by the configurations $(\omega_r:r\leq R_i')$, so that $\mathscr{A}_i$ is measurable with respect to $\mathcal{F}_j$ for every $j\geq i$ and $\mathscr{G}_i$ is measurable with respect to $\mathcal{F}_j$ for every $j>i$. Considering the possibility that the clusters $K_{j,i}$ and $K_{i,i}$ merge via the direct addition of an edge when we pass to the configuration $K'_{j,i}$ yields that there exists a positive constant $c_2$ such that
\[
  \P(\mathscr{G}_i \mid \mathcal{F}_i) \geq c_2 \mathbbm{1}(\mathscr{A}_i)
\]
for every $1\leq i \leq n-1$. Applying the Harris-FKG inequality to the conditional measure $\cF_i$ (which is a product measure), it follows that
\[
\P\left(\left(\cap_{j=i+1}^n \mathscr{A}_{j}\right) \cap \mathscr{G}_i \mid \mathcal{F}_i\right)
   \geq c_2   \P\left(\cap_{j=i+1}^n \mathscr{A}_{j} \mid \mathcal{F}_i\right) \mathbbm{1}(\mathscr{A}_i)
\]
for every $1\leq i \leq n-1$ and hence by the tower property of conditional expectation that
\begin{align*}
  \P\left(\left(\cap_{j=1}^{n}\mathscr{A}_{j}\right) \cap \left(\cap_{j=1}^i \mathscr{G}_j\right)\right) 
  &= \E\left[\P\left(\left(\cap_{j=1}^{i+1}\mathscr{A}_{j}\right) \cap \left(\cap_{j=1}^n \mathscr{G}_j\right) \mid \mathcal{F}_i\right) \right]
\\&=
   \E\left[\mathbbm{1}\left(\left(\cap_{j=1}^{i}\mathscr{A}_{j}\right) \cap \left(\cap_{j=1}^{i-1} \mathscr{G}_j\right)\right)\P\left(\left(\cap_{j=i+1}^n \mathscr{A}_{j}\right) \cap \mathscr{G}_i \mid \mathcal{F}_i\right) \right]
  \\
  & \geq 
   c_2\E\left[\mathbbm{1}\left(\left(\cap_{j=1}^{i}\mathscr{A}_{j}\right) \cap \left(\cap_{j=1}^{i-1} \mathscr{G}_j\right)\right)\P\left(\cap_{j=i+1}^n \mathscr{A}_{j} \mid \mathcal{F}_i\right) \right] 
   \\&= c_2  \P\left(\left(\cap_{j=1}^{n}\mathscr{A}_{j}\right) \cap \left(\cap_{j=1}^{i-1} \mathscr{G}_j\right)\right)
\end{align*}
for every $1\leq i \leq n-1$. It follows inductively that
\[
  \P\left(\left(\cap_{j=1}^{n}\mathscr{A}_{j}\right) \cap \left(\cap_{j=1}^{n-1} \mathscr{G}_j\right)\right) \geq c_2^n \P\left(\cap_{j=1}^{n}\mathscr{A}_{j}\right) \succeq_n \prod_{i=1}^n R_i^{-(d-\alpha)/2},
\]
where the final inequality follows from \eqref{eq:low_dim_locally_large_cluster} and the Harris-FKG inequality (applied to the intersection of the events in \eqref{eq:low_dim_locally_large_cluster} over the different vertices of $A$). Using \cref{lem:sweep_recursion}, the claim follows since the points $a_1,\ldots,a_n$ all belong to the same cluster on the event $\cap_{j=1}^{n-1}\mathscr{G}_j$.
\qedhere
\end{proof}

\begin{proof}[Proof of \cref{thm:k_point_S}]
This follows immediately from \cref{cor:geometric_Gladkov} (for the upper bound) and \cref{prop:k-point_hyperscaling_lower} (for the lower bound).
\end{proof}

\begin{remark}
The same proof also establishes similar up-to-constants estimates on the critical $k$-point function for long-range percolation on the \emph{hierarchical lattice} when $d<3\alpha$; the inputs needed for the proof (i.e., the upper bound on the two-point function and the analogues of \cref{thm:max_cluster_size_LD} and \cref{thm:hyperscaling}) are established in \cite{hutchcrofthierarchical,hutchcroft2022critical}.
\end{remark}

\begin{remark}
\label{remark:HD_spread}
The correct analogue of \cref{thm:k_point_S} in the effectively \emph{high-dimensional} regime should be that
\begin{equation}
\label{eq:HD_spread}
  \tau_{\beta_c}(A) \asymp_{|A|} \operatorname{spread}(A)^{-d+2(\alpha \wedge 2)} \operatorname{diam}(A)^{-(\alpha \wedge 2)}
\end{equation}
for $\alpha\neq 2$ (and with logarithmic corrections when $\alpha=2$). The corresponding statement for the high-dimensional \emph{uniform spanning tree} on $\Z^d$ is proven in \cite{BeKePeSc04,hutchcroft2017component}, and the arguments presented there show that this estimate is equivalent to the two-point asymptotics $\tau_{\beta_c}(x,y) \asymp \|x-y\|^{-d+(\alpha \wedge 2)}$ whenever the tree-graph inequalities give the correct order estimates on the $k$-point function. Since the spread and the diameter now appear with \emph{different} powers, one loses the connection to the sweep and hence to the M\"obius-covariant quantity $S$ when working in high effective dimension. (Both powers in \eqref{eq:HD_spread} become equal to $(d+\alpha)/4=\alpha$ when $d=3\alpha<6$ as one would expect for continuity of exponents.) 
In the critical-dimensional case $d=3\alpha<6$ the $k$-point function should be given up-to-constants by
\[
  \tau_{\beta_c}(A) \asymp_{|A|}  \log(\operatorname{diam}(A)) \prod_{x\in A} \frac{1}{R_x^\alpha (\log R_x)^{1/2}}
\]
where $(R_x)_{x\in A}$ are the diameters of the sets $(A_x)_{x\in A}$ associated to the arboresence achieving the minimum in the definition of $\operatorname{sweep}(A)$; this is proven for the two-point and three-point functions in \cref{III-thm:pointwise_three_point}.
\end{remark}


\section{Slightly subcritical scaling relations}
\label{sec:subcritical}

In this section we prove our results concerning scaling relations in the slightly subcritical regime, \cref{thm:Fisher_relation,thm:subcritical_volume}. We also complete the proof of \cref{thm:CL_Sak} by establishing the lower bound on the critical two-point function under \eqref{CL}. All of the results proven here apply to the entire effectively long-range regime (as defined by \eqref{CL}) and are not specific to low effective dimension. As mentioned in the introduction, the proofs in the critical-dimensional case $d=3\alpha<6$ will sometimes invoke results not proven until the third paper of the series.

\subsection{The slightly subcritical two-point function}

In this section we prove \cref{thm:Fisher_relation} and complete the proof of \cref{thm:CL_Sak}. We begin with the following simple estimate on the susceptibility. Recall that we will sometimes use the notation $\chi(\beta)=\E_\beta|K|$ for the susceptibility.

\begin{lemma}
\label{lem:subcritical_susceptibility}
  If \eqref{CL} holds then $\E_{\beta,r}|K| \asymp \min\{r^\alpha,\chi(\beta)\}$ for every $\beta\leq \beta_c$ and $r\geq 1$.
\end{lemma}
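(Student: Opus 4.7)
The plan is to establish the upper and lower bounds separately, with only the lower bound requiring any real work. For the upper bound $\E_{\beta,r}|K| \preceq \min\{r^\alpha, \chi(\beta)\}$, monotonicity in $r$ gives $\E_{\beta,r}|K| \leq \E_\beta|K| = \chi(\beta)$, while monotonicity in $\beta$ combined with \cref{lem:first_moment_CLb} (which applies since \eqref{CL} implies \eqref{CLp} for every $p>0$) yields $\E_{\beta,r}|K| \leq \E_{\beta_c,r}|K| \preceq r^\alpha$.

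For the lower bound, I would begin by noting that the $m=\infty$ case of \cref{lem:truncated_expectation_differential_inequality} (which remains valid without truncation, since the case analysis $a[(a+b)_m-a_m]\leq a_m b_m$ trivializes to the identity $ab=ab$) yields the differential inequality
\[
\frac{d}{dr}\E_{\beta,r}|K| \leq \beta |J'(r)||B_r|\bigl(\E_{\beta,r}|K|\bigr)^2.
\]
Rewriting this as $\frac{d}{dr}\bigl(-1/\E_{\beta,r}|K|\bigr) \leq \beta|J'(r)||B_r|$ and integrating from $r$ to $\infty$, using the monotone-convergence identity $\lim_{s\to\infty}\E_{\beta,s}|K|=\chi(\beta)$ together with the convention $1/\infty = 0$ to handle the case $\chi(\beta_c)=\infty$, yields
\[
\frac{1}{\E_{\beta,r}|K|} - \frac{1}{\chi(\beta)} \leq \beta \int_r^\infty |J'(s)||B_s|\,\mathrm{d}s \leq C \beta_c\, r^{-\alpha},
\]
where the final inequality comes from the normalization asymptotics $|J'(s)||B_s|\sim s^{-\alpha-1}$ from \eqref{eq:normalization_conventions} together with $\beta\leq\beta_c$.

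A simple case analysis now completes the proof. If $\chi(\beta)\leq r^\alpha$ then the right-hand side above is bounded by $(1+C\beta_c)/\chi(\beta)$, giving $\E_{\beta,r}|K| \succeq \chi(\beta)$; if instead $\chi(\beta)\geq r^\alpha$, the right-hand side is bounded by $(1+C\beta_c)r^{-\alpha}$, giving $\E_{\beta,r}|K| \succeq r^\alpha$. In either case $\E_{\beta,r}|K| \succeq \min\{r^\alpha,\chi(\beta)\}$ as required. I do not anticipate any substantive obstacle here: the argument is entirely elementary once one recognizes that the truncated differential inequality of \cref{lem:truncated_expectation_differential_inequality} applies equally well without truncation and that the resulting ODE can be integrated across the whole range $[r,\infty)$ regardless of whether $\chi(\beta)$ is finite or infinite.
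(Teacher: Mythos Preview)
Your proof is correct and follows essentially the same approach as the paper: the upper bound is handled by monotonicity and \cref{lem:first_moment_CLb}, and the lower bound is obtained by integrating the differential inequality $\frac{\partial}{\partial r}\E_{\beta,r}|K| \leq \beta|J'(r)||B_r|(\E_{\beta,r}|K|)^2$ from $r$ to $\infty$ to bound $1/\E_{\beta,r}|K| - 1/\chi(\beta) \preceq r^{-\alpha}$. The paper simply writes this differential inequality down directly rather than citing it as the $m=\infty$ case of \cref{lem:truncated_expectation_differential_inequality}, and leaves your final case analysis implicit, but the argument is the same.
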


\begin{proof}[Proof of \cref{lem:subcritical_susceptibility}]
The upper bound is a trivial consequence of \cref{lem:first_moment_CLb}. The lower bound follows from the differential inequality
\[
  \frac{\partial}{\partial r} \E_{\beta,r}|K| \leq \beta |J'(r)||B_r| (\E_{\beta,r}|K|)^2 \preceq r^{-\alpha-1} (\E_{\beta,r}|K|)^2,
\]
which we can integrate to obtain that
\[
  \frac{1}{\E_{\beta,r}|K|} -\frac{1}{\E_{\beta}|K|} =  - \int_r^\infty \frac{\partial}{\partial s} \frac{1}{\E_{\beta,s}|K|} \dif s 
  \preceq \int_r^\infty r^{-\alpha-1} \dif s  \preceq r^{-\alpha}
\]
for every $0<\beta\leq \beta_c$ and $r\geq 1$.
\end{proof}

We next prove that the defining feature of \eqref{CL} (i.e., that the $L^p$ radius of gyration associated to the measure $\P_{\beta_c,r}$ is $O(r)$ for every $p$) extends uniformly to the subcritical regime.

\begin{prop}
\label{prop:subcritical_gyration}
 If \eqref{CL} holds then
$\E_{\beta,r} \sum_{x\in K}\|x\|^{p} \preceq_p r^{p} \E_{\beta,r}|K|$ 
for every integer $p\geq 1$, $0<\beta\leq \beta_c$, and $r\geq 1$.
\end{prop}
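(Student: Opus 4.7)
The plan is to derive a differential inequality in $r$ for the quantity $g_p(\beta,r) := \E_{\beta,r}\sum_{x\in K}\|x\|^p$ via Russo's formula and the BK-type covariance inequality \eqref{eq:BK_disjoint_clusters_covariance}, and then integrate it using \eqref{CL} at $\beta=\beta_c$ as initial data together with \cref{lem:subcritical_susceptibility}. Specifically, applying Russo's formula (with $|J'(r)|\asymp r^{-d-\alpha-1}$), decorrelating the cluster of the origin from the cluster of the far endpoint of a newly open long edge via \eqref{eq:BK_disjoint_clusters_covariance} with $F(K_0)=\mathbbm{1}(a\in K_0)$ and $G(K_b)=\sum_{x\in K_b}\|x\|^p$, re-centering by translation invariance, and using $\|b\|^p\leq 2^{p-1}(\|a\|^p+\|b-a\|^p)$ (legitimate since the new edge has $\|a-b\|\leq r$), I expect to obtain the differential inequality
\[
\frac{d}{dr}g_p(\beta,r) \preceq_p r^{-\alpha-1}\chi_r(\beta)\, g_p(\beta,r) + r^{p-\alpha-1}\chi_r(\beta)^2,
\]
where $\chi_r(\beta) := \E_{\beta,r}|K|\asymp\min\{r^\alpha,\chi(\beta)\}$ by \cref{lem:subcritical_susceptibility}.

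Writing $\xi^*(\beta) := \chi(\beta)^{1/\alpha}$, I will then split into two regimes. When $r\leq\xi^*(\beta)$ the model is ``critical-like'' in the sense that $\chi_r(\beta)\asymp r^\alpha$, so stochastic domination by $\P_{\beta_c,r}$ combined with \eqref{CL} already gives
\[
g_p(\beta,r)\leq g_p(\beta_c,r)\preceq_p r^{p+\alpha}\asymp r^p\chi_r(\beta),
\]
and the proposition holds in this regime without further use of the differential inequality.

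For $r\geq\xi^*(\beta)$ we have $\chi_r(\beta)\asymp\chi(\beta)$, and I will apply Gronwall's lemma with initial condition $g_p(\beta,\xi^*(\beta))\preceq_p \xi^*(\beta)^{p+\alpha}$ inherited from the first regime. The observation that makes Gronwall work is that
\[
\int_{\xi^*(\beta)}^{\infty} s^{-\alpha-1}\chi(\beta)\,ds = \frac{\chi(\beta)\,\xi^*(\beta)^{-\alpha}}{\alpha} = \frac{1}{\alpha}
\]
is a universal constant, so the exponential Gronwall factor is uniformly bounded. The non-homogeneous contribution $\chi(\beta)\int_{\xi^*(\beta)}^r s^{p-\alpha-1}\,ds$ will then be shown to be $\preceq_p r^p$ in each of the three cases $p\lessgtr\alpha$, using $\xi^*(\beta)\leq r$; for $p=\alpha$ a logarithmic factor appears but is absorbed into $r^\alpha$ via $\log t\leq t^\alpha$ for $t\geq 1$. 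Combined, these yield $g_p(\beta,r)\preceq_p\chi(\beta)\, r^p\asymp r^p\E_{\beta,r}|K|$ in the second regime.

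The main technical obstacle I anticipate is the careful derivation of the differential inequality, particularly the re-centering step: after applying \eqref{eq:BK_disjoint_clusters_covariance} one is left with $\E_{\beta,r}\sum_{x\in K_b}\|x\|^p$ at an arbitrary vertex $b$, which must be decomposed via translation invariance into the translation-invariant part $g_p(\beta,r)$ plus a residual $|K_b|\cdot \|b\|^p$ with expectation $\chi_r(\beta)\|b\|^p$; summing this residual over $b$ with $\|b-a\|\leq r$ and then over $a$ against $G_{\beta,r}(0,a)$ requires a further application of the triangle-type inequality to $\|b\|^p=\|a+(b-a)\|^p$ in order to produce the clean source term $r^{p-\alpha-1}\chi_r(\beta)^2$ above.
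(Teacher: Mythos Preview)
Your proposal is correct and follows the same overall architecture as the paper's proof: derive a differential inequality for $g_p(\beta,r)=\E_{\beta,r}\sum_{x\in K}\|x\|^p$ via Russo's formula, handle the regime $r\le \xi^*(\beta)$ by stochastic domination and \eqref{CL}, and integrate the inequality for $r\ge \xi^*(\beta)$.

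There are two technical differences worth noting. First, the paper derives the differential inequality by \emph{induction on $p$}: it expands $\|z\|\le \|z-y\|+\|y-x\|+\|x\|$ trinomially and applies BK to obtain $\frac{d}{dr}g_p\preceq_p r^{-\alpha-1}\sum_{a+b+c=p}g_a g_b r^c$, then uses the induction hypothesis $g_a\preceq_a r^a\chi_r$ for $a<p$ to collapse all cross terms into $r^p\chi_r^2$. Your approach avoids induction entirely by applying the covariance inequality with the simpler choice $F(K_0)=\mathbbm{1}(a\in K_0)$ and then re-centering twice; this is a small but genuine simplification. Second, for the integration the paper multiplies through by $r^{-p}$, observing that the coefficient $C_p r^{-\alpha}\chi_r-p$ of the linear term in $\frac{d}{dr}(r^{-p}g_p)$ becomes negative once $r\ge \lambda_p\xi^*(\beta)$, leaving only the integrable source $C_p r^{-\alpha-1}\chi(\beta)^2$; this avoids your case split on the sign of $p-\alpha$. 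Both integration schemes are valid and give the same bound.
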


Applying the same argument used in \eqref{eq:CLp_application}, it follows from \cref{lem:subcritical_susceptibility,prop:subcritical_gyration} that there exists a constant $C\geq 1$ such that
\[
  \E_{\beta,r}|K\cap B_{Cr}| \geq \frac{1}{2}\E_{\beta,r}|K| \succeq r^\alpha
\]
for every $\beta_c/2\leq \beta<\beta_c$ and $r\leq \xi^*(\beta)$ and hence that
\begin{equation}
  \E_{\beta,r}|K\cap B_{r}| \geq \E_{\beta,r/C} |K\cap B_{C(r/C)}| \succeq r^\alpha
  \label{eq:subcritical_susceptibility_inside_box}
\end{equation}
for every $\beta_c/2\leq \beta<\beta_c$ and $r\leq \xi^*(\beta)$ also.

\begin{proof}[Proof of \cref{prop:subcritical_gyration}]
We prove the claim by induction on $p \geq 0$, the base case $p=0$ being trivial. For $p\geq 1$, we have by Russo's formula that
\[
  \frac{\partial}{\partial r} \E_{\beta,r}\sum_{x\in K}\|x\|^{p} = \beta |J'(r)| \E_{\beta,r}\left[ \sum_{x\in K} \sum_{y\in B_r} \sum_{z \in K_y} \mathbbm{1}(x\nleftrightarrow y) \|z\|^p \right]
\]
for every $0<\beta\leq \beta_c$ and $r>0$.
Writing $\|z\|\leq \|z-y\|+\|y-x\|+\|x\|$, expanding out the resulting trinomial, and using the BK inequality, we obtain that
\[
  \frac{\partial}{\partial r} \E_{\beta,r}\sum_{x\in K}\|x\|^{p} \preceq_p r^{-\alpha-1} \sum_{a+b+c=p}  \Bigl(\E_{\beta,r}\sum_{x\in K}\|x\|^a\Bigr)\Bigl(\E_{\beta,r}\sum_{x\in K}\|x\|^b\Bigr) r^c,
\]
where we bounded $\|y-x\|\leq r$ to simplify the resulting expression.
(See \cref{I-subsec:the_full_displacement_distribution} for more precise versions of the same calculation.) Using the induction hypothesis we obtain that there exists a constant $C_p$ such that
\[
  \frac{\partial}{\partial r} \E_{\beta,r}\left[\sum_{x\in K}\|x\|^{p}\right] \leq C_p r^{-\alpha-1} \E_{\beta,r}|K|  \E_{\beta,r}\left[\sum_{x\in K}\|x\|^{p}\right] + C_p r^{p-\alpha-1} (\E_{\beta,r} |K|)^2
\]
for all $0<\beta \leq \beta_c$ and $r\geq 1$ and hence that
\[
  \frac{\partial}{\partial r} \left(r^{-p} \E_{\beta,r}\left[\sum_{x\in K}\|x\|^{p}\right] \right) \leq (C_p r^{-\alpha} \E_{\beta,r}|K| -p)  r^{-p-1}   \E_{\beta,r}\left[\sum_{x\in K}\|x\|^{p}\right] + r^{-\alpha-1} (\E_{\beta,r} |K|)^2
\]
for all $0<\beta \leq \beta_c$ and $r\geq 1$. Since $\E_{\beta,r}|K| \leq \chi(\beta)$ and $\xi^*(\beta):=\chi(\beta)^{1/\alpha}$, it follows that there exists a constant $\lambda_p<\infty$ such that if $r\geq \lambda_p \xi^*(\beta)$ then the coefficient of the first term on the right hand side of this inequality is negative, so that
\[
  \frac{\partial}{\partial r} \left(r^{-p} \E_{\beta,r}\left[\sum_{x\in K}\|x\|^{p}\right] \right) \leq  C_p r^{-\alpha-1} \chi(\beta)^2
\]
for all $r\geq \lambda_p \xi^*(\beta)$. Integrating this inequality  yields that
\begin{align*}
  \sup_{r\geq \lambda_p \xi^*(\beta)} r^{-p} \E_{\beta,r}\left[\sum_{x\in K}\|x\|^{p}\right] &\leq (\lambda_p \xi^*(\beta))^{-p} \E_{\beta,\lambda_p \xi^*(\beta)}\left[\sum_{x\in K}\|x\|^{p}\right] + \frac{C_p}{\alpha} (\lambda_p \xi^*(\beta))^{-\alpha} \chi(\beta)^2 \\
  &\preceq_p   \chi(\beta),
\end{align*}
yielding a bound of the correct order when $r\geq \lambda_p \xi^*(\beta)$.
On the other hand, if $r\leq \lambda_p \xi^*(\beta)$ then a bound of the correct order follows immediately from \cref{lem:subcritical_susceptibility} and the definition of \eqref{CL} which yield that \[\E_{\beta,r} \sum_{x\in K}\|x\|^{p} \leq \E_{\beta_c,r} \sum_{x\in K}\|x\|^{p} \preceq_p r^{p} \E_{\beta_c,r}|K| \asymp_p r^p \E_{\beta,r}|K|\]
when  $r\leq \lambda_p \xi^*(\beta)$. This completes the induction step and hence the proof.
\end{proof}

We next prove the upper bound of \cref{thm:Fisher_relation}. Recall that we define $\xi^*(\beta):=\chi(\beta)^{1/\alpha}$.

\begin{lemma}
\label{lem:slightly_subcritical_two_point_upper}
  If \eqref{CL} holds then 
  \[
    \P_{\beta}(x\leftrightarrow y) \preceq 
    \|x-y\|^{-d+\alpha}\left(1\vee \frac{\|x-y\|}{\xi^*(\beta)}\right)^{-2\alpha}
  \] for every $\beta\leq \beta_c$ and $r\geq 1$.
\end{lemma}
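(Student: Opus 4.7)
We split the analysis according to whether $\|x-y\|$ is small or large compared to the correlation length scale $\xi^*(\beta)=\chi(\beta)^{1/\alpha}$. In the short-distance regime $\|x-y\|\le \xi^*(\beta)$, the desired estimate reduces to $\P_\beta(x\leftrightarrow y)\preceq \|x-y\|^{-d+\alpha}$, and this follows immediately from monotonicity $\P_\beta\le \P_{\beta_c}$ together with the critical two-point upper bound already established in \cref{lem:CL_Sak_upper}. The real content is therefore the long-distance regime $\|x-y\| > \xi^*(\beta)$, where one must show
\[
\P_\beta(x\leftrightarrow y)\preceq \chi(\beta)^2\,\|x-y\|^{-d-\alpha}.
\]

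The plan is to re-run the Russo--BK argument of \cref{prop:CL_to_two_point} at $\beta\le \beta_c$, using \cref{lem:subcritical_susceptibility} and \cref{prop:subcritical_gyration} as the subcritical substitutes for the critical first-moment and radius-of-gyration inputs. Precisely, Russo's formula and the BK inequality give
\[
\frac{d}{dr}\P_{\beta,r}(x\leftrightarrow y)\le \beta|J'(r)|\sum_{\substack{a,b\in\Z^d\\ \|a-b\|\le r}}\P_{\beta,r}(x\leftrightarrow a)\P_{\beta,r}(b\leftrightarrow y),
\]
and we integrate this differential inequality after splitting at the scale $r_0=\|x-y\|/8$. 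For $r\ge r_0$ we discard the constraint $\|a-b\|\le r$, use \cref{lem:subcritical_susceptibility} to bound $\E_{\beta,r}|K|\le \chi(\beta)$, obtain a derivative of order $r^{-d-\alpha-1}\chi(\beta)^2$, and integrate from $r_0$ to $\infty$ to get a contribution of order $\|x-y\|^{-d-\alpha}\chi(\beta)^2$. For $r\le r_0$ any admissible pair $(a,b)$ must satisfy either $\|a-x\|\ge \|x-y\|/4$ or $\|b-y\|\ge \|x-y\|/4$, so Markov's inequality and \cref{prop:subcritical_gyration} give
\[
\E_{\beta,r}\bigl|K_x\setminus B_{\|x-y\|/4}(x)\bigr|\preceq_p \Bigl(\frac{r}{\|x-y\|}\Bigr)^{p}\E_{\beta,r}|K|
\]
for any $p\ge 1$, yielding the refined derivative bound
\[
\frac{d}{dr}\P_{\beta,r}(x\leftrightarrow y)\preceq_p r^{-d-\alpha-1+p}\|x-y\|^{-p}\E_{\beta,r}|K|^2.
\]

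Now comes the step requiring some care, which I expect to be the main technical point: one must choose the exponent $p$ so that, after substituting $\E_{\beta,r}|K|\asymp \min\{r^\alpha,\chi(\beta)\}$ from \cref{lem:subcritical_susceptibility}, integration from $1$ to $r_0$ splits at $\xi^*(\beta)$ and both subintegrals combine to match the target $\chi(\beta)^2\|x-y\|^{-d-\alpha}$. Taking any $p>d+\alpha$ works: the sub-$\xi^*(\beta)$ piece integrates to $\chi(\beta)^{(p-d+\alpha)/\alpha}$, the super-$\xi^*(\beta)$ piece integrates to $\chi(\beta)^2\|x-y\|^{p-d-\alpha}$, and after multiplying by $\|x-y\|^{-p}$ the second piece gives exactly the target while the first piece is dominated by it precisely because $p-d-\alpha>0$ and we are in the regime $\|x-y\|>\xi^*(\beta)$. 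The remaining boundary term $\P_{\beta,1}(x\leftrightarrow y)$ decays exponentially in $\|x-y\|$ by the sharpness of the phase transition and is absorbed into the polynomial bound. Combining the two regimes yields the displayed estimate for all $\beta\le \beta_c$ and distinct $x,y\in\Z^d$.
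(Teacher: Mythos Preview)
Your proposal is correct and follows essentially the same approach as the paper: both use the Russo--BK differential inequality for $\P_{\beta,r}(x\leftrightarrow y)$, split the integration at $r_0=\|x-y\|/8$, and for $r\le r_0$ use \cref{prop:subcritical_gyration} to control the displaced mass. The only minor difference is in how the region $r\le \xi^*(\beta)$ is handled: you integrate the derivative bound all the way down to $r=1$ using $\E_{\beta,r}|K|\asymp r^\alpha$ on this range, whereas the paper instead stops the integration at $r=\xi^*(\beta)$ and bounds the boundary term $\P_{\beta,\xi^*(\beta)}(x\leftrightarrow y)\le \P_{\beta_c,\xi^*(\beta)}(x\leftrightarrow y)$ directly via the $h$-function form of \cref{lem:CL_Sak_upper}. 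These are equivalent up to constants (your sub-$\xi^*$ integral is essentially re-deriving that bound), and the rest of the argument is identical.
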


\begin{proof}[Proof of \cref{lem:slightly_subcritical_two_point_upper}] It suffices by \cref{lem:CL_Sak_upper} to prove that
  \[
    \P_{\beta}(x\leftrightarrow y) \preceq 
    \chi(\beta)^2 \|x-y\|^{-d-\alpha}
  \]
 for every $\beta_c/2\leq \beta \leq \beta_c$ and $\|x-y\|\geq \xi^*(\beta)$. Fix $0<\beta \leq \beta_c$ and one such pair $x,y\in \Z^d$.
We have as in the proof of \cref{lem:CL_Sak_upper} that
\[
  \frac{\partial}{\partial r} \P_{\beta,r}(x\leftrightarrow y) 
  \leq \beta |J'(r)| \sum_{a,b\in \Z^d} \mathbbm{1}(\|a-b\|\leq r) \P_{\beta,r}(x\leftrightarrow a)\P_{\beta,r}(b\leftrightarrow y) 
\]
for every $r>0$. For $r\geq \|x-y\|/8\geq \xi^*(\beta)/8$, we can ignore the constraint that $\|a-b\|\leq r$ to obtain that
\[
  \frac{\partial}{\partial r} \P_{\beta,r}(x\leftrightarrow y) 
  \preceq r^{-d-\alpha-1} \chi(\beta)^2
\]
and hence that
\begin{equation}
  \P_{\beta}(x\leftrightarrow y) - \P_{\beta,\|x-y\|/8}(x\leftrightarrow y) \preceq \chi(\beta)^2 \int_{\|x-y\|}^\infty r^{-d-\alpha-1}\dif r \asymp \chi(\beta)^2 \|x-y\|^{-d-\alpha}.
\label{eq:large_r_two_point_integrated}
\end{equation}
We now bound the derivative for $r \leq \|x-y\|/8$.
As in the proof of \cref{lem:CL_Sak_upper}, if $r\leq \|x-y\|/8$ then every pair $a,b \in \Z^d$ contributing to this sum must satisfy at least one of the inequalities $\|x-b\|\geq \|x-y\|/4$ or $\|y-a\|\geq \|x-y\|/4$, so that
\[
    \frac{\partial}{\partial r} \P_{\beta,r}(x\leftrightarrow y) 
  \preceq r^{-d-\alpha-1} (\E_{\beta,r}|K|)(
  \E_{\beta,r} |K\setminus B_{\|x-y\|/4}|).
\]
Using \cref{prop:subcritical_gyration} in the same way we applied \eqref{CL} to obtain \eqref{eq:CLp_application} we can bound
\[\E_{\beta,r} |K\setminus B_{\lambda r}| \preceq_p \lambda^{-p}\E_{\beta,r} |K|, \]
for every $p\geq 1$ and $r,\lambda\geq 1$, so that
\[
   \frac{\partial}{\partial r} \P_{\beta,r}(x\leftrightarrow y) \preceq_p r^{-d-\alpha-1} (\E_{\beta,r}|K|)^2 \left(\frac{\|x-y\|}{r}\right)^{-p}
\] 
for every $p\geq 1$ and $r\leq \|x-y\|/8$. Taking $p=\lceil d+\alpha+1\rceil>d+\alpha$ and integrating this inequality between $\xi^*(\beta)$ and $\|x-y\|/8$ yields that
\[
  \P_{\beta,\|x-y\|/8}(x\leftrightarrow y) - \P_{\beta,\xi^*(\beta)}(x\leftrightarrow y) \preceq \chi(\beta)^2 \|x-y\|^{-p} \int_{\xi^*(\beta)}^{\|x-y\|/8} r^{p-d-\alpha-1} \dif r \asymp \chi(\beta)^2 \|x-y\|^{-d-\alpha}.
\]
Putting this together with \eqref{eq:large_r_two_point_integrated} and \cref{lem:CL_Sak_upper} we obtain that there exists a decreasing function $h:(0,\infty)\to (0,1]$ decaying faster than any power such that
\begin{align*}
  \P_{\beta}(x\leftrightarrow y) &\leq [\P_{\beta}(x\leftrightarrow y) - \P_{\beta,\xi^*(\beta)}(x\leftrightarrow y)]
  +
  \P_{\beta,\xi^*(\beta)}(x\leftrightarrow y)
  \\&\preceq \chi(\beta)^2 \|x-y\|^{-d-\alpha} + \|x-y\|^{-d+\alpha} h(\|x-y\|/\xi^*(\beta))\\
  &\preceq \chi(\beta)^2 \|x-y\|^{-d-\alpha}
\end{align*}
as claimed, where we bounded $h(t) \preceq (1\vee t)^{-2\alpha}$ in the final inequality.
\qedhere
\end{proof}

To complete the proofs of \cref{thm:CL_Sak,thm:Fisher_relation} it suffices to prove the following lemma.

\begin{lemma}
\label{lem:two_point_lower}
If \eqref{CL} holds then
\[
  \P_{\beta}(x\leftrightarrow y) \succeq \|x-y\|^{-d+\alpha} \left(1 \vee \frac{\|x-y\|}{\xi^*(\beta)}\right)^{-2\alpha}.
\]
for every $x\neq y \in \mathbb{Z}^d$ and $\beta_c/2\leq \beta \leq \beta_c$, where we set $\xi^*(\beta_c)=\infty$.
\end{lemma}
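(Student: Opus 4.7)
The plan is to prove the lower bound by a \emph{single long edge} second-moment argument that does not distinguish between the subcritical and critical regimes. Write $r = \|x-y\|$, choose a small constant $c_0 > 0$ and large constant $C$ with $c_0 C < 1/4$, and set $L = c_0 \min\{r,\xi^*(\beta)\}$ (interpreting $\xi^*(\beta_c) = \infty$) and $\Lambda_x = B_{CL}(x)$, $\Lambda_y = B_{CL}(y)$, so that the two boxes are disjoint and every pair $(a,b) \in \Lambda_x \times \Lambda_y$ satisfies $\|a-b\| \asymp r$. Decompose the percolation configuration as $\omega = \omega^L \cup \omega'$, where $\omega^L \sim \P_{\beta,L}$ is independent of the long-edge residual $\omega' \sim \P_{\beta,\bar J_L}$ with $\bar J_L = J - J_L$; then each edge between $\Lambda_x$ and $\Lambda_y$ is in $\omega'$ and has weight $\bar J_L(a,b) = J(\|a-b\|) \asymp r^{-d-\alpha}$. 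Writing $K^L_x, K^L_y$ for the clusters of $x,y$ in $\omega^L$ and $N_i$ for the intersections of these clusters with the respective boxes, conditioning on $\omega^L$ and using the conditional independence of $\omega'$ yields
\[
\P_\beta(x \leftrightarrow y) \;\geq\; \E\!\left[1 - \exp\!\bigl(-c_1 \beta\, r^{-d-\alpha} N_1 N_2\bigr)\right].
\]

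With $X := c_1 \beta r^{-d-\alpha} N_1 N_2$, I would use the elementary inequality $1 - e^{-X} \geq X - X^2/2$ and reduce to proving (i) $\E X \succeq \beta r^{-d-\alpha} L^{2\alpha}$ and (ii) $\E X^2 \leq \E X/2$. For (i): \cref{prop:subcritical_gyration} combined with \eqref{eq:subcritical_susceptibility_inside_box} and \cref{lem:subcritical_susceptibility} show that, for $C$ large enough, the expected cluster mass outside $\Lambda_x$ is at most half of $\E|K^L_x|$, so $\E N_1 \succeq \E|K^L_x| \asymp L^\alpha$ (using $L \leq c_0 \xi^*(\beta)$); the Harris--FKG inequality then gives $\E X \geq c_1 \beta r^{-d-\alpha}(\E N_1)(\E N_2) \succeq \beta r^{-d-\alpha} L^{2\alpha}$. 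For (ii), the correlation inequality \eqref{eq:FG_covariance} applied to the increasing functionals $N_1^2, N_2^2$ of $\omega^L$ gives
\[
\E[N_1^2 N_2^2] \;\leq\; \bigl(\E N_1^2\bigr)^2 + \E\bigl[N_1^2 N_2^2 \,\mathbbm 1(x\leftrightarrow y\ \text{in}\ \omega^L)\bigr],
\]
with the main term controlled by $(\E|K^L|^2)^2 \preceq L^{2\alpha+d+\alpha}$ via the Gladkov-based bound of \cref{lem:sCL_moments}. If the covariance error is also bounded by $L^{2\alpha+d+\alpha}$, this yields $\E X^2 / \E X \preceq \beta (L/r)^{d+\alpha} \leq \beta_c c_0^{d+\alpha}$, which can be made $\leq 1/2$ by taking $c_0$ small. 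Combining (i) and (ii) gives
\[
\P_\beta(x \leftrightarrow y) \;\succeq\; \beta\, r^{-d-\alpha} L^{2\alpha} \;\asymp\; \|x-y\|^{-d+\alpha}\bigl(1 \vee \|x-y\|/\xi^*(\beta)\bigr)^{-2\alpha}
\]
after substituting $L = c_0 \min\{r,\xi^*(\beta)\}$; taking $\beta = \beta_c$ recovers the critical bound $\P_{\beta_c}(x\leftrightarrow y) \succeq \|x-y\|^{-d+\alpha}$ and completes the proof of \cref{thm:CL_Sak}.

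The main technical obstacle is the covariance error term $\E[N_1^2 N_2^2 \,\mathbbm 1(x\leftrightarrow y\ \text{in}\ \omega^L)]$, which must also be shown to be $\preceq L^{2\alpha+d+\alpha}$. On $\{x\leftrightarrow y\}$ we have $K^L_x = K^L_y$, so the term is bounded by $\E[|K^L|^4 \mathbbm 1(x\leftrightarrow y)] = \sum_{u_1,\ldots,u_4} \tau_{\beta,L}(x,y,u_1,\ldots,u_4)$. Although the cutoff two-point bound of \cref{lem:CL_Sak_upper} gives $\P_{\beta,L}(x\leftrightarrow y) \preceq r^{-d+\alpha} h(r/L)$ with $h$ decaying faster than any polynomial, the choice $L = c_0 r$ makes the argument $r/L = 1/c_0$ a fixed constant, so naive Cauchy--Schwarz against the eighth-moment estimate of \cref{lem:sCL_moments} loses a polynomial factor of $r^{(d-\alpha)/4}$ and cannot give a uniform bound in $r$. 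The plan is to instead bound the six-point sum directly by the Aizenman--Newman tree-graph inequality, which gives $\chi_L^4(\tau_{\beta,L} * \tau_{\beta,L})(x,y) \preceq L^{5\alpha} r^{-d+\alpha} h(r/L)$, small relative to the target when $3\alpha < 2d$; in the complementary range $3\alpha \geq 2d$ (which occurs within the effectively long-range regime only near its upper boundary), one would substitute the higher Gladkov inequality of \cref{thm:higher_Gladkov} together with the more refined Gladkov bound of \cref{cor:geometric_Gladkov}, carrying the rapid-decay factor $h$ through each iteration so that the resulting coefficient $h(1/c_0)^{1/2}$ can be absorbed into the main term by choosing $c_0$ sufficiently small.
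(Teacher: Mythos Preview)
Your overall strategy—a single-long-edge second-moment argument with the short-range/long-range decomposition at scale $L \asymp \min\{\|x-y\|,\xi^*(\beta)\}$—matches the paper's, and your first-moment bound (i) is essentially identical. The difference is in how you control the second moment, and this is where your argument has a real gap.

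You use $1-e^{-X} \geq X - X^2/2$ and must therefore bound $\E[N_1^2 N_2^2]$, whose covariance error $\E[N_1^2 N_2^2\,\mathbbm 1(x\leftrightarrow y)]$ is a sum of \emph{six}-point functions. Your proposed control via the tree-graph inequality only works when $3\alpha < 2d$; this excludes a substantial portion of the effectively long-range regime (for instance $d=1$, $\alpha \in (2/3,1)$, which is half of the one-dimensional LR LD interval), not merely a neighbourhood of its upper boundary as you write. Your fallback to the higher Gladkov inequality is not actually carried out: the \emph{maximum} over tripartitions in \cref{thm:higher_Gladkov} goes the wrong way for picking a favourable partition, and you have not shown how to propagate the rapid-decay factor $h(\|x-y\|/L)$ through the recursion so as to beat the polynomial loss uniformly over all partitions.

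The paper sidesteps this entirely. It counts pairs directly, defining $Z$ as the set of $(a,b)\in B_r(x)\times B_r(y)$ with $x\leftrightarrow a$, $y\leftrightarrow b$ in $\omega_r$ and $\{a,b\}$ open in $\omega'$, and then \emph{splits both moments} on the event $\{x\nleftrightarrow y\text{ in }\omega_r\}$. The key point is that the error on $\{x\leftrightarrow y\}$ now appears already in the \emph{first} moment $\E|Z|\mathbbm 1(x\leftrightarrow y)$, hence involves only the \emph{four}-point function $\tau_{\beta,r}(x,y,a,b)$, which is handled by a single Gladkov application (\cref{lem:Gladkov_application_for_two_point_lower}) valid throughout the regime. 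On the complement $\{x\nleftrightarrow y\}$, the correlation inequality \eqref{eq:BK_disjoint_clusters_covariance} gives the clean factorization $\E\bigl[\binom{|Z|}{2}\mathbbm 1(x\nleftrightarrow y)\bigr] \preceq \|x-y\|^{-2d-2\alpha}(\E|K\cap B_r|^2)^2 \preceq \|x-y\|^{-2d-2\alpha} r^{d+3\alpha}$ via \eqref{eq:Sak_upper_restate} and \eqref{eq:M_r_upper_restate}, and Cauchy--Schwarz on $\{|Z|>0,\,x\nleftrightarrow y\}$ finishes. The moral: split on $\{x\leftrightarrow y\}$ \emph{before} squaring, so the only delicate term is a four-point rather than a six-point quantity.
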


The proof of this lemma will apply the following lemma, which is a simple consequence of \cref{lem:CL_Sak_upper} and the four-point Gladkov inequality \eqref{eq:4pt_Gladkov}.

\begin{lemma}
\label{lem:Gladkov_application_for_two_point_lower}
If \eqref{CL} holds then there exists a decreasing function $h:(0,\infty) \to (0,1]$ decaying faster than any power such that
\[
  \E_{\beta_c,r}\left[|K_x \cap B_r(x)||K_y \cap B_r(x)| \mathbbm{1}(x\leftrightarrow y) \right] \preceq r^{d+\alpha} \|x-y\|^{-d+\alpha} h(\|x-y\|/r)
\]
for every $r\geq 1$ and $x,y\in \Z^d$ with $\|x-y\|\geq 4r$.
\end{lemma}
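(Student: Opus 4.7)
The plan is to expand the expectation as a sum of cut-off four-point function values and bound each term using the four-point Gladkov inequality \eqref{eq:4pt_Gladkov} together with the pointwise two-point bound of \cref{lem:CL_Sak_upper}. Writing $\tau=\tau_{\beta_c,r}$ and noting that on the event $\{x\leftrightarrow y\}$ the clusters $K_x$ and $K_y$ coincide, we have
\[
\E_{\beta_c,r}\bigl[|K_x\cap B_r(x)|\,|K_y\cap B_r(x)|\,\mathbbm{1}(x\leftrightarrow y)\bigr] = \sum_{a,b\in B_r(x)} \tau(x,y,a,b).
\]
Set $R:=\|x-y\|$ and assume $R\ge 4r$. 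For $a,b\in B_r(x)$ the six pairwise distances between $x,y,a,b$ split into three \emph{short} pairs $\{x,a\},\{x,b\},\{a,b\}$ of size at most $2r$ and three \emph{long} pairs $\{x,y\},\{a,y\},\{b,y\}$ of size $\asymp R$. By \cref{lem:CL_Sak_upper}, \eqref{CL} supplies a decreasing function $h_0:(0,\infty)\to(0,1]$ decaying faster than any power with $\tau(u,v)\preceq \|u-v\|^{-d+\alpha}h_0(\|u-v\|/r)$ for every distinct $u,v\in\Z^d$.

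Applying \eqref{eq:4pt_Gladkov} gives
$\tau(x,y,a,b)^4 \preceq P\cdot \bigl(\tau(x,y)\tau(a,b)+\tau(x,a)\tau(y,b)+\tau(x,b)\tau(y,a)\bigr)$
with $P=\prod_{i<j}\tau(u_i,u_j)$. After taking fourth roots, each of the three resulting terms involves factors of $\tau$ with total multiplicity $4$ on long pairs and $4$ on short pairs, so the long factors jointly contribute $R^{-(d-\alpha)}h_0(R/r)$, while the short factors yield a product $\|x-a\|^{-s_1}\|x-b\|^{-s_2}\|a-b\|^{-s_3}$ whose exponent triple is a permutation of $\bigl(\tfrac{d-\alpha}{4},\tfrac{d-\alpha}{4},\tfrac{d-\alpha}{2}\bigr)$ and in particular satisfies $s_1+s_2+s_3=d-\alpha$. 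It therefore suffices to prove the convolution estimate
\[
\Sigma(s_1,s_2,s_3):=\sum_{a,b\in B_r(x)} \|x-a\|^{-s_1}\|x-b\|^{-s_2}\|a-b\|^{-s_3} \preceq r^{d+\alpha}
\]
for each such triple.

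This follows by elementary case analysis. If $\|x-a\|\ge 2\|x-b\|$ then $\|a-b\|\asymp\|x-a\|$ and the sum is bounded by $\sum_{a,b}\|x-a\|^{-(s_1+s_3)}\|x-b\|^{-s_2}\preceq r^{2d-(s_1+s_2+s_3)}=r^{d+\alpha}$; the mirror case $\|x-b\|\ge 2\|x-a\|$ is analogous; and in the remaining regime $\|x-a\|\asymp\|x-b\|=:\rho$, one bounds $\sum_{b:\|x-b\|\asymp\rho}\|a-b\|^{-s_3}\preceq\rho^{d-s_3}$ by a standard spherical-shell estimate, then $\sum_{a:\|x-a\|\asymp\rho}\rho^{-s_1-s_2+d-s_3}\asymp \rho^{d+\alpha}$, which sums to $r^{d+\alpha}$ after summing over dyadic values of $\rho\le r$. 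Every partial-sum exponent encountered is strictly less than $d$ (immediate since $\alpha>0$ is assumed and $\alpha\le d$ follows from \cref{cor:large_alpha_not_CL}), so all convolution sums converge. Combining the three contributions yields $\sum_{a,b\in B_r(x)}\tau(x,y,a,b)\preceq r^{d+\alpha}R^{-(d-\alpha)}h_0(R/r)$, and the lemma follows with $h$ a constant multiple of $h_0$. The main bookkeeping step is the case analysis for the convolution sum; everything else is mechanical once Gladkov and the two-point bound are in hand.
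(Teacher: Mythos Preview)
Your argument is correct and follows the same approach as the paper: expand the expectation as a sum of four-point functions, apply the four-point Gladkov inequality \eqref{eq:4pt_Gladkov}, and feed in the cut-off two-point bound from \cref{lem:CL_Sak_upper}. The only substantive difference is that the paper's proof actually treats the version with $|K_y\cap B_r(y)|$ in the second factor (which is what is used in the proof of \cref{lem:two_point_lower}), whereas you take the statement literally with $B_r(x)$ in both factors. In the paper's geometry there are only two short pairs $\{x,a\},\{y,b\}$ and they decouple, so the sum over $a,b$ factorizes and no convolution argument is needed; the paper simply notes that the dominant Gladkov diagram has the double edges on the two short pairs and sums. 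Your version has three coupled short pairs $\{x,a\},\{x,b\},\{a,b\}$, which forces the extra case analysis on $\Sigma(s_1,s_2,s_3)$---this is correct but a little more work. Either way the mechanism is identical.
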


(Note that this bound will suffice for our applications even in the regime $d\geq 3\alpha$ where it should not be optimal.)

\begin{proof}[Proof of \cref{lem:Gladkov_application_for_two_point_lower}]
We apply the two-point upper bound of \cref{lem:CL_Sak_upper} and the four-point case of the higher Gladkov inequality \cref{thm:higher_Gladkov} to bound $\tau_{\beta,r}(x,y,a,b)$ with $\|x-a\|,\|y-b\| \leq \|x-y\|/4$ by
\[
  \tau_{\beta,r}(x,y,a,b) \preceq 
   \|x-a\|^{-(d-\alpha)/2}\|y-b\|^{-(d-\alpha)/2} \|x-y\|^{-(d-\alpha)} h(\|x-y\|/r)
\]
for some decreasing function $h:(0,\infty) \to (0,1]$ decaying faster than any power: here we used here that $ \|x-y\|$, $\|x-b\|$, $\|y-a\|$, and $\|a-b\|$ are all of the same order and that the main contribution to \eqref{eq:4pt_Gladkov} is from the diagram in which the double edges are between the two closer pairs of points $\{x,a\}$ and $\{y,b\}$. Summing this estimate over the two balls yields the claim.
\end{proof}

\begin{proof}[Proof of \cref{lem:two_point_lower}]
Fix $\beta_c/2\leq \beta \leq \beta_c$ and $x\neq y\in \Z^d$. Let $r\leq \min\{\|x-y\|,\xi^*(\beta)\}/4$ be a parameter to be determined and consider the standard monotone coupling of $\P_{\beta,r}$ and $\P_{\beta}$, writing $\omega_r$ and $\omega$ for the two relevant configurations. By the splitting property of Poisson processes, we can write $\omega$ as the union of $\omega_r$ with an independent configuration $\omega'$ in which each edge $\{x,y\}$ is included with probability $1-e^{-\beta (J-J_r)(x,y)}$.
 Write $\P$ for the joint law of $(\omega_r,\omega',\omega)$ and let $Z$ denote the (random) set of pairs $(a,b)$ in $\Z^d$ such that $\|a-x\|\leq r$, $\|b-y\|\leq r$, $\{a,b\}$ is open in $\omega'$, $x$ is connected to $a$ in $\omega_r$, and $y$ is connected to $b$ in $\omega_r$. This definition ensures that
$\P_\beta(x\leftrightarrow y) \geq \P(|Z|>0)$.
It follows from \eqref{eq:subcritical_susceptibility_inside_box} and the Harris-FKG inequality that
\begin{equation}
  \E |Z| \succeq (\E_{\beta,r}|K\cap B_r|)^2 \|x-y\|^{-d-\alpha} \succeq r^{2\alpha} \|x-y\|^{-d-\alpha} = \|x-y\|^{-d+\alpha} \left(\frac{\|x-y\|}{r}\right)^{-2\alpha}
  \label{eq:Z_mean_lower}
\end{equation}
On the other hand, we also have by \cref{lem:Gladkov_application_for_two_point_lower} that
\begin{multline*}
\E |Z| \mathbbm{1}(x\leftrightarrow y \text{ in $\omega_r$}) \leq \|x-y\|^{-d-\alpha} \E_{\beta,r}\left[ |K_x \cap B_r(x)| |K_y \cap B_r(y)| \mathbbm{1}(x \leftrightarrow y)\right] \\
\preceq \|x-y\|^{-d-\alpha} r^{d+\alpha} \P_{\beta,r}(x\leftrightarrow y)  \preceq \|x-y\|^{-d+\alpha} h(\|x-y\|/r)
\end{multline*}
for some function $h$ decaying faster than any polynomial, where we used that $r\leq \|x-y\|/4$ in the final inequality.
Comparing this to the lower bound \eqref{eq:Z_mean_lower}, it follows that there exists a positive constant $c$ such that if $r\leq c\|x-y\|$ then
\begin{equation}
\label{eq:good_Z_lower}
  \E |Z| \mathbbm{1}(x\nleftrightarrow y \text{ in $\omega_r$}) = 
  \E |Z|  - \E |Z| \mathbbm{1}(x\leftrightarrow y \text{ in $\omega_r$}) \succeq \E |Z| \succeq r^{2\alpha} \|x-y\|^{-d-\alpha}.
\end{equation}
Fix $r=\min\{c,1/4\} \cdot \min\{\|x-y\|,\xi^*(\beta)\}$ so that this lower bound holds. Applying the BK inequality to the configuration $\omega_r$ yields the upper bound
\begin{multline}
  \E \left[\binom{|Z|}{2}\mathbbm{1}(x\nleftrightarrow y \text{ in $\omega_r$})\right] \preceq \|x-y\|^{-2d-2\alpha} \E_{\beta,r} \left[|K_x \cap B_r(x)|^2 |K_y \cap B_r(y)|^2 \mathbbm{1}(x\nleftrightarrow y) \right]\\ \preceq 
  \|x-y\|^{-2d-2\alpha} r^{d+3\alpha} = \|x-y\|^{-d+\alpha} \left(\frac{\|x-y\|}{r}\right)^{-d-3\alpha} \leq \|x-y\|^{-d+\alpha} \left(\frac{\|x-y\|}{r}\right)^{-2\alpha},
  \label{eq:good_Z2_upper}
\end{multline}
where we used \eqref{eq:Sak_upper_restate} and \eqref{eq:M_r_upper_restate} in the first inequality on the second line and used that $r\leq \|x-y\|$ in the final inequality.
 Since
\[\E\left[|Z|^2\mathbbm{1}(x\nleftrightarrow y \text{ in $\omega_r$})\right] = 2\E\left[\binom{|Z|}{2}\mathbbm{1}(x\nleftrightarrow y \text{ in $\omega_r$})\right] + \E\left[|Z|\mathbbm{1}(x\nleftrightarrow y \text{ in $\omega_r$})\right] \]
it follows from \eqref{eq:good_Z_lower} and \eqref{eq:good_Z2_upper} that
\[
  \E\left[|Z|^2\mathbbm{1}(x\nleftrightarrow y \text{ in $\omega_r$})\right] \preceq \E\left[|Z|\mathbbm{1}(x\nleftrightarrow y \text{ in $\omega_r$})\right]
\]
and hence by Cauchy-Schwarz that
\begin{align*}
  \P_\beta(x\leftrightarrow y) \geq \P(|Z|>0, x\nleftrightarrow y \text{ in $\omega_r$}) &\geq \frac{(\E[|Z|\mathbbm{1}(x\nleftrightarrow y \text{ in $\omega_r$})])^2}{\E[|Z|^2\mathbbm{1}(x\nleftrightarrow y \text{ in $\omega_r$})]} 
  \\&\succeq \E\left[|Z|\mathbbm{1}(x\nleftrightarrow y \text{ in $\omega_r$})\right] \succeq \|x-y\|^{-d+\alpha} \left(\frac{\|x-y\|}{r}\right)^{-2\alpha}
\end{align*}
as claimed.
\end{proof}

\begin{proof}[Proof of \cref{thm:CL_Sak}]
The theorem follows immediately from \cref{lem:CL_Sak_upper,lem:two_point_lower}.
\end{proof}

\begin{proof}[Proof of \cref{thm:Fisher_relation}]
The theorem follows immediately from \cref{lem:slightly_subcritical_two_point_upper,lem:two_point_lower}.
\end{proof}

\subsection{The slightly subcritical volume distribution}

In this section we prove \cref{thm:subcritical_volume}.
Our first lemma establishes a version of the scaling hypothesis \eqref{eq:scaling_hypothesis_moments} for the measures $\P_{\beta_c,r}$ (rather than $\P_\beta$ with $\beta<\beta_c$ as claimed in \cref{thm:subcritical_volume}) as a corollary of various other results proven throughout the series.

\begin{lemma}
\label{lem:moments_scaling_hypothesis_cut_off} If \eqref{CL} holds then
\[
  \E_{\beta_c,r}|K|^p \asymp_p \left(\frac{\E_{\beta_c,r}|K|^2}{\E_{\beta_c,r}|K|}\right)^{p-1} \E_{\beta_c,r}|K|
\]
for each integer $p\geq 1$ and every $r\geq 1$.
\end{lemma}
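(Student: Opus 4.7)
The lower bound does not require \eqref{CL} and is immediate from Cauchy--Schwarz: for any non-negative random variable $X$, the inequality $\E X^{p+1}\E X^{p-1} \geq (\E X^p)^2$ says that the map $p \mapsto \E X^{p+1}/\E X^p$ is non-decreasing in $p \geq 0$, so iterating from $p=1$ gives $\E X^p \geq (\E X^2/\E X)^{p-1} \E X$ for every integer $p \geq 1$. Applying this to $X = |K|$ under $\P_{\beta_c,r}$ yields the lower bound in the claimed $\asymp$.

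For the upper bound I would proceed by case analysis on the effective dimension, appealing in each case to the $p$th moment asymptotics established elsewhere in the series. By \cref{cor:HD_CL}, \eqref{CL} combined with $d \geq 3\alpha$ already forces $\alpha < 2$, so the high- and critical-dimensional regimes fall under the results of the other two papers \cite{LRPpaper1,LRPpaper3}. When $d > 3\alpha$, \cref{I-thm:hd_moments_main} gives $\E_{\beta_c,r}|K|^p \asymp_p r^{(2p-1)\alpha}$, so $\zeta(\beta_c,r) \asymp r^{2\alpha}$ and $\E_{\beta_c,r}|K| \asymp r^\alpha$, whence $\zeta^{p-1}\E_{\beta_c,r}|K| \asymp r^{(2p-1)\alpha}$ matches. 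When $d = 3\alpha$, \cref{III-thm:critical_dim_moments_main} provides the analogous asymptotics with explicit slowly varying logarithmic corrections, and one verifies that those corrections multiply correctly inside $\zeta^{p-1}\E_{\beta_c,r}|K|$. When $d < 3\alpha$, \cref{thm:CL_volume_moments} (proved earlier in this paper under \eqref{CL}) gives $\E_{\beta_c,r}|K|^p \asymp_p r^{\alpha + (p-1)(d+\alpha)/2}$, from which $\zeta(\beta_c,r) \asymp r^{(d+\alpha)/2}$ and the matching again holds.

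The only real bookkeeping is tracking the slowly varying corrections at the critical dimension $d = 3\alpha$; away from this dimension the argument reduces to exponent arithmetic applied to the appropriate moment formula. A potentially cleaner unified route would go via the universal tightness theorem \cref{I-cor:universal_tightness_moments} (in the form of \eqref{eq:universal_tightness_moments_restate}): combining it with $q = p-1$ and the extension of \cref{lem:CL_and_wCL} to $d \geq 3\alpha$ noted in the remark following that lemma yields $\E_{\beta_c,r}|K|^p \preceq_p M_{Cr}^{p-1}\,\E_{\beta_c,r}|K|$ for some constant $C$. This would suffice if one knew $M_r \preceq \zeta(\beta_c,r)$ universally under \eqref{CL}; the reverse inequality $\zeta \preceq M_r$ follows immediately from universal tightness with $p = q = 1$, but establishing $M_r \preceq \zeta$ at this level of generality appears to require the same regime-by-regime moment computations, so the case analysis cannot be circumvented.
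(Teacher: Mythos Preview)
Your proposal is correct and takes essentially the same approach as the paper: a case analysis on whether $d>3\alpha$, $d=3\alpha$, or $d<3\alpha$, invoking \cref{cor:HD_CL} to reduce the first two cases to $\alpha<2$ and then citing \cref{I-thm:hd_moments_main}, \cref{III-thm:critical_dim_moments_main}, and \cref{thm:CL_volume_moments} respectively. The only difference is cosmetic: you separate out the lower bound via log-convexity of moments, whereas the paper simply invokes the two-sided moment asymptotics from those theorems in each case without splitting into upper and lower bounds.
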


\begin{proof}[Proof of \cref{lem:moments_scaling_hypothesis_cut_off}]
The case $d<3\alpha$ was established in \cref{thm:CL_volume_moments}. For $d \geq 3\alpha$ we have by \cref{cor:HD_CL} that \eqref{CL} holds if and only if $\alpha<2$, and in this case the claim follows from \cref{I-thm:hd_moments_main} when $d>3\alpha$ and from \cref{III-thm:critical_dim_moments_main} when $d=3\alpha$ (alternatively one can use the less precise results \cref{I-thm:critical_dim_moments_main_slowly_varying} and \cref{III-thm:critical_dim_hydro} in this case).
\end{proof}

Our next goal is to prove the upper bound of \cref{thm:subcritical_volume}.
Recall that we defined $\zeta^*(\beta)$ by
\[
  \zeta^*(\beta) := \frac{\E_{\beta_c,\xi^*(\beta)}|K|^2}{\E_{\beta_c,\xi^*(\beta)}|K|} 
\]
which under \eqref{CL} satisfies
\begin{equation}
  \zeta^*(\beta)  \asymp 
\begin{cases}
\xi^*(\beta)^{2\alpha} \\
(\log \xi^*(\beta))^{-1/2} \xi^*(\beta)^{2\alpha} \\
  \xi^*(\beta)^{(d+\alpha)/2} 
  \end{cases}
  \!\!\!\!\asymp
  \begin{cases}
\chi(\beta)^{2} &\qquad d>3\alpha \qquad \text{(LR HD)} \\
(\log \chi(\beta))^{-1/2} \chi(\beta)^{2} &\qquad d=3\alpha \qquad \text{(LR CD)} \\
  \chi(\beta)^{(d+\alpha)/2\alpha} &\qquad d<3\alpha \qquad \text{(LR LD)}
  \end{cases}
  \label{eq:zeta*_asymptotics}
\end{equation}
by \cref{I-thm:hd_moments_main} (in the case $d>3\alpha$), \cref{III-thm:critical_dim_moments_main} (in the case $d=3\alpha$), and \cref{lem:sCL_moments} (in the case $d<3\alpha$). (Here we are also making implicit use of \cref{cor:HD_CL}.)

\begin{lemma}
\label{lem:moments_scaling_hypothesis_upper}
 If \eqref{CL} holds then
\[
\E_{\beta}|K|^p \preceq_p 
 \zeta^*(\beta)^{p-1} \chi(\beta)
\]
for every integer $p\geq 1$ and $r\geq 1$.
\end{lemma}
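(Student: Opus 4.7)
The plan is to reduce the bound to the already-established cut-off critical moment estimate from \cref{lem:moments_scaling_hypothesis_cut_off}. Applying that lemma with $r = \xi^*(\beta) = \chi(\beta)^{1/\alpha}$ and using the asymptotic identity $\E_{\beta_c,\xi^*(\beta)}|K| \asymp \chi(\beta)$ (valid under \eqref{CL} by \cref{lem:first_moment_CL,lem:subcritical_susceptibility}), one sees that
\[
\E_{\beta_c,\xi^*(\beta)}|K|^p \asymp_p \zeta^*(\beta)^{p-1}\chi(\beta).
\]
Thus it suffices to prove the comparison estimate $\E_\beta|K|^p \preceq_p \E_{\beta_c,\xi^*(\beta)}|K|^p$ for each integer $p \geq 1$ and $\beta_c/2 \leq \beta < \beta_c$.

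For the comparison, the natural route is to combine two complementary $k$-point function bounds on $\tau_\beta(\{0,x_1,\ldots,x_{p-1}\})$ and then sum. The first is the Aizenman-Newman tree-graph inequality, which always holds and yields (by iteratively summing out leaves along a binary skeleton with $p+1$ labeled vertices, $p-1$ internal vertices, and $2p-1$ edges) the universally valid bound $\E_\beta|K|^p \preceq_p \chi(\beta)^{2p-1}$. This recovers the desired inequality in the effectively high-dimensional regime, where $\zeta^*(\beta)\asymp\chi(\beta)^2$. The second is the higher Gladkov inequality (\cref{thm:higher_Gladkov}) combined with the sharp subcritical two-point bound $\P_\beta(x\leftrightarrow y) \preceq \|x-y\|^{-d+\alpha}(1\vee\|x-y\|/\xi^*(\beta))^{-2\alpha}$ from \cref{lem:slightly_subcritical_two_point_upper}. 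Propagating this two-point estimate through the recursive Gladkov bound (as in \cref{cor:geometric_Gladkov}) yields $\tau_\beta(A) \preceq_{|A|} \operatorname{sweep}(A)^{-(d-\alpha)/2}$ times a subcritical tail factor that restricts the geometry of $A$ to scale $\xi^*(\beta)$; a computation similar to the proof of \cref{lem:sCL_moments} then shows that the sum over configurations is dominated by $\xi^*(\beta)^{\alpha+(p-1)(d+\alpha)/2}$, recovering the desired bound in the effectively low-dimensional regime. Taking the minimum of the two bounds covers every regime, including the critical-dimensional case $d=3\alpha<6$ (where the refined logarithmic estimates of \cref{III-thm:critical_dim_moments_main} must be invoked).

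The main obstacle is the geometric bookkeeping for the Gladkov bound: the critical geometric estimate $\tau_\beta(A) \preceq \operatorname{sweep}(A)^{-(d-\alpha)/2}$, obtained from the pointwise two-point bound $\|x\|^{-d+\alpha}$, is not summable over $A\subseteq\Z^d$ when $\alpha < d$, so one must carefully track the subcritical cut-off factor $(1\vee\|x-y\|/\xi^*)^{-2\alpha}$ through each recursive application of \cref{thm:higher_Gladkov}. The natural way to encode this is to show
\[
\tau_\beta(A) \preceq_{|A|} \operatorname{sweep}(A)^{-(d-\alpha)/2}\prod_{x\in A\setminus\{x_0\}}\!\!\bigl(1\vee \operatorname{diam}(A_x)/\xi^*(\beta)\bigr)^{-\alpha}
\]
for the arboresence attaining the sweep (with $A_x$ as in \cref{def:sweep_and_spread}); this then limits the relevant configurations in the summation to those with each $A_x$ of diameter $\preceq\xi^*(\beta)$, after which the summation reduces to the same geometric sum that controls $\E_{\beta_c,\xi^*(\beta)}|K|^p$ in \cref{lem:sCL_moments}. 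Once this refined Gladkov estimate is in hand, the summation and the final comparison to $\zeta^*(\beta)^{p-1}\chi(\beta)$ proceed by induction on $p$.
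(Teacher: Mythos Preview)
Your approach differs substantially from the paper's and contains a genuine gap in the low-dimensional case. The paper argues uniformly across all effective dimensions by induction on $p$, studying the $r$-derivative of $\E_{\beta,r}|K|^p$ via Russo's formula and the BK inequality:
\[
\frac{\partial}{\partial r} \E_{\beta,r}|K|^p \preceq_p r^{-\alpha-1} \E_{\beta,r}|K|\, \E_{\beta,r}|K|^p + r^{-\alpha-1}\sum_{\ell=1}^{p-1} \E_{\beta,r}|K|^{\ell+1} \E_{\beta,r}|K|^{p-\ell}.
\]
After bounding the sum on the right by the inductive hypothesis (using $\E_{\beta,r}|K|\leq\chi(\beta)$), this becomes a linear ODE in $\E_{\beta,r}|K|^p$. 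An integrating factor $\exp\bigl(-C_p\chi(\beta)\int_{\xi^*(\beta)}^r s^{-\alpha-1}\,ds\bigr)$, which is bounded for $r\geq\xi^*(\beta)$, lets one integrate from $r=\xi^*(\beta)$ to $r=\infty$, and the initial value $\E_{\beta,\xi^*(\beta)}|K|^p\leq\E_{\beta_c,\xi^*(\beta)}|K|^p$ is handled by \cref{lem:moments_scaling_hypothesis_cut_off}. No case analysis on the effective dimension is needed.

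Your proposal instead attempts a direct $k$-point comparison $\E_\beta|K|^p\preceq_p \E_{\beta_c,\xi^*(\beta)}|K|^p$, treating each regime separately. The HD argument via the tree-graph inequality is correct. But the LD argument hinges on the ``refined Gladkov estimate''
\[
\tau_\beta(A) \preceq_{|A|} \operatorname{sweep}(A)^{-(d-\alpha)/2}\prod_{x\in A\setminus\{x_0\}}\bigl(1\vee \operatorname{diam}(A_x)/\xi^*(\beta)\bigr)^{-\alpha},
\]
which you state but do not prove. This is a genuinely new geometric claim: the passage from the Gladkov recursion to the sweep in \cref{cor:geometric_Gladkov} goes through \cref{lem:sweep_recursion}, an up-to-constants equivalence that does not obviously propagate the extra subcritical decay factors in the right form through the tripartition recursion. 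Even granting this, the subsequent summation would require $\sum_x \|x\|^{-\eta(d-\alpha)}(1\vee\|x\|/\xi^*)^{-2\alpha\eta}\preceq (\xi^*)^{d-\eta(d-\alpha)}$ for the exponents $\eta=2^{-k+2}Q_\ell$ arising in the proof of \cref{lem:sCL_moments}; this needs $\eta(d+\alpha)>d$, which is not guaranteed for every such $Q_\ell$ in a general connected $2^{k-2}$-regular multigraph (some vertices can have very few edges to lower-indexed vertices). The CD case is only gestured at. The paper's ODE argument avoids all of these geometric difficulties.
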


\begin{proof}[Proof of \cref{lem:moments_scaling_hypothesis_upper}]
We prove the claim by induction on $p$, the case $p=1$ being trivial. Suppose that $p>1$ and that the claim has been proven for all smaller $p$.
We have by \eqref{eq:pth_moment_derivative} and the BK inequality that
\[
  \frac{\partial}{\partial r} \E_{\beta,r}|K|^p \preceq_p r^{-\alpha-1} \E_{\beta,r}|K| \E_{\beta,r}|K|^p + r^{-\alpha-1}\sum_{\ell=1}^{p-1} \E_{\beta,r}|K|^{\ell+1} \E_{\beta,r}|K|^{p-\ell},
\]
and applying the induction hypothesis yields that there exists a constant $C_p$ such that
\[
  \frac{\partial}{\partial r} \E_{\beta,r}|K|^p \leq C_p r^{-\alpha-1} \chi(\beta) \E_{\beta,r}|K|^p + C_p r^{-\alpha-1} \chi(\beta)^2 
  \zeta^*(\beta)^{p-1}.
\]
Introducing the integrating factor $e^{-\int_{\xi^*(\beta)}^r C_p \chi(\beta) s^{-\alpha-1} \dif s}$, it follows that
\[
  \frac{\partial}{\partial r} \left( e^{-\int_{\xi^*(\beta)}^r C_p \chi(\beta) s^{-\alpha-1} \dif s} \E_{\beta,r}|K|^p\right) \leq  C_p  e^{-\int_{\xi^*(\beta)}^r C_p \chi(\beta) s^{-\alpha-1} \dif s} r^{-\alpha-1} \chi(\beta)^2 
  \zeta^*(\beta)^{p-1}
\]
for every $r\geq \xi^*(\beta)$ and hence that
\begin{align*}
  \E_\beta |K|^p &\preceq_p  e^{\int_{\xi^*(\beta)}^r C_p \chi(\beta) s^{-\alpha-1} \dif s} \left(\E_{\beta,\xi^*(\beta)}|K|^p + \int_{\xi^*(\beta)}^\infty e^{-\int_{\xi^*(\beta)}^r C_p \chi(\beta) s^{-\alpha-1} \dif s} r^{-\alpha-1} \chi(\beta)^2 \zeta^*(\beta)^{p-1} \dif r \right)
  \\
  &\preceq_p \E_{\beta,\xi^*(\beta)}|K|^p + \zeta^*(\beta)^{p-1}\chi(\beta) \preceq_p \zeta^*(\beta)^{p-1}\chi(\beta),
\end{align*}
where we bounded $\E_{\beta,\xi^*(\beta)}|K|^p \leq \E_{\beta_c,\xi^*(\beta)}|K|^p \preceq \zeta^*(\beta)^{p-1}\chi(\beta)$ using \cref{lem:moments_scaling_hypothesis_cut_off} in the final inequality. This completes the induction step and hence the proof.
\end{proof}

\begin{proof}[Proof of \cref{thm:subcritical_volume}]
We begin with the claim that 
\begin{equation}
\label{eq:volume_scaling_hypothesis_proof}
\E_{\beta}|K|^p \asymp_p \zeta^*(\beta)^{p-1} \chi(\beta).
\end{equation}
Given \cref{lem:subcritical_susceptibility,lem:moments_scaling_hypothesis_upper}, it suffices to prove the lower bound 
 $\E_{\beta}|K|^2 \succeq \chi(\beta)\zeta^*(\beta)$,
with the corresponding lower bound on higher moments following from this bound by H\"older's inequality. We will prove this estimate using the formula for $\zeta^*(\beta)$ in terms of $\chi(\beta)$ given in \eqref{eq:zeta*_asymptotics} together with the critical volume tail asymptotics (assuming \eqref{CL})
\begin{equation}
\label{eq:volume_tail_all_dimensions}
  \P_{\beta_c}(|K| \geq n) \asymp \begin{cases}
n^{-1/2}&\qquad d>3\alpha \qquad \text{(LR HD)} \\
(\log n)^{1/4} n^{-1/2} &\qquad d=3\alpha \qquad \text{(LR CD)} \\
  n^{-(d-\alpha)/(d+\alpha)} &\qquad d<3\alpha \qquad \text{(LR LD)}
  \end{cases}
\end{equation}
as proven in \cref{I-thm:hd_moments_main} (for $d>3\alpha$), \cref{III-thm:critical_dim_moments_main} (for $d=3\alpha$), and \cref{thm:main_low_dim} ($d<3\alpha$).
Comparing this with \eqref{eq:zeta*_asymptotics} yields that
\begin{equation}
  \P_{\beta_c}(|K|\geq \zeta^*(\beta)) \asymp \frac{\chi(\beta)}{\zeta^*(\beta)}
\label{eq:volume_tail_zeta_formulation}
\end{equation}
for every $0\leq \beta<\beta_c$ in every case in which \eqref{CL} holds.
Write $\hat \E_\beta$ for the size-biased measure, so that $\hat \E_\beta|K| = \E_\beta|K|^2/\E_\beta|K|$. Applying Markov's inequality to the size-biased measure yields that
\[
  \E_\beta \left[|K| \mathbbm{1}(|K| \geq \lambda \hat \E_\beta|K| )\right]=\E_\beta|K| \hat\P_\beta (|K| \geq \lambda \hat \E_\beta|K| ) \leq \frac{1}{\lambda}\E_\beta|K|
\]
and hence that
\begin{equation}
\label{eq:biased_Markov}
  \E_\beta \left[|K| \mathbbm{1}(|K| \leq \lambda \hat \E_\beta|K| )\right]\geq \frac{\lambda-1}{\lambda}\E_\beta|K|.
\end{equation}
On the other hand, it follows from \eqref{eq:volume_tail_all_dimensions} that
\begin{align*}
  \E_\beta \left[|K| \mathbbm{1}(|K| \leq 2 \hat \E_\beta|K| )\right]
  &\leq
  \E_{\beta_c} \left[|K| \mathbbm{1}(|K| \leq 2 \hat \E_\beta|K| )\right] =\sum_{n=1}^{\lfloor 2\hat \E_{\beta}|K|\rfloor} \P_{\beta_c}(|K|\geq n)
   \\
  &\preceq 
  \begin{cases}
(\hat \E_\beta|K|)^{1/2} &\qquad d>3\alpha \qquad \text{(LR HD)} \\
(\log \hat \E_\beta|K|)^{1/4} (\hat \E_\beta|K|)^{1/2} &\qquad d=3\alpha \qquad \text{(LR CD)} \\
  (\hat \E_\beta|K|)^{2\alpha /(d+\alpha)} &\qquad d<3\alpha \qquad \text{(LR LD)}.
  \end{cases}
\end{align*}
Comparing this estimate with \eqref{eq:biased_Markov} and \eqref{eq:zeta*_asymptotics} we obtain that $\hat \E_\beta|K| \succeq \zeta^*(\beta)$ as claimed.

\medskip We now prove the claims concerning the subcritical volume tail. Using \eqref{eq:volume_scaling_hypothesis_proof} and \eqref{eq:volume_tail_zeta_formulation} we obtain the bound
\[
  \P_\beta(|K|\geq n) \leq n^{-p} \E_\beta|K|^p \preceq_p \frac{\chi(\beta)}{\zeta^*(\beta)} \left(\frac{n}{\zeta^*(\beta)}\right)^{-p} \preceq_p \P_{\beta_c}(|K|\geq \zeta^*(\beta))  \left(\frac{n}{\zeta^*(\beta)}\right)^{-p}
\]
for every $p\geq 1$, $\beta_c/2\leq \beta <\beta_c$, and $n\geq \zeta^*(\beta)$, which is easily seen to imply the claimed upper bound on $\P_\beta(|K|\geq n)$ since $p\geq 1$ is arbitrary and $\P_{\beta_c}(|K|\geq \zeta^*(\beta))$ is of the same order as a regularly varying function. For the lower bound, we can apply the Paley-Zygmund inequality to the size-biased measure to obtain that
\[
  \hat \P_\beta(|K|\geq \frac{1}{2} \hat \E_\beta|K|) \geq \frac{1}{4} \frac{(\hat \E_\beta |K|)^2}{\hat \E_\beta|K|^2} = 
  \frac{1}{4} \frac{(\E_\beta |K|^2)^2}{\E_\beta|K| \E_\beta|K|^3} \asymp 1,
\]
so that
\[
  \E_\beta\left[|K|\mathbbm{1}(|K|\geq \frac{1}{2} \hat \E_\beta|K|)\right] \succeq \chi(\beta)
\]
for every $\beta_c/2\leq \beta<\beta_c$. Combining this with \eqref{eq:biased_Markov} yields there exists a constant $C$ such that
\begin{multline}
  \P_{\beta}\left(\frac{1}{2}\hat \E_\beta |K| \leq |K| \leq C \hat \E_\beta |K|\right) \asymp \frac{1}{\hat \E_\beta |K|}\E_{\beta}\left[|K|\mathbbm{1}\Bigl(\frac{1}{2}\hat \E_\beta |K| \leq |K| \leq C \hat \E_\beta |K|\Bigr) \right]
  \\ \succeq \frac{\chi(\beta)}{\zeta^*(\beta)} \asymp \P_{\beta_c}\Bigl(|K|\geq \zeta^*(\beta)\Bigr) \asymp \P_{\beta_c}\Bigl(|K|\geq \frac{1}{2}\hat \E_\beta|K|\Bigr),
  \label{eq:subcritical_volume_debiasing}
\end{multline}
where we used that $\zeta^*(\beta)\asymp \frac{1}{2}\hat \E_\beta|K|$ and that $\P_{\beta_c}(|K|\geq n)$ of the same order as a regularly varying function in the final estimate. By the intermediate value theorem, for each $n\leq \frac{1}{2}\hat \E_{\beta}|K|$ there exists $\beta'\leq \beta$ such that $\frac{1}{2}\hat \E_{\beta'}|K|=n$, and applying \eqref{eq:subcritical_volume_debiasing} at this $\beta'$ yields the claimed estimate
 $\P_{\beta}(|K|\geq n) \geq \P_{\beta'}(|K|\geq n) \succeq \P_{\beta_c}(|K|\geq n)$ for every $\beta_c/2\leq \beta <\beta_c$ and $n\leq \frac{1}{2}\hat \E_{\beta}|K| \asymp \zeta^*(\beta)$.  
%
\end{proof}

\subsection*{Acknowledgements}
This work was supported by NSF grant DMS-1928930 and a Packard Fellowship for Science and Engineering. We thank Gordon Slade and Ed Perkins for helpful comments on an earlier draft.

\addcontentsline{toc}{section}{References}

 \setstretch{1}
 \footnotesize{
  \bibliographystyle{abbrv}
  \bibliography{unimodularthesis.bib}
  }

\end{document}